\newcommand{\imod}[1]{\allowbreak\mkern4mu({\operator@font mod}\,\,#1)}
\theoremstyle{plain}
\newtheorem{thm}{\protect\theoremname}[section]
  \theoremstyle{definition}
  \newtheorem{defn}[thm]{\protect\definitionname}
  \theoremstyle{plain}
  \newtheorem{cor}[thm]{\protect\corollaryname}
  \theoremstyle{plain}
  \newtheorem{lem}[thm]{\protect\lemmaname}
  \theoremstyle{plain}
  \newtheorem{prop}[thm]{\protect\propositionname}
  \theoremstyle{remark}
  \newtheorem*{rem*}{\protect\remarkname}
\date{}
\theoremstyle{plain}
\newtheorem{mainthm}{Theorem}
  \providecommand{\corollaryname}{Corollary}
  \providecommand{\definitionname}{Definition}
  \providecommand{\lemmaname}{Lemma}
  \providecommand{\propositionname}{Proposition}
  \providecommand{\remarkname}{Remark}
\providecommand{\theoremname}{Theorem}
\begin{document}

\author{Alexander J. Malcolm}

\address{Alexander J. Malcolm, Department of Mathematics,
    Imperial College, London SW7 2BZ, UK}
\email{alexander.malcolm09@imperial.ac.uk}

\title{The Involution Width of Finite Simple Groups}

\subjclass[2010]{Primary 20D6; Secondary 20C33 }

\begin{abstract}
 For a finite group generated by involutions, the involution
width is defined to be the minimal $k\in\mathbb{N}$ such that any group element
can be written as a product of at most $k$ involutions. We show that
the involution width of every non-abelian finite simple group is at
most $4$. This result is sharp, as there are families with involution
width precisely 4.
\end{abstract}

\date{\today}
\maketitle

\section{Introduction}

 Let $G$ be a finite group generated by involutions. The involution
width, denoted iw$(G)$, is defined to be the minimal $k\in\mathbb{N}$
such that any element of $G$ can be written as a product of at most
$k$ involutions. It is well known that all non-abelian finite simple
groups are generated by their involutions. Furthermore, it was proved
by Liebeck and Shalev (\cite{Liebeck and Shalev}, 1.4) that there
exists an absolute constant $N$ that bounds the involution width
of all finite simple groups. The purpose of this paper is to obtain
the minimal value for $N$.

The involution width was first considered in the case of special linear groups
by Wonenburger \cite{wonenburger}, and later by Gustafson et. al.
\cite{Gustavson et al} and Kn$\ddot{\text{u}}$ppel and Nielsen \cite{Knuppel and Knielsen}.
Further work has been completed on orthogonal groups (Kn$\ddot{\text{u}}$ppel
and Thomsen, \cite{Knuppel and Thomsen 98}) and the exceptional group
$F_{4}(K)$ (Austin, \cite{Austin}). 

More recently, various problems have been resolved involving the related
notions of real or strongly real groups. An element $x\in G$ is real
if $x^{g}=x^{-1}$ for some $g\in G$. Furthermore $x$ is strongly
real if $g$ can be taken to be an involution. We call the group $G$
(strongly) real if all of its elements are (strongly) real. It follows
easily that $x\in G$ is strongly real if and only if $x$ is a product
of 2 involutions and so the strongly real groups are precisely those
of involution width 2. The classification of strongly real, finite
simple groups was completed in 2010 after work by a number of authors
(see  \cite{gow strongly real symplectic,kolesnikov,Johanna Ramo,Ibrahim Suleiman 2008}
and Theorem \ref{Thm: Strongly real G} below). The work of this paper
addresses the remaining finite simple groups and is summarised by
the following theorem.

\begin{mainthm}\label{mainthm} Every non-abelian finite simple group has involution width at most 4. \end{mainthm}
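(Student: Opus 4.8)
The proof goes through the Classification of Finite Simple Groups, organised around the elementary observation that, as the set of involutions of $G$ is a union of conjugacy classes, an element $g\in G$ is a product of $4$ involutions if and only if $g=xy$ with $x$ and $y$ each a product of $2$ involutions; thus $\mathrm{iw}(G)\le 4$ means precisely that every element of $G$ is a product of two strongly real elements, and $\mathrm{iw}(G)\le 3$ that every element is a product of a strongly real element and an involution. The classification of strongly real finite simple groups (Theorem \ref{Thm: Strongly real G}) is used as a black box, so for each non-strongly-real simple group the task is to produce such a short factorisation.

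For the alternating groups $A_{n}$ ($n\ge 5$) the argument is elementary and combinatorial: every permutation is a product of two involutions of $S_{n}$ by the standard reversal construction, and a case analysis on cycle types, according to whether the two involutions already lie in $A_{n}$, gives $\mathrm{iw}(A_{n})\le 3$, with $3$ attained exactly when $A_{n}$ is not strongly real. For the $26$ sporadic groups one checks $\mathrm{iw}\le 3$ (and $\mathrm{iw}=2$ exactly on the strongly real list) by a finite computation with the character tables: by Frobenius's formula the number of ways of writing $g$ as a product of elements from classes $C_{1},\dots,C_{k}$ is a positive rational multiple of $\sum_{\chi}\chi(c_{1})\cdots\chi(c_{k})\overline{\chi(g)}\,\chi(1)^{1-k}$, and it is enough to evaluate this over the few classes of involutions, with $k=2$ and $k=3$.

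The substance of the paper is the groups of Lie type. For the classical groups $\mathrm{PSL}_{n}(q)$, $\mathrm{PSU}_{n}(q)$, $\mathrm{PSp}_{2n}(q)$ and $\mathrm{P\Omega}^{\varepsilon}_{m}(q)$ the plan has two prongs. First, for most of the parameter range one works in a covering matrix group and invokes the classical theory of products of two involutions --- Wonenburger, Djokovi\'c, Gow, Kn\"uppel--Nielsen, Kn\"uppel--Thomsen --- which writes essentially every reversible element of $\mathrm{GL}$, $\mathrm{GU}$, $\mathrm{Sp}$ or $\mathrm{O}$ as a product of two involutions; one then measures the obstructions --- a determinant in the linear and unitary cases, a spinor norm or discriminant in the orthogonal case, and the final quotient by the centre --- that can stop two involutions being enough, and shows each such obstruction is absorbed by one or two further involutions (reflections, transvections, or products of two of these), giving $\mathrm{iw}\le 4$ uniformly and $\mathrm{iw}=4$ exactly in the families where the obstruction genuinely survives in the simple quotient. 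Second, and this both settles the small ranks and small fields left over and reinforces the structural arguments, one uses Deligne--Lusztig theory and bounds on character values (in the spirit of Liebeck--Shalev) to show that the product of two suitably chosen strongly real conjugacy classes --- for instance of regular semisimple elements lying in tori inverted by an involution of $G$ --- already covers $G$ once $q$ or the rank is large. The exceptional groups follow the same scheme: Austin's theorem disposes of $F_{4}$, the high-rank exceptional groups are strongly real or have involution width at most $3$, and the short residual list --- including the Suzuki and Ree groups --- is handled by character estimates or direct computation.

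The delicate point, and the source both of the sharp value $4$ and of the examples attaining it, is the descent from the matrix group to the simple group: deciding precisely when a determinant, spinor-norm or central discrepancy between an element and a candidate product of two involutions can be repaired by a bounded number of extra involutions, and controlling this uniformly in $q$ and in the rank --- with the orthogonal groups, their several types and their spinor norms, the most demanding. Wherever the uniform arguments break down one falls back on explicit conjugacy-class computations or on the character-theoretic covering estimates above.
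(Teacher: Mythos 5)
Your overall framework (reduce via CFSG, use the strongly real classification as a black box, and aim to write every element as a product of two strongly real elements) matches the paper, and your treatment of alternating and sporadic groups is essentially the paper's. But two parts of your plan have genuine gaps. First, the unitary groups, which are the heart of the paper (Section \ref{sec:Unitary-Groups}). Your ``first prong'' assumes the classical bireflection theorems (Wonenburger, Gow, Kn\"uppel et al.) write essentially every reversible element of $GU_n(q)$ as a product of two involutions, leaving only a determinant or central obstruction to be repaired by one or two extra involutions. For unitary groups the obstruction is not of that kind: a typical element of $SU_n(q)$, and of $PSU_n(q)$, is not real at all, so there is no candidate two-involution factorisation to repair, and no bounded number of extra involutions fixes a non-real element this way. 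The paper instead chooses two strongly real \emph{regular semisimple} classes (in $T_n$ and $T_{1,1,n-2}$ for $n$ even, in $T_{1,n-1}$ and $T_{1^3,n-3}$ for $n$ odd) and proves $x^G y^G\supseteq G\setminus Z(G)$ by structure constants. Your ``second prong'' (Deligne--Lusztig bounds in the spirit of Liebeck--Shalev, valid for $q$ or the rank large) does not close this gap: the hard case is bounded $q$ (e.g.\ $q=2,3$) with unbounded $n$, where generic character bounds fail, and in odd dimension the chosen tori are not weakly orthogonal, so non-unipotent characters survive in the sum. The paper must compute or tightly bound values of specific characters --- the Weil-type characters $\chi_{s^t,(n-1)}$, $\chi_{s^t,(n-2,1)}$ and the unipotent character $\chi_{(n-3,2,1)}$, identified via the dual pair $GU_3(q)\times GU_n(q)\subset GU_{3n}(q)$ together with a classification of all irreducible characters of nearby degree. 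Your fallback of ``explicit conjugacy-class computations'' cannot substitute here, since infinitely many groups remain.

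Second, the exceptional groups. Your claim that ``the high-rank exceptional groups are strongly real or have involution width at most $3$'' is false: among the exceptional types only ${}^3D_4(q)$ is strongly real, and no prior result gives width at most $3$ for $E_6^{\epsilon}(q)$, $E_7(q)$ or $E_8(q)$; these are infinite families, not a short residual list amenable to direct computation. The paper (Section \ref{sec:Simple-groups-of exceptional type}) handles $E_8$, $E_7$, $F_4$, $G_2$, ${}^2F_4$, ${}^2G_2$, ${}^2B_2$ by producing strongly real regular semisimple classes whose product covers the group (Guralnick--Malle covering theorems), with strong reality coming from $w_0=-1$ inverting odd-order tori, or from Singh--Thakur; and for $E_6^{\epsilon}(q)$, where $w_0\neq -1$, it needs a further two-step argument: character estimates when $|C_{G'}(g)|$ is small, and embeddings of the remaining elements (via their unipotent parts and semisimple centralizers) into subsystem subgroups of type $A_2^{\pm}$, $D_4$, $D_5$, $A_4$, $A_5$ whose involution width is already known. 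None of this appears in your outline, so the exceptional case --- and above all the unitary case --- is a genuine gap rather than merely a different route.
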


Note that the upper bound 4 is sharp, as certain families (for example
$PSL_{n}(q)$ such that $n,q\geq6$ and gcd$(n,q-1)=1$) do have involution
width 4 (see Theorem \ref{thm:sl 4 ref}).

The involution width is one of a number of width questions that have
been considered about simple groups in recent literature. For example,
\cite{Ore Conjecture paper} settles the longstanding conjecture of
Ore that the commutator width of any finite non-abelian simple group
$G$ is exactly 1. Also, $G$ is generated by its set of squares and
the width in this case is 2 \cite{lost squares}. More generally,
given any two non-trivial words $w_{1},w_{2}$, if $G$ is of large
enough order then $w_{1}(G)w_{2}(G)\supseteq G\backslash\{1\}$ \cite{Guralnick and Tiep}.

Here are some remarks on the proof of Theorem~\ref{mainthm}. For
alternating groups we find the involution width directly by studying
the disjoint cycle decomposition of elements. For groups $G$ of Lie
type we adopt a different approach: we aim to find particular regular
semisimple elements $x,y\in G$ such that $x$ and $y$ are strongly
real and $G\backslash\{1\}\subseteq x^{G}y^{G}$. It then follows
that every element is a product of at most 4 involutions.To do this
we make extensive use of the character theory of finite groups of
Lie type, building on methods first seen in \cite{Malle and Saxl and Weigel}
and \cite{Guralnick and Malle}. Substantial difficulties are faced
in the case of unitary groups, where we develop the theory of minimal
degree characters using dual pairs (Sec. \ref{sec:Unitary-Groups}).
Similarly problematic are a number of exceptional groups of Lie type
and in these instances we use an inductive approach, restricting to
subgroups of $G$ for which the involution width is known (Sec. \ref{sec:Simple-groups-of exceptional type}).

Naturally the involution width problem can be generalised to elements
of order $p$, for any fixed prime $p$. This has been resolved in
the case of alternating groups and work on the simple groups of Lie
type will be forthcoming.

\textit{Acknowledgements: }This paper forms part of work towards a
PhD degree under the supervision of Martin Liebeck at Imperial College
London, and the author wishes to thank him for his direction and support
throughout. The author is also very thankful to Pham Huu Tiep for
helpful advice on the character theory of finite simple groups, and
EPSRC for their financial support.

\section{Alternating and Sporadic Groups}

\subsection{Alternating Groups}

We first consider the simple alternating groups $A_{m}$, $m\geq5$.
Conjugacy in $A_{m}$ is easily understood: classes in the full symmetric
group $S_{m}$ are indexed by cycle type. These types correspond to
partitions of $m$ and an $S_{m}$-class splits in $A_{m}$ if and
only if the partition consists of distinct odd integers. We can find
the involution width directly by studying the permutations and, unlike
later work on groups of Lie type, require no representation theory.\\
For reference, we first note the classification of strongly real finite
simple groups.
\begin{thm}
\label{Thm: Strongly real G}(\cite{gow strongly real symplectic,kolesnikov,Johanna Ramo,Ibrahim Suleiman 2008}
). A finite non-abelian simple group is strongly real if and only
if it is one of the following
\end{thm}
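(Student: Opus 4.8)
The plan is to run through the classification of finite simple groups, disposing of each family in turn, using throughout the elementary reformulation that $g\in G$ is strongly real if and only if $g$ is inverted by an \emph{involution} of $G$ (equivalently $g=t_1t_2$ with $t_1^2=t_2^2=1$). In particular $G$ can be strongly real only if it is real, so one first quotes the known description of the real finite simple groups and then, among these, decides in each case whether a reversing element of order exactly $2$ can always be found. The ``if'' direction of the theorem is then a union of the constructions of Gow, Kolesnikov, R\"am\"o and Suleiman producing, for each group on the list, an involution inverting an arbitrary element; the ``only if'' direction requires, for every other simple group, an explicit element that is either not real, or real but not strongly real.

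For the alternating groups $A_m$ ($m\geq5$) I would argue directly with the disjoint cycle decomposition. In $S_m$ every permutation is strongly real, the reversing involution being the evident product of transpositions read off from the cycle structure; the only issue is to realise such an involution inside $A_m$. Tracking the parity of the natural reversing involution of a $k$-cycle --- it is a product of $\lfloor k/2\rfloor$ transpositions, hence lies in $A_m$ exactly when $k\equiv0,1\imod4$ --- and correcting it where possible by transpositions fixing the given permutation, one finds that every element of $A_m$ is strongly real precisely for the short list of small $m$ in the statement, while for all other $m$ one writes down a cycle type (a single cycle, or a union of two cycles, of suitable lengths) admitting no even reversing involution.

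For the groups of Lie type I would split on the defining characteristic. In characteristic $2$ the symplectic groups $Sp_{2n}(q)=PSp_{2n}(q)$, and certain orthogonal groups, are strongly real by the work of Gow and of Kn\"uppel--Thomsen: over a field of characteristic $2$ an involution inverting a given element can be assembled from its primary decomposition, and the other characteristic-$2$ simple groups are ruled out by exhibiting non-real or non-strongly-real elements. In odd characteristic the classical groups are the delicate part: here $-I$ is a genuine involution, and the congruence of $q$ modulo $4$, together with the parity of the rank, controls whether a given regular semisimple element is inverted by an involution rather than merely by an element of $2$-power order. One reduces the question to the ambient matrix group, applies Wonenburger-type canonical forms on the primary components, and tracks the determinant and spinor-norm obstructions incurred on passing to the simple quotient. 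The exceptional groups of Lie type and the sporadic groups are handled separately: for the sporadics one reads the real classes from the ATLAS and then checks strong reality by computing class multiplication coefficients, and for the exceptional groups one combines small-rank computations with structural arguments on centralisers of involutions.

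The main obstacle is the odd-characteristic classical case. There reality is cheap but strong reality is not: the reversing element supplied by the classical canonical-form arguments naturally has $2$-power order, and forcing it down to an involution is an extension problem governed by the determinant and the spinor norm --- which is exactly where the dependence of the final list on $q\imod4$ and on the rank enters. Making this bookkeeping uniform across the four classical families, and matching it against explicit non-strongly-real witnesses for the excluded parameters, is the technical heart of the argument.
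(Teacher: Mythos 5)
This statement is not proved in the paper at all: it is quoted as a known classification, the culmination of several substantial papers (Gow, Kolesnikov--Nuzhin, R\"am\"o, Suleiman, together with earlier work on symplectic and orthogonal groups), and the paper's only ``proof'' is the citation. So there is no in-paper argument to compare yours against; the question is whether your text would itself constitute a proof, and it would not. What you have written is a roadmap, not a proof: every genuinely hard step is described rather than carried out. The ``only if'' direction requires, for \emph{every} simple group omitted from the list --- most $PSL_n(q)$, all $PSU_n(q)$, $PSp_{2n}(q)$ with $q\equiv3\imod4$, $P\Omega_{2n+1}(q)$ with $q\equiv3\imod4$ and $n\neq4$, the families $P\Omega^{\pm}_{4n+2}(q)$, all exceptional groups other than $^{3}D_{4}(q)$, all but four alternating groups and all but two sporadic groups --- an explicit element that is not a product of two involutions, and none of these witnesses is produced; saying ``one writes down a cycle type admitting no even reversing involution'' or ``ruled out by exhibiting non-real or non-strongly-real elements'' is an assertion of the result, not an argument (note also that for a specific element one must exclude \emph{all} inverting involutions, not merely check that the natural reversing permutation is odd).

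The same applies, more seriously, to the positive direction in odd characteristic: you correctly identify that the determinant/spinor-norm obstructions and the congruence of $q$ modulo $4$ govern whether the Wonenburger-type reversing element can be taken to be an involution and to survive passage to the simple quotient, and you explicitly flag this bookkeeping as ``the technical heart'' --- but you do not do it, and it is exactly this analysis (occupying the cited papers) that produces the precise conditions $q\not\equiv3\imod4$, $q\equiv1\imod4$, the dimension congruences $4n$ versus $4n+2$, and the exceptional cases $P\Omega_{9}(q)$, $P\Omega_{8}^{+}(q)$, $^{3}D_{4}(q)$. As it stands the proposal defers all content that distinguishes the stated list from any other plausible list, so it cannot be accepted as a proof of the theorem; if your aim is to use the theorem (as this paper does), the honest course is to cite it, and if your aim is to prove it, each family needs the detailed case analysis you have only gestured at.
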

\begin{enumerate}
\item $PSp_{2n}(q)$ where $q\not\equiv3$ (mod $4$) and $n\geq1$;
\item $P\Omega_{2n+1}(q)$ where $q\equiv1$ (mod $4$) and $n\geq3$;
\item $P\Omega_{9}(q)$ where $q\equiv3$ (mod $4$);
\item $P\Omega_{4n}^{+}(q)$ where $q\not\equiv3$ (mod $4$) and $n\geq3$;
\item $P\Omega_{4n}^{-}(q)$ where $n\geq2$;
\item $P\Omega_{8}^{+}(q)$ or $^{3}D_{4}(q)$;
\item $A_{5},$ $A_{6}$, $A_{10},$ $A_{14}$, $J_{1}$, $J_{2}$.
\end{enumerate}
Evidently, all but four of the alternating groups are not strongly
real. We find that products of three involutions are needed in general.
\begin{defn}
Let $G$ be a finite group generated by involutions and let $g\in G$.
The involution width of $g$, denoted iw$(g)$, is the minimal $k\in\mathbb{N}$
such that $g$ can be written as a product of $k$ involutions.
\end{defn}
\begin{thm}
The involution width of $A_{m}$ ($m\geq5$) is at most $3$.\label{An width at most 3}
\end{thm}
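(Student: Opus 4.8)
The plan is to study $\sigma\in A_{m}$ through its disjoint cycle decomposition, leaning on the classical fact that every permutation of $S_{m}$ is a product of two involutions. Concretely, if $\sigma$ has nontrivial cycles $c_{1},\dots,c_{r}$ on disjoint supports with $c_{i}=(a_{1}^{(i)}\,a_{2}^{(i)}\,\cdots\,a_{k_{i}}^{(i)})$, let $w_{i}=\prod_{l}(a_{l}^{(i)}\,a_{k_{i}+1-l}^{(i)})$ be the reversal of $c_{i}$ and put $w=w_{1}\cdots w_{r}$. Then $w$ and $w\sigma$ are both involutions, since $(w\sigma)^{2}=w\sigma w\sigma=\sigma^{-1}\sigma=1$, and $\sigma=w\cdot(w\sigma)$. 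The crucial observation is that $\operatorname{sgn}(w)=\prod_{i}(-1)^{\lfloor k_{i}/2\rfloor}$ and $\operatorname{sgn}(w\sigma)=\operatorname{sgn}(w)$ because $\sigma$ is even, so as soon as $\operatorname{sgn}(w)=+1$ both factors lie in $A_{m}$ and $\operatorname{iw}(\sigma)\le 2$.

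First I would prove the following lemma: $\operatorname{iw}(\sigma)\le 2$ unless the cycle type of $\sigma$, viewed as a partition of $m$ with fixed points counted as parts equal to $1$, consists of distinct odd parts with an odd number of them congruent to $3\bmod 4$. Since $\lfloor k/2\rfloor$ is odd exactly when $k\equiv 2,3\pmod 4$, the only way to have $\operatorname{sgn}(w)=-1$ is for an odd number of parts to lie in those residue classes; assume this. If $\sigma$ has an even part $k_{0}$, replace the factor $w_{i}$ of that cycle by $w_{i}c_{i}$: this is still an involution ($(w_{i}c_{i})^{2}=w_{i}c_{i}w_{i}c_{i}=c_{i}^{-1}c_{i}=1$) still reversing $c_{i}$, and its sign is $(-1)^{\lfloor k_{0}/2\rfloor}(-1)^{k_{0}-1}$, which flips the total sign. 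If $\sigma$ has a repeated part $k_{0}$, which is then odd, replace the product $w_{i_{1}}w_{i_{2}}$ of the two equal cycles by an involution $\mu$ which interchanges their supports while inverting their product; one checks $\mu$ is a product of $k_{0}$ transpositions, so $\operatorname{sgn}(\mu)=(-1)^{k_{0}}=-1$, again flipping the total sign. In either case the modified element $w'$ is an involution reversing $\sigma$ with $\operatorname{sgn}(w')=+1$, so $\sigma=w'\cdot(w'\sigma)$ gives $\operatorname{iw}(\sigma)\le 2$. In the remaining case all parts are distinct and odd, where $\operatorname{sgn}(w)=(-1)^{\#\{i:\,k_{i}\equiv 3\bmod 4\}}$, which is exactly the exceptional set.

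It then remains to bound $\operatorname{iw}(\sigma)$ for $\sigma$ of exceptional type, and here the strategy is to find an even involution $t$ for which $t\sigma$ is \emph{not} exceptional; then $\sigma=t\cdot(t\sigma)$ gives $\operatorname{iw}(\sigma)\le 1+2=3$. Because the parts are distinct, $\sigma$ has at most one fixed point, and since an odd number of parts are $\equiv 3\pmod 4$, it has at least one nontrivial cycle. If $\sigma$ has two or more nontrivial cycles, pick ones of distinct odd lengths $k_{1}>k_{2}\ge 3$ and set $t=(a\,b)(c\,d)$ where $a,b$ lie in the two supports, so that multiplying by $(a\,b)$ fuses them into a single cycle of even length $k_{1}+k_{2}\ge 8$, and $c,d$ are chosen (disjoint from $a,b$) inside that fused cycle so that $(c\,d)$ splits it into two cycles of even length; then $t\sigma$ has an even part and is non-exceptional. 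If $\sigma$ has exactly one nontrivial cycle, its length $k$ is $\equiv 3\pmod 4$, and $k=3$ is impossible for $m\ge 5$ (it would force two or more fixed points, contradicting distinctness), so $k\ge 7$; taking $t=(a_{1}\,a_{2})(a_{3}\,a_{4})$ inside that cycle, one checks that $t\sigma$ is a single $(k-2)$-cycle together with $m-k+2\ge 2$ fixed points, which is non-exceptional because the part $1$ now repeats. Combining the lemma with these two constructions yields $\operatorname{iw}(A_{m})\le 3$.

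The step I expect to be the main obstacle is the bookkeeping in the lemma — checking in each configuration that one can trade pieces of the canonical reversal for others of opposite sign while keeping the whole product an involution that reverses $\sigma$ — together with ensuring in the exceptional-case constructions that the auxiliary transpositions are genuinely disjoint and supported on available points. The small cases $m=5,6,7,8$ should be inspected separately to rule out degeneracies, but this is routine: $A_{5}$ and $A_{6}$ have no exceptional classes, and $A_{7},A_{8}$ only the classes of cycle type $(7)$, $(7,1)$ and $(5,3)$, all handled by the constructions above.
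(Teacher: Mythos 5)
Your proposal is correct, and while it lives in the same elementary world as the paper's argument (disjoint cycles, reversing involutions built from $\lfloor k/2\rfloor$ transpositions, parity bookkeeping mod $4$), its architecture is genuinely different. The paper works constructively: it writes each cycle of length $\equiv 1 \pmod 4$, each pair of even cycles, and each pair of cycles of length $\equiv 3\pmod 4$ explicitly as a product of two involutions of $A_m$, then treats a leftover single cycle of length $\equiv 3 \pmod 4$ either by an explicit three-involution identity (length $\geq 7$) or, for a leftover $3$-cycle, by borrowing a transposition from the other factors or using two spare points. You instead prove one clean lemma — the sign of a reversing involution can always be corrected using an even part or a repeated part (swapping two equal cycles, or two fixed points), so $\mathrm{iw}(\sigma)\le 2$ unless the type has distinct odd parts with an odd number $\equiv 3\pmod 4$ — and then escape the exceptional set by premultiplying by a single even involution $t$ (two transpositions that fuse/split or shorten cycles), giving $\mathrm{iw}(\sigma)\le 1+2=3$. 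Your lemma subsumes the paper's corollary about two fixed points, and your "multiply by $t$ to change the cycle type" step replaces the paper's explicit formulas; the paper's route yields concrete involution decompositions, while yours is more uniform and arguably cleaner to verify. The details you flag are indeed routine and do check out: the swap $\mu$ of two equal odd cycles inverts their product and has odd sign; in your fusion-then-split step the fused cycle has length $k_1+k_2\ge 8$, so among its at least six points other than $a,b$ two share position parity, giving a splitting transposition $(c\,d)$ disjoint from $(a\,b)$ with both resulting lengths even; and the computation that $(a_1\,a_2)(a_3\,a_4)$ times a $k$-cycle is a $(k-2)$-cycle with two new fixed points is correct, with $k\ge 7$ forced exactly as you say.
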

\begin{proof}
Let $g\in A_{m}$ and write $g$ as a product of disjoint cycles $g=c_{1}\dots c_{k}$
where $c_{i}\in S_{n}$. Recall that $n_{0}+n_{2}\equiv0$ mod $2$,
where $n_{j}$ denotes the number of cycles of $g$ of length congruent
to $j$ mod $4$. Denote the length of a cycle by $|c_{i}|$. We consider
single cycles and also pairs from this decomposition and show how
they can be written as a product of at most 3 involutions.\\
First consider $c\in\{c_{1},\dots,c_{k}\}$ such that $|c|=n\equiv1\text{ mod }4$.
Without loss of generality we assume that $c=(1\,2\dots n)$. Define
\begin{eqnarray*}
x_{1} & := & (1\,n)(2\,n-1)\dots(\frac{n-1}{2}\,\frac{n+3}{2}),\\
x_{2} & := & (2\,n)(3\,n-1)\dots(\frac{n+1}{2}\,\frac{n+3}{2}).
\end{eqnarray*}
 Then $x_{1}$ and $x_{2}$ are involutions in $A_{m}$ and $c=x_{1}\cdot x_{2}$.

Next consider a pair of cycles $c,c'\in\{c_{1},\dots,c_{k}\}$ of
even lengths $n$ and $n'$ respectively. Without loss of generality
we assume these have the form $c=(1\dots n)$ and $c'=(1'\dots n')$.
For $n>2$ define 
\[
y_{1}:=(1\,n-1)(2\,n-2)\dots(\frac{n-2}{2}\,\frac{n+2}{2}),
\]

\[
y_{2}:=(1\,n)(2\,n-1)\dots(\frac{n}{2}\,\frac{n+2}{2}),
\]
\[
z_{1}:=(1\,n)(2\,n-1)\dots(\frac{n}{2}\,\frac{n+2}{2}),
\]
\[
z_{2}:=(2\,n)(3\,n-1)\dots(\frac{n}{2}\,\frac{n+3}{2}).
\]
When $n=2$ define $y_{1}=z_{2}=1$ and $y_{2}=z_{1}=(1\,2)$. Furthermore,
define $y'_{1},\,y'_{2}$ etc to be the permutations as above, but
acting on the letters $1',\dots,n'$. Evidently all of the above elements
have order dividing two, but note that they are not necessarily even
permutations. In fact, when $n\equiv0$ mod $4$ , $y_{2},z_{1}\in A_{m}$
and $y_{1},z_{2}\in S_{m}\backslash A_{m}$. If $n\equiv2$ mod $4$
then the reverse situation holds. Naturally the same can be said for
$y'_{1}$ etc.

First assume that $n\equiv n'\equiv0$ mod $4$. Then $c\cdot c'=(y_{1}y_{2})\cdot(y'_{1}y'_{2})$.
But $y_{2}$ and $y'_{1}$ contain no common letters and hence commute.
We therefore rewrite $cc'=(y_{1}y'_{1})\cdot(y_{2}y'_{2})$, a product
of two involutions, $y_{1}y'_{1}$ and $y_{2}y'_{2}$ in $A_{m}$.
In an identical manner we write $cc'=(z_{1}z'_{1})\cdot(z_{2}z'_{2})$
when $n\equiv n'\equiv2$ mod $4$, and $cc'=(y_{1}z'_{1})\cdot($$y_{2}z'_{2})$
when $n\equiv0$ mod 4 and $n'\equiv2$ mod 4. Each bracketed expression
is an even permutation and hence in all cases we have written $cc'$
as the product of two involutions in $A_{m}$.

Next we consider cycles of length $3$ mod $4$. In particular, let
$c,c'\in\{c_{1},\dots,c_{k}\}$ be cycles of lengths $n$ and $n'$
respectively such that $n\equiv n'\equiv3$ mod $4$. Assuming that
$c=(1\dots n)$ and $c'=(1'\dots n')$, it follows that $c=x_{1}x_{2}$
and $c'=x'_{1}x'_{2}$, where $x_{1},x_{2}$ etc are as above. Note
however, that unlike the case above of $n\equiv1$ mod $4$, here
the $x_{i}$ are in $S_{n}\backslash A_{n}$. But $x_{2}$ and $x'_{1}$
commute and it follows that $cc'$ is indeed the product of the two
involutions $x_{1}x'_{1}$ and $x_{2}x'_{2}$ in $A_{m}$.

Lastly consider the cycle $c=(1\dots n)$ such that $n\equiv3$ mod
$4$, alone. If we assume additionally that $n\geq7$, then we can
easily write $c$ as the product of three involutions in $A_{m}$.
Namely 
\begin{equation}
c=\left(x_{1}(\frac{n-1}{2}\,\frac{n+3}{2})\right)\cdot\left((\frac{n-1}{2}\,\frac{n+3}{2})(2\,n)\right)\cdot\left((2\,n)x_{2}\right).\label{eq: 3 mod 4 as 3 involutions}
\end{equation}

Note that this analysis has ignored single cycles of length 3. This,
as we shall see below, is because such cycles must be considered within
the context of the whole element $g$.

Let us now return to our original element $g=c_{1}\dots c_{k}$. As
the $c_{i}$ commute, we assume that cycles of lengths congruent to
$3$ mod $4$ appear at the end of the decomposition, and are ordered
by increasing length. In particular, $|c_{k}|=3$ if and only if all
cycles of length 3 mod 4 are in fact of length 3 (and at least one
of these exists).

Assume for now that $n_{3}$, the number of cycles of length congruent
to $3$ mod $4$, is even. Then by the above, each cycle or pair of
cycles in $g$ can be decomposed as products of two involutions in
$A_{m}$. These decompositions have the form $c_{i}=t_{1_{i}}t_{2_{i}}$
or $c_{i}c_{i+1}=t_{1_{i}}t_{2_{i}}$, where $t_{i_{j}}\in A_{m}$
denote the involutions. These involutions then commute where necessary
to yield
\[
g=(t_{1_{1}}t_{2_{1}})\dots(t_{1_{p}}t_{2_{p}})=(t_{1_{1}}\dots t_{1_{p}})\cdot(t_{2_{1}}\dots t_{2_{p}}),
\]
 where $p=n_{1}+(n_{0}+n_{2})/2+n_{3}/2.$ Evidently $(t_{1_{1}}\dots t_{1_{p}})$
and $(t_{2_{1}}\dots t_{2_{p}})$ are involutions in $A_{m}$ and
hence $g$ is strongly real. 

Now consider the remaining case, where $n_{3}$ is odd. Firstly note
that $gc_{k}^{-1}$ is strongly real by the above method. In particular
there exist involutions $t_{i_{j}}\in A_{m}$ such that
\[
gc_{k}^{-1}=(t_{1_{1}}\dots t_{1_{q}})\cdot(t_{2_{1}}\dots t_{2_{q}}),
\]
 where $q=n_{1}+(n_{0}+n_{2})/2+(n_{3}-1)/2.$ If $|c_{k}|>3$, then
$c_{k}$ can be written as a product of three involutions as shown
by (\ref{eq: 3 mod 4 as 3 involutions}). Denote this decomposition
$c_{k}=s_{1}s_{2}s_{3}$, for involutions $s_{i}\in A_{m}$. It then
follows that $g=(t_{1_{1}}\dots t_{1_{q}}s_{1})\cdot(t_{2_{1}}\dots t_{2_{q}}s_{2})\cdot s_{3}$
and iw$(g)\leq3$. \\
This leaves the case where $|c_{k}|=3$. Write $c_{k}=(1\text{\,2\,3)=}(1\,2)(1\,3)$.
If $g=c_{k}$ then $c_{k}=\left((1\,2)(4\,5)\right)\cdot\left((4\,5)(1\,3)\right)$
and $g$ is strongly real. Suppose instead that $gc_{k}^{-1}\neq1$.
Then one of the elements $(t_{1_{1}}\dots t_{1_{q}})$ and $(t_{2_{1}}\dots t_{2_{q}})$
is non-trivial, say the latter. Let $(i\,j)$ be a transposition in
its cycle decomposition. This commutes with $c_{k}$ and hence we
can reorder the transpositions to give
\[
g=((t_{1_{1}}\dots t_{1_{q}})\cdot(t_{2_{1}}\dots t_{2_{q}}(i\,j)(1\,2))\cdot((i\,j)(1\,3)).
\]

This is a product of three involutions in $A_{m}$.
\end{proof}
\smallskip{}
It is worth noting the following corollary of the proof above.
\begin{cor}
Let $g\in A_{m}$ for $m\geq5.$ Suppose that $g$ has at least 2
fixed points or an even number of cycles with length $3\text{ mod }4$.
Then $g$ can be written as the product of 2 involutions in $A_{m}$.
\end{cor}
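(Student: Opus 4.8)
The plan is to revisit the proof of Theorem~\ref{An width at most 3} and pin down exactly what forced the use of a third involution there. For $g=c_{1}\cdots c_{k}\in A_{m}$, that argument writes every cycle of length $\equiv 1\pmod 4$ on its own, every pair of even-length cycles, and every pair of cycles of length $\equiv 3\pmod 4$ as a product of two involutions \emph{lying in} $A_{m}$; and since $g$ is even, the even-length cycles always pair off. Hence the sole obstruction to $g$ being strongly real is a single unpaired cycle of length $\equiv 3\pmod 4$, which occurs precisely when $n_{3}$ is odd. In particular, if $n_{3}$ is even the theorem's proof already exhibits $g$ as a product of two involutions in $A_{m}$, settling the second hypothesis of the corollary.

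So suppose instead that $n_{3}$ is odd but $g$ has two fixed points $a\neq b$ (so $a,b\notin\operatorname{supp}(g)$). First I would apply the theorem's decomposition to every cycle of $g$ except for one distinguished cycle $c$ of length $n\equiv 3\pmod 4$, pairing off the remaining $n_{3}-1$ cycles of that length; this yields a collection of involutions in $A_{m}$ whose product is $gc^{-1}$. For $c=(1\,2\cdots n)$ itself, let $x_{1},x_{2}$ be the two reversal involutions with $c=x_{1}x_{2}$ used in the theorem; since $n\equiv 3\pmod 4$ these lie in $S_{n}\setminus A_{n}$. As $(a\,b)$ is disjoint from $\operatorname{supp}(c)$ it commutes with both $x_{1}$ and $x_{2}$, so
\[
c=x_{1}x_{2}=\bigl(x_{1}(a\,b)\bigr)\cdot\bigl((a\,b)x_{2}\bigr),
\]
and each factor is an involution which is also an even permutation (an odd permutation times a transposition), hence lies in $A_{m}$. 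Thus $c$ too is a product of two involutions in $A_{m}$, and collecting all the pieces and commuting involutions past one another exactly as in the theorem --- using that $(a\,b)$ commutes with every permutation supported off $\operatorname{supp}(c)$ --- reassembles $g$ as a product of two involutions in $A_{m}$.

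I expect the only delicate point to be the parity bookkeeping: one must check that, with $n_{3}$ odd, exactly one cycle of length $\equiv 3\pmod 4$ is left unpaired no matter how the rest are grouped, so that a single transposition --- equivalently, just two fixed points --- always suffices to repair the parity defect that forced the three-involution decomposition \eqref{eq: 3 mod 4 as 3 involutions} in the original proof. Everything else is a direct specialisation of the argument for Theorem~\ref{An width at most 3}.
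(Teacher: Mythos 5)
Your proposal is correct and follows essentially the same route as the paper: the paper likewise reduces to the single unpaired cycle $c$ of length $\equiv 3 \pmod 4$ and writes $c=\bigl(x_{1}(n{+}1\ n{+}2)\bigr)\cdot\bigl((n{+}1\ n{+}2)x_{2}\bigr)$, using a transposition on the two fixed points to make both reversal factors even, exactly as you do with $(a\,b)$. The even-$n_{3}$ case is, as you note, already contained in the proof of Theorem \ref{An width at most 3}.
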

\begin{proof}
From the proof of Theorem \ref{An width at most 3} it is sufficient
to show that if $c$ is a cycle of length $n\equiv3\text{ mod }4$
such that $c\in A_{n+k}$ for $k\geq2$, then $c=t_{1}t_{2}$ for
involutions $t_{i}\in A_{n+k}.$ \\
Without loss of generality, take $c=(1\,2\dots n)$ with fixed points
$n+1$ and $n+2.$ We can write $c$ as a product of two involutions
as follows
\begin{eqnarray*}
(1\,2\,\dots n) & = & \left((1\,n)(2\,n-1)\dots(\frac{n-1}{2}\,\frac{n+3}{2})(n+1\,n+2)\right)\\
 &  & \cdot\left((n+1\,n+2)(2\,n)(3\,n-1)\dots(\frac{n+1}{2}\,\frac{n+3}{2})\right).
\end{eqnarray*}
\end{proof}

\subsection{Sporadic Groups}

In this section we prove the following result.
\begin{thm}
\label{thm:sporadic width at most 3}Let G be a sporadic finite simple
group. If $G\in\{J_{1},\,J_{2}\}$ then G has involution width 2,
otherwise G has width 3.
\end{thm}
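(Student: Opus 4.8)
The plan is to establish Theorem~\ref{thm:sporadic width at most 3} by a finite case-by-case verification, since there are only $26$ sporadic groups and each has a well-understood character table available in the Atlas (and in GAP's character table library). First I would recall the standard character-theoretic criterion: for $g\in G$ and conjugacy classes $C_1,\dots,C_k$ with representatives $x_1,\dots,x_k$, the number of ways to write $g=g_1\cdots g_k$ with $g_i\in C_i$ equals
\[
\frac{|C_1|\cdots|C_k|}{|G|}\sum_{\chi\in\mathrm{Irr}(G)}\frac{\chi(x_1)\cdots\chi(x_k)\overline{\chi(g)}}{\chi(1)^{k-1}},
\]
so to prove $\mathrm{iw}(G)\le 3$ it suffices to exhibit (one or more) involution classes $C$ such that for every $g\in G\setminus\{1\}$ the corresponding sum with $k=3$ and $C_1=C_2=C_3=C$ is strictly positive; for the two groups $J_1,J_2$ one does the same with $k=2$, which is already covered by Theorem~\ref{Thm: Strongly real G}, and one checks these really do require two involutions since they are non-abelian. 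Dually, one must also confirm $\mathrm{iw}(G)\neq 2$ for the remaining $24$ groups, i.e.\ that some element is \emph{not} strongly real; by Theorem~\ref{Thm: Strongly real G} none of $M_{11},\dots,\mathbb{M}$ other than $J_1,J_2$ is strongly real, so this is immediate, and the lower bound $\mathrm{iw}(G)\ge 3$ follows (the groups are generated by involutions, being simple, so the width is at least the strong-reality threshold).

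The key steps, in order, are: (i) reduce to the upper bound $\mathrm{iw}(G)\le 3$, the lower bound being handed to us by Theorem~\ref{Thm: Strongly real G}; (ii) for each sporadic $G$, pick a suitable involution class $t^G$ — a good heuristic is to take the class on which the permutation character of a small primitive action, or equivalently the sum $\sum_\chi \chi(t)^2/\chi(1)$-type quantity, is largest, as this maximizes the chance of covering all of $G$; (iii) verify positivity of the class-multiplication sum above for every non-identity class of $G$, which is a finite computation in the (rational) character table; (iv) for the very large groups ($\mathbb{M}$, $\mathbb{B}$, etc.) where a single involution class may fail to cover a handful of classes, allow the three involutions to come from different classes, or alternatively handle the exceptional elements $g$ by writing $g = h\cdot t$ with $t$ an involution and $h$ of width $2$, again checked via character sums. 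Throughout, one cites the Atlas for the structure of involution centralizers and the list of involution classes, and GAP for the numerical evaluation of the sums.

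I expect the main obstacle to be purely computational rather than conceptual: for the largest sporadic groups the character tables are enormous (the Monster has $194$ classes), and one must be careful that the class-multiplication coefficient is genuinely positive for \emph{all} non-trivial $g$, including unipotent-like elements of small order and elements in classes of large centralizer where the dominant non-principal character contributions can have unfavourable signs. A secondary subtlety is the rationality/algebraic-conjugacy bookkeeping: some sporadic groups have irrational character values (and pairs of algebraically conjugate classes), so one should either work with the sum over a full Galois orbit or note that the relevant involution classes and their multiplication constants are rational. Where a uniform single-class choice does not suffice, the fallback of mixing involution classes or peeling off one involution and invoking strong reality of the cofactor will close the remaining finitely many cases; none of this requires new theory, only care with the Atlas data.
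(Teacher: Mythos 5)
Your proposal is correct and follows essentially the same route as the paper: the lower bounds come from the classification of strongly real simple groups, and the upper bound $\mathrm{iw}(G)\le 3$ is verified by computing, in GAP from the Atlas character tables, the structure constants $\eta(C,C,C,g^{G})$ for a fixed involution class $C$ (the paper uses $2A$, and $2B$ for $He$, $Co_{2}$, $Fi_{22}$, $Fi_{23}$, $BM$), checking nonvanishing on the non-strongly-real classes. The only difference is cosmetic: the paper restricts the check to the non-strongly-real classes listed by Suleiman rather than all nontrivial classes, and never needs your fallback of mixing involution classes.
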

It was found by Suleiman (\cite{Ibrahim Suleiman 2008}, see also
Theorem \ref{Thm: Strongly real G}) that only two sporadic groups,
namely $J_{1}$ and $J_{2}$ are strongly real. Furthermore, \cite{Ibrahim Suleiman 2008}
lists the non-strongly real classes of every other sporadic group.
We compute the involution width of these classes by calculating the
structure constants, which are defined as follows.
\begin{defn}
Let $G$ be a finite group and let $C_{i}$, $1\leq i\leq m$ be conjugacy
classes in $G$. For $g\in G$, define the \textit{structure constant}
$\eta(C_{1},\dots,C_{m},g^{G})$ to be the number of $m$-tuples $(g_{1},\dots,g_{m})\in C_{1}\times\dots\times C_{m}$
such that $g_{1}\dots g_{m}=g$.
\end{defn}
The structure constants can be computed using the following well known
formula.
\begin{thm}
\label{thm:Burnsides formula}(\cite{Arad and Herzog}, 1.10). Let
$G$ be a finite group and let $C_{i}$ , $1\leq i\leq m$ be conjugacy
classes in $G$. For $g\in G$, 
\[
\eta(C_{1},\dots,C_{m},g^{G})=\frac{|G|}{|C_{G}(g_{1})|\dots|C_{G}(g_{m})|}\sum_{\chi}\frac{\chi(g_{1})\dots\chi(g_{m})\chi(g^{-1})}{\chi(1)^{m-1}},
\]
 where the sum is over all the irreducible characters $\chi$ of $G$.
\end{thm}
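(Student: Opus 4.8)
This is a classical fact (see \cite{Arad and Herzog}), and the natural way to prove it is to compute inside the complex group algebra $\mathbb{C}[G]$, using two standard ingredients. The first is the decomposition of the regular character, $\rho_{\mathrm{reg}}=\sum_{\chi}\chi(1)\chi$, which gives the orthogonality identity: $\tfrac{1}{|G|}\sum_{\chi}\chi(1)\chi(x)$ equals $1$ if $x=1$ and $0$ otherwise. The second is that for a conjugacy class $C$ with representative $c$, the class sum $\widehat{C}:=\sum_{x\in C}x$ is central in $\mathbb{C}[G]$, so any irreducible representation $\rho$ affording $\chi$ sends it to a scalar matrix $\rho(\widehat{C})=\tfrac{|C|\chi(c)}{\chi(1)}\mathrm{id}$, the scalar being read off by taking traces. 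This second ingredient is itself well known and may be quoted, or proved in two lines via Schur's Lemma.

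Granting these, the proof is a single computation. Fix $g\in G$ and representatives $g_i\in C_i$. Writing $[\,P\,]$ for $1$ if the statement $P$ holds and $0$ otherwise,
\begin{align*}
\eta(C_1,\dots,C_m,g^G)&=\sum_{x_1\in C_1}\cdots\sum_{x_m\in C_m}\bigl[\,x_1\cdots x_m g^{-1}=1\,\bigr]\\
&=\frac{1}{|G|}\sum_{\chi}\chi(1)\sum_{x_1\in C_1}\cdots\sum_{x_m\in C_m}\chi(x_1\cdots x_m g^{-1}).
\end{align*}
For each irreducible $\chi$, afforded by $\rho$, the inner multiple sum is the trace of $\rho(\widehat{C_1})\cdots\rho(\widehat{C_m})\rho(g^{-1})$, because $\rho$ is an algebra homomorphism on $\mathbb{C}[G]$; by centrality of the $\widehat{C_i}$ this equals $\bigl(\prod_{i}\tfrac{|C_i|\chi(g_i)}{\chi(1)}\bigr)\chi(g^{-1})$. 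Substituting, the factors of $\chi(1)$ collapse to $\chi(1)^{m-1}$ in the denominator, and rewriting the resulting constant via $|C_i|=|G|/|C_G(g_i)|$ gives exactly the stated formula (with $\chi(g^{-1})=\overline{\chi(g)}$).

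There is no genuine obstacle here; the points needing care are purely bookkeeping — tracking the power of $\chi(1)$ through the collapse, and the passages between $\chi(g^{-1})$ and $\overline{\chi(g)}$ and between $|C_i|$ and centralizer orders. If one wishes to avoid group-algebra language, the same result follows by induction on $m$ from the case $m=2$: a tuple in $C_1\times\cdots\times C_m$ with product $g$ corresponds to a tuple in $C_1\times\cdots\times C_{m-1}$ with product some $y\in G$ together with the element $g_m=y^{-1}g$, so imposing $g_m\in C_m$ yields the recursion $\eta(C_1,\dots,C_m,g^G)=\sum_{K}\eta(C_1,\dots,C_{m-1},K)\,\eta(K,C_m,g^G)$, the sum running over all conjugacy classes $K$; this sum is collapsed at each step by the first orthogonality relation $\sum_{k\in G}\chi(k)\overline{\chi'(k)}=|G|\,\delta_{\chi\chi'}$, and the base case $m=2$ is treated as above (or by expanding the class indicator $\mathbf{1}_{C_2}$ in the basis of irreducible characters). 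As the statement is entirely standard, within the paper it suffices to cite \cite{Arad and Herzog}.
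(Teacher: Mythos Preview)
Your proof is correct and is the standard argument for this classical formula. Note, however, that the paper does not actually prove this statement: it is simply quoted as a known result with a citation to \cite{Arad and Herzog}, so there is no ``paper's own proof'' to compare against. Your closing remark that it suffices to cite \cite{Arad and Herzog} is exactly what the paper does.
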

Naturally $\eta(C_{1},\dots,C_{m},g^{G})$ is a non-negative integer.
However for our purposes we will only be concerned with showing that
$\eta(C_{1},\dots,C_{m},g^{G})\neq0$. Therefore to simplify proceedings,
we will predominantly compute 
\[
\kappa(C_{1},\dots,C_{m},g^{G})=\sum_{\chi}\frac{\chi(g_{1})\dots\chi(g_{m})\chi(g^{-1})}{\chi(1)^{m-1}},
\]
 which we call the \textit{normalised structure constant}.\\
\\
\textbf{Proof of Theorem }\ref{thm:sporadic width at most 3}:

\begin{proof}[\unskip\nopunct]
The full character tables are known for all sporadic groups (\cite{the atlas}) and hence the structure constants can be calculated explicitly. We use the software package GAP \cite{GAP} and a short function that checks whether $\eta(C_{1},C_{1},C_{1},C_{2})$ is nonzero for $C_{1}$, a class of involutions and $C_{2}$, a chosen non-strongly real class. For almost all the sporadic groups, we find that every such conjugacy class $C_{2}$ is contained in the cube of the $2A$ class. For the exceptions $He$, $Co_{2}$, $Fi_{22}$, $Fi_{23}$ and $BM$, we find instead that each class  $C_{2} \subset (2B)^{3}$. 
\end{proof}

\section{Non-Unitary Classical Groups\label{sec:Simple-Groups-of pre lime}}

\subsection{Preliminary Material\label{subsec:Preliminary-Material for general Lie Type Groups}}

 The remainder of this paper will be devoted to proving Theorem~\ref{mainthm}
for the simple groups of Lie type. Throughout this section, $\boldsymbol{G}$
denotes a simple linear algebraic group over an algebraically closed
field of characteristic $p>0$. Let $F\,:\,\boldsymbol{G}\rightarrow\boldsymbol{G}$
be a Frobenius endomorphism. Then the set of fixed points $G:=\boldsymbol{G}^{F}$
is the associated finite group of Lie type. Usually we take $\boldsymbol{G}$
to be simply connected, in which case, with a small number of exceptions,
$G$ is quasisimple and $\overline{G}=G/Z(G)$ is the simple group
of interest.

The general procedure for proving Theorem~\ref{mainthm} for a simple
group of Lie type $\overline{G}$ is as follows. We aim to pick regular
semisimple elements $x$ and $y$ in particular classes of $G$, such
that 
\begin{enumerate}
\item The projections $\overline{x},\overline{y}$ are strongly real in
the simple group $\overline{G}$.
\item $G\backslash Z(G)\subset x^{G}\cdot y^{G}.$
\end{enumerate}
From (2) it will follow that $\overline{G}\backslash\{1\}\subset\overline{x}^{\overline{G}}\cdot\overline{y}^{\overline{G}}$
and hence iw$(\overline{G})\leq\text{iw}(\overline{x})+\text{iw}(\overline{y})\leq4$
by (1).

The proof that $G\backslash Z(G)\subseteq x^{G}\cdot y^{G}$ involves
calculating structure constants using Theorem \ref{thm:Burnsides formula}.
However unlike the sporadic case, complete character tables are currently
unavailable for most groups of Lie type. We therefore rely substantially
on estimates of character values. In particular, technical details
of the characters of unitary groups form a substantial part of this
work (see Sec. \ref{sec:Unitary-Groups}). 

Before proceeding, we briefly recall some results on ordinary representations
of finite groups of Lie type (see Carter \cite{Carter rep theory}
and Digne and Michel \cite{Digne and Michel textbook}).

Let $\boldsymbol{T}$ be an an $F$-stable maximal torus in $\boldsymbol{G}$
and $\theta\in$Irr$(\boldsymbol{T}^{F})$. One can define a corresponding
Deligne-Lusztig character $R_{T}^{G}(\theta)$ of the fixed point
group $G=\boldsymbol{G}^{F}$ such that $R_{T}^{G}(\theta)\in\mathbb{Z}[\text{Irr}(G)]$
(\cite{Carter rep theory}, 7.2). Moreover, for $\chi\in\text{Irr}(G)$,
there is a pair $(\boldsymbol{T},\theta$) such that $\chi$ occurs
in the decomposition of $R_{T}^{G}(\theta)$.

We shall work in the setting of Lusztig series of characters. For
this let $\boldsymbol{G}^{*}$ be the reductive group in duality with
$\boldsymbol{G}$, with corresponding Frobenius endomorphism $F^{*}:\,\boldsymbol{G}^{*}\rightarrow\boldsymbol{G}^{*}$.
The dual finite group of Lie type is then the fixed point set $G^{*}:=\boldsymbol{G}^{*F^{*}}$
(see (\cite{Digne and Michel textbook}, 13.10)). There is a bijective
correspondence between $G$-conjugacy classes of pairs $(T,\theta)$
as defined above, and $G^{*}$-classes of pairs $(T^{*},s)$, where
$T^{*}\leq G^{*}$ is a maximal torus dual to $T$ and $s\in T^{*}$
is semisimple (\cite{Digne and Michel textbook}, 13.12). If $(T^{*},s)$
corresponds to $(T,\theta)$ in this manner, we re-label $R_{T}^{G}(\theta)$
as $R_{T^{*}}^{G^{*}}(s)$. The Lusztig series $\mathcal{E}(G,(s))$
is then defined to be the set of irreducible constituents of the $R_{T^{*}}^{G^{*}}(s)$
for $T^{*}$ running over maximal tori of $G^{*}$ containing $s$.
We note the following lemma. 
\begin{lem}
(\cite{Guralnick and Malle}, 3.2).\label{lem:G and M lemma on the existence of a character in a non trivial Lusztig series}
Let $g\in G$ be semisimple and $\chi\in$Irr$(G)$ with $\chi(g)\neq0$.
Then there exists a maximal torus $T\ni g$ of $G$ and $s\in T^{*}\leq G^{*}$
such that $\chi\in\mathcal{E}(G^{F},(s))$.
\end{lem}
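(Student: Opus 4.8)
The plan is to recover the pair $(T,\theta)$ associated to $\chi$ from the Deligne–Lusztig theory and then move it inside the centralizer of $g$. First I would invoke the fact recalled above that every $\chi\in\mathrm{Irr}(G)$ occurs in some $R_{T}^{G}(\theta)$; dualizing, $\chi\in\mathcal{E}(G,(s))$ for some semisimple $s\in G^{*}$ lying in a torus $T^{*}$ dual to $T$. The content of the lemma is that when $\chi(g)\neq 0$ for a semisimple $g$, this torus $T$ can be chosen to contain $g$. The mechanism is the character formula for Deligne–Lusztig characters evaluated at a semisimple element: for semisimple $g\in G$ one has
\[
R_{T}^{G}(\theta)(g)=\frac{1}{|C_{\boldsymbol G}(g)^{F}|_{p'}}\sum_{\{h\in G\,:\,g\in {}^{h}\!\boldsymbol T\}}|{}^{h}\!\boldsymbol T|^{-1}\,\cdots
\]
—more precisely, the value is a sum over $G$-conjugates of $\boldsymbol T$ that contain $g$, of terms $\pm\,{}^{h}\theta(g)$ weighted by the relevant Green-function-type constants (which at a semisimple element reduce to signs and indices). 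The key structural input is that this sum is supported on those conjugates ${}^{h}\boldsymbol T$ with $g\in{}^{h}\boldsymbol T$; equivalently, only tori meeting $g$ contribute.

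With that formula in hand, the argument runs as follows. Suppose, for contradiction, that $\chi(g)\neq 0$ but $\chi$ lies in no Lusztig series $\mathcal{E}(G,(s))$ with $s$ contained in a torus dual to one containing $g$. Expand $\chi$ in terms of Deligne–Lusztig characters: by the orthogonality and the fact that the $R_{T}^{G}(\theta)$ span $\mathbb{Q}\otimes\mathbb{Z}[\mathrm{Irr}(G)]$, we may write $\chi$ (or rather its class function restricted to semisimple classes) as a rational combination $\sum c_{T,\theta}R_{T}^{G}(\theta)$ where every occurring $(T,\theta)$ has $\chi$ in $\mathcal{E}(G,(s))$, i.e. every occurring $s$ is $G^{*}$-conjugate into $T^{*}$; under our contradiction hypothesis none of these tori $\boldsymbol T$ is conjugate to one containing $g$. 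But then, by the support statement above, every $R_{T}^{G}(\theta)(g)$ in the sum would vanish — each such torus, being of a $G^{*}$-class whose dual does not admit a conjugate through $g$, has no $G$-conjugate containing $g$ — forcing $\chi(g)=0$, a contradiction. Hence some occurring $(T,\theta)$ does have a conjugate of $\boldsymbol T$ containing $g$; replacing $\boldsymbol T$ by that conjugate (and $\theta$, $s$ accordingly) gives the desired $T\ni g$ and $s\in T^{*}\leq G^{*}$ with $\chi\in\mathcal{E}(G,(s))$.

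The main obstacle is making precise the claim that $R_{T}^{G}(\theta)(g)=0$ whenever no $G$-conjugate of $\boldsymbol T$ contains the semisimple element $g$, and linking "$\chi\in\mathcal{E}(G,(s))$" cleanly to "$(T,\theta)$ occurs in a DL-character having $\chi$ as a constituent" with $s$ living in the dual torus $T^{*}$. The first point is exactly the reduction of the general Deligne–Lusztig character formula (Carter \cite{Carter rep theory}, Thm.~7.2.8, or Digne–Michel \cite{Digne and Michel textbook}) at a semisimple element, where the two-variable Green function $Q_{\boldsymbol T}^{\boldsymbol G}$ is evaluated at $(g_{u},1)=(1,1)$ on the relevant centralizer and the sum collapses to tori through $g$; I would quote this rather than reprove it. The second point is the dictionary between the geometric Lusztig series and the $(T^{*},s)$-parametrization (Digne–Michel \cite{Digne and Michel textbook}, 13.12 and the surrounding material), already recalled in the excerpt. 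Given these two inputs the contradiction argument is short; the only care needed is that the expansion of $\chi$ into DL-characters only involves series through the fixed $s$ — which is precisely the statement that $\langle\chi,R_{T}^{G}(\theta)\rangle=0$ unless $\chi\in\mathcal{E}(G,(s))$, a standard consequence of the disjointness of Lusztig series.
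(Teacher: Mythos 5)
The paper offers no proof of this lemma --- it is quoted directly from \cite{Guralnick and Malle} --- so your attempt stands or falls on its own, and it has a genuine gap at the decisive step. The claim that the Deligne--Lusztig virtual characters $R_{T}^{G}(\theta)$ span $\mathbb{Q}\otimes\mathbb{Z}[\mathrm{Irr}(G)]$ is false: their span is the proper subspace of \emph{uniform} class functions, and an irreducible character is in general not uniform (cuspidal unipotent characters are standard examples). Your parenthetical retreat --- that at least the restriction of $\chi$ to semisimple classes agrees with such a combination --- is exactly the statement your argument needs, but it does not follow from orthogonality together with the support property of $R_{T}^{G}(\theta)$ at semisimple elements; it is (equivalent to) the theorem of Deligne and Lusztig that the characteristic function of the conjugacy class of a semisimple element $g$ is uniform, indeed a linear combination of the $R_{T}^{G}(\theta)$ for maximal tori $T$ containing $g$ (see \cite{Carter rep theory}, Sec.~7.5, or \cite{Digne and Michel textbook}). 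Without invoking this, nothing forces $\chi(g)$ to be computable from Deligne--Lusztig characters at all, and your contradiction argument cannot get started.

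Once that input is quoted, your remaining steps are sound, and the proof collapses to the standard one behind the citation: since the characteristic function of $g^{G}$ is a combination of $R_{T}^{G}(\theta)$ with $g\in T$, the hypothesis $\chi(g)\neq0$ forces $\langle\chi,R_{T}^{G}(\theta)\rangle\neq0$ for some such pair; dualizing $(T,\theta)\leftrightarrow(T^{*},s)$ and using the definition and disjointness of Lusztig series gives $\chi\in\mathcal{E}(G,(s))$ with $g\in T$ and $s\in T^{*}$, with no need for the detour through the vanishing of $R_{T}^{G}(\theta)(g)$ for tori missing $g$, nor for an argument by contradiction. Your support observation and your use of disjointness are correct as far as they go; the missing ingredient is precisely the uniformity of semisimple class functions, which is the heart of the matter.
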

Of the characters of finite groups of Lie type, the best understood
are the so-called unipotent characters. These are the elements of
the Lusztig series $\mathcal{E}(G,(1))$. 
\begin{prop}
(Jordan decomposition of characters, (\cite{Digne and Michel textbook},
13.23)).\label{prop:(Jordan-decomposition-of characters and degree formula}
For any semisimple element $s\in G^{*}$, there is a bijection $\psi_{s}$
from $\mathcal{E}(G,(s))$ to $\mathcal{E}(C_{\boldsymbol{G}^{*}}(s)^{F^{*}},(1))$.
The degree of any character $\chi\in\mathcal{E}(G,(s))$ is given
by the formula
\[
\chi(1)=|G^{*}:C_{\boldsymbol{G}}(s)^{F^{*}}|_{p'}\cdot(\psi_{s}(\chi))(1).
\]
\end{prop}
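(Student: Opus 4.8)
The plan is to deduce this from Lusztig's classification of the irreducible characters of finite reductive groups; a complete proof along these lines is \cite{Digne and Michel textbook}, 13.23, and I would follow that route. There are two separate tasks: producing the bijection $\psi_{s}$, and then extracting the degree formula from it. I expect the first to be essentially all of the work and the second to be bookkeeping.

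For the bijection, the first step is to reduce to the case of connected centre. One chooses a regular embedding $\boldsymbol{G}\hookrightarrow\widetilde{\boldsymbol{G}}$ with the same derived subgroup and $Z(\widetilde{\boldsymbol{G}})$ connected, so that centralisers of semisimple elements in $\widetilde{\boldsymbol{G}}^{*}$ are connected reductive and the component-group complications disappear; one proves the statement for $\widetilde{G}=\widetilde{\boldsymbol{G}}^{\widetilde F}$ and then descends to $G$ by Clifford theory, since the Lusztig series of $G$ arise as the sets of irreducible constituents of restrictions of characters of $\widetilde G$, and the index factors can be carried through this restriction. In the connected-centre case, write $\boldsymbol{C}=C_{\boldsymbol{G}^{*}}(s)$, a connected reductive group with Frobenius $F^{*}$. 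For each $F^{*}$-stable maximal torus $T^{*}\le\boldsymbol{C}$ containing $s$, the pair $(T^{*},s)$ corresponds to a pair $(T,\theta)$ as in the excerpt, and one wants $\psi_{s}$ to satisfy $\langle\chi,R_{T}^{G}(\theta)\rangle=\langle\psi_{s}(\chi),R_{T^{*}}^{\boldsymbol{C}^{F^{*}}}(1)\rangle$ for every such $(T,\theta)$. The existence of such a bijection is the heart of the matter: it comes from matching Lusztig's parametrisation of the series $\mathcal{E}(G,(s))$ against his parametrisation of the unipotent characters $\mathcal{E}(\boldsymbol{C}^{F^{*}},(1))$ --- the two sets carry the same family data and the same multiplicities in the corresponding Deligne-Lusztig characters --- and the only route to this in general goes through character sheaves and the geometric theory. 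This is the step I expect to be the main obstacle.

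Granting the multiplicity-preserving bijection, the degree formula is a short calculation. By the Deligne-Lusztig degree formula (\cite{Carter rep theory}) one has $R_{T}^{G}(\theta)(1)=\varepsilon_{\boldsymbol{G}}\varepsilon_{\boldsymbol{T}}\,|G^{F}|_{p'}/|T^{F}|$ and $R_{T^{*}}^{\boldsymbol{C}^{F^{*}}}(1)(1)=\varepsilon_{\boldsymbol{C}}\varepsilon_{\boldsymbol{T}^{*}}\,|\boldsymbol{C}^{F^{*}}|_{p'}/|T^{*F^{*}}|$; dual tori have the same number of rational points, so $|T^{F}|=|T^{*F^{*}}|$, and $|G^{F}|=|G^{*F^{*}}|$, whence a short sign check gives $R_{T}^{G}(\theta)(1)=\pm\,|G^{*}:C_{\boldsymbol{G}^{*}}(s)^{F^{*}}|_{p'}\cdot R_{T^{*}}^{\boldsymbol{C}^{F^{*}}}(1)(1)$ with a sign that does not depend on the torus. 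Expanding $\chi$ as an integral combination of the $R_{T}^{G}(\theta)$ and $\psi_{s}(\chi)$ as the same combination of the $R_{T^{*}}^{\boldsymbol{C}^{F^{*}}}(1)$, and using that $|G^{*}:C_{\boldsymbol{G}^{*}}(s)^{F^{*}}|_{p'}$ is a positive integer while $\chi(1)$ and $(\psi_{s}(\chi))(1)$ are positive, the sign is forced to be $+1$, and the stated formula follows.
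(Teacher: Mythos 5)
The paper offers no proof of this proposition: it is quoted verbatim as a known result from Digne--Michel (13.23), so there is nothing internal to compare against, and your plan of ``follow the route of that reference'' is exactly what the paper itself does. Your outline of that route (regular embedding $\boldsymbol{G}\hookrightarrow\widetilde{\boldsymbol{G}}$ to reduce to connected centre, Lusztig's multiplicity-preserving bijection between $\mathcal{E}(G,(s))$ and the unipotent characters of the dual centraliser as the genuinely hard input, then the degree formula by comparing Deligne--Lusztig degrees) is the standard one and correctly locates where the real content lies. One technical slip in your final step: an irreducible character $\chi$ is in general \emph{not} an integral (or even rational) linear combination of the $R_{T}^{G}(\theta)$ --- non-uniform characters exist, e.g.\ cuspidal unipotent characters --- so ``expanding $\chi$'' in that basis is not available. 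The repair is standard: the regular character of $G$ is a uniform function, so $\chi(1)=\langle\chi,\mathrm{reg}_{G}\rangle$ is determined by the uniform projection of $\chi$, i.e.\ by the multiplicities $\langle\chi,R_{T}^{G}(\theta)\rangle$ alone; since $\psi_{s}$ preserves these multiplicities (up to the global sign $\varepsilon_{\boldsymbol{G}}\varepsilon_{\boldsymbol{C}}$), the degree comparison goes through exactly as you intend, with the sign fixed to $+1$ by positivity of degrees as you say.
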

Note that some caution is required when the centre of $\boldsymbol{G}$
is disconnected. If the centre of $\boldsymbol{G}$ is connected,
then the group $C_{\boldsymbol{G}^{*}}(s)$ is connected reductive
for any semisimple $s\in\boldsymbol{G}^{*}$. If however the centre
of $\boldsymbol{G}$ is disconnected then the definition of the unipotent
characters $\mathcal{E}(C_{\boldsymbol{G}^{*}}(s)^{F^{*}},1)$ needs
to be generalised to disconnected groups (\cite{Digne and Michel textbook},
13.23).

Recall that for a prime $p$ and a character $\chi\in$Irr$(G)$,
we say that $\chi$ has $p$-defect zero if $p$ does not divide $\frac{|G|}{\chi(1)}$.
The study of $p$-defect zero characters plays an important role in
this work and we will often use the following result of Brauer.
\begin{thm}
\label{thm:Brauers theorem on prime defect}((\cite{Isaacs Character Theory of finite groups},
8.17). Let $p$ be a prime and $\chi\in\text{Irr}(G)$ a character
of $p$-defect zero. Then $\chi(g)=0$ whenever $p\mid o(g)$.
\end{thm}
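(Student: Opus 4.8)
The plan is to derive this classical theorem of Brauer from two facts, the second of which is the clean, self-contained core. (For the paper it would of course suffice to quote \cite{Isaacs Character Theory of finite groups}, but here is the argument I would reconstruct.) Write $|G|=p^{a}m$ with $p\nmid m$. Since $\chi(1)$ divides $|G|$, the hypothesis $p\nmid|G|/\chi(1)$ is equivalent to $\chi(1)_{p}=p^{a}=|G|_{p}$, i.e.\ the full Sylow $p$-part of $|G|$ divides $\chi(1)$. Fix a large enough finite extension $K/\mathbb{Q}_{p}$ with valuation ring $\mathfrak{O}$ and residue field $k=\mathfrak{O}/\mathfrak{p}$ of characteristic $p$, and let $M$ be an $\mathfrak{O}G$-lattice affording $\chi$. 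The two facts are: (i) $M$ is a projective $\mathfrak{O}G$-module; and (ii) the ordinary character of any projective $\mathfrak{O}G$-module vanishes on every $p$-singular element. Together these give $\chi(g)=0$ whenever $p\mid o(g)$. Here (i) amounts to the assertion that $\chi$ lies in a $p$-block of defect zero, a standard fact: the defect-zero hypothesis $|G|_{p}\mid\chi(1)$ is exactly what forces the reduction $k\otimes_{\mathfrak{O}}M$ to be a (simple) projective $kG$-module, and then $M$ itself is projective by lifting of idempotents.

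For (ii), which is the part I would actually write out, let $g$ be $p$-singular and write $g=su=us$ with $s$ its $p'$-part and $u\ne 1$ its $p$-part. Restrict $M$ to the abelian subgroup $\langle g\rangle=\langle s\rangle\times\langle u\rangle$ and, enlarging $\mathfrak{O}$ if needed, decompose it into $s$-eigenspaces $M=\bigoplus_{\lambda}M_{\lambda}$, where $s$ acts on $M_{\lambda}$ by a root of unity $\lambda(s)$. Each $M_{\lambda}$ is an $\mathfrak{O}\langle u\rangle$-module, and it is projective over $\mathfrak{O}\langle u\rangle$, being a direct summand of the restriction of $M$ to $\langle u\rangle$ (which is projective since $M$ is a summand of a free $\mathfrak{O}G$-module and $\mathfrak{O}G$ is free over $\mathfrak{O}\langle u\rangle$); as $\langle u\rangle$ is a $p$-group, $\mathfrak{O}\langle u\rangle$ is local, so $M_{\lambda}$ is in fact free. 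The trace of $u$ on a free $\mathfrak{O}\langle u\rangle$-module is a multiple of the value at $u$ of the regular character of $\langle u\rangle$, which is $0$ because $u\ne 1$. Hence $\chi(g)=\chi(su)=\sum_{\lambda}\lambda(s)\,\mathrm{tr}(u\mid M_{\lambda})=0$.

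There is an alternative that stays entirely inside ordinary character theory, presumably the route of the cited reference: show that the class function $\alpha$ agreeing with $\chi$ on $p$-regular elements and equal to $0$ on $p$-singular elements is a virtual character (via Brauer's characterization of characters). Granting this, a direct computation gives $\langle\alpha,\alpha\rangle=\langle\alpha,\chi\rangle$, so by Cauchy--Schwarz $\langle\alpha,\alpha\rangle^{2}\le\langle\alpha,\alpha\rangle\langle\chi,\chi\rangle=\langle\alpha,\alpha\rangle$; since $\alpha(1)=\chi(1)\ne 0$ we have $\alpha\ne 0$, hence $\langle\alpha,\alpha\rangle=1=\langle\chi,\chi\rangle$, and comparing $\sum_{g\in G}|\chi(g)|^{2}$ with $\sum_{g\ p\text{-regular}}|\chi(g)|^{2}$ forces $\chi(g)=0$ for every $p$-singular $g$. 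Either way the real obstacle is the same, and it is where Brauer's insight lies: turning the arithmetic condition $|G|_{p}\mid\chi(1)$ into a structural statement — projectivity of $M$, respectively virtuality of $\alpha$ — both of which are proved using algebraic integers. The remainder, in each case, is the elementary bookkeeping carried out in the second paragraph.
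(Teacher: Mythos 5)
The paper does not prove this statement at all: it is imported verbatim from Isaacs (8.17) and used as a black box, so there is no in-paper argument to measure yours against; your proposal has to be judged as a reconstruction of the classical proof. The half you actually write out is correct and complete: restricting to $\langle g\rangle=\langle s\rangle\times\langle u\rangle$, decomposing over a suitably enlarged $\mathfrak{O}$ into $s$-eigenspaces (the idempotents of $\mathfrak{O}\langle s\rangle$ exist because $o(s)$ is prime to $p$, and they commute with $u$), noting that each eigenspace is a direct summand of the projective $\mathfrak{O}\langle u\rangle$-module $M|_{\langle u\rangle}$ and hence free over the local ring $\mathfrak{O}\langle u\rangle$, and using that the regular character of $\langle u\rangle$ vanishes at $u\neq1$ -- this correctly proves that the character of any projective lattice vanishes on $p$-singular elements; likewise the Cauchy--Schwarz bookkeeping in your alternative route is right ($\langle\alpha,\alpha\rangle=\langle\alpha,\chi\rangle\leq1$, integrality and $\alpha(1)\neq0$ force $\langle\alpha,\alpha\rangle=1=\langle\chi,\chi\rangle$, whence $\chi=\alpha$). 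But notice, as you yourself admit, that the defect-zero hypothesis is consumed entirely inside the two facts you defer: projectivity of $M$, respectively virtuality of $\alpha$. So as a self-contained proof the core is still quoted rather than proved; that is a reasonable stance for a theorem the paper itself only cites, but do not underestimate that step. The cheap part is that $e_{\chi}=\frac{\chi(1)}{|G|}\sum_{g}\chi(g^{-1})g$ has coefficients $\chi(g^{-1})\cdot\bigl(|G|/\chi(1)\bigr)^{-1}\in\mathfrak{O}$ exactly because $|G|/\chi(1)$ is prime to $p$; passing from this central idempotent to projectivity of the lattice (Serre's theorem on defect-zero reductions, or the fact that $e_{\chi}\mathfrak{O}G$ is a matrix algebra over $\mathfrak{O}$) still requires a genuine argument, and in the character-theoretic route the integrality of $\langle\alpha|_{E},\psi\rangle$ on elementary subgroups $E$ is precisely the same divisibility (via $E=Q\times P$ it amounts to $|P|$ dividing $\langle\chi|_{Q},\mu\rangle$) in different clothing. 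For what it is worth, the source the paper cites proves the theorem essentially by your second sketch, inside the chapter on Brauer's characterization, not via block theory; your first route is the modern modular-representation-theoretic proof. Either way your outline is sound, and the only thing separating it from a complete proof is the deferred arithmetic step you have correctly identified as Brauer's actual insight.
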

We will predominantly apply the above theorem with respect to particular
prime factors of $|G|$ known as primitive prime divisors.
\begin{defn}
\label{def:ppd}Let $q=p^{f}$ for some prime $p$ and let $n\geq2$.
Then a primitive prime divisor, denoted ppd$(q,n)$, is a prime $r$
such that $r|(q^{n}-1)$ but $r\nmid(q^{k}-1)$ for $1\leq k\leq n-1$.
\\
A primitive prime divisor always exists for $n>2$ by Zsigmondy's
Theorem (\cite{Birkhoff for Zsgimondy}, Thm. V) except when $(n,q)=(6,2)$.
\end{defn}
A final result that will be useful in proving step (2) of the procedure
detailed above is the following due to Gow.

\begin{thm}
\label{thm:(Gow-)Theorem for ss elements}(\cite{Gow ss elements results},
2). Let $G$ be a finite simple group of Lie type and let $L_{1}$
and $L_{2}$ be conjugacy classes of $G$ consisting of regular semisimple
elements. Then any non-identity semisimple element of $G$ is expressible
as a product $xy$, where $x\in L_{1}$ and $y\in L_{2}$.
\end{thm}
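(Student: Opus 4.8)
The statement to prove is Theorem~\ref{thm:(Gow-)Theorem for ss elements} (Gow's theorem): if $L_1,L_2$ are conjugacy classes of regular semisimple elements in a finite simple group of Lie type $G$, then every non-identity semisimple element of $G$ lies in $L_1 L_2$.

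The plan is to use the standard character-theoretic criterion via Theorem~\ref{thm:Burnsides formula}: for a target semisimple element $g \neq 1$, it suffices to show the structure constant $\eta(L_1, L_2, g^G)$ is positive, equivalently that $\sum_{\chi \in \mathrm{Irr}(G)} \frac{\chi(x)\chi(y)\overline{\chi(g)}}{\chi(1)} \neq 0$, where $x \in L_1$, $y \in L_2$. The key point is that only a very restricted set of characters can contribute. Since $x$ is regular semisimple, its centralizer $C_{\boldsymbol{G}^*}(s)$ is a maximal torus $T^*$, so by Proposition~\ref{prop:(Jordan-decomposition-of characters and degree formula} the Lusztig series $\mathcal{E}(G,(s))$ consists of characters of degree $|G^* : T^*|_{p'}$ — and by Lemma~\ref{lem:G and M lemma on the existence of a character in a non trivial Lusztig series}, any $\chi$ with $\chi(x) \neq 0$ must lie in $\mathcal{E}(G,(s))$ for some $s$ conjugate into a maximal torus containing $x$. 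So the sum over all irreducible characters collapses to a sum over these series. The first step, then, is to set up this reduction and record that the characters $\chi$ with $\chi(x)\neq 0 \neq \chi(y)$ all have degree roughly $|G|_{p'}$ divided by a torus order — in particular they are ``large'' — and estimate $|\chi(x)|$, $|\chi(y)|$ for such characters.

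Next I would isolate the leading term. The Steinberg character $\mathrm{St}$ has $\mathrm{St}(1) = |G|_p$ and $\mathrm{St}(x) = \pm |C_G(x)|_p = \pm 1$ on every regular semisimple element, and similarly $\mathrm{St}(y) = \pm 1$, $\mathrm{St}(g) = \pm 1$ for any semisimple $g$. After multiplying Burnside's formula through by suitable factors, the Steinberg contribution to $\eta$ is a term of size comparable to $|C_G(x)|\cdot|C_G(y)|\cdot|g^G|/|G|_p$ — one wants this to dominate. The trivial character contributes $1$. The main work is then to bound the total contribution of all remaining characters $\chi \in \mathcal{E}(G,(s))$, $s \neq 1$, $T^* = C_{\boldsymbol{G}^*}(s)^{F^*}$ maximal. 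Using the character bounds of the Malle--Saxl--Weigel / Guralnick--Malle type (bounding $|\chi(x)|$ for $x$ regular semisimple by small constants, e.g. by the number of $W$-conjugates of $T$, and controlling $\chi(1)$ from below by the torus index), plus the bound $|\mathcal{E}(G,(s))| \leq |W|$ and a count of the number of semisimple classes $(s)$ with a given maximal centralizer, one gets $\left|\sum_{\chi \neq 1, \mathrm{St}} \frac{\chi(x)\chi(y)\overline{\chi(g)}}{\chi(1)}\right| < $ something smaller than the $1 + (\text{Steinberg term})$ coming from the principal part. Concluding $\eta(L_1,L_2,g^G) > 0$ finishes the proof.

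The main obstacle is the character-theoretic estimate in the last step: controlling the tail $\sum_{\chi}\frac{|\chi(x)\chi(y)\chi(g)|}{\chi(1)}$ uniformly over all families of simple groups of Lie type and all choices of regular semisimple classes $L_1, L_2$ and all semisimple $g$. This requires good lower bounds on $\chi(1)$ for $\chi$ in a non-trivial Lusztig series attached to a maximal torus (so that $1/\chi(1)$ is small), together with upper bounds on $|\chi(x)|$ for regular semisimple $x$ — here the point is that for $x$ regular semisimple, $\chi(x)$ is a sum of at most a bounded number of roots of unity coming from the Deligne--Lusztig decomposition, so $|\chi(x)|$ is bounded by a constant depending only on the Weyl group. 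One must handle small-rank and small-field cases (where the asymptotic estimates may fail) separately, possibly by direct computation, and take care with groups whose algebraic group has disconnected centre, where Proposition~\ref{prop:(Jordan-decomposition-of characters and degree formula} needs the generalized notion of unipotent characters. Once the uniform inequality ``(tail) $<$ (principal term)'' is established, positivity of the structure constant — and hence the theorem — follows immediately.
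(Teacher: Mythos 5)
The paper does not prove this statement at all: it is quoted verbatim from Gow (\cite{Gow ss elements results}, Thm.\ 2), so the comparison must be with Gow's argument, and your sketch is not a repair of it. The first genuine gap is in your reduction step. From Lemma \ref{lem:G and M lemma on the existence of a character in a non trivial Lusztig series} the only conclusion is that a character $\chi$ with $\chi(x)\neq 0$ lies in $\mathcal{E}(G,(s))$ for some semisimple $s$ in a dual torus $T^{*}$; you then assert that $C_{\boldsymbol{G}^{*}}(s)$ is a maximal torus because $x$ is regular semisimple, which conflates the centralizer of $x$ in $G$ with the centralizer of the label $s$ in $G^{*}$. In fact $s$ ranges over all of $T^{*}$, and $C_{\boldsymbol{G}^{*}}(s)$ can be anything from a torus up to the whole group: $s=1$ gives every unipotent character, including the trivial character and characters of very small degree. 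The paper's own computations illustrate this — in Lemma \ref{lem:reduction to 4 character lemma} and Proposition \ref{prop:non vanishing characters} the characters surviving on two regular semisimple elements include Weil-type characters of degree roughly $q^{n-1}$, far below $|G^{*}:T^{*}|_{p'}$ — so the premise that all contributing characters are ``large'' is false, and with it the basis of your tail estimate.

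The second, more fundamental gap is that an absolute-value estimate of the tail cannot work at the stated level of generality (arbitrary $G$, arbitrary regular semisimple classes $L_{1},L_{2}$, arbitrary semisimple $g$). Already for $G=PSL_{2}(q)$ with $L_{1}=L_{2}$ a class meeting the split torus, there are on the order of $q$ irreducible characters of degree $q\pm1$ taking values of modulus up to $2$ on $x$, $y$ and $g$, so $\sum_{\chi\neq 1}|\chi(x)\chi(y)\chi(g)|/\chi(1)$ is of order a constant $>1$; positivity of the structure constant here depends on cancellation, not on size. This is precisely why the present paper never uses a generic bound but instead chooses very particular classes (weakly orthogonal tori, primitive-prime-divisor elements) before estimating. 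Gow's actual proof of the quoted theorem is not an estimate at all: it exploits the fact that the characteristic function of a regular semisimple class is a uniform function, i.e.\ an explicit combination of Deligne--Lusztig characters $R_{T}^{G}(\theta)$, and then computes the structure constant exactly via orthogonality of the $R_{T}^{G}(\theta)$ and values of Green functions at semisimple elements, obtaining a manifestly nonzero answer. Without either that exact mechanism or a genuinely new way to control the small-degree characters, your outline does not yield the theorem.
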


First we consider the non-unitary classical groups, that is the linear,
orthogonal and symplectic finite simple groups. These are denoted
$PSL_{n}(q)$, $P\Omega_{n}^{\pm}(q)$ for $n$ even, $P\Omega_{n}(q)$
for $n$ odd and $PSp_{n}(q)$. We see that for some families, the
involution width problem has already been considered and we quote
results where appropriate. 

\subsection{Linear Groups}

In \cite{Gustavson et al} Gustavson et al. proved that the group
of matrices in $GL_{n}(K)$ ($K$ a field) of determinant $\pm1$
has involution at most 4. The problem of the special linear group
was then addressed by Kn$\ddot{\text{u}}$ppel  and Nielsen \cite{Knuppel and Knielsen}. 
\begin{thm}
\label{thm:sl 4 ref}(\cite{Knuppel and Knielsen}, Thm. A). Let $G=SL_{n}(q)$,
$n\geq3$. Then $G$ has involution width $3$ if and only if at least
one of the following holds
\end{thm}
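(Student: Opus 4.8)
The statement to be proved characterizes when $G = SL_n(q)$, $n \geq 3$, has involution width exactly $3$, i.e. fails to be strongly real. The plan is to work with the rational canonical form of matrices in $SL_n(q)$ and determine precisely which conjugacy classes fail to split as a product of two involutions.

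\textbf{Setting up the framework.} First I would recall that an element $g \in GL_n(q)$ is a product of two involutions (equivalently, strongly real) if and only if $g$ is conjugate to $g^{-1}$ by an involution; a classical reversibility analysis (as in Wonenburger \cite{wonenburger} and Knüppel--Nielsen \cite{Knuppel and Knielsen}) shows that in $GL_n(q)$ \emph{every} element is strongly real when $p$ is odd, and the obstructions for $SL_n(q)$ arise because the conjugating involution, or the two involution factors, may be forced to lie outside $SL_n(q)$ (i.e. have determinant $-1$), or may fail to exist as genuine involutions in characteristic $2$. So the key invariants are: the characteristic $p$ (the case $q$ even is genuinely different, since unipotent issues and the nonexistence of certain involutions intervene), and determinant/spinor-type parity conditions coming from the elementary divisor structure of $g$.

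\textbf{Reduction to indecomposable blocks.} The main technical step is to decompose a putative $g \in SL_n(q)$ according to its primary rational canonical form $g = \bigoplus_i g_i$, where each $g_i$ acts on the generalized eigenspace for an irreducible polynomial $f_i$, and analyze strong reality block-by-block. For blocks associated to a pair $\{f, f^*\}$ with $f^* \ne f$ (where $f^*$ is the "reciprocal" polynomial whose roots are inverses of those of $f$), the block is automatically strongly real within $SL$. The problematic blocks are the self-reciprocal ones and, in characteristic $2$, blocks with eigenvalue $1$; for these one computes exactly when the reversing element can be chosen in $SL_n(q)$. I would then assemble these local conditions: $SL_n(q)$ has width $3$ precisely when \emph{every} element is strongly real fails, which means there is \emph{no} bad element — so the theorem's list of conditions is really the negation, characterizing when a bad (width-$3$-requiring) element must exist. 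Concretely I expect the conditions to involve $q \equiv 3 \pmod 4$ together with parity/divisibility constraints on $n$ and $\gcd(n, q-1)$ that control whether the relevant sign obstruction can be absorbed by the center or by rearranging blocks.

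\textbf{Main obstacle.} The hardest part will be the careful bookkeeping of the determinant-$(-1)$ obstruction across blocks: a single self-reciprocal block may only be expressible as a product of two involutions one of which has determinant $-1$ in $GL$, and whether two such "defects" can be paired off to land inside $SL_n(q)$ depends delicately on $n$, on the degrees of the irreducible factors available, and on $q \bmod 4$ (which governs whether $-1$ is a square, hence whether scalars can fix parity). Getting the exact boundary cases right — small $n$, the interaction with $\gcd(n,q-1)$, and the even-characteristic subtleties where "involution" forces extra unipotent structure — is where the real work lies; the high-level reversibility theory is classical and I would cite \cite{Knuppel and Knielsen} and \cite{wonenburger} for the block-level statements rather than reprove them.
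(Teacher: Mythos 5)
This statement is not proved in the paper at all: it is Theorem A of Kn\"{u}ppel--Nielsen, quoted verbatim and used as an imported result (e.g.\ in the corollary on $PSL_n(q)$ that follows it). So the honest comparison is between your sketch and the cited literature, and as a proof your proposal has a genuine gap: it is circular, since the ``block-level statements'' you propose to cite from \cite{Knuppel and Knielsen} are precisely the content of the theorem you are asked to establish. Deferring the hard part to the source of the theorem is not a proof strategy; either you reprove the classification or you do what the paper does and simply cite it.

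Beyond the circularity, the mathematical framing is off in two concrete ways. First, you set up the problem as deciding ``which conjugacy classes fail to split as a product of two involutions,'' i.e.\ strong reality; but the dichotomy in the theorem is between width $3$ and width $4$, so the relevant question is which elements fail to be a product of \emph{three} involutions (equivalently, an involution times a strongly real element), and your two-involution bookkeeping never engages with that. Second, the conditions you predict (``$q\equiv 3\pmod 4$ together with constraints on $\gcd(n,q-1)$'') do not match the actual statement: the exceptional cases are $n=4$; $q\in\{2,3,5\}$ with $n\not\equiv 2\pmod 4$; and $n=3$ with $\mathrm{char}(\mathbb{F}_q)=3$ or $x^2+x+1$ irreducible over $\mathbb{F}_q$. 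That mismatch shows the proposed determinant-parity analysis, as outlined, would not produce the right boundary cases, so even granted the rational-canonical-form reduction the argument as planned would fail to close.
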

\begin{enumerate}
\item \textit{$n=4$,}
\item \textit{$q\in\{2,3,5\}$ and $n\not\equiv2$ mod $4$,}
\item \textit{$n=3$, and either char$(\mathbb{F}_{q})=3$ or $x^{2}+x+1$
is irreducible over $\mathbb{F}_{q}$ .}
\end{enumerate}
\textit{Otherwise $G$ has involution width 4.}
\begin{cor}
\label{cor:iw of PSL is at most 4}Let $\overline{G}=PSL_{n}(q)$,
$n\geq2$ (not $PSL_{2}(2)$ or $PSL_{2}(3)$).
\end{cor}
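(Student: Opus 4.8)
The claim is that $\overline{G} = PSL_n(q)$ (excluding $PSL_2(2)$ and $PSL_2(3)$) has involution width at most $4$. The plan is to deduce this from Theorem \ref{thm:sl 4 ref} together with the small-rank cases handled elsewhere. First I would dispose of the case $n=2$: here $PSL_2(q)$ is well understood, and for $q \geq 4$ one checks directly (or cites the structure-constant method of the sporadic/exceptional sections, or the explicit involution decompositions) that every element is a product of at most $3$ involutions when $q$ is such that $PSL_2(q)$ is not strongly real, and a product of $2$ involutions otherwise; in any case $\mathrm{iw}(PSL_2(q)) \leq 3 \leq 4$. For $n \geq 3$, Theorem \ref{thm:sl 4 ref} gives $\mathrm{iw}(SL_n(q)) \leq 4$, so every element of $SL_n(q)$ is a product of at most $4$ involutions. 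Since $PSL_n(q) = SL_n(q)/Z$ with $Z = Z(SL_n(q))$, the images of those involutions in the quotient are elements of order dividing $2$, i.e.\ involutions or the identity. Hence any element of $PSL_n(q)$ is a product of at most $4$ involutions, giving $\mathrm{iw}(\overline{G}) \leq \mathrm{iw}(SL_n(q)) \leq 4$.

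The one genuine subtlety — and the step I expect to require care — is that passing to the quotient can turn an involution of $SL_n(q)$ into the identity of $PSL_n(q)$, and more importantly the quotient $PSL_n(q)$ might a priori be generated by involutions and have elements whose \emph{shortest} involution expression in the quotient is realised only via lifts that are \emph{not} of minimal length; but this can only \emph{decrease} the width, so the inequality $\mathrm{iw}(PSL_n(q)) \leq \mathrm{iw}(SL_n(q))$ is safe. The reverse worry — whether $PSL_n(q)$ is actually generated by its involutions so that $\mathrm{iw}$ is even defined — is settled by the standard fact (noted in the Introduction) that all non-abelian finite simple groups are generated by involutions. So no real obstacle arises here beyond bookkeeping.

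Thus the corollary is essentially immediate once Theorem \ref{thm:sl 4 ref} is in hand and the rank-$2$ groups are noted separately. I would phrase the write-up as: for $n \geq 3$ invoke Theorem \ref{thm:sl 4 ref} and quotient by the centre; for $n = 2$ treat $PSL_2(q)$, $q \geq 4$, directly (it is strongly real for $q \not\equiv 3 \pmod 4$ by Theorem \ref{Thm: Strongly real G}(1) since $PSL_2(q) = PSp_2(q)$, and otherwise a short argument or structure-constant computation gives width $3$). In all cases $\mathrm{iw}(\overline{G}) \leq 4$, which is exactly the assertion of Theorem \ref{mainthm} for the family of linear groups.
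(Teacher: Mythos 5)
Your argument for the bound $\mathrm{iw}(PSL_n(q))\leq 4$ is essentially the paper's own: for $n\geq 3$ the corollary is deduced immediately from Theorem \ref{thm:sl 4 ref} by passing to the central quotient (images of involutions being involutions or trivial), and for $n=2$ the paper likewise invokes Theorem \ref{Thm: Strongly real G} for $q\not\equiv 3 \pmod 4$ and, for the remaining $q$, the fact quoted from Arad--Herzog that $C^{3}=PSL_{2}(q)$ for every non-identity class $C$, which is precisely the structure-constant step you gesture at. The only difference is that the paper's corollary also records the exact widths in its parts (1) and (2) (strong reality exactly when $n=2$, $q\not\equiv3\pmod 4$, and width exactly $3$ in the cases of Theorem \ref{thm:sl 4 ref}), which are handled by the same one-line citations and which your write-up covers only for $n=2$.
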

\begin{enumerate}
\item \textit{Then \textup{iw}$(\overline{G})=2$ if and only if $n=2$
and $q\not\equiv3$ mod $4$. }
\item \textit{Suppose $(n,q)$ satisfies at least one of the conditions
1-3 of Theorem \ref{thm:sl 4 ref} or $(n,q)=(2,\,3\text{ mod}\,4)$.
Then \textup{iw}$(\overline{G})=3$.}
\item \textit{In any case, \textup{iw}$(\overline{G})\leq4$.}
\end{enumerate}
\begin{proof}
Part 1 follows from Theorem \ref{Thm: Strongly real G}. Part 2 follows
from Theorem \ref{thm:sl 4 ref} together with (\cite{Arad and Herzog},
Thm 4.2), which shows that $C^{3}=PSL_{2}(q)$ for any non-identity
class $C$. Part 3 is immediate from Theorem \ref{thm:sl 4 ref} . 
\end{proof}

\subsection{Orthogonal Groups}

The orthogonal groups are another classical family that have been
considered in the literature. Building on a number of results we prove
\begin{thm}
Let $n\ge7$ and let $\overline{G}=P\Omega_{n}^{\pm}(q)$ ($n$ even)
or $P\Omega_{n}(q)$ ($n$ odd).\label{thm: orthog width}
\end{thm}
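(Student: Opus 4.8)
The plan is to prove that $\mathrm{iw}(\overline{G}) \le 4$ for all orthogonal simple groups $P\Omega_n^\pm(q)$ and $P\Omega_n(q)$ with $n\ge 7$, and moreover to identify exactly when the width is $2$ or $3$. By Theorem~\ref{Thm: Strongly real G} we already know the complete list of strongly real orthogonal groups, which settles the width-$2$ cases: items (2)--(6) of that theorem cover $P\Omega_{2n+1}(q)$ with $q\equiv 1\pmod 4$, $P\Omega_9(q)$ with $q\equiv 3\pmod 4$, $P\Omega^+_{4n}(q)$ with $q\not\equiv 3\pmod4$, $P\Omega^-_{4n}(q)$, $P\Omega^+_8(q)$ and ${}^3D_4(q)$. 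So the real content is to show that every remaining orthogonal simple group has width at most (hence exactly) $3$, and here I expect the statement to assert $\mathrm{iw}(\overline G)=3$ for all the non-strongly-real cases, following the Wonenburger/Kn\"uppel--Thomsen tradition.

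First I would recall the classical bireflectionality results. For orthogonal groups over fields of characteristic $\ne 2$, work of Wonenburger and Kn\"uppel--Thomsen~\cite{Knuppel and Thomsen 98} shows that in the full orthogonal group $O_n^{\epsilon}(q)$ every element is a product of two involutory elements; the obstruction to width $2$ in $\Omega$ or $P\Omega$ is purely about whether the two involutions can be chosen inside $\Omega$ (i.e.\ have spinor norm $1$ and determinant $1$) and whether they descend to involutions modulo the centre. The plan is therefore: (i) quote the ambient bireflectionality to get each $g\in\Omega_n^\epsilon(q)$ (or its preimage) as a product of two orthogonal involutions $t_1,t_2$; (ii) when $t_1,t_2$ already lie in $\Omega$ and project to involutions in $P\Omega$, conclude $\mathrm{iw}(\overline g)\le 2$, recovering Theorem~\ref{Thm: Strongly real G}; (iii) in the remaining cases, adjust by writing one of the two factors as a product of two $\Omega$-involutions, which costs one extra involution and gives $\mathrm{iw}(\overline g)\le 3$. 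Step (iii) is the standard trick: if $t$ is an orthogonal involution not in $\Omega$, one finds a short-root-type involution $s\in\Omega$ with $st\in\Omega$, so $t=(st)s^{-1}\cdot$(correction) — more precisely one replaces the pair $(t_1,t_2)$ by a triple of $\Omega$-involutions by absorbing a spinor-norm/determinant correction into an extra reflection-type element, exactly as in the alternating-group argument where a $3\bmod 4$ cycle needs a third involution.

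For characteristic $2$, where $\Omega_n(q)$ makes sense only for $n$ even and the relevant simple group is $P\Omega^\pm_{2m}(q)=\Omega^\pm_{2m}(q)$ (trivial centre, $q$ even), I would invoke the symplectic-type analysis: in characteristic $2$ an involution is a transvection-product and the orthogonal group embeds in a symplectic group, so one can transfer the Kn\"uppel--Thomsen description. Here the answer again is width $\le 2$ precisely on the strongly-real list and $\le 3$ otherwise, and the argument is cleaner because there is no centre and no spinor norm to track (one only tracks the Dickson invariant / type $\epsilon$). Throughout I would treat the small-$n$ boundary cases ($n=7,8,9,10$) and small $q$ separately, since several of these are sporadic coincidences (e.g.\ $P\Omega_8^+(q)$ and its triality, $\Omega_7(q)\cong PSp_6(q)$ for $q$ odd) that may already be covered by the symplectic results quoted earlier or require a direct structure-constant computation via Theorem~\ref{thm:Burnsides formula}.

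The main obstacle I anticipate is the bookkeeping in step (iii): deciding, for each element type of $\Omega_n^\epsilon(q)$, whether the natural bireflection lands in $\Omega$ and descends to $P\Omega$, and when it does not, producing a genuine third $\Omega$-involution rather than just an orthogonal involution. This is essentially a computation with spinor norms of the factors in the Kn\"uppel--Thomsen normal form, combined with the action on the $1$- or $2$-dimensional centre of $\Omega$ (the cases $q\equiv 3\pmod 4$ with $n\equiv 2\pmod 4$, where $-I\in\Omega$ has order $2$ but a would-be involution may lift to an element of order $4$, are the delicate ones). I would organise this by $n\bmod 4$ and $q\bmod 4$ in direct parallel with the symplectic and linear cases already in the literature, and where the general argument is unilluminating for a handful of small groups I would fall back on explicit character-table or GAP computation as in the sporadic case. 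The conclusion in every case is $\mathrm{iw}(\overline G)\in\{2,3\}$, hence in particular $\le 4$, completing this family.
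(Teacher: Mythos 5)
Your plan for odd $q$ is essentially a proposal to redo the spinor-norm bookkeeping that Kn\"uppel and Thomsen have already carried out: their result (\cite{Knuppel and Thomsen 98}, 8.5) states directly that $\Omega_{n}(q)$ is trireflectional for $q$ odd, and the paper simply quotes this and passes to the quotient $P\Omega_{n}(q)$. Your step (iii) as written is not yet a proof --- ``absorbing a spinor-norm/determinant correction into an extra reflection-type element'' does not by itself produce a third \emph{involution} in $\Omega$ inverting nothing in particular, and if $t_{1}\notin\Omega$ you cannot write $t_{1}$ as a product of two $\Omega$-involutions at all --- but this part could be repaired simply by citing the trireflectionality result instead of only the bireflectionality of the full orthogonal group. (Also, the aside $\Omega_{7}(q)\cong PSp_{6}(q)$ for $q$ odd is false; $B_{3}(q)$ and $C_{3}(q)$ have equal orders but are non-isomorphic in odd characteristic.)

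The genuine gap is the even-characteristic case. You assert that the Kn\"uppel--Thomsen description ``transfers'' through the embedding of $\Omega_{2m}^{\pm}(q)$ into a symplectic group and that the width is again at most $3$, but no mechanism is given: a factorisation into involutions inside $Sp_{2m}(q)$ has no reason to have its factors in the orthogonal subgroup, and the Kn\"uppel--Thomsen analysis you lean on is an odd-characteristic result. Indeed the theorem itself only claims iw$(\overline{G})\leq4$ when $q$ is even, and the paper's proof in that case is by an entirely different route: by Ellers--Gordeev (\cite{Ellers and Gordeev}, Thm.~3) every element of $P\Omega_{2m}^{\epsilon}(q)$, $q$ even, is a product of two unipotent elements, and by R\"am\"o (\cite{Johanna Ramo}, Thm.~1.2) unipotent elements of these groups are strongly real, giving width at most $2+2=4$. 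Your stronger conclusion that every non-strongly-real orthogonal group (including those with $q$ even and $n\equiv2\bmod4$) has width exactly $3$ is not established by anything in your sketch, and is not what the statement asserts; as it stands the characteristic-$2$ half of the theorem is unproved in your proposal.
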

\begin{enumerate}
\item \textit{If $q$ is odd then} iw$(\overline{G})\leq3$.
\item \textit{If $q$ is even then} iw$(\overline{G})\leq4.$
\end{enumerate}
\begin{proof}
Firstly assume that $q$ is odd. By (\cite{Knuppel and Thomsen 98},
8.5), iw$(\Omega_{n}(q))\leq3$. The result then holds in the simple
group after taking the natural map. Next assume that $q$ is even
(and hence $G=P\Omega_{2m}^{\epsilon}(q))$. By (\cite{Ellers and Gordeev},
Thm. 3) we know that any $g\in G$ can be written as a product of
two unipotent elements of $G$. But then, by (\cite{Johanna Ramo},
Thm. 1.2), such unipotent elements are strongly real and hence it
follows that iw$(g)\leq4$.
\end{proof}

\subsection{Symplectic Groups}

Recall from Theorem \ref{Thm: Strongly real G} that the symplectic
groups $PSp_{2n}(q)$ are strongly real if $q\not\equiv3$ mod $4$.
The next result considers $PSp_{2n}(q)$ for all values of $q$. Note
also that $PSp_{2}(q)\cong PSL_{2}(q)$ and thus by Corollary \ref{cor:iw of PSL is at most 4}
we can assume that $n\geq2$.
\begin{thm}
\label{thm:Sp 4 reflectional}Let $G=Sp_{2n}(q)$ with $n\geq2$.
Then $\overline{G}=PSp_{2n}(q)$ has involution width at most 4. 
\end{thm}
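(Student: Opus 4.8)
The plan is to follow the general procedure outlined in Section~\ref{subsec:Preliminary-Material for general Lie Type Groups}: exhibit regular semisimple elements $x,y\in G=Sp_{2n}(q)$ whose projections to $\overline{G}=PSp_{2n}(q)$ are strongly real, and such that $G\setminus Z(G)\subseteq x^{G}y^{G}$. Since $PSp_{2n}(q)$ is already known to be strongly real when $q\not\equiv 3\pmod 4$ by Theorem~\ref{Thm: Strongly real G}, the only case requiring work is $q\equiv 3\pmod 4$; I would state this reduction explicitly at the start. For the strong reality of the projections (step (1)), I would choose $x$ and $y$ to lie in cyclic maximal tori of order related to $q^{n}+1$ (a torus of ``$-$'' type), or a product such as $(q^{n/2}+1)$-type tori, which are inverted by suitable elements of the symplectic group; the classical fact that every element of $Sp_{2n}(q)$ is conjugate to its inverse, combined with a careful check that the inverting element can be chosen to be an involution modulo the centre, gives $\mathrm{iw}(\overline{x}),\mathrm{iw}(\overline{y})\le 2$. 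Good candidate tori are those of order $q^{n}+1$ when $n$ is even (these contain $ppd(q,2n)$ elements) and a $(q^{n/2}+1)^{2}$ or $(q^{a}+1)(q^{b}+1)$ torus otherwise, chosen so that the centralizer structure forces regularity and the relevant primitive prime divisors appear.

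For step (2), the covering statement $G\setminus Z(G)\subseteq x^{G}y^{G}$, I would split into the semisimple and non-semisimple parts of $G\setminus Z(G)$. Every non-identity semisimple element is handled immediately by Gow's Theorem~\ref{thm:(Gow-)Theorem for ss elements}, since $x^{G}$ and $y^{G}$ are classes of regular semisimple elements: any non-central semisimple $g$ is a product of one element from each class. For elements $g$ that are not semisimple — equivalently, $p\mid o(g)$ where $p=\mathrm{char}(\mathbb{F}_q)$ — I would estimate the normalised structure constant $\kappa(x^{G},y^{G},g^{G})=\sum_{\chi}\chi(x)\chi(y)\overline{\chi(g)}/\chi(1)$ using Theorem~\ref{thm:Burnsides formula}, and show it is nonzero. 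The key leverage is Lemma~\ref{lem:G and M lemma on the existence of a character in a non trivial Lusztig series}: since $x$ and $y$ are regular semisimple lying in the specified tori, any $\chi$ with $\chi(x)\ne 0$ (resp. $\chi(y)\ne 0$) lies in a Lusztig series $\mathcal{E}(G,(s))$ with $s$ in the dual torus; choosing the torus order divisible by a primitive prime divisor $r=ppd(q,2n)$ (or $ppd(q,n)$) forces $s$ to have order divisible by $r$, which by the Jordan decomposition of characters (Proposition~\ref{prop:(Jordan-decomposition-of characters and degree formula}) makes $\chi(1)$ divisible by $r$, i.e.\ $\chi$ has $r$-defect zero. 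But then by Brauer's Theorem~\ref{thm:Brauers theorem on prime defect}, $\chi(g')=0$ for any element $g'$ of order divisible by $r$ — so the relevant sum is dramatically truncated. After isolating the trivial character's contribution $1$, I would bound $\sum_{\chi\ne 1}|\chi(x)\chi(y)|/\chi(1)$ by crude character-value estimates (for instance $|\chi(x)|\le$ a small constant times the number of constituents in the relevant Deligne–Lusztig characters, and $1/\chi(1)$ small because the surviving non-trivial $\chi$ have large degree) to conclude $\kappa>0$.

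The main obstacle I expect is the bookkeeping in step (2) for the unipotent and mixed (non-semisimple) elements when $q$ is small, where the contribution of the trivial character need not dominate so easily and more careful degree estimates are required; in particular small rank cases ($n=2,3$) and small $q$ may need to be treated separately, possibly by direct computation using known character tables of $Sp_4(q)$ and $Sp_6(q)$, or by invoking that $(\overline{C})^{3}$ or $(\overline{C})^{4}$ already covers the group for any nontrivial class $\overline{C}$ via results like (\cite{Arad and Herzog}) when the order is small. A secondary subtlety is ensuring that the chosen elements $x,y$ are genuinely \emph{regular} semisimple in $\boldsymbol{G}$ (not merely $G$-regular) so that Gow's theorem and the centralizer-order computations in Burnside's formula go through cleanly; this amounts to checking that the cyclic torus containing $x$ is self-centralizing, which follows from the primitivity of the prime divisor but should be verified. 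Finally, one must confirm that passing from $G\setminus Z(G)\subseteq x^Gy^G$ down to $\overline{G}$ is valid and that the projections $\overline{x},\overline{y}$ remain regular semisimple and of the claimed involution width; this is routine given that $Z(Sp_{2n}(q))$ has order $\gcd(2,q-1)$.
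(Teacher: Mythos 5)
Your overall architecture (strongly real regular semisimple $x,y$ plus a covering statement $G\setminus Z(G)\subseteq x^{G}y^{G}$, with Gow's Theorem \ref{thm:(Gow-)Theorem for ss elements} handling semisimple targets) matches the paper, but there is a genuine gap at the step you dispose of with ``a careful check that the inverting element can be chosen to be an involution modulo the centre.'' When $q\equiv 3\pmod 4$ --- the only case you keep, and the only case where anything needs proving --- $PSp_{2n}(q)$ is \emph{not} strongly real (Theorem \ref{Thm: Strongly real G}), so the fact that every element of $Sp_{2n}(q)$ is real gives you nothing: the entire difficulty of the theorem is to exhibit \emph{specific} classes whose inverting element can be taken to square into the centre. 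The paper needs two nontrivial inputs here. For the element in the torus of order $q^{n}+1$ it quotes (\cite{Malle and Saxl and Weigel}, 2.3(c)). For the element $z$ generating the torus of order $q^{n}-1$ it embeds $z$ in the Levi subgroup $H\cong GL_{n}(q)$ stabilising a pair of complementary Lagrangians, uses the map $t\colon e_{i}\mapsto f_{i},\,f_{i}\mapsto -e_{i}$ (so $t^{2}=-I$, hence $\overline{t}$ is an involution in $\overline{G}$), which conjugates $\mathrm{diag}(B,B^{-T})$ to $\mathrm{diag}(B^{-T},B)$, and then reduces strong reality of $\overline{z}$ to showing that the Singer cycle $B$ is $GL_{n}(q)$-conjugate to a \emph{symmetric} matrix --- which requires the theorem of Brawley and Teitloff \cite{Brawley Teitloff} on companion matrices of non-square monic polynomials. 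Nothing in your sketch supplies a mechanism of this kind, and no soft ``reality plus care'' argument can, precisely because the group itself fails to be strongly real for these $q$.

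A secondary divergence: for the covering step the paper does not estimate structure constants at all; it cites (\cite{Guralnick and Tiep}, Sec.\ 2.2.3), which already provides regular semisimple $x\in T_{1}$ with $|T_{1}|=q^{n}+1$ and $y\in T_{2}$ with $|T_{2}|=q^{n}-1$ satisfying $x^{G}y^{G}\supseteq G\setminus Z(G)$ outside $(n,q)\in\{(2,2),(2,3),(3,2)\}$, the small cases being checked from \cite{the atlas}. Your plan to redo this via Brauer defect-zero truncation of Burnside's formula is in principle viable (it is the strategy the paper carries out for unitary groups), but it is a substantial undertaking for $Sp_{2n}(q)$ --- the surviving characters include Weil-type characters whose values on non-semisimple classes you would have to control --- and your torus choices should be pinned down (one of order $q^{n}+1$ and one of order $q^{n}-1$, not two tori of plus-type or a $(q^{n/2}+1)^{2}$ torus) before the defect-zero argument and the strong-reality argument above can even be formulated. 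As it stands, the proposal would not close without importing the Malle--Saxl--Weigel and Brawley--Teitloff ingredients (or equivalents), so the key idea of the proof is missing.
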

\begin{proof}
The work of Guralnick and Tiep (\cite{Guralnick and Tiep}, Sec. 2.2.3)
shows that as long as $(n,\,q)\notin\{(2,2),\,(2,3),\,(3,2)\}$ we
can find regular semisimple elements $x,\,y\in G$ such that $x^{G}\cdot y^{G}\supseteq G\setminus Z(G).$
Such elements are contained in tori $T_{1}$ of order $q^{n}+1$ and
$T_{2}$ of order $q^{n}-1$ respectively. The cases where $(n,\,q)\in\{(2,2),\,(2,3),\,(3,2)\}$
are easily dealt with using the character tables in \cite{the atlas},
so we assume $(n,\,q)$ is not one of these from now on.

Let $\overline{g}$ be the image of $g\in G$ under the canonical
map $G\rightarrow\overline{G}$. By (\cite{Malle and Saxl and Weigel}
, 2.3(c)), $\overline{x}$ is strongly real so iw$(\overline{x})=2.$
We claim that $\overline{y}$ is also strongly real, from which the
theorem will follow.\\
Now $T_{2}$ is a cyclic torus, conjugate to a Singer cycle $GL_{1}(q^{n})<GL_{n}(q)<G$.
Let $z\in T_{2}$ be a generator of the torus. We shall show that
a conjugate of $z$ is strongly real and thus so is $z$ itself.

Let $e_{1},\dots e_{n},f_{1},\dots,f_{n}$ be a standard basis for
the symplectic space, where $(e_{i},f_{j})=\delta_{ij}$ and $V_{e}=\langle e_{1},\dots,e_{n}\rangle$,
$V_{f}=\langle f_{1},\dots,f_{n}\rangle$ are totally isotropic subspaces.
Let $H\cong GL_{n}(q)$ be the stabiliser in $G$ of $V_{e},V_{f}$.
Explicitly,
\[
H=\{\,\begin{pmatrix}A & 0\\
0 & A^{-T}
\end{pmatrix}|\,A\in GL_{n}(q)\}
\]
 and we let $z\in H$ have matrix representation $\begin{pmatrix}B & 0\\
0 & B^{-T}
\end{pmatrix}$ for some $B\in GL_{n}(q)$.

Now let $t\in Sp_{2n}(q)$ be the map that sends $e_{i}\rightarrow f_{i}$
and $f_{i}\rightarrow-e_{i}$, $1\leq i\leq n$. Evidently this map
normalises the subgroup $H$ and swaps $V_{e}$ and $V_{f}$. Explicitly,
$t$ is given by the matrix $\begin{pmatrix}0 & -I_{n}\\
I_{n} & 0
\end{pmatrix}$ and thus conjugation of $z\in H$ is given by 
\[
t^{-1}zt=\begin{pmatrix}0 & I_{n}\\
-I_{n} & 0
\end{pmatrix}\begin{pmatrix}B & 0\\
0 & B^{-T}
\end{pmatrix}\begin{pmatrix}0 & -I_{n}\\
I_{n} & 0
\end{pmatrix}=\begin{pmatrix}B^{-T} & 0\\
0 & B
\end{pmatrix}.
\]

Clearly, $\overline{t}$ is an involution and hence to show $\overline{z}$
is strongly real it suffices to show that $B$ is conjugate in $GL_{n}(q)$
to a symmetric matrix.

As $B$ is a Singer cycle in $GL_{n}(q)$ it has a minimal polynomial
$f$ of degree $n$. Furthermore $B$ is cyclic and conjugate to the
companion matrix $C(f)$. In (\cite{Brawley Teitloff}, Thm. 7), Brawley
and Teitloff prove that if $f\in\mathbb{F}_{q}[x]$ is monic of degree
$n$ and $f$ is not square in $\mathbb{F}_{q}[x]$ then $C(f)$ is
indeed similar to a symmetric matrix. 

In conclusion, $z$ is conjugate to some $h\in H$ which is inverted
by $t$. Thus $\overline{z}$, and hence also $\overline{y}$, is
strongly real as claimed. 
\end{proof}

\section{\label{sec:Unitary-Groups}Unitary Groups}

In this section we complete the proof of Theorem~\ref{mainthm} for
classical groups by proving
\begin{thm}
\label{thm:inv wwidth of PSU}Let $\overline{G}=PSU_{n}(q)$ with
$n\geq3$ and $(n,q)\neq(3,2).$ Then $\overline{G}$ has involution
width at most 4. 
\end{thm}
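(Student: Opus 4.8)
The plan is to follow the general procedure laid out in Section~\ref{subsec:Preliminary-Material for general Lie Type Groups}: work in $G = SU_n(q)$ (simply connected, with $\boldsymbol{G}$ of type $A_{n-1}$ and twisted Frobenius), and produce regular semisimple elements $x, y \in G$ lying in two specific maximal tori, such that (1)~the images $\overline{x}, \overline{y}$ are strongly real in $\overline{G} = PSU_n(q)$, and (2)~$G \setminus Z(G) \subseteq x^G y^G$. The natural choice of tori, mirroring the symplectic case, is to take $x$ in a torus of order roughly $(q^n - (-1)^n)/(q+1)$ --- a ``Singer-type'' torus coming from $GU_1(q^n) < GU_n(q)$ when $n$ is odd, or an analogous Coxeter torus --- and $y$ in a torus built from $GU_{n-1}(q)$ or from a product $GU_1(q^a)\times\cdots$ chosen so that the relevant primitive prime divisors $\mathrm{ppd}(q,\cdot)$ appear in exactly one of $|T_1|, |T_2|$. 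For the strong reality of $\overline{x}, \overline{y}$ one argues as in Theorem~\ref{thm:Sp 4 reflectional}: a suitable graph/diagram automorphism realised inside $GU_n(q)$ inverts a torus element, combined with the Brawley--Teitloff type statement that a cyclic matrix with non-square minimal polynomial is similar to a symmetric (here Hermitian-symmetric) matrix, modulo checking the congruence conditions on $q$ and the descent to $PSU_n(q)$.

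The core of the argument is step (2), and here the method is the character-theoretic one: by Theorem~\ref{thm:Burnsides formula} it suffices to show the normalised structure constant $\kappa(x^G, y^G, g^G) \neq 0$ for every $g \in G \setminus Z(G)$, i.e.\
\[
\left| \sum_{\chi \in \mathrm{Irr}(G)} \frac{\chi(x)\chi(y)\overline{\chi(g)}}{\chi(1)} \right| \;<\; \text{(contribution of the known-nonzero terms)}.
\]
One isolates a small set of ``good'' characters (typically the trivial character, plus possibly a few low-degree ones or the Steinberg-related terms) whose product of values is controlled and of predictable sign, and then bounds the tail. The tail bound has two ingredients. First, by Theorem~\ref{thm:Brauers theorem on prime defect} applied to primitive prime divisors dividing $|T_1|$ but not $|T_2|$ (and vice versa), and using Lemma~\ref{lem:G and M lemma on the existence of a character in a non trivial Lusztig series}, only characters $\chi$ lying in Lusztig series $\mathcal{E}(G,(s))$ for $s$ with prescribed centraliser structure can have $\chi(x)\neq 0$ and $\chi(y)\neq 0$ simultaneously --- this forces $s$ to be ``almost regular'', so $C_{\boldsymbol{G}^*}(s)$ is small and by Proposition~\ref{prop:(Jordan-decomposition-of characters and degree formula} the degrees $\chi(1)$ in the surviving series are large. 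Second, one needs the bound $|\chi(x)|, |\chi(y)| \le$ (small constant, ideally $\le$ the number of tori containing $x$, resp.\ $y$), which for regular semisimple elements follows from the Deligne--Lusztig character formula. Combining $\sum_\chi |\chi(x)\chi(y)\chi(g)|/\chi(1) \le C \sum_{\chi \text{ surviving}} \chi(1)/\chi(1)$-type estimates against explicit lower bounds on those degrees gives the result for all but finitely many $(n,q)$.

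I expect two places to be the main obstacles. The first, already flagged in the introduction, is that for unitary groups one does not have a clean enough handle on the minimal-degree characters and on character values at regular semisimple elements to run the naive estimate; the excerpt promises that Section~\ref{sec:Unitary-Groups} ``develop[s] the theory of minimal degree characters using dual pairs.'' So the real work is: (a)~classify, or at least lower-bound, the degrees of the characters in the Lusztig series that survive the $\mathrm{ppd}$ vanishing argument --- this is where the dual-pair / Weil representation machinery enters, since the smallest unipotent-twisted-by-small-$s$ characters of $SU_n(q)$ are exactly the ones described via dual pairs $GU_m \times GU_{n-m}$; and (b)~get a uniform bound like $|\chi(g)| \le \chi(1)^{1-\delta}$ or an absolute bound on $|\chi(x)|$ for $x$ regular semisimple in these distinguished series. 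The second obstacle is the usual endgame of small rank and small $q$: for small $n$ and small $q$ the estimates will fail and one must verify $SU_3(q), SU_4(q), SU_5(q), \ldots$ (and small $q$) by hand or from \cite{the atlas} and explicit generic character tables, also dealing separately with $PSU_3(q)$ where $Z(G)$ and the outer structure behave specially, and recalling that $(n,q)=(3,2)$ is genuinely excluded since $PSU_3(2)$ is not simple. Once both the generic estimate and the finite check are in place, steps (1) and (2) together yield $\mathrm{iw}(\overline{G}) \le \mathrm{iw}(\overline{x}) + \mathrm{iw}(\overline{y}) \le 2 + 2 = 4$.
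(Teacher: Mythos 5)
Your outline reproduces the paper's strategy for the \emph{even}-dimensional case, where one can indeed take weakly orthogonal tori ($x\in T_{n}$, $y\in T_{1,1,n-2}$), reduce via primitive prime divisors and Theorem \ref{thm:Brauers theorem on prime defect} to four unipotent characters, and close the estimate. The genuine gap is the odd-dimensional case, and it occurs at exactly the point where you choose the tori. For $n$ odd, a regular element of the Singer-type torus $GU_{1}(q^{n})$ acts irreducibly with characteristic polynomial of odd degree $n$, and an irreducible polynomial of odd degree $>1$ over $\mathbb{F}_{q^{2}}$ cannot be self-reciprocal; such an element is therefore not even real in $SU_{n}(q)$, so step (1) fails for your choice. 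The same obstruction kills every pair of weakly orthogonal tori in Proposition \ref{prop:().-wo tori in su} when $n$ is odd: reality forces the elementary divisors to be closed under reciprocation, which forces eigenvalue $1$ and even-degree self-reciprocal factors, i.e.\ tori of type $T_{1,n-1}$ and $T_{1^{3},n-3}$ as in Proposition \ref{prop:picking x and y in n odd} (strong reality there comes from the reality criterion of Gates--Singh--Vinroot, not from a graph automorphism plus a Brawley--Teitloff argument, which is the symplectic trick and does not transfer). But these two tori are \emph{not} weakly orthogonal, and the semisimple elements $s\in T_{1,n-1}^{*}\cap T_{1^{3},n-3}^{*}$ that survive your ppd/defect-zero sieve include $s$ with $C_{G^{*}}(s)\cong GU_{n-1}(q)$ --- far from regular --- so the surviving series contain the Weil characters of degree roughly $(q^{n}+1)/(q+1)$, the \emph{minimal} nontrivial degree, not the large degrees your tail estimate presumes.

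Because of this, the generic bound you propose cannot close: for $\chi$ a Weil character and $g$ a transvection-like unipotent element, $|\chi(g)|/\chi(1)$ is close to $1$ (for instance $\sum_{t}|\chi_{s^{t},(n-1)}(u)|/\chi_{s^{t},(n-1)}(1)=q(q^{n-1}-1)/(q^{n}+1)$ for $u\sim(2,1^{n-2})$), so an estimate of the shape ``absolute constant times $\sum 1/\chi(1)$'' is hopeless. The paper instead computes the values $\chi(x),\chi(y)$ exactly (all of modulus at most $1$, using the Weil character formulas, the dual pair $GU_{3}(q)\times SU_{n}(q)$ to identify $\chi_{(n-3,2,1)}$ as a $D_{\alpha}$, and Alvis--Curtis duality for the conjugate-partition characters), and bounds $\chi(g)$ by explicit formulas and centraliser estimates; even then the structure-constant argument genuinely fails for the unipotent class $u\sim(2,1^{n-2})$ with $s=1$, which must be removed from the statement (Proposition \ref{prop:calculating structurre constants any g }) and handled separately by embedding it in a subgroup $SU_{3}(q)$ (or $SU_{4}(q)$ when $q=2$) where the width is already known. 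Your proposal, which asserts that $G\setminus Z(G)\subseteq x^{G}y^{G}$ for all but finitely many $(n,q)$ on the strength of uniform character bounds alone, has no mechanism for either the exact evaluation of the small-degree characters or this residual unipotent class, and so does not constitute a proof in the odd case.
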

After some preliminary material, we treat the cases of even and odd-dimensional
unitary groups separately. Substantially more effort is required for
the odd-dimensional case.

\subsection{Preliminaries\label{subsec:unitary groups prelims}}

Regard $G=SU_{n}(q)=\boldsymbol{G}^{F}$ where $\boldsymbol{G}=SL_{n}(\overline{\mathbb{F}}_{q})$
and $F$ is a Frobenius endomorphism as in Section \ref{subsec:Preliminary-Material for general Lie Type Groups}.
Explicitly, $F$ is the standard Frobenius map that raises matrix
entries to the $q^{\text{th }}$power, composed with the inverse-transpose
map. Let $\Phi$ denote the root system of $\boldsymbol{G}$ of type
$A_{n-1}$. Then the maximal tori of $G$ and its dual $G^{*}=PGU_{n}(q)$
correspond to $F$-conjugacy classes of the Weyl group $W=W(\Phi)\cong S_{n}.$
These in turn correspond to $S_{n}$-orbits in the non-trivial coset
of Aut$(\Phi)=S_{n}\times\{\pm1\}$ (\cite{Tiep Shalev warring},
2.2). Such an orbit is given by a conjugacy class in $S_{n}$ and
therefore a partition $(a_{1},\dots a_{k})$, of $n$. We denote the
corresponding maximal torus by $T_{a_{1},\dots,a_{k}}$. Recall that
there exists a torus $T^{*}<G^{*}$ that is dual to $T$ in the sense
that the Frobenius actions on their character groups are mutually
transpose (\cite{Tiep Shalev warring}, 2.2). This duality gives a
bijective correspondence between types of maximal tori in $G$ and
$G^{*}$. In particular, $T_{a_{1},\dots,a_{k}}$ and its dual denoted
$T_{a_{1},\dots,a_{k}}^{*}$ have the same order
\begin{equation}
|T_{a_{1},\dots,a_{k}}|=|T_{a_{1},\dots,a_{k}}^{*}|=\frac{(q^{a_{1}}-(-1)^{a_{1}})\dots(q^{a_{k}}-(-1)^{a_{k}})}{(q+1)}.\label{eq:Unitary tori order}
\end{equation}
Note that these orders, as well as those for all maximal tori of Lie
type groups, can be calculated using the methods in \cite{Carter Conjugacy classes in the weyl group}.

Firstly we define what it means for two tori to be weakly orthogonal
and explore how this affects the values of characters evaluated on
such tori. This concept was first seen in \cite{Malle and Saxl and Weigel}
and then given a formal definition in \cite{Tiep Shalev warring}.
\begin{defn}
We call two maximal tori $T_{1}=\boldsymbol{T}_{1}^{F}$ and $T_{2}=\boldsymbol{T}_{2}^{F}$
weakly orthogonal if 
\[
\boldsymbol{S}_{1}^{*F^{*}}\cap\boldsymbol{S}_{2}^{*F^{*}}=1
\]
 for every pair $\boldsymbol{S}_{1}^{*},\boldsymbol{S}_{2}^{*}$ of
$F^{*}$-stable conjugates of $T_{1},T_{2}$.
\end{defn}
If $x$ and $y$ are regular semisimple elements of $G=\boldsymbol{G}^{F}$
with centralizers $T_{1}$ and $T_{2}$ respectively, we say $x$
and $y$ are weakly orthogonal if and only if $T_{1}$ and $T_{2}$
are weakly orthogonal. 

For unitary groups there are known collections of weakly orthogonal
tori pairs:
\begin{prop}
\label{prop:().-wo tori in su}(\cite{Tiep Shalev warring}, 2.3.2).
Let $G=SU_{n}(q)$. For $0\leq a\leq n-1$, the maximal tori $T_{n}$
and $T_{1,a,n-1-a}$ are weakly orthogonal. If $2\leq a\leq n-2$,
the maximal tori $T_{1,n-1}$ and $T_{a,n-a}$ are weakly orthogonal.
\end{prop}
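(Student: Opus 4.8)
The plan is to translate weak orthogonality into a concrete statement about the eigenvalues of semisimple elements of $GU_n(q)$ acting on the natural Hermitian module $V=\mathbb{F}_{q^{2}}^{n}$. Unravelling the definition: a maximal torus of $\boldsymbol{G}^{*}$ consists of semisimple elements, and under the duality recalled above the dual torus $T^{*}_{\mu}$ has the same type $\mu$ as $T_{\mu}$, so $T_{\mu}$ and $T_{\nu}$ (tori of $G=SU_n(q)$) are weakly orthogonal precisely when no semisimple element of $G^{*}=PGU_n(q)$ other than the identity lies both in a $G^{*}$-conjugate of $T^{*}_{\mu}$ and in a $G^{*}$-conjugate of $T^{*}_{\nu}$. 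Lifting such an element to $s\in GU_n(q)$ (well-defined modulo the scalar subgroup $Z=Z(GU_n(q))\cong C_{q+1}$, which lies in every maximal torus of $GU_n(q)$), it is therefore enough to show: if a semisimple $s\in GU_n(q)$ lies in some conjugate of the maximal torus of type $\mu$ and also in some conjugate of the one of type $\nu$, then $s$ is a scalar.

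First I would record the dictionary between torus type and module structure. For $\mu=(b_1,\dots,b_l)$ a partition of $n$ (with zero parts omitted), a conjugate of the torus of type $\mu$ in $GU_n(q)$ is, up to conjugacy, $\prod_{i}T^{\mathrm{Cox}}_{b_i}$ acting on an orthogonal decomposition $V=V_1\perp\cdots\perp V_l$ with $\dim V_i=b_i$, where $T^{\mathrm{Cox}}_{b}$ is the Coxeter (twisted Singer) torus of $GU_b(q)$, cyclic of order $q^{b}-(-1)^{b}$. Realising each $V_i$ over a suitable extension field and using that a unitary element has eigenvalue multiset stable under $\sigma\colon\lambda\mapsto\lambda^{-q}$, one checks that every element of $T^{\mathrm{Cox}}_{b}$ acts on $V_i$ with eigenvalues forming a single $\sigma$-orbit $O_i$, each occurring with multiplicity $b_i/|O_i|$, and that $|O_i|=1$ precisely when $s|_{V_i}$ is a scalar of order dividing $q+1$ (the converse also holds, but we only need this direction). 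Two consequences are all that is needed: membership in a conjugate of $T_n$ forces the entire eigenvalue multiset of $s$ on $V$ to be a single $\sigma$-orbit with uniform multiplicity; and membership in a conjugate of a torus having a part equal to $1$ produces a non-degenerate $s$-invariant line on which $s$ acts by a scalar $\mu_0$ with $\mu_0^{q+1}=1$, whence $\sigma(\mu_0)=\mu_0^{-q}=\mu_0$, so that $\{\mu_0\}$ is a complete $\sigma$-orbit.

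Granting this, both claims follow at once. For the pair $(T_n,\,T_{1,a,n-1-a})$ with $0\le a\le n-1$: membership in a conjugate of $T_n$ gives a single $\sigma$-orbit $O$ containing every eigenvalue of $s$; membership in a conjugate of $T_{1,a,n-1-a}$, which has a part $1$, gives an eigenvalue $\mu_0$ of $s$ whose full $\sigma$-orbit is $\{\mu_0\}$, and as $\mu_0\in O$ this forces $O=\{\mu_0\}$, so $s$ has a single eigenvalue and is scalar. For the pair $(T_{1,n-1},\,T_{a,n-a})$ with $2\le a\le n-2$: writing $V=V_0\perp V_{n-1}$ from the first type, $s$ acts on $V_0$ by a scalar $\mu_0$ with $\mu_0^{q+1}=1$; inspecting $V_{n-1}$ (whose $s$-eigenvalues form a single $\sigma$-orbit) shows that either $\mu_0$ lies in that orbit, forcing $s|_{V_{n-1}}$ and hence $s$ to be scalar, or $\mu_0$ occurs as an eigenvalue of $s$ with multiplicity exactly $1$. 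In the latter case the decomposition $V=W_a\perp W_{n-a}$ from the second type writes every eigenvalue of $s$ inside $O_a\cup O_b$ with uniform multiplicities $a/|O_a|$ and $(n-a)/|O_b|$; since $\mu_0$ is $\sigma$-fixed it can only appear in $O_a$ or in $O_b$ as a singleton orbit, which forces $s$ to be scalar on $W_a$ or on $W_{n-a}$ and so $\mu_0$ to occur with multiplicity at least $a\ge 2$ or at least $n-a\ge 2$, contradicting the multiplicity $1$. In every case $s$ is scalar, i.e. central in $GU_n(q)$, which is exactly what was required.

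The step I expect to be the real obstacle is the dictionary of the second paragraph: proving, uniformly in the parities of the parts $b_i$, that membership in a conjugate of the torus of type $\mu$ forces the stated eigenvalue shape. The Coxeter torus of $GU_b(q)$ behaves somewhat differently according as $b$ is even or odd, and one must carefully track how the Hermitian form pairs the $\mathbb{F}_{q^{2}}[s]$-irreducible constituents of $V$, since a self-dual constituent may occupy a non-degenerate subspace on its own while a non-self-dual one must be matched with its dual. Once that bookkeeping is in hand, the two combinatorial deductions above are short, and the argument recovers \cite{Tiep Shalev warring}, 2.3.2.
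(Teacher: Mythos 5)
The paper does not prove this proposition at all: it is quoted verbatim from (\cite{Tiep Shalev warring}, 2.3.2), so there is no internal proof to compare against. Your blind reconstruction is, however, correct, and it is essentially the argument used in that source: reduce weak orthogonality to the statement that a semisimple element of $GU_n(q)$ lying in maximal tori of both types must be scalar (legitimate, since the centre lies in every maximal torus and the preimage in $GU_n(q)$ of a dual torus of type $\mu$ in $PGU_n(q)$ is the $GU$-torus of type $\mu$), and then argue on the spectrum: each cyclic factor of order $q^{b}-(-1)^{b}$ contributes a single orbit under $\lambda\mapsto\lambda^{-q}$ with uniform multiplicity, a part equal to $1$ contributes a $\sigma$-fixed eigenvalue $\mu_0$ with $\mu_0^{q+1}=1$, and the two multiplicity counts you run (orbit equal to $\{\mu_0\}$ for the pair $(T_n,T_{1,a,n-1-a})$; multiplicity $1$ versus $\min(a,n-a)\geq 2$ for the pair $(T_{1,n-1},T_{a,n-a})$) do force $s$ to be central. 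The only part you leave as "bookkeeping" -- that for $b$ even the Singer-type torus $GL_1(q^{b})\le GL_{b/2}(q^{2})\le GU_b(q)$ acts on its non-degenerate $b$-space with eigenvalues equal to the $\sigma$-orbit of $\lambda$ (the $\mathbb{F}_{q^{2}}$-conjugates of $\lambda$ together with those of $\lambda^{-q}$, which either coincide or are disjoint, giving uniform multiplicity $b/|O|$ in both cases), and similarly for $b$ odd via the norm-one torus in $\mathbb{F}_{q^{2b}}^{*}$ -- does check out as you anticipate, so the gap you flag is genuinely routine. The net effect is that your proposal supplies a correct proof of a statement the paper only cites.
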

Here is a result concerning the vanishing of characters on weakly
orthogonal elements.
\begin{prop}
(\cite{Tiep Shalev warring}, 2.2.2).\label{prop:weakly orthogonal implies only unipotent.}
If $x$ and $y$ are weakly orthogonal regular semisimple elements
of $G$, and $\chi$ is an irreducible character of $G$ such that
$\chi(x)\chi(y)\neq0$, then $\chi$ is unipotent. 
\end{prop}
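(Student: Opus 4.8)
The plan is to translate the non-vanishing of $\chi$ on the semisimple elements $x$ and $y$ into two Lusztig series, each of which must contain $\chi$, and then to play these off against one another using the definition of weak orthogonality.

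First I would apply Lemma~\ref{lem:G and M lemma on the existence of a character in a non trivial Lusztig series} twice. Since $x$ is semisimple and $\chi(x)\neq 0$, there is a maximal torus $T_{1}\ni x$ of $G$ and a semisimple element $s_{1}\in T_{1}^{*}\leq G^{*}$ with $\chi\in\mathcal{E}(G,(s_{1}))$. Because $x$ is regular it lies in a unique maximal torus of $\boldsymbol{G}$, so $T_{1}=C_{G}(x)$ and $T_{1}^{*}$ is, by construction, a torus dual to $C_{G}(x)$. Likewise $\chi(y)\neq 0$ gives a maximal torus $T_{2}=C_{G}(y)\ni y$ and a semisimple $s_{2}\in T_{2}^{*}$ with $\chi\in\mathcal{E}(G,(s_{2}))$, where $T_{2}^{*}$ is dual to $C_{G}(y)$.

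Next I would use that the Lusztig series $\mathcal{E}(G,(s))$, as $(s)$ runs over the $G^{*}$-classes of semisimple elements, partition $\mathrm{Irr}(G)$ (see \cite{Digne and Michel textbook}). Since $\chi$ lies in $\mathcal{E}(G,(s_{1}))\cap\mathcal{E}(G,(s_{2}))$, the classes $(s_{1})$ and $(s_{2})$ coincide, so $s_{2}=gs_{1}g^{-1}$ for some $g\in G^{*}=\boldsymbol{G}^{*F^{*}}$. Now set $\boldsymbol{S}_{1}^{*}:=g\boldsymbol{T}_{1}^{*}g^{-1}$ and $\boldsymbol{S}_{2}^{*}:=\boldsymbol{T}_{2}^{*}$; both are $F^{*}$-stable (the former because $g$ is $F^{*}$-fixed and $\boldsymbol{T}_{1}^{*}$ is $F^{*}$-stable), and they are $F^{*}$-stable conjugates of the tori dual to $C_{G}(x)$ and $C_{G}(y)$ respectively. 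Moreover $s_{2}\in\boldsymbol{S}_{1}^{*F^{*}}\cap\boldsymbol{S}_{2}^{*F^{*}}$. By hypothesis $C_{G}(x)$ and $C_{G}(y)$ are weakly orthogonal, so this intersection is trivial; hence $s_{2}=1$, and therefore $s_{1}=1$. Thus $\chi\in\mathcal{E}(G,(1))$, i.e.\ $\chi$ is unipotent.

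I do not expect a genuine obstacle here, since no character estimates are needed: the argument is essentially a bookkeeping exercise with the torus/dual-torus dictionary. The two points needing care are the identification of the torus $T_{1}^{*}$ produced by Lemma~\ref{lem:G and M lemma on the existence of a character in a non trivial Lusztig series} with (the dual of) $C_{G}(x)$ --- which is where regularity of $x$ is used, to force $T_{1}=C_{G}(x)$ --- and the verification that $s_{2}$ is literally a common $F^{*}$-fixed point of two tori of the shape occurring in the definition of weak orthogonality, which is what lets the hypothesis be applied.
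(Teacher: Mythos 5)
Your argument is correct and is essentially the proof of this result: the paper itself gives no proof, quoting it from (\cite{Tiep Shalev warring}, 2.2.2), and the argument there is exactly your bookkeeping — non-vanishing on each regular semisimple element places $\chi$ in a Lusztig series attached to an element of the corresponding dual torus (as in Lemma~\ref{lem:G and M lemma on the existence of a character in a non trivial Lusztig series}), the partition of $\text{Irr}(G)$ into rational Lusztig series forces the two semisimple labels to be $G^{*}$-conjugate, and weak orthogonality then forces the label to be $1$. The two points you flag (uniqueness of the maximal torus containing a regular semisimple element, and exhibiting the common $F^{*}$-fixed point inside $F^{*}$-stable conjugates of the dual tori) are handled correctly.
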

It is thus useful when computing structure constants to consider weakly
orthogonal elements as this allows for a reduction down to the unipotent
characters, of which the most is known. Recall from Section \ref{subsec:Preliminary-Material for general Lie Type Groups}
that the unipotent characters are exactly the elements of the Lusztig
series $\mathcal{E}(G,(1))$. 

For classical groups we have a combinatorial description for such
characters (\cite{Carter rep theory}, 13.8). In particular, for a
root system of type $A_{n-1}$, unipotent characters correspond to
partitions of $n$. 

Let $\lambda=(\lambda_{1}\geq\dots\geq\lambda_{k})$ be a partition
of $n$ and let $[\lambda]$ be the associated Young diagram. Denote
the set of hooks of $[\lambda]$ by $H$, and the length of a hook
$h\in H$ by $l(h)$. Also, define $a(\lambda):=\sum_{i<j}\text{min}(\lambda_{i},\lambda_{j})$
and the integral polynomial 
\begin{equation}
\rho_{\lambda}(x)=\frac{(x^{n}-1)(x^{n-1}-1)\dots(x-1)}{\prod_{h\in H}(x^{l(h)}-1)}\cdot x^{a(\lambda)}.\label{eq:degree formula of unipotent characters}
\end{equation}
 The degree of the corresponding unipotent character denoted $\chi_{\lambda}\in$Irr$(SU_{n}(q))$,
is then given by $\chi_{\lambda}(1)=\pm\rho_{\lambda}(-q)$, where
the sign is chosen such that the highest coefficient is positive.
For example if $\lambda=(n-1,1)$ where $n$ is odd, then $\chi_{\lambda}(1)=q(q^{n-1}-1)/(q+1)$.
Note also that the unipotent character of $SL_{n}(q)$ associated
to $\lambda$ has degree $\rho_{\lambda}(q)$.

Recall from Proposition \ref{prop:(Jordan-decomposition-of characters and degree formula}
that a general character $\chi\in\text{Irr}(G)$ corresponds to a
member of a Lusztig series $\mathcal{E}(C_{\boldsymbol{G}^{*}}(s)^{F^{*}},1)$,
where $s\in G^{*}$ is semisimple. Centralizers of semisimple elements
are well understood in unitary groups (\cite{Carter Centralizers paper},
Sec. 8) and we shall to only consider the case where $C_{\boldsymbol{G}^{*}}(s)$
is connected. In particular, connected centralizers in $GU_{n}(q)$
of semisimple elements are direct products of factors of type $GL_{n_{i}}(q^{a_{i}})$
and $GU_{n_{i}}(q^{a_{i}})$. We then consider the image of these
products in $G^{*}=PGU(q)$. A unipotent character of $C_{\boldsymbol{G}^{*}}(s)^{F^{*}}$
restricted to one of these components then corresponds to a partition
$\lambda^{i}\vdash n_{i}$ as explained above. Hence letting these
partitions form the entries of a multi-partition denoted $\overline{\lambda}$,
it follows that a given $\chi\in\text{Irr}(G)$ is completely determined
by $s$ and $\overline{\lambda}$. We therefore adopt the notation
$\chi=\chi_{s,\overline{\lambda}}$ to refer to this character. We
however follow the usual notation for unipotent characters that we
have already used above. That is, when $s=1$ we write $\chi_{1,\overline{\lambda}}=\chi_{\lambda}$.

Note that the hooks contained in a diagram $[\lambda]$ match those
of the diagram of the conjugate partition $[\lambda']$. Consequently,
the degrees $\chi_{\lambda}(1)$ and $\chi_{\lambda'}(1)$ differ
only by a scalar factor $q^{a(\lambda')-a(\lambda)}$. 

We can further relate pairs of characters in Irr$(G)$ using the notion
of Alvis-Curtis duality. For a reductive group $\boldsymbol{G}$,
the Alvis-Curtis duality functor $D_{\boldsymbol{G}}$ sends any $\chi\in$Irr$(\boldsymbol{G}^{F})$
to another irreducible character of $\boldsymbol{G}^{F}$ up to sign
(\cite{Digne and Michel textbook}, 8.8). This duality is particularly
important as we will often only have to find character values for
one character in each pairing.
\begin{lem}
\label{lem dual charracters fixed on reg sem si-1}Let $g\in G$ be
regular semisimple. Then $|\chi_{s,\overline{\lambda}}(g)|=|D_{\boldsymbol{G}}(\chi_{s,\overline{\lambda}})(g)|$. 
\end{lem}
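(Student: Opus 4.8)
The plan is to reduce the identity to the way Alvis--Curtis duality interacts with Deligne--Lusztig induction from maximal tori, together with the standard fact that a class function on $G$ is determined on the regular semisimple elements by its uniform projection.

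I would first record two ingredients. (i) For any $F$-stable maximal torus $\boldsymbol{T}\le\boldsymbol{G}$ and any $\theta\in\mathrm{Irr}(\boldsymbol{T}^{F})$ one has $D_{\boldsymbol{G}}\bigl(R_{T}^{G}(\theta)\bigr)=\varepsilon_{\boldsymbol{G}}\varepsilon_{\boldsymbol{T}}\,R_{T}^{G}(\theta)$, where $\varepsilon_{\boldsymbol{H}}=(-1)^{r_{\boldsymbol{H}}}$ for $r_{\boldsymbol{H}}$ the $\mathbb{F}_{q}$-rank of a connected reductive group $\boldsymbol{H}$; this follows from the commutation rule $D_{\boldsymbol{G}}\circ R_{\boldsymbol{L}}^{\boldsymbol{G}}=\varepsilon_{\boldsymbol{G}}\varepsilon_{\boldsymbol{L}}\,R_{\boldsymbol{L}}^{\boldsymbol{G}}\circ D_{\boldsymbol{L}}$ (see \cite{Digne and Michel textbook}, Ch.~8, and \cite{Carter rep theory}) applied with $\boldsymbol{L}=\boldsymbol{T}$, since $D_{\boldsymbol{T}}=\mathrm{id}$ on a torus; the scalar $\varepsilon_{\boldsymbol{G}}\varepsilon_{\boldsymbol{T}}$ depends only on the $G^{F}$-conjugacy class of $\boldsymbol{T}$. (ii) Because $D_{\boldsymbol{G}}$ commutes with $R_{\boldsymbol{T}}^{\boldsymbol{G}}$ and with ${}^{*}R_{\boldsymbol{T}}^{\boldsymbol{G}}$ for all $\boldsymbol{T}$, it commutes with the projection $p_{\mathrm{unif}}$ onto the subspace of uniform class functions (those spanned by the $R_{T}^{G}(\theta)$); hence $D_{\boldsymbol{G}}(\chi)^{\mathrm{unif}}=D_{\boldsymbol{G}}\bigl(\chi^{\mathrm{unif}}\bigr)$, where $\chi^{\mathrm{unif}}:=p_{\mathrm{unif}}(\chi)$.

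Now fix a regular semisimple $g\in G$ and put $\boldsymbol{T}_{g}=C_{\boldsymbol{G}}(g)$, a maximal torus of $\boldsymbol{G}$. Two consequences of the Deligne--Lusztig character formula are needed: $R_{T}^{G}(\theta)(g)=0$ unless $\boldsymbol{T}$ is $G^{F}$-conjugate to $\boldsymbol{T}_{g}$, and the value of any class function at $g$ equals the value of its uniform projection there (equivalently, the characteristic function of the class $g^{G}$ is uniform; cf.\ \cite{Digne and Michel textbook}). Writing $\chi^{\mathrm{unif}}=\sum_{(\boldsymbol{T},\theta)}c_{\boldsymbol{T},\theta}\,R_{T}^{G}(\theta)$ as a rational combination and applying (i) term by term, every summand that survives evaluation at $g$ — that is, every one with $\boldsymbol{T}\sim_{G^{F}}\boldsymbol{T}_{g}$ — is multiplied by the single sign $\varepsilon:=\varepsilon_{\boldsymbol{G}}\varepsilon_{\boldsymbol{T}_{g}}\in\{\pm1\}$. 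Therefore
\[
D_{\boldsymbol{G}}(\chi)(g)=D_{\boldsymbol{G}}(\chi)^{\mathrm{unif}}(g)=D_{\boldsymbol{G}}\bigl(\chi^{\mathrm{unif}}\bigr)(g)=\varepsilon\,\chi^{\mathrm{unif}}(g)=\varepsilon\,\chi(g),
\]
and taking absolute values with $\chi=\chi_{s,\overline{\lambda}}$ gives the lemma. Note that we never have to compute $\varepsilon$, nor decide which of $\pm D_{\boldsymbol{G}}(\chi_{s,\overline{\lambda}})$ is the genuine irreducible character.

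The step I expect to be the main obstacle is pinning down the input to (ii) on regular semisimple elements — locating and correctly invoking the statement that a character is determined at such $g$ by its uniform projection, and that $R_{T}^{G}(\theta)$ vanishes off the tori $G^{F}$-conjugate to $C_{\boldsymbol{G}}(g)$. Everything else is a short formal manipulation of the duality identity, and the signs are harmless because only the absolute value is asserted.
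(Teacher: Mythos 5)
Your argument is correct, and it even yields the sharper statement $D_{\boldsymbol{G}}(\chi)(g)=\varepsilon_{\boldsymbol{G}}\varepsilon_{\boldsymbol{T}_{g}}\,\chi(g)$ for an arbitrary class function $\chi$, but it reaches the lemma by a different route than the paper. The paper's proof (following the proof of Lemma 2.3 in \cite{Guralnick and Tiep}) is a two-line application of a ``local'' formula for the duality functor at semisimple elements (\cite{Digne and Michel textbook}, Cor.\ 8.16): since $g$ is regular semisimple, $C_{\boldsymbol{G}}(g)$ is a maximal torus, the corollary reduces $D_{\boldsymbol{G}}(\chi)(g)$ to $(D_{C_{\boldsymbol{G}}(g)}\circ\mathrm{Res})(\chi)(g)$ up to sign, and $D_{\boldsymbol{T}}=\mathrm{id}$ finishes it. You instead expand on the uniform space: you use $D_{\boldsymbol{G}}\circ R_{T}^{G}=\varepsilon_{\boldsymbol{G}}\varepsilon_{\boldsymbol{T}}R_{T}^{G}$, the fact that $D_{\boldsymbol{G}}$ commutes with the uniform projection (which needs self-adjointness of $D_{\boldsymbol{G}}$, or the expression of $p_{\mathrm{unif}}$ through $R_{T}^{G}\circ{}^{*}R_{T}^{G}$ --- worth saying explicitly), that characteristic functions of semisimple classes are uniform, and the vanishing of $R_{T}^{G}(\theta)$ at $g$ unless $\boldsymbol{T}$ is conjugate to $C_{\boldsymbol{G}}(g)$, so that all surviving terms carry one common sign. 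Both arguments rest on the same underlying phenomenon (duality acts by a sign governed by the centralizing torus at a regular semisimple element); the paper's citation is shorter and hides the mechanism, while your version is more self-contained and makes the sign $\varepsilon_{\boldsymbol{G}}\varepsilon_{\boldsymbol{T}_{g}}$ explicit, at the cost of having to invoke (and correctly locate) the uniformity of semisimple class functions and the vanishing statement, both of which are indeed standard in \cite{Digne and Michel textbook}.
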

\begin{proof}
This follows from part of the proof of (\cite{Guralnick and Tiep},
2.3) which we will summarise here. By the definition of the functor
$D_{\boldsymbol{G}}$ (\cite{Digne and Michel textbook}. 8.8) it
is clear that $D_{\boldsymbol{T}}(\lambda)=\lambda$ for any $F$-stable
torus $\boldsymbol{T}$ and $\lambda\in$Irr$(\boldsymbol{T}^{F})$.
Recall that as $g$ is regular semisimple, $C_{\boldsymbol{G}}(g)^{F}=T_{1}$
for some maximal torus $T_{1}<G$. Applying this and (\cite{Digne and Michel textbook},
Cor 8.16), we see that 
\[
|D_{\boldsymbol{G}}(\chi_{s,\overline{\lambda}})(g)|=|(D_{C_{\boldsymbol{G}}(g)}\circ\text{Res}_{C_{\boldsymbol{G}}(g)^{F}}^{G})(\chi_{s,\overline{\lambda}})(g)|=|\chi_{s,\overline{\lambda}}(g)|.
\]
\end{proof}
Recent work of Guralnick, Larsen and Tiep \cite{GLT Preprint} descibes
the Alvis-Curtis duality of unitary groups.
\begin{lem}
(\cite{GLT Preprint}, 5.3).\label{lem tiep and bob result} Let $\tilde{G}=GU_{n}(q)$
and $\chi_{s,\overline{\lambda}}\in\text{Irr}(\tilde{G})$, where
$s\in\tilde{G}^{*}$ is semismple and $\overline{\lambda}=(\lambda^{1},\lambda^{2},\dots)$
is a multi-partition, as described above. Let $\overline{\mu}=((\lambda')^{1},(\lambda')^{2},\dots)$.
Then $D_{\boldsymbol{G}}(\chi_{s,\overline{\lambda}})=\chi_{s,\overline{\mu}}$.
\end{lem}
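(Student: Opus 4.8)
The plan is to reduce the statement to the case of unipotent characters by means of the compatibility of Alvis--Curtis duality with Jordan decomposition, and then to treat separately the general linear and general unitary factors of the relevant centraliser.

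First I would invoke the theorem (see \cite{Digne and Michel textbook}) that $D_{\boldsymbol{G}}$ commutes, up to sign, with the Jordan decomposition of characters: for $\chi\in\mathcal{E}(\tilde{G},(s))$ one has $\psi_{s}\bigl(D_{\boldsymbol{G}}(\chi)\bigr)=\pm\,D_{C}\bigl(\psi_{s}(\chi)\bigr)$, where $C:=C_{\boldsymbol{G}^{*}}(s)^{F^{*}}$ and $D_{C}$ is the Alvis--Curtis duality of $C$. Since $\tilde{G}=GU_{n}(q)$ has an ambient algebraic group with connected centre, $C_{\boldsymbol{G}^{*}}(s)$ is connected reductive, so $C$ is a direct product of factors of type $GL_{m_{i}}(q^{a_{i}})$ and $GU_{m_{i}}(q^{a_{i}})$, and under $\psi_{s}$ the character $\chi_{s,\overline{\lambda}}$ corresponds to the unipotent character of $C$ whose components are the $\chi_{\lambda^{i}}$. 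As Alvis--Curtis duality of a direct product is the external tensor product of the dualities of its factors, and conjugation of a multi-partition is performed componentwise, it suffices to prove that for $H\in\{GL_{m}(Q),\,GU_{m}(Q)\}$ and any partition $\lambda\vdash m$ one has $D_{\boldsymbol{H}}(\chi_{\lambda})=\pm\chi_{\lambda'}$, the sign being fixed by the requirement that the right-hand side be a genuine irreducible character, which is exactly the normalisation implicit in the statement.

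For $H=GL_{m}(Q)$ this is classical: every unipotent character lies in the principal Harish--Chandra series, the standard bijection between $\mathcal{E}(GL_{m}(Q),(1))$ and $\mathrm{Irr}(S_{m})$ intertwines $D_{\boldsymbol{H}}$ with the operation of tensoring by the sign character on $\mathrm{Irr}(S_{m})$, and tensoring by $\mathrm{sgn}$ sends the irreducible $S_{m}$-character labelled by $\lambda$ to that labelled by $\lambda'$. For $H=GU_{m}(Q)$ I would instead argue through Deligne--Lusztig characters of maximal tori. One knows that $D_{\boldsymbol{H}}\bigl(R_{T_{w}}^{H}(1)\bigr)=\pm R_{T_{w}}^{H}(1)$ with a sign depending only on $w$, while the Lusztig--Srinivasan description of the unipotent characters of $GU_{m}(Q)$ shows that the transition matrix expressing the $R_{T_{w}}^{H}(1)$ in the basis $\{\chi_{\lambda}\}_{\lambda\vdash m}$ is, up to signs depending only on $w$, the character table of $S_{m}$, exactly as in the general linear case; this is the Ennola phenomenon in type $A$, where all Lusztig families are singletons and the attached Fourier matrices are trivial. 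Hence $D_{\boldsymbol{H}}$ acts on unipotent characters of $GU_{m}(Q)$ by the same combinatorial rule $\lambda\mapsto\lambda'$ as for $GL_{m}(Q)$, again up to sign. Reassembling through the product decomposition of $C$ and through the Jordan decomposition, and then fixing every sign so that $D_{\boldsymbol{G}}(\chi_{s,\overline{\lambda}})$ is an actual character, yields $D_{\boldsymbol{G}}(\chi_{s,\overline{\lambda}})=\chi_{s,\overline{\mu}}$.

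The hard part will be the general unitary case, and within it the sign bookkeeping: one must verify that the $w$-dependent sign on $D_{\boldsymbol{H}}\bigl(R_{T_{w}}^{H}(1)\bigr)$ together with the Lusztig--Srinivasan signs produce precisely the conjugation $\lambda\mapsto\lambda'$ with no residual sign once the natural normalisation is imposed, and that the sign ambiguities in the Jordan-decomposition step and in the external tensor product over the factors of $C$ cancel as well. The one structural simplification that makes everything go through cleanly is that $\tilde{G}^{*}=GU_{n}(q)$ has connected centre, so no unipotent characters of disconnected groups, and none of the subtleties that accompany them, ever enter the argument.
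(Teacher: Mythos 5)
The paper does not actually prove this statement: it is quoted verbatim from the preprint of Guralnick--Larsen--Tiep (\cite{GLT Preprint}, 5.3), so your argument is being compared with a citation rather than with a written proof. That said, your outline is essentially the standard way such a result is established, and it is sound: reduce via Jordan decomposition to unipotent characters of the product of $GL$/$GU$ factors of $C_{\boldsymbol{G}^{*}}(s)^{F^{*}}$, handle $GL_m$ through the principal-series/Hecke-algebra picture where duality is tensoring with sign on $\mathrm{Irr}(S_m)$, and handle $GU_m$ through the fact that in type $A$ all unipotent characters are uniform (Lusztig--Srinivasan), so $D_{\boldsymbol{H}}$ can be computed on the $R_{T_w}(1)$ where it acts by $\epsilon_H\epsilon_{T_w}$, and the resulting twist by $\mathrm{sgn}(w)$ converts $\chi^{\lambda}$ into $\chi^{\lambda'}$. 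Two points deserve more care than your sketch gives them. First, the compatibility $\psi_s\circ D_{\boldsymbol{G}}=\pm D_{C}\circ\psi_s$ is a genuine theorem, not an immediate consequence of the definition of $\psi_s$; it is not stated as such in \cite{Digne and Michel textbook}, and you should either cite the unicity results for Jordan decomposition in the connected-centre case or, more simply, bypass it entirely by noting that in type $A$ (split or twisted, connected centre) \emph{every} irreducible character is uniform, so $D_{\boldsymbol{G}}(\chi_{s,\overline{\lambda}})$ can be computed directly from the expansion in Deligne--Lusztig characters $R_{T}(\theta)$ -- which is in effect what the cited source does. Second, the statement as used here asserts equality of characters, whereas your computation yields it only up to sign; this is harmless because the paper's convention for $D_{\boldsymbol{G}}$ is already ``up to sign'' and the only application (via Lemma \ref{lem dual charracters fixed on reg sem si-1}) involves absolute values, but the sign normalisation you defer is genuinely the only delicate content of the lemma, so it should be carried out or explicitly referenced rather than asserted to cancel.
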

As we are working in $G=\boldsymbol{G}^{F}=SU_{n}(q)$, we consider
the extension of characters to $\tilde{G}=\tilde{\boldsymbol{G}}^{F}=GU_{n}(q)$
before applying Lemmas \ref{lem dual charracters fixed on reg sem si-1}
and \ref{lem tiep and bob result}. Recall that $G^{*}=PGU_{n}(q)$
and $\tilde{G}^{*}=GU_{n}(q)$, and let $\pi:\tilde{G}^{*}\rightarrow G^{*}$
denote the usual projection map. For a semisimple element $s\in G^{*}$,
there exists $\tilde{s}\in\tilde{G}^{*}$ such that $\pi(\tilde{s})=s$
and we have the Lusztig series $\mathcal{E}(G,(s))\subseteq\text{Irr}(G)$
and $\mathcal{E}(\tilde{G},(\tilde{s}))\subseteq\text{Irr}(\tilde{G})$
. Furthermore,
\[
\mathcal{E}(G,(s))=\{\chi\in\text{Irr}(G)\,|\,\chi\,\text{occurs in}\,\tilde{\chi}|_{G}\,\text{for some}\,\tilde{\chi}\in\mathcal{E}(\tilde{G},(\tilde{s}))\}.
\]

To check if $\text{\ensuremath{\chi\in\text{Irr}}}(G)$ extends to
a a character of $\tilde{G}$ we use the following result.
\begin{lem}
(\cite{Isaacs Character Theory of finite groups}, 11.22). \label{lem:cyclic extensions and extendible characters}Let
$H$ be a finite group and let $N\trianglelefteq G$ such that $G/N$
is cyclic. Let $\chi\in\text{Irr}(N)$ be $G$-invariant. Then $\chi$
is extendible to $G$.
\end{lem}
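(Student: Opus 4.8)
The plan is to prove this classical extension result by lifting $\chi$ to a matrix representation and extending the representation itself. Fix an irreducible representation $\rho\colon N\to GL_d(\mathbb{C})$ affording $\chi$, where $d=\chi(1)$. Since $G/N$ is cyclic, choose $g\in G$ with $G=\langle N,g\rangle$, put $n=|G/N|$, and observe that $g^n\in N$ and that every element of $G$ has a unique expression $g^i x$ with $0\le i<n$ and $x\in N$.

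First I would use that $\chi$ is $G$-invariant: the conjugate $x\mapsto\rho(gxg^{-1})$ affords $\chi^g=\chi$, hence is equivalent to $\rho$, so there is an invertible matrix $P$ with $\rho(gxg^{-1})=P\rho(x)P^{-1}$ for all $x\in N$; iterating, $\rho(g^{-j}xg^{j})=P^{-j}\rho(x)P^{j}$. Since $g^n\in N$, both $P^n$ and $\rho(g^n)$ implement the automorphism of $\rho$ given by conjugation by $g^n$, so by Schur's Lemma applied to the irreducible $\rho$ we get $P^{n}\rho(g^n)^{-1}=\lambda I$ for some $\lambda\in\mathbb{C}^{\times}$. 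Replacing $P$ by $\mu P$ with $\mu^{n}=\lambda^{-1}$, I may and do assume $P^{n}=\rho(g^n)$.

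Now define $\hat\rho\colon G\to GL_d(\mathbb{C})$ by $\hat\rho(g^i x)=P^i\rho(x)$ for $0\le i<n$, $x\in N$; this is well-defined by uniqueness of the decomposition and restricts to $\rho$ on $N$. It remains to verify that $\hat\rho$ is a homomorphism: writing $g^i x\, g^j y=g^{i+j}(g^{-j}x g^{j})y$, setting $z=(g^{-j}x g^{j})y\in N$, and using the iterated conjugacy relation $\rho(g^{-j}xg^j)=P^{-j}\rho(x)P^j$, one reduces to checking $\hat\rho(g^{i+j}z)=P^{i+j}\rho(z)$, which is the definition when $i+j<n$ and, when $i+j\ge n$, follows by pulling a factor $g^n$ out into $N$ and invoking $P^n=\rho(g^n)$. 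Finally, the character $\tilde\chi$ afforded by $\hat\rho$ satisfies $\tilde\chi|_N=\chi$, and since $\chi$ is irreducible so is $\tilde\chi$; hence $\chi$ extends to $G$.

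The step I expect to be the main obstacle is the homomorphism check across the "period" of the cyclic quotient, and in particular the necessity of the normalization $P^{n}=\rho(g^n)$: if one only knew $P^{n}\in\mathbb{C}^{\times}\rho(g^n)$, the map $\hat\rho$ would fail to be multiplicative whenever the $g$-exponents add up to at least $n$. What makes the normalization available is precisely Schur's Lemma together with the existence of $n$-th roots in $\mathbb{C}$.
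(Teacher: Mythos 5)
Your argument is correct: the intertwiner $P$ with $\rho(gxg^{-1})=P\rho(x)P^{-1}$, the Schur-normalization $P^{n}=\rho(g^{n})$ (achievable because $\mathbb{C}$ contains $n$-th roots), and the definition $\hat\rho(g^{i}x)=P^{i}\rho(x)$ together with the two-case homomorphism check do give a genuine extension of $\chi$, and irreducibility of the extension is automatic since its restriction to $N$ is already irreducible. Note, however, that the paper offers no proof of this statement at all -- it is quoted as Corollary 11.22 of Isaacs, where the result is obtained from the general theory of character extensions via factor sets: the obstruction to extending a $G$-invariant irreducible character lives in $H^{2}(G/N,\mathbb{C}^{\times})$, and this vanishes for cyclic quotients. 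Your proof is the concrete, matrix-level version of exactly that vanishing statement: the freedom to rescale $P$ by an $n$-th root of $\lambda^{-1}$ is what kills the potential cocycle obstruction. What the cohomological route buys is generality (it handles the other standard extension criteria, e.g. coprimality of $|G/N|$ and $o(\chi)\chi(1)$, in one framework); what your direct construction buys is a short, self-contained and entirely elementary proof that needs nothing beyond Schur's Lemma.
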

To obtain information about the splitting of a character $\tilde{\chi}\in\mathcal{E}(\tilde{G},(\tilde{s}))$
upon restriction from $\tilde{G}$ to $G$, we study the centralisers
$\tilde{H}=C_{\tilde{\boldsymbol{G}}^{*}}(\tilde{s})$ and $H=C_{\boldsymbol{G}^{*}}(s)$.
Recall from Proposition \ref{prop:(Jordan-decomposition-of characters and degree formula}
that $\psi_{s}(\tilde{\chi})$ is a unipotent character of $\tilde{H}^{F}$.
Let $\Gamma$ denote the stabiliser of $\psi_{s}(\tilde{\chi})$ under
the action of $H^{F}/H^{\circ F}$. Then $\tilde{\chi}|_{G}$ splits
into $|\Gamma|$ constituents. We will only consider Lusztig series
$\mathcal{E}(G,(s))$ where $H=C_{\boldsymbol{G}^{*}}(s)$ is connected
and so the characters restrict irreducibly. 

\subsection{$G=SU_{n}(q)$, n even\label{subsec:n-even}}

In this section we complete the proof of Theorem \ref{thm:inv wwidth of PSU}
for even dimensions $n\geq4$. 
\begin{prop}
\label{prop:picking x and y}Let $G=SU_{n}(q)$, $n\geq4$ even. There
exist regular semisimple elements $x\in T_{n}$ and $y\in T_{1,1,n-2}$
such that $\overline{x}$ and $\overline{y}$ are strongly real in
the simple group $\overline{G}=PSU_{n}(q)$.
\end{prop}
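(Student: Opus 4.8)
The plan is to exhibit $x$ and $y$ explicitly and then verify strong reality of the projections using the standard "swap-the-two-isotropic-blocks" trick, exactly as in the proof of Theorem~\ref{thm:Sp 4 reflectional} for symplectic groups. First I would choose $x \in T_n$ to be a generator of the cyclic torus of order $(q^n-1)/(q+1)$ (a Singer-type cycle, regular semisimple since $n$ is even and the torus is self-centralising). For strong reality of $\overline{x}$ I would invoke \cite{Malle and Saxl and Weigel}, 2.3, just as in Theorem~\ref{thm:Sp 4 reflectional}: the element of the maximal torus $T_n$ is inverted by a suitable element of the extended Weyl group, and one checks this survives passage to $\overline{G} = PSU_n(q)$. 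One must be slightly careful here about centre and the distinction between $SU_n$, $GU_n$ and $PSU_n$, but the inverting element can be arranged to act as an involution modulo the centre.

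For $y \in T_{1,1,n-2}$ the natural approach is to realise $T_{1,1,n-2}$ inside a subgroup of the shape $GU_1(q) \times GU_1(q) \times GU_{n-2}(q)$, and then further to realise the $GU_{n-2}(q)$ factor via a totally isotropic decomposition: pick a hyperbolic pair-structure on an $(n-2)$-dimensional nondegenerate subspace $W$, write $GU_{n-2}(q) \supset H \cong GL_{(n-2)/2}(q^2)$ as the stabiliser of a pair $V_e, V_f$ of complementary totally isotropic halves of $W$, and let the $T_{n-2}$-part of $y$ be a Singer cycle $\mathrm{diag}(B, B^{-*})$ of $GL_{(n-2)/2}(q^2)$ (where $-*$ is the conjugate-transpose-inverse appropriate to the Hermitian form). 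The map $t$ sending $e_i \mapsto f_i$, $f_i \mapsto -e_i$ (and doing the analogous involution on the $2$-dimensional complement carrying the two $T_1$ factors, or simply fixing/negating those vectors appropriately so that the whole thing lies in $SU_n$ and is an involution mod centre) conjugates $\mathrm{diag}(B, B^{-*})$ to $\mathrm{diag}(B^{-*}, B)$; so strong reality of $\overline y$ reduces to showing that the Singer cycle $B \in GL_{(n-2)/2}(q^2)$ is conjugate to a matrix that is symmetric with respect to the relevant Hermitian-type form, i.e. to a $\bar B = B^{-*}$-type statement. This is precisely the kind of statement supplied by \cite{Brawley Teitloff}, Thm.~7 (companion matrix of a non-square polynomial is similar to a symmetric matrix), adapted to the Hermitian setting; alternatively one checks directly that a Singer cycle of $GL_m(q^2)$ is inverted by the field automorphism composed with transpose-inverse, i.e. is real, and lifts this to strong reality.

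The main obstacle I anticipate is bookkeeping with the centre and the form: one must ensure simultaneously that (a) the chosen torus generators are genuinely \emph{regular} semisimple in $G = SU_n(q)$ (not just in $GU_n$), which forces the eigenvalue/Frobenius-orbit data associated to the partitions $(n)$ and $(1,1,n-2)$ to be chosen so that centralisers are exactly the tori; (b) the inverting elements $t$ really lie in $SU_n(q)$ (determinant condition) and have order dividing $2$ modulo $Z(G)$, which may require multiplying $t$ by a suitable scalar or diagonal element; and (c) the reduction to "companion matrix $\sim$ symmetric matrix" is stated over the right field with the right form, so that \cite{Brawley Teitloff} genuinely applies. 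Once these are handled, the conclusion $\mathrm{iw}(\overline x) = \mathrm{iw}(\overline y) = 2$ is immediate, and feeds directly into the weak-orthogonality computation of the following sections (Proposition~\ref{prop:().-wo tori in su} gives that $T_n$ and $T_{1,1,n-2}$—being of the form $T_{1,a,n-1-a}$ with $a=1$—are weakly orthogonal, which is why this particular pair is chosen).
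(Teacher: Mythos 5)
Your construction fails at exactly the point where the unitary case differs from the symplectic proof you are imitating. In $Sp_{2n}(q)$ the longest Weyl element is $-1$, so every maximal torus is inverted by an element of its normaliser; in type $A_{n-1}$ this is false, and for $T_{n}\le SU_{n}(q)$ ($n$ even) the normaliser $N_{G}(T_{n})$ acts on $T_{n}$ (a subgroup of $\mathbb{F}_{q^{n}}^{\times}$ of order $(q^{n}-1)/(q+1)$) only through the cyclic group generated by $t\mapsto t^{-q}$, which contains no inversion. Concretely, a \emph{generator} $x$ of $T_{n}$ has eigenvalues $\lambda^{q^{2i}},\,\lambda^{-q^{2i+1}}$ ($0\le i<n/2$) with $\lambda$ of order $(q^{n}-1)/(q+1)$, and $\overline{x}$ real in $PSU_{n}(q)$ would force $x^{g}=x^{-1}\omega I$ with $\omega$ of order dividing $\gcd(n,q+1)$, hence $\omega=\lambda^{1+q^{2i}}$ or $\omega=\lambda^{1-q^{2i+1}}$ for some $i$; a gcd computation shows these elements have order far exceeding $\gcd(n,q+1)$ (e.g.\ for $n=8$ and every $q$), so your $\overline{x}$ is in general not even real, let alone strongly real, and the appeal to \cite{Malle and Saxl and Weigel}, 2.3 "as in Theorem~\ref{thm:Sp 4 reflectional}" does not apply. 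The same defect kills your $y$: inverting $\mathrm{diag}(B,\bar{B}^{-T})$ by the swap requires $B$ similar to a Hermitian matrix, but a Hermitian matrix has characteristic polynomial with coefficients in $\mathbb{F}_{q}$, whereas a Singer cycle of $GL_{(n-2)/2}(q^{2})$ has irreducible characteristic polynomial over $\mathbb{F}_{q^{2}}$ with roots of order $q^{n-2}-1$, which is not bar-invariant; there is no Hermitian analogue of \cite{Brawley Teitloff} that could apply, and the fallback claim that the Singer cycle is conjugate to $\bar{B}^{-T}$ is false by the same eigenvalue comparison.

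The fix, and the paper's actual argument, is to abandon generators: take $x\in T_{n}$ of \emph{prime} order $r=\mathrm{ppd}(q,n)$ if $n\equiv0\pmod4$ and $r=\mathrm{ppd}(q,n/2)$ if $n\equiv2\pmod4$, so that $r\mid q^{n/2}+(-1)^{n/2}$; such an $x$ is still regular (by the torus order formula) and lies in a subgroup $SL_{2}(q^{n/2})$ of $G$ (a field-extension subgroup $SU_{2}(q^{n/2})$ when $n\equiv2\pmod4$, or $SL_{2}(q^{n/2})<SL_{n/2}(q^{2})<SU_{n}(q)$ when $n\equiv0\pmod4$). In even characteristic $PSL_{2}(q^{n/2})=SL_{2}(q^{n/2})$ is strongly real; in odd characteristic one finds $g\in SL_{2}(q^{n/2})$ with $g^{2}=-1$ and $x^{g}=-x^{-1}$, and since $-I_{n}\in Z(SU_{n}(q))$ for $n$ even, $\overline{g}$ is an involution of $\overline{G}$ inverting $\overline{x}$. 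The element $y$ is then taken as $y_{1}y_{2}$ with $y_{1}\in T_{n-2}<SU_{n-2}(q)$ of analogous ppd order and $y_{2}=\mathrm{diag}(\mu,\mu^{-1})\in SU_{2}(q)$, inverted modulo the centre by $\mathrm{diag}(g_{1},g_{2})$ with $g_{2}=\bigl(\begin{smallmatrix}0&-1\\1&0\end{smallmatrix}\bigr)$. Your closing remark about weak orthogonality of $T_{n}$ and $T_{1,1,n-2}$ is correct, but note that the prime ppd order of $x$ and $y_{1}$ (not merely membership of these tori) is also what is used later, in Lemma~\ref{lem:reduction to 4 character lemma} and Proposition~\ref{prop:x and y cover the group}.
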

\begin{proof}
The maximal torus $T_{n}<G$ has order $\frac{q^{n}-1}{q+1}$ and
is cyclic (\cite{Gager Thesis}, 4.5). Choose $x\in T_{n}$ of prime
order $r$, where $r=$ ppd$(q,\,n)$ if $n\equiv0$ mod $4$ and
$r=$ ppd $(n/2,\,q)$ if $n\equiv2$ mod $4$. Note in both cases
$r$ divides $q^{\frac{n}{2}}+(-1)^{\frac{n}{2}}.$ Such primitive
prime divisors exist (see Definition \ref{def:ppd}) Evidently $x$
is regular as the order formula (\ref{eq:Unitary tori order}) prevents
it from lying in any other maximal torus. 

To show that $x$ is strongly real in $\overline{G}$ we look for
containment in a strongly real subgroup. First note that $G$ has
an $SL_{2}(q^{\frac{n}{2}})$ subgroup containing $x$. Indeed, when
$n\equiv2$ mod $4$, $SU_{2}(q^{\frac{n}{2}})\cong SL_{2}(q^{\frac{n}{2}})$
is a field extension subgroup of $SU_{n}(q),$ whereas if $n\equiv0$
mod $4$ we have the embedding $SL_{2}(q^{\frac{n}{2}})<SL_{\frac{n}{2}}(q^{2})<SU_{n}(q)$.

In even characteristic, $PSL_{2}(q^{\frac{n}{2}})=SL_{2}(q^{\frac{n}{2}})$
and this subgroup is strongly real by Theorem \ref{Thm: Strongly real G}.
In odd characteristic we can find $g\in SL_{2}(q^{\frac{n}{2}})$
such that $g^{2}=-1$ and $x^{g}=-x^{-1}$. It follows that $\overline{g}$
is an involution in $\overline{G}$ and hence $\overline{x}$ is strongly
real. 

A similar method is used for $y$ where now we want to pick a regular
element in a torus of size $|T_{1,1,n-2}|=(q^{n-2}-1)(q+1)$. We take
$y$ as the product of a regular element $y_{1}$ in the cyclic torus
$T_{n-2}<SU_{n-2}(q)$ and $y_{2}=$diag$(\mu,\mu^{-1})\in T_{1,1}\leq SU_{2}(q)$
where $\mu\neq\mu^{-1}$ and $\mu\overline{\mu}=1$. Here matrices
in $SU_{2}(q)$ are written with respect to an orthonormal basis. 

A regular element $y_{1}$ exists in $SU_{n-2}(q)$ exactly as we
picked $x$ above, and evidently $C_{SU_{2}(q)}(y_{2})=T_{1,1}$.
It follows that $C_{SU_{n}(q)}(y)=T_{1^{2},n-2}$ and so $y$ is regular.
As above, there exists an element $g_{1}\in SU_{n-2}(q)$ such that
$g_{1}^{2}=-1$ and $g_{1}y_{1}g_{1}^{-1}=-y_{1}^{-1}$. Let $g_{2}=\begin{pmatrix}0 & -1\\
1 & 0
\end{pmatrix}\in SU_{2}(q)$. Then it follows that $g:=$diag$(g_{1},g_{2})$ is an involution
in $\overline{G}$, conjugating $\overline{y}$ to its inverse. Hence
$\overline{y}$ is strongly real.
\end{proof}
For the remainder of this section, we fix the elements $x,y\in G=SU_{n}(q)$
($n\geq4$ and even) as in Proposition \ref{prop:picking x and y}.
We now calculate the normalised structure constants using the formula
given in Theorem \ref{thm:Burnsides formula}. Recall, we wish to
show that for arbitrary $g\in G\backslash Z(G)$,
\[
\sum_{\chi\in\text{Irr}(G)}\frac{\chi(x)\chi(y)\chi(g^{-1})}{\chi(1)}\neq0.
\]
In the following, let $\Phi_{k}(z)$ denote the usual $k^{\text{th}}$
cyclotomic polynomial in the variable $z$.
\begin{lem}
\label{lem:reduction to 4 character lemma}Let $G=SU_{n}(q)$ where
$n\geq4$ is even. Suppose $\chi$ is an irreducible character of
$G$ such that $\chi(x)\chi(y)\neq0$. Then $\chi$ is unipotent and
corresponds to a partition $\lambda\in\{(n),(1^{n}),(n-1,1),(2,1^{n-2})\}$.
\end{lem}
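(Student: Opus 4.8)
The plan is to combine the weak orthogonality of the chosen tori with the degree formula for unipotent characters to cut the list of surviving characters down to the four partitions claimed. First I would invoke Proposition \ref{prop:().-wo tori in su}: since $x\in T_n$ and $y\in T_{1,1,n-2}=T_{1,a,n-1-a}$ with $a=1$, the tori $T_n$ and $T_{1,1,n-2}$ are weakly orthogonal, so $x$ and $y$ are weakly orthogonal regular semisimple elements. By Proposition \ref{prop:weakly orthogonal implies only unipotent.}, any $\chi\in\operatorname{Irr}(G)$ with $\chi(x)\chi(y)\neq 0$ must be unipotent, hence $\chi=\chi_\lambda$ for some partition $\lambda\vdash n$.

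Next I would bring in the defect-zero machinery via Brauer's theorem (Theorem \ref{thm:Brauers theorem on prime defect}). Recall $x$ has order a primitive prime divisor $r$: either $r=\operatorname{ppd}(q,n)$ (when $n\equiv 0\bmod 4$) or a primitive prime divisor associated to $q^{n/2}+1$ (when $n\equiv 2\bmod 4$); in both cases $r\mid q^{n/2}+(-1)^{n/2}$ and $r\mid q^n-1$ but $r$ divides no smaller $q^k-1$. For $\chi_\lambda(x)\neq 0$ we need $\chi_\lambda$ to NOT have $r$-defect zero, i.e.\ $r$ must divide $|G|/\chi_\lambda(1)$. Using the hook-length degree formula (\ref{eq:degree formula of unipotent characters}), $\chi_\lambda(1)=\pm\rho_\lambda(-q)$ with $\rho_\lambda(x)=\prod_{i=1}^n(x^i-1)\big/\prod_{h\in H}(x^{l(h)}-1)\cdot x^{a(\lambda)}$, so the $r$-part of $|G|/\chi_\lambda(1)$ is governed by how many factors $x^i-1$ with $i$ a multiple of $n$ (equivalently, after evaluating at $-q$, how the $\Phi$-factors distribute) survive in the numerator versus the hook-length denominator. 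The point is that $\Phi_n(-q)$ (resp.\ the relevant cyclotomic factor) appears exactly once in the numerator of $\rho_\lambda$, and it is cancelled by the denominator precisely unless $[\lambda]$ contains a hook of length $n$ (the full rim) — which forces $\lambda$ to be a hook partition $(n-k,1^k)$. Running the analogous argument for $y$, whose support involves a ppd of $q^{n-2}-(-1)^{n-2}=q^{n-2}-1$, forces the hook structure of $\lambda$ to also accommodate a hook of length $n-2$ (or $n-1$), which, intersected with the hook-partition constraint, leaves only $\lambda\in\{(n),(1^n),(n-1,1),(2,1^{n-2})\}$.

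The bookkeeping I would carry out explicitly: for a hook partition $\lambda=(n-k,1^k)$ the set of hook lengths is $\{n\}\cup\{1,\dots,n-k-1\}\cup\{1,\dots,k\}$ (the arm plus leg of the corner, then the remaining rows/columns), so the denominator of $\rho_\lambda$ kills the factor $x^n-1$ from the numerator and leaves the $r$-part of $|G|/\chi_\lambda(1)$ equal to $(q^{n/2}+(-1)^{n/2})_r$ times a quantity prime to $r$ only when the leftover factors in the numerator, namely those $x^i-1$ with $n/2 < i < n$, are not entirely cancelled; a short count shows this holds exactly for $k\in\{0,1,n-2,n-1\}$, i.e.\ $\lambda$ or its conjugate is $(n)$ or $(n-1,1)$. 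One then checks $(n-1,1)'=(2,1^{n-2})$ and $(n)'=(1^n)$, giving the four partitions. The $y$-constraint is used to rule out the remaining hook partitions $(n-k,1^k)$ with $2\le k\le n-3$: for these, the ppd of $q^{n-2}-1$ dividing $o(y_1)$ forces a hook of length $n-2$ in $[\lambda]$, but a hook partition has hooks only of length $1,\dots,\max(n-k-1,k)$ besides the corner hook $n$, so $n-2$ is a hook length only if $k\le 1$ or $k\ge n-2$.

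The main obstacle will be the careful $r$-adic valuation count in the degree formula — tracking which cyclotomic factors $\Phi_d(-q)$ (for $d\mid$ the relevant exponents) cancel between the numerator $\prod_{i=1}^n(x^i-1)$ and the hook-length product $\prod_{h\in H}(x^{l(h)}-1)$, and doing so uniformly in the two parity cases $n\equiv 0,2\bmod 4$. I expect the cleanest route is to argue at the level of cyclotomic polynomials: $\Phi_e(-q)$ divides $\chi_\lambda(1)$'s cofactor $|G|/\chi_\lambda(1)$ to the right power iff the multiplicity of $e$ as $\{i : 1\le i\le n,\ e\mid 2i\text{ or }e\mid i\}$ exceeds its multiplicity among hook lengths, and then specialize $e$ to the order of $x$ (resp.\ the order of $y_1$) to read off the hook constraints. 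Once the hook constraints are in hand, the identification of the four partitions is immediate.
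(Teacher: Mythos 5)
Your skeleton is the paper's own: weak orthogonality of $T_{n}$ and $T_{1,1,n-2}$ together with Proposition \ref{prop:weakly orthogonal implies only unipotent.} reduces to unipotent $\chi_{\lambda}$, and Brauer's Theorem \ref{thm:Brauers theorem on prime defect} applied to the primitive prime divisors dividing $o(x)$ and $o(y_{1})$ forces $[\lambda]$ to contain an $n$-hook and an $(n-2)$-hook, which yields the four partitions. Your closing count -- a hook $(n-k,1^{k})$ has hook lengths $\{n\}\cup\{1,\dots,n-k-1\}\cup\{1,\dots,k\}$, so $n-2$ occurs iff $k\le 1$ or $k\ge n-2$ -- is exactly the right combination and is in fact more explicit than the paper's one-line assertion.

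Two pieces of your ``bookkeeping'' are wrong as stated, though. First, the middle claim that the $r$-adic count alone restricts to $k\in\{0,1,n-2,n-1\}$ is false: once $[\lambda]$ has an $n$-hook, the factor $x^{n}-1$ is the \emph{only} factor of numerator or hook-length denominator divisible by $r$ (for both parities one checks $r\nmid(-q)^{i}-1$ for $i<n$), so $\chi_{\lambda}(1)_{r}=1$ and \emph{every} hook partition passes the $x$-test; the factors $x^{i}-1$ with $n/2<i<n$ are irrelevant to the $r$-part, and the narrowing comes solely from the $y$-constraint, as your final sentence then correctly argues. Relatedly, when $n\equiv2\bmod 4$ the prime $r$ is a ppd of $q^{n/2}-1$, not of $q^{n/2}+1$, so your assertion that $r$ divides no smaller $q^{k}-1$ fails; the property you actually need -- among $(-q)^{i}-1$ with $1\le i\le n$ only $i=n$ is divisible by $r$ -- still holds, but requires this small parity check. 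Second, the $y$-argument (existence of the relevant ppd and the dichotomy $\rho_{\lambda}(-q)_{r'}\in\{1,|G|_{r'}\}$) needs $n\ge6$; the paper handles $n=4$ separately by noting that the listed partitions are precisely the partitions of $4$ containing a $4$-hook, so the $x$-condition alone suffices there. Your exclusion range $2\le k\le n-3$ is vacuous for $n=4$, so your conclusion survives, but you should say this explicitly rather than lean on a ppd of $q^{2}-1$, which need not exist.
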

\begin{proof}
By Proposition \ref{prop:().-wo tori in su}, $x$ and $y$ are weakly
orthogonal. It therefore follows from Proposition \ref{prop:weakly orthogonal implies only unipotent.}
that $\chi$ is unipotent. We first consider unipotent characters
such that $\chi_{\lambda}(x)\neq0$. Recall that $\chi_{\lambda}(1)=|\rho_{\lambda}(-q)|$
where the polynomial $\rho_{\lambda}(x)$ is given by the formula
(\ref{eq:degree formula of unipotent characters}), and $x$ has prime
order $r$, defined in the proof of Proposition \ref{prop:picking x and y}.
Clearly $r$ divides $\rho_{\lambda}(-q)$ if and only if $\Phi_{n}(-q)$
divides $\rho_{\lambda}(-q)$. It follows that either $\rho_{\lambda}(-q)_{r}=1$
or $\rho_{\lambda}(-q)_{r}=|G|_{r}$. But $\Phi_{n}(-q)$ divides
$\rho_{\lambda}(-q)$ if and only if $[\lambda]$ has no hooks of
length $n$. Hence $\chi_{\lambda}(x)\neq0$ implies that $[\lambda]$
has an $n$-hook and so the partition must be of the form $\lambda=(k,1^{n-k})$
for some $k$. 

The case for characters not vanishing on $y$ is similar as the order
of $y$ is divisible by a primitive prime divisor $s,$ of $\Phi_{n-2}(-q)$.
If $n\geq6$ then as above, either $\rho(-q)_{s}=1$ or $\rho(-q)_{s}=|G|_{s}$.
Hence if $\chi_{\lambda}(y)\neq0$, then $[\lambda]$ contains an
$(n-2)$-hook. In summary, if $\chi_{\lambda}(x)\chi_{\lambda}(y)\neq0$
then $[\lambda]$ contains both an $n$-hook and an $(n-2)$-hook.
This is only possible if $\lambda\in\{(n),(1^{n}),(n-1,1),(2,1^{n-2})\}$. 

The result for $n=4$ follows without the need to consider vanishing
on $y$ as the partitions listed in the statement are exactly those
containing a $4$-hook.
\end{proof}
\begin{prop}
\label{prop:x and y cover the group}Let $G=SU_{n}(q)$ where $n\geq4$
is even and $(n,q)\notin\{(4,2),(4,3),(6,2)\}$. Let $g\in G$ be
a non-semisimple element. Then $g\in x^{G}\cdot y^{G}$.
\end{prop}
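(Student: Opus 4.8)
The plan is to show $g\in x^{G}\cdot y^{G}$ by proving that the normalised structure constant
\[
\kappa(x^{G},y^{G},g^{G})=\sum_{\chi\in\text{Irr}(G)}\frac{\chi(x)\chi(y)\chi(g^{-1})}{\chi(1)}
\]
is nonzero; since the prefactor $|G|/(|C_{G}(x)|\,|C_{G}(y)|)$ in Theorem~\ref{thm:Burnsides formula} is positive, this is equivalent to $\eta(x^{G},y^{G},g^{G})\neq 0$, i.e.\ to $g\in x^{G}\cdot y^{G}$. By Lemma~\ref{lem:reduction to 4 character lemma}, the only characters that can contribute are the four unipotent characters $\chi_{\lambda}$ with $\lambda\in\{(n),(1^{n}),(n-1,1),(2,1^{n-2})\}$, so the sum has at most four terms.

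Two of these I would dispose of at once. The partition $(n)$ gives the trivial character, contributing $1$. The partition $(1^{n})$ gives the Steinberg character of $G$ (indeed $|\rho_{(1^{n})}(-q)|=q^{n(n-1)/2}=|G|_{p}$ is the Steinberg degree); since the Steinberg character vanishes on every non-semisimple element, and $g$, hence $g^{-1}$, is non-semisimple by hypothesis, this term is $0$. Thus
\[
\kappa(x^{G},y^{G},g^{G})=1+\frac{\chi_{(n-1,1)}(x)\chi_{(n-1,1)}(y)\chi_{(n-1,1)}(g^{-1})}{\chi_{(n-1,1)}(1)}+\frac{\chi_{(2,1^{n-2})}(x)\chi_{(2,1^{n-2})}(y)\chi_{(2,1^{n-2})}(g^{-1})}{\chi_{(2,1^{n-2})}(1)},
\]
and it suffices to show that the two remaining terms together have absolute value strictly less than $1$. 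For the values on the regular semisimple elements $x\in T_{n}$ and $y\in T_{1,1,n-2}$ I would use that these tori correspond to the classes of an $n$-cycle, respectively an $(n-2)$-cycle, in the Weyl group $W\cong S_{n}$: the value of a unipotent character $\chi_{\lambda}$ on such a regular semisimple element equals, up to sign, the value of the symmetric group character $\chi^{\lambda}$ on the corresponding cycle, and for $\lambda\in\{(n-1,1),(2,1^{n-2})\}$ each of these four values has absolute value $1$. (Alternatively, since $\chi_{(2,1^{n-2})}=\pm D_{\boldsymbol{G}}(\chi_{(n-1,1)})$ by Lemma~\ref{lem tiep and bob result}, the bounds for $(2,1^{n-2})$ follow from those for $(n-1,1)$ via Lemma~\ref{lem dual charracters fixed on reg sem si-1}.) Hence each remaining term has absolute value $|\chi_{\lambda}(g^{-1})|/\chi_{\lambda}(1)$, where from~(\ref{eq:degree formula of unipotent characters}) one computes $\chi_{(n-1,1)}(1)=q(q^{n-1}+1)/(q+1)$ and $\chi_{(2,1^{n-2})}(1)=q^{(n-1)(n-2)/2}(q^{n-1}+1)/(q+1)$.

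The technical heart, and the step I expect to be the main obstacle, is bounding $|\chi_{(n-1,1)}(g^{-1})|$ and $|\chi_{(2,1^{n-2})}(g^{-1})|$ for non-semisimple $g$. Writing $g^{-1}=su$ with $s$ semisimple, $u\neq 1$ unipotent and $su=us$, I would expand $\chi_{\lambda}(su)$ using the Jordan decomposition of characters (Proposition~\ref{prop:(Jordan-decomposition-of characters and degree formula}) and the Deligne--Lusztig character formula, reducing to Green functions of $C_{\boldsymbol{G}^{*}}(s)^{F^{*}}$ evaluated at $u$; since $u\neq 1$ these are polynomials in $q$ of degree strictly below the corresponding generic degree, which should give $|\chi_{(n-1,1)}(g^{-1})|$ of size $O(q^{n-2})$ and $|\chi_{(2,1^{n-2})}(g^{-1})|$ of size $O(q^{(n-1)(n-2)/2-1})$. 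Comparing with the degrees above, both fractions are $O(1/q)$, their sum is $<1$ provided $q$ is not too small, and hence $\kappa\neq 0$, so $g\in x^{G}\cdot y^{G}$. The finitely many residual pairs $(n,q)$ not covered by this estimate, and not among the three excluded pairs, are then handled directly from explicit character-theoretic data. I expect that controlling these character-value estimates uniformly in the unipotent class of $u$ and in the isomorphism type of $C_{\boldsymbol{G}^{*}}(s)$, together with the small-$q$ bookkeeping (for instance near-transvections when $q=2$, where the first fraction is already close to $1/2$), is where the real work lies.
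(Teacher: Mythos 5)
Your overall skeleton matches the paper's: reduction to the four unipotent characters via Lemma \ref{lem:reduction to 4 character lemma}, the trivial character contributing $1$, and the Steinberg term vanishing because $g$ is not semisimple. Your justification that $|\chi_{(n-1,1)}|$ and $|\chi_{(2,1^{n-2})}|$ take the value $1$ at $x$ and $y$ is a genuinely different (and perfectly valid) route: you use that regular semisimple class functions are uniform, so $\chi_{\lambda}(x)=\pm\chi^{\lambda}(w)$ for the Weyl-group classes of type $(n)$ and $(n-2,1,1)$, whereas the paper computes $\chi_{(n-1,1)}(x)=1$, $\chi_{(n-1,1)}(y)=-1$ from the explicit Weil-character formula (\ref{eq:formula for the unipotent Weil rep}) and transfers to $\chi_{(2,1^{n-2})}$ by Alvis--Curtis duality (Lemmas \ref{lem dual charracters fixed on reg sem si-1} and \ref{lem tiep and bob result}).

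The gap is precisely the step you flag as the heart. First, your bounds on $|\chi_{(n-1,1)}(g^{-1})|$ and $|\chi_{(2,1^{n-2})}(g^{-1})|$ are only asserted (``should give''); in particular the claim $|\chi_{(2,1^{n-2})}(g^{-1})|=O\bigl(q^{(n-1)(n-2)/2-1}\bigr)$ does not follow from ``Green functions have degree below the generic degree'' without an argument uniform over the unipotent part of $g$ and the type of $C_{\boldsymbol{G}}(s)$ (note also the character formula involves Green functions of $C_{\boldsymbol{G}}(s)$, not of a dual-group centralizer). Second, and more structurally, with $O(1/q)$ bounds carrying unspecified constants the exceptional set is \emph{not} finitely many pairs $(n,q)$: for each fixed small $q$ there are infinitely many $n$ to handle, and as you yourself observe, at $q=2$ the first ratio is already $\frac{q^{n-2}-1}{q^{n-1}+1}$, essentially $1/2$, so you need an explicit bound on the second ratio, uniform in $n$, strictly better than $1/2$. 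The paper obtains exactly such uniform estimates cheaply: since a non-scalar $g$ has no eigenvalue in $\langle\delta\rangle$ of multiplicity $n$, the Weil formula gives $|\chi_{(n-1,1)}(g)|\leq\frac{1}{q+1}|(-q)^{n-1}-1|$, i.e.\ ratio at most $1/q$; and for $\chi_{(2,1^{n-2})}$ it uses only the trivial bound $|\chi(g)|\leq|C_{G}(g)|^{1/2}$, which is worst for $g$ unipotent of type $J_{2}\oplus J_{1}^{n-2}$ and yields ratio $<\frac{q-1}{q}$ in all cases (and $<1/q^{2}$ outside small $(n,q)$), so the two terms sum to less than $\frac{1}{q}+\frac{q-1}{q}=1$, with $(4,2),(4,3),(6,2)$ excluded by hypothesis. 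Without estimates of this explicit, $n$-uniform kind, your argument does not close at small $q$.
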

\begin{proof}
We show that the normalised structure constant is non-zero by using
the formula given in Theorem \ref{thm:Burnsides formula}. By Lemma
\ref{lem:reduction to 4 character lemma} we thus far have a reduction
to

\begin{equation}
\kappa(x^{G},y^{G},g^{G})=\sum_{\chi_{\lambda}}\frac{\chi_{\lambda}(x)\chi_{\lambda}(y)\chi_{\lambda}(g^{-1})}{\chi_{\lambda}(1)}\label{eq:reduced sum for the normalised structure constant}
\end{equation}
 where the sum is over $\lambda\in\{(n),(1^{n}),(n-1,1),(2,1^{n-2})\}$.

Note that $\chi_{n}$ is the trivial character and $\chi_{1^{n}}$
is the Steinberg character $St$ which takes absolute value $1$ on
regular semisimple elements (\cite{Carter rep theory}, 6.4.7). Also
note the character degrees:
\[
St(1)=q^{\frac{n}{2}(n-1)},\ \chi_{(n-1,1)}(1)=\frac{q(q^{n-1}+1)}{q+1},\ \chi_{(2,1^{n-2})}(1)=\frac{q^{\frac{1}{2}(n-2)(n-1)}(q^{n-1}+1)}{q+1}.
\]
 Now $\chi_{(n-1,1)}$ is one of the so called irreducible Weil characters,
some characterisations of which are given by Tiep and Zalesskii in
\cite{Tiep Zalesskii weil reps}. Fix a generating element $\gamma$
of $\mathbb{F}_{q^{2}}^{*}$ and let $\delta=\gamma^{q-1}.$ Then
by (\cite{Tiep Zalesskii weil reps}, 4.1) we can compute the character
values using the formula
\begin{equation}
\chi_{(n-1,1)}(g)=\frac{(-1)^{n}}{q+1}\sum_{l=0}^{q}(-q)^{\text{dim Ker}(g-\delta^{-l})}.\label{eq:formula for the unipotent Weil rep}
\end{equation}
 By the choice of order, it follows that $x$ has no eigenvalues contained
in $\langle\delta\rangle$ and hence $\chi_{(n-1,1)}(x)=1$. Similarly,
$\chi_{(n-1,1)}(y)=-1$ as there exist two 1-dimensional eigenspaces
for the eigenvalues $\mu$ and $\mu^{-1}$. 

From (\cite{Tiep Zalesskii weil reps}, Sec. 4) we see that $\chi_{(n-1,1)}\in\mathcal{E}(G,(1))$
is the restriction of the unipotent character $\tilde{\chi}_{(n-1,1)}\in\text{Irr}(\tilde{G})$
and it follows from Lemmas \ref{lem dual charracters fixed on reg sem si-1}
and \ref{lem tiep and bob result} that $|\chi_{(n-1,1)}(x)|=|\tilde{\chi}_{(n-1,1)}(x)|=|\tilde{\chi}_{(2,1^{n-2})}(x)|$.
Then, as $C_{\boldsymbol{G}^{*}}(1)=\boldsymbol{G}^{*}$ is connected,
it follows from Section \ref{subsec:unitary groups prelims} that
$\tilde{\chi}_{(2,1^{n-2})}|_{G}=\chi_{(2,1^{n-1})}$. Hence $|\chi_{(n-1,1)}(x)|=|\chi_{(2,1^{n-2})}(x)|=1$.
It follows in an identical manner that $|\chi_{(2,1^{n-2})}(y)|=1$.

The value of the Steinberg character is given by (\cite{Carter rep theory},
6.4.7) : $St(z)=\pm|C_{G}(z)|_{p}$ if $z$ is semisimple and zero
otherwise. Hence the Steinberg summand vanishes in our consideration
of (\ref{eq:reduced sum for the normalised structure constant}).
Combining these results, we simplify (\ref{eq:reduced sum for the normalised structure constant})
to 
\[
\kappa(x^{G},y^{G},g^{G})=1+\frac{-(q+1)\chi_{(n-1,1)}(g^{-1})}{q(q^{n-1}+1)}\pm\frac{\chi_{(2,1^{n-2})}(g^{-1})(q+1)}{q^{\frac{1}{2}(n-2)(n-1)}(q^{n-1}+1)}.
\]
 As $g$ is non-scalar it cannot have an eigenvalue in $\langle\delta\rangle$
of multiplicity $n$ and thus $|\chi_{(n-1,1)}(g)|\leq\frac{1}{q+1}|(-q)^{n-1}-1|$.
We do not have a method for computing general character values of
$\chi_{(2,1^{n-2})}$ but it is enough to use the trivial bound $|\chi_{(2,1^{n-2})}(g)|\leq|C_{G}(g)|^{\frac{1}{2}}$.
Centralizer orders can be easily computed using the details in (\cite{Liebeck and Seitz book},
7.1 and \cite{Carter Centralizers paper}, Sec. 8) and this bound
is maximal when $g$ is unipotent with Jordan block structure $J_{2}\oplus J_{1}^{n-2}$.
Excluding a small number of exceptions when $n$ and $q$ are small,
we find that $\frac{|\chi_{(2,1^{n-2})}(g^{-1})|}{\chi_{(2,1^{n-2})}(1)}<\frac{1}{q^{2}}$.
In any case $\frac{|\chi_{(2,1^{n-2})}(g^{-1})|}{\chi_{(2,1^{n-2})}(1)}<\frac{q-1}{q}$
and hence
\[
\frac{(q+1)|\chi_{(n-1,1)}(g)|}{q(q^{n-1}+1)}+\frac{|\chi_{(2,1^{n-2})}(g^{-1})(q+1)|}{q^{\frac{1}{2}(n-2)(n-1)}(q^{n-1}+1)}<\frac{1}{q}+\frac{q-1}{q}=1.
\]
 We conclude that $\kappa(x^{G},y^{G},g^{G})\neq0$ and thus $g\in x^{G}\cdot y^{G}$.
\end{proof}
\textbf{Proof of Theorem }\ref{thm:inv wwidth of PSU} for $n$ even:

Let $G=SU_{n}(q)$ where $n\geq4$ is even. If $(n,q)\in\{(4,2),(4,3),(6,2)\}$
then the result can be checked using GAP \cite{GAP} so we assume
this is not the case. Let $g\in G$ be a semsimple element. Then by
Theorem \ref{thm:(Gow-)Theorem for ss elements}, it follows that
$\overline{g}\in x^{\overline{G}}y^{\overline{G}}$. Similarly, $\overline{g}\in x^{\overline{G}}y^{\overline{G}}$
for all non-semsimple elements $g\in G$ by Proposition \ref{prop:x and y cover the group}.
As $\overline{x}$ and $\overline{y}$ are strongly real by Proposition
\ref{prop:picking x and y}, it follows that iw$(\overline{G})\leq4$
and Theorem \ref{thm:inv wwidth of PSU} is proved for even $n\geq4$.

\subsection{$G=SU_{n}(q)$, n odd}

This section completes our work on the classical groups by considering
the simple unitary groups of odd dimension. More precisely we complete
the proof of Theorem \ref{thm:inv wwidth of PSU} for odd $n\geq3$.
Note that the initial cases $PSU_{3}(q)$ and $PSU_{5}(q)$ can be
dealt with easily using the computer package CHEVIE (\cite{CHEVIE}).We
do however begin with a slightly more detailed treatment of $SU_{3}(q)$
as knowledge of the involution width here will be of particular use
later in the analysis of exceptional Lie-type groups.
\begin{lem}
Let $G=SU_{3}(q)$ where $q\neq2$. Then $G$ has involution width
at most $4$.\label{lem:SU result for 3}
\end{lem}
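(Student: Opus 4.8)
The plan is to follow the same general strategy used for $SU_n(q)$ with $n$ even: pick two regular semisimple elements $x, y \in G = SU_3(q)$ lying in weakly orthogonal tori, show their images $\overline{x}, \overline{y}$ are strongly real in $\overline{G} = PSU_3(q)$, and then show that $G \setminus Z(G) \subseteq x^G y^G$ by a structure-constant computation. Since the classification of strongly real simple groups (Theorem~\ref{Thm: Strongly real G}) does not include $PSU_3(q)$, the group itself is not strongly real, so two involutions will not suffice; the target bound is exactly $4$.

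\textbf{Choice of $x$ and $y$.} The natural candidates are a generator $x$ of (a prime-order subgroup of) the cyclic torus $T_3$ of order $(q^3+1)/(q+1) = q^2 - q + 1$, and an element $y$ of the torus $T_{1,1,1}$ of order $(q-1)^2$, i.e. a regular diagonal element $\mathrm{diag}(\alpha,\beta,(\alpha\beta)^{-1})$ with distinct entries (and $\alpha\overline\alpha = \beta\overline\beta = 1$). By Proposition~\ref{prop:().-wo tori in su} with $a=1$, the tori $T_3 = T_n$ and $T_{1,1,1} = T_{1,a,n-1-a}$ are weakly orthogonal. For strong reality: $x$ lies in a field-extension-type subgroup (essentially a cyclic subgroup normalised by an element inverting it up to a central element, analogous to the argument in Proposition~\ref{prop:picking x and y}), and for $y$ one uses a monomial permutation matrix swapping the first two coordinates together with the Weyl-group action to invert the diagonal element, checking that the resulting element of $PGU_3(q)$ projects to an involution in $PSU_3(q)$. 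Care is needed with the scalars since $q \neq 2$ is assumed precisely to make the relevant central elements available.

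\textbf{Structure constants.} Using Proposition~\ref{prop:weakly orthogonal implies only unipotent.}, only unipotent characters $\chi_\lambda$ with $\lambda \vdash 3$ contribute to $\kappa(x^G, y^G, g^G)$, so the sum runs over just $\lambda \in \{(3), (1^3), (2,1)\}$: the trivial character, the Steinberg character $St$, and the Weil character $\chi_{(2,1)}$ of degree $q(q^2-q+1)/(q+1) = q^2 - q$ (wait --- more precisely degree $q(q-1)$; in any case of order $q^2$). The trivial character contributes $1$; the Steinberg summand vanishes on any non-semisimple $g$ by $St(z) = \pm |C_G(z)|_p$ for $z$ semisimple and $0$ otherwise (\cite{Carter rep theory}, 6.4.7); and $\chi_{(2,1)}$ is handled by the explicit Weil-character formula~(\ref{eq:formula for the unipotent Weil rep}), which gives $|\chi_{(2,1)}(x)| = |\chi_{(2,1)}(y)| = 1$ by the eigenvalue conditions on $x$ and $y$. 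So one needs $\left| \chi_{(2,1)}(g^{-1}) / \chi_{(2,1)}(1) \right| < 1$ for all non-semisimple $g \in G \setminus Z(G)$, which follows from the trivial bound $|\chi_{(2,1)}(g)| \le |C_G(g)|^{1/2}$ together with the fact that centralizer orders of non-semisimple elements of $SU_3(q)$ are small (the worst case being a regular unipotent or a unipotent with a single nontrivial Jordan block). For the semisimple non-central elements, Theorem~\ref{thm:(Gow-)Theorem for ss elements} (Gow) directly gives $g \in x^G y^G$ once $x, y$ are regular semisimple. Assembling: $\mathrm{iw}(\overline{G}) \le \mathrm{iw}(\overline{x}) + \mathrm{iw}(\overline{y}) \le 2 + 2 = 4$.

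\textbf{Main obstacle.} I expect the bookkeeping around the centre and strong reality to be the delicate part: $SU_3(q)$ has $|Z| = \gcd(3, q+1)$, and one must verify both that the chosen conjugating elements of $GU_3(q)$ genuinely descend to \emph{involutions} in $PSU_3(q)$ (not merely elements of order dividing $2$ times a central factor) and that $x, y$ can be chosen inside $SU_3(q)$ itself rather than only $GU_3(q)$. The assumption $q \neq 2$ enters here. The character estimate for $\chi_{(2,1)}$ is routine once centralizer orders are tabulated, so the real work is the reality argument; alternatively, for this small case one could simply invoke CHEVIE, but the point of stating $SU_3(q)$ separately is to have the explicit structure available for the later exceptional-group analysis, so a hands-on argument is preferable.
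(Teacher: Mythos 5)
Your proposal contains a fatal gap at its first step: the element $x$ you take in the Coxeter torus $T_{3}$ (of order $q^{2}-q+1=\Phi_{6}(q)$) is not strongly real; in fact it is not even real in $SU_{3}(q)$ or in $PSU_{3}(q)$. For $x$ regular in $T_{3}$ we have $C_{G}(x)=T_{3}$, so any element inverting $x$ would lie in $N_{G}(T_{3})$, which acts on $T_{3}$ through the relative Weyl group of order $3$, i.e.\ by $\lambda\mapsto\lambda^{q^{2j}}$ for $j=0,1,2$. Thus $x^{g}=x^{-1}$ would force the order of $x$ to divide $q^{2j}+1$, and one checks $\gcd(q^{2}-q+1,\,q^{2j}+1)=1$ for $j=0,1,2$ (and $|T_{3}|$ is odd), so no nontrivial element of $T_{3}$ is real; the same computation with a central factor $z$ (of order dividing $3$) shows $\overline{x}$ is not real in $PSU_{3}(q)$ either. (Concretely, the classes of elements of order $7$ in $U_{3}(3)$ are non-real.) This is exactly why the paper's odd-dimensional strategy avoids the Coxeter torus altogether, choosing $x\in T_{1,n-1}$ and $y\in T_{1^{3},n-3}$ for $n\geq7$, at the price of losing weak orthogonality; the even-dimensional trick you are imitating relies on $q^{n/2}\equiv-1$ modulo the order of $x$, which fails for $n$ odd. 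Two further problems: the lemma is a statement about $G=SU_{3}(q)$ itself, and this is what the later applications require (unipotent elements of $SU_{n}(q)$ and of $E_{6}^{\epsilon}(q)$ are embedded in $SU_{3}(q)$ subgroups, so one needs products of involutions inside that subgroup, not in the quotient $PSU_{3}(q)$), whereas your argument at best bounds the width of $PSU_{3}(q)$; also $|T_{1,1,1}|=(q+1)^{2}$, not $(q-1)^{2}$.

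The paper's actual proof is entirely different and much shorter: since the full character table of $SU_{3}(q)$ is known (\cite{Simpson Frame character table of ssu3}), one can quote Orevkov's explicit analysis of products of conjugacy classes in $SU_{3}(q)$ \cite{Orevkov}. There is a unique class $A$ of involutions in $SU_{3}(q)$; if $3\nmid q+1$ then $SU_{3}(q)=PSU_{3}(q)$ and the width bound follows from (\cite{Orevkov}, 1.9), while if $3\mid q+1$ and $q\neq2$, Theorem 1.3 of \cite{Orevkov} gives $1\in A^{3}C$ or $1\in A^{4}C$ for every nontrivial class $C$, so every element of $SU_{3}(q)$ is a product of at most $4$ involutions. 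If you want a hands-on argument along your lines, you would have to start from genuinely strongly real regular semisimple classes (e.g.\ in $T_{1,2}$), and then the reduction to unipotent characters via weak orthogonality is no longer available, which is precisely the difficulty the paper sidesteps for $n=3,5$ by explicit computation.
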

\begin{proof}
Calculation of structure constants is straightforward in $SU_{3}(q)$
as the full character table is known (\cite{Simpson Frame character table of ssu3}).
Furthermore, products of conjugacy classes in $SU_{3}(q)$ have been
examined in the work of Orekovkov \cite{Orevkov}.

Firstly we note that $SU_{3}(q)$ contains a unique class of involutions
regardless of characteristic. If $3$ does not divide $q+1$, then
$PSU_{3}(q)=SU_{3}(q)$ and iw$(G)\leq4$ by (\cite{Orevkov}, 1.9).
Hence assume now that $3$ divides $q+1$ and $q\neq2$. Theorem 1.3
of \cite{Orevkov} details when exactly the identity element is contained
in a product of conjugacy classes, and denoting our class of involutions
by $A$, we find that for an arbitrary class $1\neq C\subset G$ either
$1\in A^{3}C$ or $1\in A^{4}C$. It follows that the involution width
of $SU_{3}(q)$ is at most 4 for $q\neq2$. 
\end{proof}
\begin{lem}
\label{lem:psu5}$PSU_{5}(q)$ has involution width at most 4.
\end{lem}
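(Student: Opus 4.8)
The plan is to run the scheme of Section~\ref{subsec:Preliminary-Material for general Lie Type Groups} for $G=SU_5(q)$: find regular semisimple $x,y\in G$ whose images $\overline{x},\overline{y}$ are strongly real in $\overline{G}=PSU_5(q)$ and with $G\setminus Z(G)\subseteq x^{G}y^{G}$; then $\overline{G}\setminus\{1\}\subseteq\overline{x}^{\overline{G}}\overline{y}^{\overline{G}}$, so $\mathrm{iw}(\overline{G})\le\mathrm{iw}(\overline{x})+\mathrm{iw}(\overline{y})\le4$. I would take $x$ in the torus $T_{1,4}$, which has order $q^{4}-1$, of order divisible by $r=\mathrm{ppd}(q,4)$; concretely one can use $x=\mathrm{diag}(\mu,\mu^{-q},\mu^{-1},\mu^{q},1)$ with $\mu$ of order $r$ (so $\mu^{q^{2}}=\mu^{-1}$), whose five eigenvalues are then distinct, making $x$ regular. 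The decisive point -- and, I expect, the main obstacle -- is strong reality. In odd dimension one cannot argue as in the even case (Proposition~\ref{prop:picking x and y}), where an inverting involution of $\overline{G}$ was manufactured from an element $g\in SU_{n}(q)$ with $g^{2}=-I$, because $-I\notin SU_{5}(q)$. Instead one uses that the eigenvalue multiset of $x$ is closed under $\lambda\mapsto\lambda^{-1}$ -- unlike that of a regular element of the Coxeter torus $T_{5}$, whose eigenvalues are never inversion-closed (which is precisely why $T_{5}$ is of no help in odd dimension) -- and that the linear map interchanging the two isotropic eigenline-pairs $E_{\mu}\leftrightarrow E_{\mu^{-1}}$, $E_{\mu^{-q}}\leftrightarrow E_{\mu^{q}}$ and fixing the non-degenerate line $E_{1}$ can be realised by a genuine involution of $SU_{5}(q)$: it is unitary, of order $2$, has determinant $1$, and inverts $x$. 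A second strongly real regular semisimple element $y$ is chosen in the same way.

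Given such $x,y$, one shows $G\setminus Z(G)\subseteq x^{G}y^{G}$ by computing normalised structure constants $\kappa(x^{G},y^{G},g^{G})$ via Theorem~\ref{thm:Burnsides formula}. For the higher-rank unitary groups of Section~\ref{sec:Unitary-Groups} this sum is pared down to a few unipotent characters by weak orthogonality (Propositions~\ref{prop:().-wo tori in su} and~\ref{prop:weakly orthogonal implies only unipotent.}), but that route is blocked for $SU_{5}(q)$: the only weakly orthogonal pair of tori avoiding the useless $T_{5}$ is $\{T_{1,4},T_{2,3}\}$, and $T_{2,3}$ carries no strongly real regular semisimple element whose order has a primitive prime divisor big enough -- through Brauer's theorem (Theorem~\ref{thm:Brauers theorem on prime defect}) and the hook-length formula~(\ref{eq:degree formula of unipotent characters}) -- to annihilate any unipotent character. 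So instead I would exploit the fact, noted in the remark preceding the lemma, that the full generic character table of $SU_{5}(q)$ is available in CHEVIE~\cite{CHEVIE}: one evaluates $\kappa(x^{G},y^{G},g^{G})$ as a rational function of $q$ for each class $g\subseteq G\setminus Z(G)$ and checks it is nonzero. Brauer's theorem applied to $r=\mathrm{ppd}(q,4)$ still removes every character of $r$-defect zero from the sum, and Alvis--Curtis duality (Lemmas~\ref{lem dual charracters fixed on reg sem si-1} and~\ref{lem tiep and bob result}) links the surviving terms in pairs, so the computation stays short.

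The remaining cases are routine. Semisimple $g\in G$ are covered immediately by Gow's theorem (Theorem~\ref{thm:(Gow-)Theorem for ss elements}), since $\overline{x},\overline{y}$ are regular semisimple; and the finitely many small $q$ on which the estimates fail, together with $q=2$ (note that $PSU_{5}(2)$ is a genuine simple group, not excluded from Theorem~\ref{thm:inv wwidth of PSU}), are checked in GAP~\cite{GAP}. Since $\overline{x}$ and $\overline{y}$ are strongly real, these together give $\mathrm{iw}(PSU_{5}(q))\le4$. I expect essentially all the real work to sit in the strong-reality step: it is genuinely more delicate than in even dimension -- which is exactly why $PSU_{5}$, like $PSU_{3}$, is handled apart from the general odd-dimensional argument -- whereas the structure-constant verification, once the character table of $SU_{5}(q)$ is in hand, is mechanical.
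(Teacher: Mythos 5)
Your overall plan is sound, but it is considerably heavier than what the paper actually does here: the entire published proof of this lemma is the observation that the generic character table of $PGU_{5}(q)$ is available in CHEVIE \cite{CHEVIE}, accessible via Maple \cite{Maple 15}, and that the relevant structure constants can therefore be checked computationally as polynomials in $q$. In other words, the paper treats $PSU_5(q)$ (like $PSU_3(q)$) as a case where exact generic character data replaces all of the estimates, and it does not set up the strongly-real pair $(x,y)$, Gow's theorem, or a separate GAP check for small $q$; your computational core (evaluate $\kappa(x^{G},y^{G},g^{G})$ from the generic table) is the same engine, wrapped in the general machinery of Section \ref{sec:Unitary-Groups}. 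Two points in your wrapper would need shoring up if you carried it out. First, the claim that the eigenline-swapping map is ``a genuine involution of $SU_{5}(q)$'' is only justified over $\overline{\mathbb{F}}_{q}$ as stated: the eigenlines of $x$ are defined over $\mathbb{F}_{q^{4}}$, so you must argue that the inverting involution can be chosen $F$-stable (the induced permutation of eigenvalues does commute with $\lambda\mapsto\lambda^{-q}$, so this can be done, or one can simply quote the reality-implies-strong-reality results \cite{Vinroot real elements in SUn} and \cite{Gates Singh and Vinroot Strongly real in unitary} exactly as in Proposition \ref{prop:picking x and y in n odd}, whose strong-reality argument does not use $n\geq7$). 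Second, you should allow for the possibility that some non-semisimple class -- the transvection class $(2,1^{3})$ is the natural suspect, since $(2,1^{n-2})$ is precisely the class excluded in Proposition \ref{prop:calculating structurre constants any g } for odd $n\geq7$ -- fails to lie in $x^{G}y^{G}$; in that event you need the same fallback used there, namely embedding such an element in an $SU_{3}(q)$ or $SU_{4}(q)$ subgroup and invoking Lemma \ref{lem:SU result for 3} or the even-dimensional case, rather than only a small-$q$ GAP check.
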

\begin{proof}
The generic character table of $PGU_{5}(q)$ is available in the computer
package CHEVIE \cite{CHEVIE}. These tables can be accessed with Maple
(\cite{Maple 15}) and we check the relevant structure constants computationally.
\end{proof}
The remainder of this section will be devoted to proving Theorem \ref{thm:inv wwidth of PSU}
where $n\geq7$ is odd. Throughout, let $G=SU_{n}(q)$. 

The proof of this this result will follow the same method as the case
when $n$ is even. That is, we first pick $x,y\in G$ such that the
projections of these elements are strongly real in $\overline{G}$
and then show that $G\backslash Z(G)\subset x^{G}\cdot y^{G}$. However
unlike before, when $n$ is odd we do not pick $x$ and $y$ from
weakly orthogonal tori. Thus when computing structure constants we
now have non-unipotent characters to contend with, which makes things
much more complicated, as we shall see.
\begin{prop}
\label{prop:picking x and y in n odd}There exist regular semisimple
elements $x\in T_{1,n-1}$ and $y\in T_{1^{3},n-3}$ such that $x$
and $y$ are strongly real in $G$.
\end{prop}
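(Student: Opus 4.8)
The plan is to construct $x$ and $y$ completely explicitly, realising the two tori by orthogonal decompositions of the Hermitian space $V=\mathbb{F}_{q^{2}}^{n}$, in the style of the symplectic case of Theorem~\ref{thm:Sp 4 reflectional}. Write $V=V_{0}\perp W$ with $V_{0}$ a non-degenerate line and $W$ a non-degenerate $(n-1)$-space, and $V=V_{1}\perp V_{2}\perp V_{3}\perp W'$ with the $V_{i}$ non-degenerate lines and $W'$ a non-degenerate $(n-3)$-space; the stabilisers in $G$ of these decompositions are maximal tori of types $T_{1,n-1}$ and $T_{1^{3},n-3}$. Since $n$ is odd, $W$ and $W'$ have even dimension and are hyperbolic, so I may further split $W=U\oplus U^{*}$ into dual totally isotropic halves of dimension $m:=(n-1)/2$; this exhibits a subgroup $GL_{m}(q^{2})\le GU(W)$ whose Singer torus $GL_{1}(q^{2m})$ is a maximal torus of $GU(W)$ of type $(n-1)$, and similarly $W'$ gives $GL_{m'}(q^{2})$ with $m':=(n-3)/2$.

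I would let $x$ act trivially on $V_{0}$ and as a regular semisimple element $w$ of the Singer torus $GL_{1}(q^{2m})\le GL_{m}(q^{2})$ on $W$, choosing $\mathrm{ord}(w)$ according to the parity of $m$. If $m$ is odd, take $w$ corresponding to a primitive element $\mu$ of the subfield $\mathbb{F}_{q^{m}}\subset\mathbb{F}_{q^{2m}}$; since $\gcd(m,2)=1$, the $GL_{m}(q^{2})$-block of $w$ is the base change to $\mathbb{F}_{q^{2}}$ of multiplication by $\mu$ on $\mathbb{F}_{q^{m}}/\mathbb{F}_{q}$, so its characteristic polynomial lies in $\mathbb{F}_{q}[x]$; the Brawley--Teitloff theorem \cite{Brawley Teitloff} applied over $\mathbb{F}_{q}$ then shows this block is conjugate in $GL_{m}(q^{2})$ to a symmetric matrix $\Sigma$ with entries in $\mathbb{F}_{q}$, which is also Hermitian, so after conjugating we may take $w=\mathrm{diag}(\Sigma,\Sigma^{-1})$. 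If $m$ is even, take $\mathrm{ord}(w)=q^{m}+1$, so that $w^{q^{m}}=w^{-1}$. A short computation shows that in both cases $w$ has pairwise distinct eigenvalues generating $\mathbb{F}_{q^{2m}}$ over $\mathbb{F}_{q^{2}}$ (whence $C_{G}(x)=T_{1,n-1}$) and $\det(x)=1$. The element $y$ is built the same way on $W'$, and on $V_{1}\perp V_{2}\perp V_{3}$ I let $y$ act as $\mathrm{diag}(1,a,a^{-1})$ with $a$ a $(q+1)$-st root of unity, $a\neq a^{-1}$; this makes $y$ regular of type $T_{1^{3},n-3}$ with $\det(y)=1$.

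It remains to produce, for each of $x$ and $y$, a genuine involution of $SU_{n}(q)$ inverting it. On $W$, if $m$ is odd I use the isometry $t_{0}$ exchanging $U$ and $U^{*}$, exactly as the map $t$ in the proof of Theorem~\ref{thm:Sp 4 reflectional}: since $w=\mathrm{diag}(\Sigma,\Sigma^{-1})$ with $\Sigma$ Hermitian, $t_{0}wt_{0}=\mathrm{diag}(\Sigma^{-1},\Sigma)=w^{-1}$ and $t_{0}^{2}=1$. If $m$ is even I instead use the $q^{m}$-power Galois map of $\mathbb{F}_{q^{2m}}/\mathbb{F}_{q^{2}}$: it inverts $w$ because $\mathrm{ord}(w)\mid q^{m}+1$, it squares to the identity, and --- crucially, because $m$ is even --- it is $\mathbb{F}_{q^{2}}$-linear, so it lies in $GL_{m}(q^{2})\le GU(W)$. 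In either case the resulting element of $GU(W)$ has order $2$; on the line $V_{0}$ (respectively, on $V_{1}$ together with a swap of the isometric lines $V_{2},V_{3}$) I let the involution act by $\pm1$, and the signs can be chosen so that the total determinant is $1$ while the square remains the identity. This gives an involution $t\in SU_{n}(q)$ with $x^{t}=x^{-1}$, and likewise for $y$, so both are strongly real in $G$.

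The crux, and the reason for the case split on the parity of $m=(n-1)/2$, is that the inverting element must be a true involution inside $SU_{n}(q)$, not merely inside $GU_{n}(q)$ nor merely modulo the centre: since $n$ is odd, $-I\notin SU_{n}(q)$, so the device of the even-dimensional and symplectic cases, where one inverts by an element squaring to $-I$ (which becomes an involution only in the quotient), is unavailable. Consequently the determinant bookkeeping on the anisotropic lines, and the dichotomy ``swap of totally isotropic halves'' versus ``$\mathbb{F}_{q^{2}}$-linear Galois involution'', are genuinely forced. Finally, no appeal to Zsigmondy's theorem is needed, since $n-1$ and $n-3$ are even and at least $4$, so the subfield elements and elements of order $q^{m}+1$, $q^{m'}+1$ used above always exist.
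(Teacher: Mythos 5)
Your construction is correct, but it follows a genuinely different route from the paper. The paper does not build the inverting involutions by hand: it chooses $x$ and $y$ of primitive prime divisor orders (ppd$(q,n-1)$ or ppd$(q,\tfrac{n-1}{2})$, resp.\ ppd's for $n-3$, via Zsigmondy), observes that their characteristic polynomials are products of distinct self-reciprocal elementary divisors (an irreducible self-reciprocal factor of even degree together with $t-1$, and for $y$ also $(t-\mu)(t-\mu^{-1})$), and then quotes the real-implies-strongly-real results for $SU_n(q)$ of Schaeffer Fry--Vinroot \cite{Vinroot real elements in SUn} and Gates--Singh--Vinroot \cite{Gates Singh and Vinroot Strongly real in unitary}. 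You instead realise the tori inside the Levi $GL_m(q^2)$ of a hyperbolic splitting and exhibit explicit involutions in $SU_n(q)$ -- the swap of the isotropic halves after a Brawley--Teitloff \cite{Brawley Teitloff} symmetrisation when $m$ is odd, and the $\mathbb{F}_{q^2}$-linear Galois involution $z\mapsto z^{q^m}$ when $m$ is even -- with the determinant repaired on the anisotropic lines. I checked the elided verifications (distinctness of the eigenvalue orbits, the type of the containing torus, the determinant and sign bookkeeping in both parities and in characteristic $2$) and they go through, so this is a valid, self-contained proof of the statement that avoids both Zsigmondy and the strongly real classification, at the cost of more case analysis. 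One caveat: the paper's specific choices are tuned for later use -- the prime ppd orders of $x,y$ feed the defect-zero arguments in Proposition \ref{prop:non vanishing characters}, and the eigenvalue $\mu=\delta^{a}$ of $y$ is constrained so that $|1+2\mathrm{Re}(\epsilon^{a})|\leq1$ (and $=0$ when $q\equiv-1\bmod3$) in Lemma \ref{lem:Level 1 and 2 c haracters on x and y} -- so your elements prove the proposition as stated but could not be substituted into the subsequent structure-constant estimates without revisiting them.
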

\begin{proof}
Firstly note that the torus $T_{1,n-1}$ of order $q^{n-1}-1$ is
cyclic and we can pick $x\in T_{1,n-1}$ of order $r$, where $r=$ppd$(q,n-1)$
when $n-1\equiv0$ mod $4$ and $r=$ppd$(q,\frac{n-1}{2})$ when
$n-1\equiv2$ mod $4$. These primes exist by Zsigmondy's Theorem.
Now $x$ has minimal polynomial of the form $p(t)=f(t)(t-1)$ where
$f(t)$ is a degree $n-1$ polynomial, irreducible over $\mathbb{F}_{q^{2}}$.
Furthermore, $f(t)$ is self-reciprocal, meaning that if $\alpha_{1},\dots,\alpha_{n-1}\in\overline{\mathbb{F}}_{q}$
are the roots of $f$, then $\prod(t-\alpha_{i})=f(t)=\prod(t-\alpha_{i}^{-1})$
. Clearly the elementary divisor $(t-1)$ is also self-reciprocal
and it follows that $x$ is real in $SU_{n}(q)$. It then follows
from (\cite{Vinroot real elements in SUn}, 7.1) and (\cite{Gates Singh and Vinroot Strongly real in unitary},
2.2 and 2.4) that $x$ is strongly real in $SU_{n}(q)$. 

A similar method is used for $y$ where we now pick a regular semisimple
element in the torus $T_{1^{3},n-3}$ of size $|T_{1^{3},n-3}|=(q^{n-3}-1)(q+1)^{2}$.
We take $y$ as the product of a regular element $y_{1}$ in the cyclic
torus $T_{n-3}<SU_{n-3}(q)$ and $y_{2}=$ diag$(\mu,\mu^{-1},1)\in T_{1^{3}}<SU_{3}(q)$.
In particular $y_{1}$ has order $r'$ where $r'=$ppd$(q,n-3)$ when
$n-3\equiv0$ mod $4$ and $r'=$ppd$(q,\frac{n-3}{2})$ when $n-3\equiv2$
mod $4$. Furthermore, if $\langle\delta\rangle\cong C_{q+1}\subset\mathbb{F}_{q^{2}}$
and $\epsilon=e^{2\pi i/(q+1)}$, then we choose $\mu=\delta^{a}$
such that $|1+2\text{Re}(\epsilon^{a})|\leq1$. This is always possible
and we can also ensure that $\mu\notin\{1,-1\}$. In fact if $q\equiv-1$
mod $3$ then we pick $\mu$ such that $1+2\text{Re}(\epsilon^{a})=0$.
This will be important for later calculations of structure constants.

Clearly $y_{2}$ has the torus $T_{1^{3}}$ as its centralizer in
$SU_{3}(q)$ and it follows that $C_{SU_{n}(q)}(y)=T_{1^{3},n-3}$
and so $y$ is regular. Note that $y$ has self-reciprocal minimal
polynomial $p(t)=f(t)(t-\mu)(t-\mu^{-1})(t-1)$ such that $f(t)$
is of degree $n-3$, self-reciprocal, and irreducible over $\mathbb{F}_{q^{2}}$.
Hence it follows as above that $y$ is strongly real in $SU_{n}(q)$.
\end{proof}

As we saw in Section \ref{subsec:n-even}, the bulk of the work is
now to show that the product $x^{G}\cdot y^{G}$ contains all non-semisimple
elements of $G$. Unlike before however, the two tori $T_{1,n-1}$
and $T_{1^{3},n-3}$ are not weakly orthogonal. Hence when computing
the structure constant it is possible that $\chi(x)\chi(y)\neq0$
for a far greater range of characters $\chi\in$Irr$(G)$. Exactly
when a character may be non-vanishing is detailed below.
\begin{prop}
\label{prop:non vanishing characters}Suppose $\chi\in$Irr$(G)$
such that $\chi(x)\chi(y)\neq0$. Then $\chi$ is one of the following
characters.
\end{prop}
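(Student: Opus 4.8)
The plan is to run the same machine as in the even case (Lemma~\ref{lem:reduction to 4 character lemma}), except that now $T_{1,n-1}$ and $T_{1^{3},n-3}$ are \emph{not} weakly orthogonal, so we cannot discard non-unipotent characters and must work inside arbitrary Lusztig series. First I would record the Lusztig-theoretic constraint on $\chi$. Both $x$ and $y$ are regular semisimple, with $C_{G}(x)=T_{1,n-1}$ and $C_{G}(y)=T_{1^{3},n-3}$; so by Lemma~\ref{lem:G and M lemma on the existence of a character in a non trivial Lusztig series} applied to $x$ there is a semisimple $s$ lying in a torus dual to $T_{1,n-1}$ with $\chi\in\mathcal{E}(G,(s))$, and applied to $y$ there is a semisimple $s'$ lying in a torus dual to $T_{1^{3},n-3}$ with $\chi\in\mathcal{E}(G,(s'))$. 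Since Lusztig series attached to non-$G^{*}$-conjugate semisimple classes are disjoint, $s$ and $s'$ are conjugate, so after replacing $s$ by a conjugate we may assume $s$ sits simultaneously in a torus dual to $T_{1,n-1}$ and in a torus dual to $T_{1^{3},n-3}$. Both such tori lie in $C_{\boldsymbol{G}^{*}}(s)^{F^{*}}$, so $q^{n-1}-1$ and $(q^{n-3}-1)(q+1)^{2}$ both divide $|C_{\boldsymbol{G}^{*}}(s)^{F^{*}}|$. As throughout Section~\ref{subsec:unitary groups prelims} I work with $s$ for which $C_{\boldsymbol{G}^{*}}(s)$ is connected (the disconnected case being dealt with separately), so by \cite{Carter Centralizers paper} (Sec.~8) the group $C_{\boldsymbol{G}^{*}}(s)^{F^{*}}$ is the image in $PGU_{n}(q)$ of a product of factors of type $GL_{n_{i}}(q^{a_{i}})$ and $GU_{n_{i}}(q^{a_{i}})$ whose dimensions sum to $n$.

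Next I would pin down the admissible types of $C_{\boldsymbol{G}^{*}}(s)$. Let $r$ be the (prime) order of $x$ and $r'$ the order of $y_{1}$, both primitive prime divisors as in Proposition~\ref{prop:picking x and y in n odd}; since $n\ge 7$ these have multiplicative order at least $3$, so neither divides $q+1$. Inspecting orders of classical groups of bounded rank, the only factors $L$ of such a product with $r\mid|L|$ are $GU_{n}(q)$, $GU_{n-1}(q)$ and $GL_{(n-1)/2}(q^{2})$ (the last two embedded in $GU_{n-1}(q)\le GU_{n}(q)$), and similarly the only candidates for $r'$ are $GU_{n}(q)$, $GU_{n-3}(q)$ and $GL_{(n-3)/2}(q^{2})$ (inside $GU_{n-3}(q)\le GU_{n}(q)$). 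Feeding both divisibility conditions into the dimension bound, and using that $y_{2}=\text{diag}(\mu,\mu^{-1},1)$ forces $s$ to be conjugate into a torus of type $T_{1^{3},n-3}$, cuts $C_{\boldsymbol{G}^{*}}(s)$ down to a short explicit list: essentially $\boldsymbol{G}^{*}$ itself (so $s$ is central and $\chi$ is unipotent), $GU_{n-1}(q)\times GU_{1}(q)$, $GU_{n-3}(q)\times GU_{3}(q)$, and their $GL$-over-$q^{2}$ analogues.

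For each admissible $C_{\boldsymbol{G}^{*}}(s)$ I would then determine which $\overline{\lambda}$ survive. By the Jordan decomposition of characters (Proposition~\ref{prop:(Jordan-decomposition-of characters and degree formula}), $\chi=\chi_{s,\overline{\lambda}}$ has $\chi(1)=|G^{*}:C_{\boldsymbol{G}^{*}}(s)^{F^{*}}|_{p'}\cdot(\psi_{s}(\chi))(1)$, and $\psi_{s}(\chi)$ restricts on each factor $L$ to the unipotent character of $L$ labelled by $\lambda^{L}$. Since $r\mid o(x)$ and $\chi(x)\neq 0$, Brauer's theorem (Theorem~\ref{thm:Brauers theorem on prime defect}) shows $\chi$ does not have $r$-defect zero; via the degree formula this transfers to $\psi_{s}(\chi)$ not having $r$-defect zero on the factor $L$ with $r\mid|L|$, and the hook-length argument from the proof of Lemma~\ref{lem:reduction to 4 character lemma} (that $\Phi_{m}(-q)$ divides $\rho_{\lambda}(-q)$, cf.~(\ref{eq:degree formula of unipotent characters}), exactly when $[\lambda]$ has no $m$-hook) forces $\lambda^{L}$ to be a hook of the appropriate length. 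The same argument with $y$ and $r'$ constrains the corresponding entry of $\overline{\lambda}$, while for the small $GU_{1}(q)$ or $GU_{3}(q)$ factors only boundedly many unipotent characters occur. Assembling these constraints for each centralizer type, and, where two or three candidate characters remain, separating them by evaluating at $x$ and $y$ via the explicit Weil-character formula~(\ref{eq:formula for the unipotent Weil rep}) together with Alvis--Curtis duality (Lemmas~\ref{lem dual charracters fixed on reg sem si-1} and~\ref{lem tiep and bob result}), produces exactly the finite list of characters claimed in the statement.

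The step I expect to be the main obstacle is the bookkeeping when $C_{\boldsymbol{G}^{*}}(s)$ has two medium-sized factors, such as $GU_{n-3}(q)\times GU_{3}(q)$, or a $GL(q^{2})$ factor alongside a $GU$ factor: there the two primitive-prime-divisor conditions have to be balanced against one another, the $GL(q^{2})$-type centralizers must be handled in parallel with the $GU$-type ones, and one must verify carefully that no further centralizer type or hook-multipartition escapes, so that the list is genuinely exhaustive. A secondary technical nuisance is keeping track of the passage between $SU_{n}(q)$, $GU_{n}(q)$ and $PGU_{n}(q)$, so that "$s$ semisimple in $G^{*}=PGU_{n}(q)$" is matched with the correct product of classical factors and with connectedness of $C_{\boldsymbol{G}^{*}}(s)$ --- which is exactly what the framework of Section~\ref{subsec:unitary groups prelims} is set up to handle.
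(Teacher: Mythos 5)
Your opening step coincides with the paper's: apply Lemma~\ref{lem:G and M lemma on the existence of a character in a non trivial Lusztig series} to both $x$ and $y$ and use disjointness of Lusztig series to conclude that the label $s$ may be taken to lie simultaneously in a torus dual to $T_{1,n-1}$ and in one dual to $T_{1^{3},n-3}$; and your finishing tools (Jordan decomposition, Brauer's $r$- and $r'$-defect-zero argument, the hook-length criterion for $\Phi_{m}(-q)\mid\rho_{\lambda}(-q)$) are exactly the paper's. The gap is in the middle, and it is precisely the step you flag as the ``main obstacle'': the paper does \emph{not} enumerate centralizer types by divisibility of $|C_{\boldsymbol{G}^{*}}(s)^{F^{*}}|$; it computes the intersection $T_{1,n-1}^{*}\cap T_{1^{3},n-3}^{*}$ directly and finds that (up to conjugacy and modulo scalars) it consists only of $s=\mathrm{diag}(1,\dots,1,\delta^{t})$, $0\le t\le q$, so that the only non-unipotent series occurring have $C_{G^{*}}(s)\cong GU_{n-1}(q)$ and $\overline{\lambda}$ is a single partition of $n-1$; the hook argument then closes the proof. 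Your substitute --- order divisibility plus a per-factor defect-zero/hook analysis --- cannot be pushed to the same conclusion.

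Concretely, your ``short explicit list'' of centralizers is wrong in both directions, and the overshoot is not repairable with the tools you propose. An element of $T_{1,n-1}$ whose eigenvalues all lie in $\mu_{q+1}$ has at most two distinct eigenvalues, of multiplicities $1$ and $n-1$, so the type $GU_{n-3}(q)\times GU_{3}(q)$ never arises (harmless, since $r\nmid|GU_{n-3}(q)\times GU_{3}(q)|$ and Brauer would kill that series anyway); but the ``$GL$-over-$q^{2}$ analogues'' are genuinely dangerous. Take $s$ with eigenvalues $\beta,\beta^{-q}\in\mathbb{F}_{q^{2}}^{*}\setminus\mu_{q+1}$, each of multiplicity $(n-1)/2$, plus one eigenvalue in $\mu_{q+1}$: such $s$ does lie in $T_{1,n-1}^{*}$, its centralizer is of type $GL_{(n-1)/2}(q^{2})\times GU_{1}(q)$, and \emph{both} $r$ and $r'$ divide this order (e.g.\ via the factors $q^{n-1}-1$ and $q^{n-3}-1$). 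Hence the series $\mathcal{E}(G,(s))$ contains characters of non-zero $r$- and $r'$-defect --- for instance the semisimple character $\chi_{s,\overline{\lambda}}$ with $\overline{\lambda}$ trivial, of degree $|G^{*}:C_{G^{*}}(s)|_{p'}$, which is prime to $rr'$ --- so Theorem~\ref{thm:Brauers theorem on prime defect} and the hook criterion leave these candidates standing, and the Weil-character formulae you cite do not apply to them. The only way to exclude them is the torus-membership constraint itself: the eigenvalues $\beta,\beta^{-q}$ have total multiplicity $n-1>n-3$, so $s$ is not conjugate into any torus of type $T_{1^{3},n-3}$, whence $\chi(y)=0$ by Lemma~\ref{lem:G and M lemma on the existence of a character in a non trivial Lusztig series}. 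You state this constraint at the outset but then abandon it in favour of divisibility, so as written your argument terminates with extra characters it cannot eliminate; carrying out the eigenvalue analysis of the intersection (as the paper does in one line) is not optional bookkeeping but the decisive step.
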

\begin{enumerate}
\item \textit{Unipotent characters: }
\[
1_{G},\ St,\ \chi_{(n-3,2,1)},\ \chi_{(3,2,1^{n-5})}.
\]
\item \textit{Non-unipotent characters:} 
\[
\chi_{s^{t},(n-1)},\ \chi_{s^{t},(1^{n-1})},\ \chi_{s^{t},(n-2,1)},\ \chi_{s^{t},(2,1^{n-3})}.
\]
\end{enumerate}
\textit{Here} $s=\text{diag}(1,\dots,1,\delta)$ \textit{where} $\langle\delta\rangle\cong C_{q+1}\subset\mathbb{F}_{q^{2}}$,\textit{
and} $1\leq t\leq q$. \textit{Also} $C_{G^{*}}(s)\cong GU_{n-1}(q)$.
\begin{proof}
Applying Lemma \ref{lem:G and M lemma on the existence of a character in a non trivial Lusztig series}
to the elements $x$ and $y$, it follows that $\chi\in\mathcal{E}(G,(s))$
where $s\in T_{1,n-1}^{*}\cap T_{1^{3},n-3}^{*}.$ This intersection
consists of elements of the form $s=\text{diag}(1,\dots,1,\delta^{t})$
where $0\leq t\leq q$. First consider the case of possible unipotent
characters, that is when $t=0$ and so $s=1$. We proceed as in Lemma
\ref{lem:reduction to 4 character lemma}, proving that only the 4
unipotent characters $\chi_{\lambda}$ listed in the conclusion are
not of $r$- or $r'$-defect zero (where $r,$ $r'$ are as defined
in the proof of Proposition \ref{prop:picking x and y in n odd})
and then apply Theorem \ref{thm:Brauers theorem on prime defect}. 

Firstly assume that $\chi_{\lambda}$ is a unipotent character such
that $\chi_{\lambda}(x)\neq0.$ From the degree formula (\ref{eq:degree formula of unipotent characters}),
it is clear that $r$ divides $\chi_{\lambda}(1)=\rho_{\lambda}(-q)$
if and only if $\Phi_{n-1}(-q)$ divides $\rho_{\lambda}(-q)$. It
follows that either $\rho_{\lambda}(-q)_{r}=1$ or $\rho_{\lambda}(-q)_{r}=|G|_{r}$.
Then note that $\Phi_{n-1}(-q)$ divides $\rho_{\lambda}(-q)$ if
and only if $[\lambda]$ has no hooks of length $n-1$. Hence $\chi_{\lambda}(x)\neq0$
implies that $[\lambda]$ does indeed have an $(n-1)$-hook and so
the partition must be of the form $\lambda=(n),\,(1^{n})$ or $(k,2,1^{n-k-2})$
for some $k$. 

The case for characters that are possibly non-vanishing on $y$ is
similar as the order of $y$ is divisible by a primitive prime divisor
factor $r',$ of $\Phi_{n-3}(-q)$. As $n\geq7$, either $\rho_{\lambda}(-q)_{r'}=1$
or $\rho_{\lambda}(-q)_{r'}=|G|_{r'}$. Hence if $\chi_{\lambda}(y)\neq0$,
then $[\lambda]$ contains an $(n-3)$-hook. In summary, if $\chi_{\lambda}(x)\chi_{\lambda}(y)\neq0$
then $[\lambda]$ contains both an $(n-1)$-hook and an $(n-3)$-hook.
Namely $\lambda\in\{(n),(1^{n}),(n-3,2,1),(3,2,1^{n-5})\}$. 

To conclude, consider non-unipotent characters $\chi\in\mathcal{E}(G,(s^{t}))$
where $1\leq t\leq q$. Clearly $C=C_{G^{*}}(s)\cong GU_{n-1}(q)$
and thus $\chi=\chi_{s^{t},\lambda}$ where $\lambda\vdash n-1.$
As explained above, $\chi(x)\chi(y)\neq0$ implies that $[\lambda]$
contains both an $(n-1)$- and $(n-3)$-hook. Hence $\lambda\in\{(n-1),(1^{n-1}),(n-2,1),(2,1^{n-3})\}$
and we have the result.
\end{proof}
For the remainder of this section, denote the set of irreducible characters
listed in (1) and (2) of Proposition \ref{prop:non vanishing characters}
by $X\subset$Irr$(G)$.

\subsubsection{Values of characters in X\label{subsec:Values-of-characters}}

Before computing the structure constants in full, we compute the values
of each character $\chi\in X$ on the elements $x$ and $y$ defined
in Proposition \ref{prop:picking x and y in n odd}. Firstly, we have
$1_{G}$ and $St_{G}\in X$. These characters are well understood
and we have seen earlier that explicit formulae exist.

Next consider $\chi_{s^{t},(n-1)}$ and the dual $\chi_{s^{t},(1^{n-1})}$.
For ease of notation denote $F=\mathbb{F}_{q^{2}}$. The reducible
Weil character $\zeta_{n,q}$ of $GU_{n}(q)$ is defined by the formula
\begin{equation}
\zeta_{n,q}(g)=(-1)^{n}(-q)^{\text{dim Ker}_{\mathbb{F}}(g-1)}.\label{eq:Weil character values}
\end{equation}
This character is well studied in the literature (\cite{Tiep Zalesskii weil reps},
Sec. 4); in particular it has the following decomposition when restricted
to $G=SU_{n}(q)$:
\begin{equation}
\zeta_{n,q}|_{G}=\chi_{(n-1,1)}+\sum_{t=1}^{q}\chi_{s^{t},(n-1)}.\label{eq:weil decomp into erreds}
\end{equation}

Furthermore explicit formulae for the values of each $\chi_{s^{t},(n-1)}$
are known.
\begin{lem}
\label{lem:(REF-TIEP).-Formulae for Weil reps}(\cite{Tiep Zalesskii weil reps},
4.1). Let $\epsilon=e^{2\pi i/(q+1)}$ and $\langle\delta\rangle\cong C_{q+1}\subset\mathbb{F}_{q^{2}}$.
Then for $g\in G=SU_{n}(q)$,
\[
\chi_{s^{t},(n-1)}(g)=\frac{(-1)^{n}}{q+1}\sum_{l=0}^{q}\epsilon^{-tl}(-q)^{\text{dim Ker}_{\mathbb{F}}(g-\delta^{-l})}.
\]
\end{lem}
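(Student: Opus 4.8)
The plan is to obtain each character $\chi_{s^{t},(n-1)}$ as a spectral component of the reducible Weil character $\zeta_{n,q}$ of $GU_n(q)$ with respect to a commuting $C_{q+1}$-action, and then to read off its values by Fourier inversion over $C_{q+1}$. To set this up I would regard $V=\mathbb{F}_{q^2}^{n}$, carrying its non-degenerate Hermitian form, as a $2n$-dimensional symplectic space over $\mathbb{F}_q$ (compose the Hermitian form with $\mathrm{Tr}_{\mathbb{F}_{q^2}/\mathbb{F}_q}$ after scaling by a trace-zero element). This produces an embedding $GU_n(q)\hookrightarrow Sp_{2n}(q)$ under which the group $Z$ of unitary scalars $\{\,\delta\in\mathbb{F}_{q^2}^{\ast}:\delta^{q+1}=1\,\}\cong C_{q+1}$ lands in $Sp_{2n}(q)$ and centralises $GU_n(q)$. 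The Weil representation of $Sp_{2n}(q)$ restricts on $GU_n(q)$ to the representation affording $\zeta_{n,q}$, whose character is given by (\ref{eq:Weil character values}); this I would take as known.

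Since $Z$ centralises $GU_n(q)$ inside $Sp_{2n}(q)$, the $Z$-action splits $\zeta_{n,q}$ into $Z$-isotypic pieces, and the input from the theory of the Weil representation that I would need is that these $q+1$ pieces are precisely the irreducible constituents of $\zeta_{n,q}|_{GU_n(q)}$ (this is the Howe-duality statement underlying \cite{Tiep Zalesskii weil reps}, Sec.~4). Label them $\tilde\zeta_{0},\dots,\tilde\zeta_{q}\in\mathrm{Irr}(GU_n(q))$ so that $Z$ acts on $\tilde\zeta_{t}$ through $\delta\mapsto\epsilon^{t}$. Then for $g\in GU_n(q)$ and $0\le l\le q$ the element $\delta^{l}g$ again lies in $GU_n(q)$ and $\zeta_{n,q}(\delta^{l}g)=\sum_{t=0}^{q}\epsilon^{tl}\tilde\zeta_{t}(g)$, so orthogonality of the characters of $C_{q+1}$ gives
\[
\tilde\zeta_{t}(g)=\frac{1}{q+1}\sum_{l=0}^{q}\epsilon^{-tl}\,\zeta_{n,q}(\delta^{l}g).
\]
Substituting (\ref{eq:Weil character values}) and using that $\delta^{l}g-1=\delta^{l}(g-\delta^{-l})$, so that $\mathrm{Ker}_{\mathbb{F}}(\delta^{l}g-1)$ and $\mathrm{Ker}_{\mathbb{F}}(g-\delta^{-l})$ have equal dimension, yields
\[
\tilde\zeta_{t}(g)=\frac{(-1)^{n}}{q+1}\sum_{l=0}^{q}\epsilon^{-tl}(-q)^{\dim\mathrm{Ker}_{\mathbb{F}}(g-\delta^{-l})}.
\]

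It then remains to match the $\tilde\zeta_{t}$ with the characters in the statement. Comparing $\sum_{t=0}^{q}\tilde\zeta_{t}|_{SU_n(q)}$ with the decomposition (\ref{eq:weil decomp into erreds}), and using Jordan decomposition (Proposition~\ref{prop:(Jordan-decomposition-of characters and degree formula}) together with $C_{G^{\ast}}(s^{t})\cong GU_{n-1}(q)$, which is connected so that the relevant restrictions from $\tilde G$ to $G$ stay irreducible), I expect to find $\tilde\zeta_{0}|_{SU_n(q)}=\chi_{(n-1,1)}$ and $\tilde\zeta_{t}|_{SU_n(q)}=\chi_{s^{t},(n-1)}$ for $1\le t\le q$, after fixing $\delta$ compatibly with the labelling of the Lusztig series. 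Restricting the last display to $g\in G=SU_n(q)$ then gives the formula; note that the $t=0$ instance recovers (\ref{eq:formula for the unipotent Weil rep}).

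The parts I expect to be genuinely delicate are not the algebra but two structural points: first, the dual-pair step, namely rigorously establishing that the $Z$-isotypic components of $\zeta_{n,q}|_{GU_n(q)}$ are irreducible and exhaust its constituents, which is where the structure of the Weil representation is really used; and second, the final bookkeeping needed to pin down the correspondence $t\leftrightarrow s^{t}$, namely the choice of additive character hidden in the Weil representation, the identification of the Galois-conjugate characters $\chi_{s^{t},(n-1)}$, and the confirmation that each $\tilde\zeta_{t}$ restricts irreducibly to $SU_n(q)$. Once those are settled, the cyclotomic-sum manipulation with $\dim\mathrm{Ker}_{\mathbb{F}}$ is routine.
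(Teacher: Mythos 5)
The paper offers no proof of this lemma—it is quoted verbatim from (\cite{Tiep Zalesskii weil reps}, 4.1)—and your argument (decomposing $\zeta_{n,q}$ into isotypic pieces for the unitary scalars $Z\cong C_{q+1}$, recovering their values by Fourier inversion over $Z$ using $\dim\mathrm{Ker}(\delta^{l}g-1)=\dim\mathrm{Ker}(g-\delta^{-l})$, and then matching the pieces against the constituents labelled in (\ref{eq:weil decomp into erreds})) is essentially the standard proof of that cited result. The inputs you take for granted, namely the value formula (\ref{eq:Weil character values}) and the fact that the $q+1$ central-character components are precisely the irreducible constituents (with the $t\leftrightarrow s^{t}$ matching being a labelling convention that is harmless for the paper's later use, where only the values and sums over $t$ matter), are exactly what the paper and its source also assume, so your proposal is correct and consistent with the paper's approach.
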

Far less is known about the dual characters $\chi_{s^{t},(1^{n-1})}$.
We can however apply Lemmas \ref{lem dual charracters fixed on reg sem si-1}
and \ref{lem:(REF-TIEP).-Formulae for Weil reps} to find $|\chi_{s^{t},(1^{n-1})}(x)|$
and $|\chi_{s^{t},(1^{n-1})}(y)|$. 

The second family of non-unipotent characters $\chi_{s^{t},(n-2,1)}$
has also been studied in the more recent literature. Now (\cite{Ore Conjecture paper},
6.6) describes any character $\chi\in\text{Irr}(SU_{n}(q))$ of degree
less than the order of $q^{3n-9}$ by associating $\chi$ to an $\alpha\in$Irr$(GU_{2}(q))$.
In particular we see that for a fixed $t$, $\chi_{s^{t},(n-2,1)}$
has labeling $D_{\alpha}^{\circ}$ where $\alpha=\chi_{q-1}^{(q+1,u)}$
for some $u\in\{1,\dots,q\}$. Here the labeling of Irr$(GU_{2}(q))$
is that used by Ennola (\cite{Ennola}). We then have by (\cite{Ore Conjecture paper},
5.5) that for $g\in SU_{n}(q)$, 
\begin{equation}
\chi_{s^{t},(n-2,1)}(g)=D_{\alpha}^{\circ}(g)=\frac{1}{|GU_{2}(q)|}\sum_{z\in GU_{2}(q)}\overline{\alpha(z)}\omega(zg)\label{eq:formula for D2}
\end{equation}
 where $\omega(h)=\zeta_{2n}(h)=(-q)^{\text{dim Ker}_{F}(h-1)}$ is
the reducible Weil character of $GU_{2n}(q)$ of degree $q^{2n}$
and $zg\in GU_{2}(q)\otimes GU_{n}(q)\subseteq GU_{2n}(q)$. \\
For the dual characters $\chi_{s^{t},(2,1^{n-3})}$, we will again
see that an application of Lemma \ref{lem dual charracters fixed on reg sem si-1}
or the trivial bound, $|\chi(g)|\leq|C_{G}(g)|^{\frac{1}{2}}$, will
be sufficient for calculating structure constants.
\begin{lem}
\label{lem:Level 1 and 2 c haracters on x and y}Let $x,y\in SU_{n}(q)$
be the regular semisimple elements defined in Proposition \ref{prop:picking x and y in n odd}.
Recall that $y$ has eigenvalue $\mu$ chosen such that $\mu=\delta^{a}$.
Then 
\[
\chi_{s^{t},(n-1)}(x)=\chi_{s^{t},(n-2,1)}(x)=1,
\]
\[
\chi_{s^{t},(n-1)}(y)=\chi_{s^{t},(n-2,1)}(y)=1+2\text{Re}(\epsilon^{a}),
\]
\[
|\chi_{s^{t},(1^{n-1})}(x)|=|\chi_{s^{t},(2,1^{n-2})}(x)|=1,
\]
\[
|\chi_{s^{t},(1^{n-1})}(y)|=|\chi_{s^{t},(2,1^{n-2})}(y)|=|1+2\text{Re}(\epsilon^{a})|.
\]
\end{lem}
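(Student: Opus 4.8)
The plan is to compute $\chi_{s^{t},(n-1)}$ and $\chi_{s^{t},(n-2,1)}$ at $x$ and $y$ directly from the Weil--type formulae recalled above, and then to obtain the conjugate--partition characters $\chi_{s^{t},(1^{n-1})}$ and $\chi_{s^{t},(2,1^{n-3})}$ for free via Alvis--Curtis duality. First recall from Proposition~\ref{prop:picking x and y in n odd} the eigenvalue data: $x$ has eigenvalues $1$ together with the $n-1$ roots of an irreducible polynomial $f\in\mathbb{F}_{q^{2}}[t]$ of degree $n-1$, all of the latter of order the primitive prime divisor $r$; and $y=y_{1}y_{2}$ has eigenvalues $1,\mu=\delta^{a},\mu^{-1}$ together with the $n-3$ roots of an irreducible polynomial over $\mathbb{F}_{q^{2}}$ of degree $n-3$, all of order $r'$. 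The single fact I would isolate at the start is that, since $n-1,n-3\geq 4$, neither $r$ nor $r'$ divides $q^{2}-1$, hence both are coprime to $q+1$; consequently none of the ``large'' eigenvalues of $x$ or of $y_{1}$ lies in $\langle\delta\rangle\cong C_{q+1}$, and moreover each such eigenvalue generates an extension of $\mathbb{F}_{q^{2}}$ of degree $\geq 4$, so it is not an eigenvalue of any element of $GU_{2}(q)$.

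For $\chi_{s^{t},(n-1)}$ I would substitute $g=x$ and then $g=y$ into Lemma~\ref{lem:(REF-TIEP).-Formulae for Weil reps}. By the remark above, $\dim\operatorname{Ker}_{\mathbb{F}}(x-\delta^{-l})$ equals $1$ for $l=0$ and $0$ otherwise, while $\dim\operatorname{Ker}_{\mathbb{F}}(y-\delta^{-l})$ equals $1$ exactly when $\delta^{-l}\in\{1,\mu,\mu^{-1}\}$ and $0$ otherwise. The sum then collapses using $\sum_{l=0}^{q}\epsilon^{-tl}=0$ (valid as $1\leq t\leq q$), and since $n$ is odd a one-line computation gives $\chi_{s^{t},(n-1)}(x)=1$ and $\chi_{s^{t},(n-1)}(y)=1+2\operatorname{Re}(\epsilon^{a})$. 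For the dual character $\chi_{s^{t},(1^{n-1})}$ nothing further need be computed: since $C_{\boldsymbol{G}^{*}}(s)\cong GU_{n-1}(q)$ is connected, both $\chi_{s^{t},(n-1)}$ and $\chi_{s^{t},(1^{n-1})}$ are restrictions to $G$ of irreducible characters of $\tilde{G}=GU_{n}(q)$, which are interchanged by Alvis--Curtis duality on $\tilde{G}$ by Lemma~\ref{lem tiep and bob result} (the partition $(n-1)$ being conjugate to $(1^{n-1})$); by Lemma~\ref{lem dual charracters fixed on reg sem si-1} dual characters agree in absolute value on regular semisimple elements, so restricting back to $G$ yields $|\chi_{s^{t},(1^{n-1})}(x)|=1$ and $|\chi_{s^{t},(1^{n-1})}(y)|=|1+2\operatorname{Re}(\epsilon^{a})|$.

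For $\chi_{s^{t},(n-2,1)}$ I would use formula~(\ref{eq:formula for D2}), namely $\chi_{s^{t},(n-2,1)}(g)=\frac{1}{|GU_{2}(q)|}\sum_{z\in GU_{2}(q)}\overline{\alpha(z)}\,\omega(zg)$ with $\alpha=\chi_{q-1}^{(q+1,u)}$ and $\omega(h)=(-q)^{\dim\operatorname{Ker}_{\mathbb{F}}(h-1)}$. Decomposing the natural module into $g$-eigenspaces and using the tensor structure $zg\in GU_{2}(q)\otimes GU_{n}(q)$ one gets $\dim\operatorname{Ker}_{\mathbb{F}}(zg-1)=\sum_{\gamma}\dim\operatorname{Ker}_{\mathbb{F}}(z-\gamma^{-1})\dim V^{(\gamma)}$, the sum over eigenvalues $\gamma$ of $g$. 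For $g=x$ only $\gamma=1$ contributes (the key fact again), so $\omega(zx)=(-q)^{\dim\operatorname{Ker}_{\mathbb{F}}(z-1)}=\zeta_{2,q}(z)$, whence $\chi_{s^{t},(n-2,1)}(x)=\langle\zeta_{2,q},\alpha\rangle_{GU_{2}(q)}=1$, since each Weil character $\chi_{q-1}^{(q+1,u)}$ occurs exactly once in the decomposition of $\zeta_{2,q}$ (\cite{Tiep Zalesskii weil reps}, Sec. 4). For $g=y$ the $y_{1}$-block contributes nothing, leaving $\omega(zy)=\omega(zy_{2})$ with $y_{2}=\operatorname{diag}(\mu,\mu^{-1},1)$; thus $\chi_{s^{t},(n-2,1)}(y)$ is exactly the ``$n=3$'' instance of~(\ref{eq:formula for D2}) at $\operatorname{diag}(\mu,\mu^{-1},1)\in SU_{3}(q)$, so one identifies the relevant small character from the known character table of $SU_{3}(q)$ (\cite{Simpson Frame character table of ssu3}, as in Lemma~\ref{lem:SU result for 3}) and reads off $\chi_{s^{t},(n-2,1)}(y)=1+2\operatorname{Re}(\epsilon^{a})$; equivalently, separating off the $z$ with $\omega(zy_{2})=1$ (which integrate to $0$ against $\overline{\alpha}$ as $\alpha\neq 1$) reduces this to a finite sum of values of $\alpha$ on the classes of $GU_{2}(q)$ with an eigenvalue in $\{1,\mu,\mu^{-1}\}$, evaluated from Ennola's table \cite{Ennola}. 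Finally $\chi_{s^{t},(2,1^{n-3})}$ is the Alvis--Curtis dual of $\chi_{s^{t},(n-2,1)}$ (conjugate partitions), so its absolute values on $x$ and $y$ coincide with those just found, exactly as for $\chi_{s^{t},(1^{n-1})}$.

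The routine parts are the geometric-sum manipulation for $\chi_{s^{t},(n-1)}$ and the two Alvis--Curtis reductions. The main obstacle is the value of $\chi_{s^{t},(n-2,1)}$ at $y$: here $y_{2}$ genuinely meets the small eigenvalues $1,\mu,\mu^{-1}$, so the Weil sum does not collapse to an inner product of $GU_{2}(q)$-characters, and one must either trace the precise small character of $SU_{3}(q)$ through the Jordan decomposition underlying~(\ref{eq:formula for D2}) or carry out the explicit summation over $GU_{2}(q)$, keeping careful track of which $\chi_{q-1}^{(q+1,u)}$ is paired with the given $s^{t}$ and of the associated linear character of $C_{q+1}$.
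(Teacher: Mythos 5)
Your proposal follows essentially the same route as the paper: the values of $\chi_{s^{t},(n-1)}$ come from the Tiep--Zalesskii formula of Lemma \ref{lem:(REF-TIEP).-Formulae for Weil reps}, those of $\chi_{s^{t},(n-2,1)}$ from formula (\ref{eq:formula for D2}) together with Ennola's table, and the conjugate-partition characters are obtained by Alvis--Curtis duality on $\tilde{G}=GU_{n}(q)$ via Lemmas \ref{lem dual charracters fixed on reg sem si-1} and \ref{lem tiep and bob result}. The one point where the paper is more explicit is the duality step for the dual of $\chi_{s^{t},(n-2,1)}$: it observes that this character is $\tilde{G}$-invariant, extends it using Lemma \ref{lem:cyclic extensions and extendible characters}, and pins down the extension as $\tilde{\chi}_{\tilde{s}z,(n-2,1)}$ by comparing degree $p$-parts before dualizing, whereas you pass to $\tilde{G}$ directly by appealing to connectedness of $C_{\boldsymbol{G}^{*}}(s^{t})$ --- a harmless compression, though worth spelling out since the values you dualize were computed for the realization $D_{\alpha}^{\circ}$ of that character.
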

\begin{proof}
The values of $\chi_{s^{t},(n-1)}$ follow from Lemma \ref{lem:(REF-TIEP).-Formulae for Weil reps}.
The values of $\alpha=\chi_{q-1}^{(q+1,u)}\in\text{Irr}(GU_{2}(q))$
are given in full in (\cite{Ennola}) and we then use (\ref{eq:formula for D2})
to evaluate $D_{\alpha}^{\circ}=\chi_{s^{t},(n-2,1)}$. 

To evaluate the characters $\chi_{s^{t},(1^{n-1})}$ and $\chi_{s^{t},(2,1^{n-2})}$
we follow the method given in the proof of Proposition \ref{prop:x and y cover the group}:
from (\cite{Tiep Zalesskii weil reps}, Sec. 4) we see that $\chi_{s^{t},(n-1)}$
is the restriction of the irreducible Weil character $\tilde{\chi}_{\tilde{s}^{t},(n-1)}\in\text{Irr}(\tilde{G})$
where $\tilde{s}$ is a fixed preimage of $s^{t}\in G^{*}$. It follows
from Lemmas \ref{lem dual charracters fixed on reg sem si-1} and
\ref{lem tiep and bob result} that $\text{|}\chi_{s^{t},(n-1)}(x)|=\text{|}\text{\ensuremath{\tilde{\chi}}}_{\tilde{s}^{t},(n-1)}(x)|=|\tilde{\chi}_{\tilde{s}^{t},(1^{n-1})}(x)|$.
Then, as $C_{\boldsymbol{G}^{*}}(s^{t})\cong GU_{n-1}(\overline{\mathbb{F}}_{q})$
is connected, it follows from Section \ref{subsec:unitary groups prelims}
that $\tilde{\chi}_{s^{t},(1^{n-1})}|_{G}=\chi_{s^{t},(1^{n-1})}$.
Hence $|\chi_{s^{t},(n-1)}(x)|=|\chi_{s^{t},(1^{n-1})}(x)|=1$. The
value of $|\chi_{s^{t},(1^{n-1})}(y)|$ follows identically. 

To evaluate $\text{|}\chi_{s^{t},(2,1^{n-2})}(x)|$, first note from
formula (\ref{eq:formula for D2}) that $\chi_{s^{t},(n-2,1)}$ is
$\tilde{G}$-invariant. Hence as $\tilde{G}/G$ is cyclic, $\chi_{s^{t},(n-2,1)}$
extends to a character of $\tilde{G}$ by Lemma \ref{lem:cyclic extensions and extendible characters}.
From Sec \ref{subsec:unitary groups prelims} such extensions lie
in Lusztig series $\mathcal{E}(\tilde{G},\tilde{s}z)$, where $\tilde{s}$
is a fixed preimage of $s^{t}\in G^{*}$ and $z\in Z(G)$. Note that
if $\tilde{\chi}\in\mathcal{E}(\tilde{G},\tilde{s}z)$ restricts irreducibly
to $\chi_{s^{t},(n-2,1)}$, then $\tilde{\chi}(1)_{p}=\chi_{s^{t},(n-2,1)}(1)_{p}=q.$
Checking degrees of the unipotent characters of $C_{\tilde{G}}(\tilde{s}z)\cong GU_{n-1}(q)\times GU_{1}(q)$,
it follows that $\tilde{\chi}=\tilde{\chi}_{\tilde{s}z,(n-2,1)}$.
Noting again that $C_{\boldsymbol{G}^{*}}(s^{t})$ is connected, the
proof now follows exaclty as above, by considering the Alvis-Curtis
dual of $\tilde{\chi}$. 
\end{proof}
Note that when we apply these values later in the calculation of structure
constants, we will only require the bound $|1+2\text{Re}(\epsilon^{a})|\leq1$
given in the proof of Proposition \ref{prop:picking x and y in n odd}.
However recall that when $q=2$, $y$ is chosen such that $1+2\text{Re}(\epsilon^{a})=0.$ 

Lastly we want to estimate values of the unipotent character $\chi_{(n-3,2,1)}$.
We use the theory of dual pairs to extend the work in (\cite{Ore Conjecture paper},
Sec. 6.1).

Let $G=SU_{n}(q)$ with $n\geq7$ and odd and let $S:=GU_{3}(q)$.
Take $G=SU(W)$ where $W=\langle v_{1},\dots,,v_{n}\rangle_{\mathbb{F}_{q^{2}}}$
is endowed with a Hermitian form $(\cdot,\cdot),$ with Gram matrix
diag$(1,\dots,1)$ with respect to the basis $v_{1},\dots,v_{n}$.
Similarly we view $S$ as $GU(U)$ where $U=\langle e_{1},e_{2},e_{3}\rangle_{\mathbb{F}_{q^{2}}}$is
endowed with Hermitian form $(\cdot,\cdot),$ with Gram matrix diag$(1,1,1)$
in the basis $e_{1},e_{2},e_{3}$. Next we consider $V=U\otimes W$
with the Hermitian form $(\cdot,\cdot)$ defined via $(u\otimes w,u'\otimes w')=(u,u')\cdot(w,w')$
for $u,u'\in U$ and $w,w'\in W$. The action of $S\times G$ on $V$
then induces a natural homomorphism $S\times G\rightarrow\Gamma:=GU(V)=GU_{3n}(q).$ 

Let $\omega=\zeta_{3n,q}$ denote the reducible Weil character of
$\Gamma$, as defined by (\ref{eq:Weil character values}). It is
by studying the restriction of $\omega$ to $S\times G$ that we will
find the character $\chi_{(n-3,2,1)}$. 
\begin{lem}
Let $n\geq7$. \label{lem:(w|g,w|G)}Then 
\[
(\omega|_{G},\omega|_{G})_{G}=(q+1)(q^{3}+1)(q^{5}+1).
\]
 Further $(\omega|_{G},1_{G})_{G}=0$ and $(\omega|_{G},\zeta_{n,q})_{G}=(q+1)(q^{3}+1).$ 
\end{lem}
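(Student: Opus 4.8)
The plan is to first recognise the restricted Weil character $\omega|_G$ as a power of the reducible Weil character $\zeta_{n,q}$ of $G=SU_n(q)$, and then translate the three inner products into counts of orbits of $G$ on the natural module. Concretely, since $U$ carries the Gram matrix $\mathrm{diag}(1,1,1)$, the basis $e_1,e_2,e_3$ is orthonormal, so $V=U\otimes W$ is the orthogonal sum $(e_1\otimes W)\perp(e_2\otimes W)\perp(e_3\otimes W)$ of three isometric copies of $W$ and $g\in G$ acts on $V$ as $g\oplus g\oplus g$. As $\zeta_{m,q}(h)=(-1)^m(-q)^{\dim_{\mathbb{F}}\mathrm{Ker}(h-1)}$ (see (\ref{eq:Weil character values})) is multiplicative on orthogonal direct sums, $\omega|_G(g)=\zeta_{3n,q}(g\oplus g\oplus g)=\zeta_{n,q}(g)^3$; moreover $\zeta_{n,q}(g)^2=q^{2\dim_{\mathbb{F}}\mathrm{Ker}(g-1)}$ is the number of fixed points of $g$ on $W=\mathbb{F}_{q^2}^n$, so $\zeta_{n,q}^2=\pi_W$, the permutation character of $G$ on $W$.

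For the two nonzero inner products these identities give $(\omega|_G,\omega|_G)_G=(\zeta_{n,q}^6,1_G)_G=(\pi_W^3,1_G)_G$ and $(\omega|_G,\zeta_{n,q})_G=(\zeta_{n,q}^4,1_G)_G=(\pi_W^2,1_G)_G$, i.e.\ the numbers of $G$-orbits on $W\oplus W\oplus W$ and on $W\oplus W$ respectively. I would count these via Witt's extension theorem: as $n\ge 7$, any at-most-three vectors span a subspace $E$ with $\dim E+\dim(\mathrm{rad}\,E)\le 6<n$, so for $k\le 3$ the $GU_n(q)$-orbits on $W^{\oplus k}$ — which coincide with the $SU_n(q)$-orbits, since the pointwise stabiliser of $E$ contains the full unitary group of a nondegenerate complement and hence has full determinant — are parametrised by pairs $(M,K)$ consisting of a Hermitian $k\times k$ matrix $M$ over $\mathbb{F}_{q^2}$ and an $\mathbb{F}_{q^2}$-subspace $K\le\mathrm{Ker}\,M$. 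Summing over such pairs, using the standard count of Hermitian matrices of each rank and of subspaces of a fixed $\mathbb{F}_{q^2}$-space, yields $(q+1)(q^3+1)$ on $W\oplus W$ and $(q+1)(q^3+1)(q^5+1)$ on $W\oplus W\oplus W$.

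For the vanishing statement I would use $\zeta_{n,q}^2=\pi_W$ to write $(\omega|_G,1_G)_G=(\zeta_{n,q}^3,1_G)_G=(\zeta_{n,q},\pi_W)_G$, and then, by Frobenius reciprocity applied to the $q+1$ orbits of $G$ on $W$, obtain $\sum_O(\zeta_{n,q}|_{G_w},1)_{G_w}$, the sum over point stabilisers of the zero vector, of a nonzero isotropic vector, and of an anisotropic vector of each of the $q-1$ nonzero norms. The term for $\{0\}$ is the multiplicity of $1_G$ in $\zeta_{n,q}|_G$, which is $0$ by (\ref{eq:weil decomp into erreds}). For an anisotropic $w$ we have $G_w\cong SU_{n-1}(q)$ and $\zeta_{n,q}(h\oplus 1)=(-1)^n(-q)^{1+\dim_{\mathbb{F}}\mathrm{Ker}(h-1)}=q\,\zeta_{n-1,q}(h)$, so the term is $q$ times the multiplicity of $1$ in $\zeta_{n-1,q}|_{SU_{n-1}(q)}$, again $0$. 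For isotropic $w$, $G_w$ sits inside a maximal parabolic with unipotent radical $Q$ and Levi $GL_1(q^2)\times SU_{n-2}(q)$; identifying the $Q$-fixed part of $\zeta_{n,q}$, as a Levi-module, with a linear character of $GL_1(q^2)$ tensored with $\zeta_{n-2,q}|_{SU_{n-2}(q)}$ forces this term to vanish too.

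The main obstacle is precisely the isotropic case: its stabiliser is not a unitary group, so one needs either the parabolic-restriction description of the Weil character just indicated, or an explicit evaluation of $\sum_{g\in G_w}\zeta_{n,q}(g)$ by summing over the unipotent radical $Q$ and its Levi — a calculation that, while elementary, is substantially longer than the other two cases. By contrast, the multiplicativity used in the first reduction and the orbit counts (once the $(M,K)$ parametrisation is set up) are routine, and the hypothesis $n\ge 7$ enters only to guarantee that Witt's theorem applies to configurations of three vectors.
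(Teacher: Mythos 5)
Your treatment of the two nonzero inner products is correct and is essentially the paper's argument: writing $\omega|_{G}=\zeta_{n,q}^{3}$ and $\zeta_{n,q}^{2}=\pi_{W}$, converting $(\omega|_{G},\omega|_{G})_{G}$ and $(\omega|_{G},\zeta_{n,q})_{G}$ into the numbers of $G$-orbits on $W^{3}$ and $W^{2}$, and counting these by Witt's extension theorem (your $(M,K)$ parametrisation checks out, and your observation that the $GU_{n}(q)$- and $SU_{n}(q)$-orbits coincide because the pointwise stabiliser contains a $GU$ of a nondegenerate complement is a point the paper leaves implicit but which is genuinely needed). The paper obtains $(\omega|_{G},\zeta_{n,q})_{G}=(\zeta_{n,q}^{2},\zeta_{n,q}^{2})_{G}$ and the vanishing $(\omega|_{G},1_{G})_{G}=(\zeta_{n,q}^{2},\zeta_{n,q})_{G}=0$ by appealing to the known decomposition of $\zeta_{n,q}^{2}$ from Section 6.1 of the Ore-conjecture paper, rather than by a fresh orbit/stabiliser analysis.

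The gap is exactly where you flag it: in your Frobenius-reciprocity proof of $(\omega|_{G},1_{G})_{G}=0$, the contribution of the isotropic-vector orbit is not proved. The zero-vector and anisotropic terms vanish by (\ref{eq:weil decomp into erreds}) and by $\zeta_{n,q}(h\oplus1)=q\,\zeta_{n-1,q}(h)$, as you say, but the isotropic term rests on the asserted identification of the $Q$-fixed part of the Weil module with (a linear character of the torus factor) $\otimes\,\zeta_{n-2,q}$, which you neither prove nor cite; the stabiliser here is $Q\rtimes SU_{n-2}(q)$, not a unitary group, and the Weil module is not a permutation module, so this step carries real content. It is not a formality: the analogous term for $SU_{3}(q)$ equals $q\neq0$ (so $(\zeta_{3,q}^{3},1)_{G}\neq0$ there), and it is only the factor $\zeta_{n-2,q}$ having no trivial constituent for $n-2\geq2$ that makes the term vanish in the range at hand. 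The claimed Harish-Chandra-type restriction of $\zeta_{n,q}$ is in fact true and your sketch is consistent with small cases, but as written the vanishing statement is incomplete; either carry out the sum over $Q\rtimes SU_{n-2}(q)$ explicitly, prove the restriction statement (or cite it, e.g.\ from the Weil-representation literature), or follow the paper and deduce $(\zeta_{n,q}^{2},\zeta_{n,q})_{G}=0$ from the known decomposition of $\zeta_{n,q}^{2}$.
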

\begin{proof}
Let $A$ be the matrix of $g\in G$ in the basis $v_{1},\dots,v_{n}$
of $W$. Then $g$ has matrix diag$(A,A,A)$ in the basis $\{e_{1}\otimes v_{i},e_{2}\otimes v_{i},e_{3}\otimes v_{i}\}$
of $V$. It also follows that $\omega|_{G}=(\zeta_{n,q})^{3}$ where
$\zeta_{n,q}$ denotes the reducible Weil character of $GU_{n}(q)$
as in (\ref{eq:Weil character values}). Furthermore
\begin{eqnarray*}
(\omega|_{G},\omega|_{G})_{G} & = & \frac{1}{|G|}\sum_{A\in G}\omega(A)^{2}\\
 & = & \frac{1}{|G|}\sum_{A\in G}\text{Fix}(A)^{3},
\end{eqnarray*}
 where Fix$(A)$ denotes the number of fixed points under the action
on the natural module $W$. But this is exactly the number of of $G$-orbits
on $W\times W\times W.$ Using Witt's lemma and the assumptions on
$n$ we find that the number of $G$-orbits is exactly $(q+1)(q^{3}+1)(q^{5}+1)$.\\
Next note that $(\omega|_{G},1_{G})_{G}=(\zeta_{n,q}{}^{2},\zeta_{n,q})_{G}$
and similarly $(\omega|_{G},\zeta_{n,q})_{G}=(\zeta_{n,q}{}^{2},\zeta_{n,q}{}^{2})_{G}$.
The decomposition of $\zeta_{n,q}{}^{2}$ is studied in (\cite{Ore Conjecture paper},
6.1) and we can check that indeed $(\omega|_{G},1_{G})_{G}=0$ and
$(\omega|_{G},\zeta_{n,q})_{G}=(q+1)(q^{3}+1).$
\end{proof}
\begin{prop}
\label{prop:classsifying the level 3 characters}Let $S=GU_{3}(q)$
and $G=SU_{n}(q)$ with $n\geq7$ odd. Then the restriction $\zeta_{3n,q}|_{S\times G}$
decomposes as $\sum_{\alpha\in\text{Irr}(S)}\alpha\otimes D_{\alpha}$
where $D_{\alpha}$ is a character of $G$. Define $a_{\alpha}=(D_{\alpha},\chi_{(n-1,1)})_{G}$,
$b_{\alpha}^{t}=(D_{\alpha},\chi_{s^{t},(n-1)})_{G}$, for $1\leq t\leq q,$
and 
\[
D_{\alpha}^{\circ}=D_{\alpha}-a_{\alpha}\chi_{(n-1,1)}-\sum_{t=1}^{q}b_{\alpha}^{t}\chi_{s^{t},(n-1)}.
\]
Then the characters $D_{\alpha}^{\circ}$ of $G=SU_{n}(q)$ are all
irreducible and distinct.
\end{prop}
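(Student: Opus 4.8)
The plan is to mimic the structure of the treatment of $D^{\circ}_{\alpha}$ in \cite{Ore Conjecture paper} (Sec.~6.1), which handled $\zeta_{2n,q}|_{GU_2(q)\times G}$; here we have an extra factor of $\zeta_{n,q}$ to carry around because $\omega|_{G}=(\zeta_{n,q})^3$. First I would fix the decomposition $\zeta_{3n,q}|_{S\times G}=\sum_{\alpha\in\mathrm{Irr}(S)}\alpha\otimes D_{\alpha}$, which is legitimate since $S=GU_3(q)$ and $G$ commute inside $\Gamma=GU_{3n}(q)$ and their images generate a commuting pair, so every $S\times G$-irreducible constituent of $\omega$ has the form $\alpha\otimes\beta$ with $\alpha\in\mathrm{Irr}(S)$; grouping by $\alpha$ defines the (possibly reducible, possibly zero) characters $D_{\alpha}$ of $G$. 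Orthogonality of the $\alpha$ over $S$ then gives the key identity
\[
(\omega|_{G},\omega|_{G})_{G}=\sum_{\alpha\in\mathrm{Irr}(S)}(D_{\alpha},D_{\alpha})_{G}\cdot\alpha(1)\big/\alpha(1)\cdots
\]
— more precisely, projecting onto the $\alpha$-isotypic part and using $\langle\alpha,\alpha\rangle_S=1$ yields $(\omega|_{G},\omega|_{G})_{G}=\sum_{\alpha}\dim\mathrm{Hom}_{S\times G}(\alpha\otimes D_{\alpha},\,\omega)=\sum_{\alpha}(D_{\alpha},D_{\alpha})_{G}$. By Lemma~\ref{lem:(w|g,w|G)} the left side equals $(q+1)(q^3+1)(q^5+1)$, which is exactly $|\mathrm{Irr}(GU_3(q))|$-many ones plus the needed corrections; I expect the bookkeeping to show $\sum_{\alpha}(D_{\alpha},D_{\alpha})_{G}$ is just slightly larger than $|\mathrm{Irr}(S)|$, with the excess located entirely in the small-degree constituents $\chi_{(n-1,1)}$ and the Weil characters $\chi_{s^t,(n-1)}$, $1\le t\le q$.

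Next I would pin down the multiplicities $a_{\alpha}=(D_{\alpha},\chi_{(n-1,1)})_{G}$ and $b^t_{\alpha}=(D_{\alpha},\chi_{s^t,(n-1)})_{G}$ using the two remaining computations of Lemma~\ref{lem:(w|g,w|G)}: since $\zeta_{n,q}|_{G}=\chi_{(n-1,1)}+\sum_{t=1}^q\chi_{s^t,(n-1)}$ by \eqref{eq:weil decomp into erreds}, the numbers $(\omega|_{G},1_{G})_{G}=0$ and $(\omega|_{G},\zeta_{n,q})_{G}=(q+1)(q^3+1)$ translate directly into $\sum_{\alpha}a_{\alpha}(\text{coeff})=0$ and $\sum_{\alpha}\big(a_{\alpha}^2+\sum_t(b^t_{\alpha})^2\big)=(q+1)(q^3+1)$ after expanding $(\zeta_{n,q})^2|_G$; here I would lean on the explicit decomposition of $\zeta_{n,q}^2|_G$ from \cite{Ore Conjecture paper}, 6.1. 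Then $D^{\circ}_{\alpha}=D_{\alpha}-a_{\alpha}\chi_{(n-1,1)}-\sum_t b^t_{\alpha}\chi_{s^t,(n-1)}$ is by construction orthogonal to $\chi_{(n-1,1)}$ and to every $\chi_{s^t,(n-1)}$, so $(D^{\circ}_{\alpha},D^{\circ}_{\alpha})_{G}=(D_{\alpha},D_{\alpha})_{G}-a_{\alpha}^2-\sum_t(b^t_{\alpha})^2$, and summing over $\alpha\in\mathrm{Irr}(S)$ gives $\sum_{\alpha}(D^{\circ}_{\alpha},D^{\circ}_{\alpha})_{G}=(q+1)(q^3+1)(q^5+1)-(q+1)(q^3+1)-0\cdot(\cdots)$, which I expect to come out to exactly $|\mathrm{Irr}(GU_3(q))|$. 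Since $|\mathrm{Irr}(S)|=|\mathrm{Irr}(GU_3(q))|$ and each $D^{\circ}_{\alpha}$ is a virtual character that is a genuine nonnegative combination of irreducibles with $(D^{\circ}_{\alpha},D^{\circ}_{\alpha})_{G}\ge 1$ whenever $D^{\circ}_{\alpha}\ne 0$, the only way the sum of the norms equals the number of terms is that every $D^{\circ}_{\alpha}$ is $\pm$ a single irreducible; a degree/positivity check (the leading term of $D_{\alpha}$ is visibly positive, as in \cite{Ore Conjecture paper}) rules out the minus sign, giving irreducibility.

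For distinctness I would argue that $D^{\circ}_{\alpha}=D^{\circ}_{\beta}$ for $\alpha\ne\beta$ would force $\alpha\otimes D^{\circ}_{\alpha}$ and $\beta\otimes D^{\circ}_{\beta}$ to be distinct $S\times G$-constituents of $\omega$ whose $G$-parts coincide, yet the Howe-duality-type behaviour of the Weil representation of the dual pair $(GU_3,GU_n)$ inside $GU_{3n}$ — concretely, the fact that each $\alpha\in\mathrm{Irr}(S)$ occurs in $\omega|_{S\times G}$ paired with a \emph{single} irreducible $G$-character once the low-rank corrections are stripped — yields a contradiction; alternatively, a counting argument shows that if two of the $D^{\circ}_{\alpha}$ agreed, then $\sum_{\alpha}(D^{\circ}_{\alpha},D^{\circ}_{\alpha})_{G}$ would still be $|\mathrm{Irr}(S)|$ but $D^{\circ}$ would take fewer than $|\mathrm{Irr}(S)|$ distinct values, contradicting that $\bigoplus_{\alpha}(\alpha\otimes D^{\circ}_{\alpha})$ is multiplicity-free as an $S\times G$-module (which follows since $\omega|_{S\times G}$ decomposes with the $\alpha$-isotypic components already identified). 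The main obstacle I anticipate is the precise evaluation of $\sum_{\alpha}\big(a^2_{\alpha}+\sum_t(b^t_{\alpha})^2\big)$ and the verification that the deficiency $(\omega|_G,\omega|_G)_G-(\text{corrections})$ equals $|\mathrm{Irr}(GU_3(q))|$ on the nose — this requires an exact count of the $GU_3(q)$-conjugacy classes and a careful matching with the orbit count $(q+1)(q^3+1)(q^5+1)$ from Witt's lemma — together with checking that no \emph{other} small character (beyond $\chi_{(n-1,1)}$ and the $\chi_{s^t,(n-1)}$) sneaks into some $D_{\alpha}$ with multiplicity $\ge 2$, which is where the hypothesis $n\ge 7$ is used to keep the relevant degree bounds separated.
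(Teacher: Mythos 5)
There is a genuine gap, and it sits at the very centre of your argument. The identity you call the ``key identity'' is false: $\sum_{\alpha}(D_{\alpha},D_{\alpha})_{G}$ equals the norm of $\omega$ as an $S\times G$-character, $(\omega,\omega)_{S\times G}$, not the quantity $(\omega|_{G},\omega|_{G})_{G}$ computed in Lemma \ref{lem:(w|g,w|G)}. Restricting to $1\times G$ gives $\omega|_{G}=\sum_{\alpha}\alpha(1)D_{\alpha}$, so
\[
(\omega|_{G},\omega|_{G})_{G}=\sum_{\alpha,\beta}\alpha(1)\beta(1)(D_{\alpha},D_{\beta})_{G},
\]
and the weights $\alpha(1)$ cannot be dropped. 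The same conflation corrupts your use of the other two computations: $(\omega|_{G},\zeta_{n,q})_{G}=(q+1)(q^{3}+1)$ yields the \emph{linear, weighted} relation $\sum_{\alpha}\alpha(1)\bigl(a_{\alpha}+\sum_{t}b_{\alpha}^{t}\bigr)=(q+1)(q^{3}+1)$, not $\sum_{\alpha}\bigl(a_{\alpha}^{2}+\sum_{t}(b_{\alpha}^{t})^{2}\bigr)=(q+1)(q^{3}+1)$. Your final count then fails numerically: $(q+1)(q^{3}+1)(q^{5}+1)-(q+1)(q^{3}+1)=(q+1)(q^{3}+1)q^{5}\sim q^{9}$, which is nowhere near $|\mathrm{Irr}(GU_{3}(q))|\sim q^{3}$, so ``sum of norms equals number of terms'' never materialises. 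The paper instead works with the weighted combination throughout: writing $a=\sum_{\alpha}\alpha(1)a_{\alpha}$, $b_{t}=\sum_{\alpha}\alpha(1)b_{\alpha}^{t}$, one has $(\omega|_{G},\omega|_{G})_{G}=a^{2}+\sum_{t}b_{t}^{2}+\bigl\|\sum_{\alpha}\alpha(1)D_{\alpha}^{\circ}\bigr\|^{2}$ and $\sum_{\alpha}\alpha(1)^{2}=|S|=(\omega|_{G},\omega|_{G})_{G}-(q+1)(q^{3}+1)^{2}$; Cauchy--Schwarz applied to $a+\sum_{t}b_{t}=(q+1)(q^{3}+1)$ (a sum of $q+1$ terms) gives $a^{2}+\sum_{t}b_{t}^{2}\geq(q+1)(q^{3}+1)^{2}$, hence $\bigl\|\sum_{\alpha}\alpha(1)D_{\alpha}^{\circ}\bigr\|^{2}\leq\sum_{\alpha}\alpha(1)^{2}$, and since the $D_{\alpha}^{\circ}$ are genuine characters the reverse inequality holds as soon as each $D_{\alpha}^{\circ}\neq0$; equality then forces each $D_{\alpha}^{\circ}$ to be irreducible and all cross inner products to vanish, which is both irreducibility and distinctness at once. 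Your Cauchy--Schwarz step and this weighted bookkeeping are absent from the proposal.

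The second omission is the one substantive verification the correct argument requires: that every $D_{\alpha}^{\circ}$ has positive degree. Even under your (incorrect) count, ``sum of norms $=$ number of terms'' would not rule out some $D_{\alpha}^{\circ}=0$ compensated by others of norm at least $2$, so nonvanishing must be proved, not assumed. The paper does this by computing $D_{\alpha}(1)$ for each of the eight families of $\mathrm{Irr}(GU_{3}(q))$ via the averaging formula (\ref{eq:Formula for D_alpha}), checking $D_{\alpha}(1)>q^{n}(q^{3}+1)(q+1)/\alpha(1)$, and bounding $D_{\alpha}(1)-D_{\alpha}^{\circ}(1)\leq(\omega|_{G},\zeta_{n,q})_{G}\,\zeta_{n,q}(1)/\alpha(1)=q^{n}(q+1)(q^{3}+1)/\alpha(1)$. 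Your distinctness paragraph is also not available as written: appealing to ``Howe-duality-type'' single pairing, or to multiplicity-freeness of $\omega|_{S\times G}$, is circular here, since that is essentially the content of the proposition being proved; in the paper distinctness falls out of the equality case of the inequalities above rather than from any a priori duality statement.
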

\begin{proof}
Applying (\cite{Ore Conjecture paper}, 5.5) to the character $\omega=\zeta_{3n,q}$
gives the decomposition $\omega|_{S\times G}=\sum_{\alpha\in\text{Irr}(S)}\alpha\otimes D_{\alpha}$
such that 
\begin{equation}
D_{\alpha}(g)=\frac{1}{|S|}\sum_{x\in S}\overline{\alpha(x)}\omega(xg).\label{eq:Formula for D_alpha}
\end{equation}
By definition, 
\[
\omega|_{G}=\sum_{\alpha\in\text{Irr}(S)}\alpha(1)\left(a_{\alpha}\chi_{(n-1,1)}+\sum_{t=1}^{q}b_{\alpha}^{t}\chi_{s^{t},(n-1)}\right)+\sum_{\alpha\in\text{Irr}(S)}\alpha(1)D_{\alpha}^{\circ}
\]
 and it follows from Lemma \ref{lem:(w|g,w|G)} and (\ref{eq:weil decomp into erreds})
that 
\begin{equation}
\sum_{\alpha\in\text{Irr}(S)}\alpha(1)\left(a_{\alpha}+\sum_{t=1}^{q}b_{\alpha}^{t}\right)=(q^{3}+1)(q+1).\label{eq:sum of non squares}
\end{equation}
For ease of notation, relabel $\sum_{\alpha\in\text{Irr}(S)}\alpha(1)a_{\alpha}=a$
and $\sum_{\alpha\in\text{Irr}(S)}\alpha(1)b_{\alpha}^{t}=b_{t}$.\\
It also follows from Lemma \ref{lem:(w|g,w|G)} that 
\[
\sum_{\alpha\in\text{Irr}(S)}\alpha(1)^{2}=|S|=q^{3}(q^{3}+1)(q^{2}-1)(q+1)=(\omega|_{G},\omega|_{G})_{G}-(q+1)(q^{3}+1)^{2}.
\]
Hence
\[
\sum_{\alpha\in\text{Irr}(S)}\alpha(1)^{2}=(\sum_{\alpha\in\text{Irr}(S)}\alpha(1)D_{\alpha}^{\circ},\sum_{\alpha\in\text{Irr}(S)}\alpha(1)D_{\alpha}^{\circ})_{G}+(a^{2}+\sum_{t=1}^{q}b_{t}^{2})-(q+1)(q^{3}+1)^{2}.
\]
Applying the Cauchy-Schwarz inequality to (\ref{eq:sum of non squares})
yields 
\[
(a^{2}+\sum_{t=1}^{q}b_{t}^{2})\geq(q+1)(q^{3}+1)^{2}.
\]
 Thus if each $D_{\alpha}^{\circ},$ $\alpha\in$Irr$(S)$ has positive
degree, it will follow that the characters are irreducible and distinct.
The character table of $S$ is known and we follow the notation of
\cite{Ennola}. In particular there are 8 families of irreducible
characters of degrees $1,\,q^{2}-q,\,q^{3},\,q^{2}-q+1,\,q(q^{2}-q+1),\,(q-1)(q^{2}-q+1),\,q^{3}+1$
and $(q+1)(q^{2}-1).$ 

We compute the corresponding $D_{\alpha}(1)$ using (\ref{eq:Formula for D_alpha})
and it is a straightforward check that $D_{\alpha}(1)>q^{n}(q^{3}+1)(q+1)/\alpha(1)$
for all $\alpha\in\text{Irr}(S)$. Furthermore, by the definition
of $D_{\alpha}^{\circ}$, 
\begin{equation}
D_{\alpha}(1)-D_{\alpha}^{\circ}(1)\leq(\omega|_{G},\zeta_{n,q})_{G}\zeta_{n,q}(1)/\alpha(1)=q^{n}(q+1)(q^{3}+1)/\alpha(1).\label{eq:max character degree difference}
\end{equation}
 Hence $D_{\alpha}^{\circ}(1)>0$ for all $\alpha\in\text{Irr}(S)$
and the proof is finished. 
\end{proof}
\begin{rem*}
By the proof of Proposition \ref{prop:classsifying the level 3 characters},
$(a^{2}+\sum_{t=1}^{q}b_{t}^{2})=(q+1)(q^{3}+1)^{2}$ and it follows
from Cauchy-Schwarz that $a=b_{t}=(q^{3}+1)$ for $1\leq t\leq q$.
Hence 
\[
\omega|_{G}=(q^{3}+1)\zeta_{n,q}+\sum_{\alpha\in\text{Irr}(S)}\alpha(1)D_{\alpha}^{\circ}.
\]
 The degrees $D_{\alpha}(1)$ are listed in Table \ref{table:1}.
The notation for characters $\alpha\in\text{Irr}(GU_{3}(q))$ is taken
from \cite{Ennola}.
\end{rem*}

\begin{table} \begin{center} \caption{Degrees of $D_{\alpha}$ for $G=SU_{n}(q)$} \label{table:1} \end{center} \begin{adjustwidth}{-2cm}{} \renewcommand{\arraystretch}{2} \begin{tabular}{cc} \hline  $\alpha\in\text{Irr}(GU_{3}(q))$ & $D_{\alpha}(1)$\tabularnewline \hline  $\chi_{1}^{(q+1)}$  & $\frac{q^{3}(q^{n}+1)(q^{n-1}-1)(q^{n-5}-1)}{(q^{3}+1)(q^{2}-1)(q+1)}+\frac{q(q^{n-1}-1)}{(q+1)}$\tabularnewline $\chi_{1}^{(t)}$, $1\leq t\leq q$ & $\frac{(q^{n}+1)(q^{n-1}-1)(q^{n-2}+1)}{(q^{3}+1)(q^{2}-1)(q+1)}.$\tabularnewline $\chi_{q^{3}}^{(q+1)}$  & $\frac{q^{6}(q^{n-1}-1)(q^{n-2}+1)(q^{n-4}+1)}{(q^{3}+1)(q^{2}-1)(q+1)}+\frac{q(q^{n-1}-1)}{(q+1)}$\tabularnewline $\chi_{q^{3}}^{(t)}$, $1\leq t\leq q$ & $\frac{q^{3}(q^{n}+1)(q^{n-1}-1)(q^{n-2}+1)}{(q^{3}+1)(q^{2}-1)(q+1)}$\tabularnewline $\chi_{q^{2}-q}^{(q+1)}$ & $\frac{q^{4}(q^{n}+1)(q^{n-2}+1)(q^{n-4}+1)}{(q^{3}+1)(q+1)^{2}}$\tabularnewline $\chi_{q^{2}-q}^{(t)}$, $1\leq t\leq q$  & $\frac{q(q^{n}+1)(q^{n-1}-1)(q^{n-2}+1)}{(q^{3}+1)(q+1)^{2}}$\tabularnewline $\chi_{q^{2}-q+1}^{(t,q+1)}$, $1\leq t\leq q$  & $\frac{q^{2}(q^{n}+1)(q^{n-1}-1)(q^{n-4}+1)}{(q^{2}-1)(q+1)^{2}}+\frac{(q^{n}+1)}{(q+1)}$\tabularnewline $\chi_{q^{2}-q+1}^{(q+1,u)}$, $1\leq u\leq q$ & $\frac{q(q^{n}+1)(q^{n-1}-1)(q^{n-3}-1)}{(q^{2}-1)(q+1)^{2}}$\tabularnewline $\chi_{q^{2}-q+1}^{(t,u)}$, $1\leq t \neq u\leq q$   & $\frac{(q^{n}+1)(q^{n-1}-1)(q^{n-2}+1)}{(q^{2}-1)(q+1)^{2}}$\tabularnewline $\chi_{q(q^{2}-q+1)}^{(t,q+1)}$, $1\leq t\leq q$ & $\frac{q^{3}(q^{n}+1)(q^{n-1}-1)(q^{n-3}-1)}{(q^{2}-1)(q+1)^{2}}+\frac{(q^{n}+1)}{(q+1)}$\tabularnewline $\chi_{q(q^{2}-q+1)}^{(q+1,u)}$, $1\leq u\leq q$  & $\frac{q^{2}(q^{n}+1)(q^{n-1}-1)(q^{n-3}-1)}{(q^{2}-1)(q+1)^{2}}$\tabularnewline $\chi_{q(q^{2}-q+1)}^{(t,u)}$, $1\leq t\neq u\leq q$  & $\frac{q(q^{n}+1)(q^{n-1}-1)(q^{n-2}+1)}{(q^{2}-1)(q+1)^{2}}$\tabularnewline $\chi_{(q-1)(q^{2}-q+1)}^{(t,u,q+1)}$, $1\leq t<u<q+1$ & $\frac{q(q^{n}+1)(q^{n-1}-1)(q^{n-3}-1)}{(q+1)^{3}}$ \tabularnewline $\chi_{(q-1)(q^{2}-q+1)}^{(t,u,v)}$, $1\leq t<u<v<q+1$ & $\frac{(q^{n}+1)(q^{n-1}-1)(q^{n-2}+1)}{(q+1)^{3}}$\tabularnewline $\chi_{q^{3}+1}^{(q+1,u)}$, $1\leq u\leq q^{2}-2$ & $\frac{q(q^{n}+1)(q^{n-1}-1)(q^{n-3}-1)}{(q^{2}-1)(q+1)}$ \\ $u\not\equiv0$ mod $(q-1)$ & \tabularnewline $\chi_{q^{3}+1}^{(t,u)}$, $1\leq t\leq q$ , $1\leq u\leq q^{2}-2$ & $\frac{(q^{n}+1)(q^{n-1}-1)(q^{n-2}+1)}{(q^{2}-1)(q+1)}$ \\  $u\not\equiv0$ mod $(q-1)$ & \tabularnewline $\chi_{(q+1)(q^{2}-1)}^{(t)}$,  $t\not\equiv0$ mod $(q^{2}-q+1)$; & $\frac{(q^{n}+1)(q^{n-1}-1)(q^{n-2}+1)}{(q^{3}+1)}$ \\ if $t_{1}\equiv tq^{2}$ or $t_{2}\equiv tq^{4}$ mod $(q^{3}+1)$, then $\chi^{(t)}=\chi^{(t_{1})}=\chi^{(t_{2})}$ &  \tabularnewline \hline  \end{tabular} \end{adjustwidth} \end{table}

Recall that we wish to find a formula for the values of the unipotent
character $\chi_{(n-3,2,1)}$. From Table \ref{table:1}, we see that
there is precisely one character $D_{\alpha}$ of the correct degree.
Specifically, when $\alpha=\chi_{q^{2}-q}^{(q+1)}$, 
\[
D_{\alpha}(1)=\frac{q^{4}(q^{n}+1)(q^{n-2}+1)(q^{n-4}+1)}{(q^{3}+1)(q+1)^{2}}=\chi_{(n-3,2,1)}(1).
\]

\begin{prop}
\label{prop: The D_alpha is the unipotent we're looking for}Let $\alpha=\chi_{q^{2}-q}^{(q+1)}\in\text{Irr}(GU_{3}(q))$.
Then $D_{\alpha}=D_{\alpha}^{\circ}=\chi_{(n-3,2,1)}$. 
\end{prop}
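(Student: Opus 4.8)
The plan is first to identify $D_{\alpha}^{\circ}$ with $\chi_{(n-3,2,1)}$, and then to read off $D_{\alpha}=D_{\alpha}^{\circ}$ from the degree coincidence in Table~\ref{table:1}. For the identification, the key is to exhibit $\chi_{(n-3,2,1)}$ as a constituent of $D_{\alpha}$. Indeed, by Proposition~\ref{prop:classsifying the level 3 characters} we have $D_{\alpha}=D_{\alpha}^{\circ}+a_{\alpha}\chi_{(n-1,1)}+\sum_{t=1}^{q}b_{\alpha}^{t}\chi_{s^{t},(n-1)}$ with $D_{\alpha}^{\circ}$ irreducible, and the correction terms involve only constituents of $\zeta_{n,q}$; since the degree of $\chi_{(n-3,2,1)}$ is of size roughly $q^{3n-7}$, far exceeding the degree of any irreducible Weil character of $G$, the character $\chi_{(n-3,2,1)}$ is not among those correction terms. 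Hence any occurrence of $\chi_{(n-3,2,1)}$ in $D_{\alpha}$ is an occurrence in $D_{\alpha}^{\circ}$, and irreducibility of $D_{\alpha}^{\circ}$ forces $D_{\alpha}^{\circ}=\chi_{(n-3,2,1)}$. Granting this, Table~\ref{table:1} gives $D_{\alpha}(1)=\frac{q^{4}(q^{n}+1)(q^{n-2}+1)(q^{n-4}+1)}{(q^{3}+1)(q+1)^{2}}$, which the degree formula (\ref{eq:degree formula of unipotent characters}) shows equals $\chi_{(n-3,2,1)}(1)=D_{\alpha}^{\circ}(1)$; as $a_{\alpha},b_{\alpha}^{t}\ge0$, this forces $a_{\alpha}=b_{\alpha}^{t}=0$, i.e. $D_{\alpha}=D_{\alpha}^{\circ}=\chi_{(n-3,2,1)}$.

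It remains to produce a copy of $\chi_{(n-3,2,1)}$ in $D_{\alpha}$. For this I would use the decomposition $\omega|_{G}=(q^{3}+1)\zeta_{n,q}+\sum_{\beta\in\text{Irr}(S)}\beta(1)D_{\beta}^{\circ}$ from the Remark following Proposition~\ref{prop:classsifying the level 3 characters}, in which the $D_{\beta}^{\circ}$ are pairwise distinct irreducibles, none of them a constituent of $\zeta_{n,q}$. Pairing this identity with $\chi_{(n-3,2,1)}$ shows that $(\omega|_{G},\chi_{(n-3,2,1)})_{G}=\beta_{0}(1)$ if $\chi_{(n-3,2,1)}=D_{\beta_{0}}^{\circ}$ for a (necessarily unique) $\beta_{0}\in\text{Irr}(S)$, and is $0$ otherwise. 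I would then evaluate $(\omega|_{G},\chi_{(n-3,2,1)})_{G}$ directly: writing $\omega|_{G}=(\zeta_{n,q})^{3}=\zeta_{n,q}\cdot(\zeta_{n,q})^{2}$ and using the known decomposition of $(\zeta_{n,q})^{2}$ into irreducibles — the $GU_{2}\times GU_{n}$ analogue of Proposition~\ref{prop:classsifying the level 3 characters}, carried out in (\cite{Ore Conjecture paper}, 6.1) — this becomes a finite computation, summing the multiplicities of $\chi_{(n-3,2,1)}$ in $\psi\cdot\zeta_{n,q}$ over the constituents $\psi$ of $(\zeta_{n,q})^{2}$ of sufficiently large degree (those with $\psi(1)$ at least of size $q^{2n-7}$, which include the unipotent characters $\chi_{(n-2,1,1)}$ and $\chi_{(n-2,2)}$). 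The expected value is $q^{2}-q\ne0$, whence $\chi_{(n-3,2,1)}=D_{\beta_{0}}^{\circ}$ with $\beta_{0}(1)=q^{2}-q$. Finally, to see $\beta_{0}=\alpha$, I would use that the assignment $\beta\mapsto D_{\beta}^{\circ}$ respects Lusztig series — equivalently, is compatible with the central character of $S=GU_{3}(q)$ through formula (\ref{eq:Formula for D_alpha}) — so that, $\chi_{(n-3,2,1)}$ being unipotent, $\beta_{0}$ must be unipotent; the unique unipotent character of $GU_{3}(q)$ of degree $q^{2}-q$ is the one labelled by $(2,1)$, namely $\chi_{q^{2}-q}^{(q+1)}=\alpha$.

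The main obstacle is the multiplicity computation $(\omega|_{G},\chi_{(n-3,2,1)})_{G}=q^{2}-q$: one must control which constituents of $(\zeta_{n,q})^{2}$ can have $\chi_{(n-3,2,1)}$ in the support of their product with $\zeta_{n,q}$, and then extract the exact coefficient, which requires careful bookkeeping with the explicit decomposition of $(\zeta_{n,q})^{2}$ and with tensor products of Weil characters. A secondary point is the compatibility of $\beta\mapsto D_{\beta}^{\circ}$ with Lusztig series used to single out $\alpha$ among the several irreducible characters of $GU_{3}(q)$ of degree $q^{2}-q$; rather than merely cite this, I would verify it by tracking how $Z(GU_{3}(q))$ acts in (\ref{eq:Formula for D_alpha}), where a central element twists $D_{\beta}$ according to the central character of $\beta$.
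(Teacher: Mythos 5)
Your argument has two genuine gaps, and the first is precisely the hard part of the proposition. The crux of your route is the exact multiplicity $(\omega|_{G},\chi_{(n-3,2,1)})_{G}=q^{2}-q$, which you do not establish. Writing $\omega|_{G}=\zeta_{n,q}\cdot(\zeta_{n,q})^{2}$ and invoking the known decomposition of $(\zeta_{n,q})^{2}$ does not reduce the problem to a short finite check: besides $\chi_{(n-2,2)}$ and $\chi_{(n-2,1,1)}$, essentially \emph{all} non-Weil constituents of $(\zeta_{n,q})^{2}$ (the roughly $q^{2}$ characters of the form $\chi_{s^{t},(n-2,1)}$, $\chi_{s^{t},(n-2)}$-type, etc.) have degrees of order $q^{2n-4}$ or larger, so your threshold $q^{2n-7}$ excludes almost nothing, and for each such constituent $\psi$ you would have to compute $(\zeta_{n,q}\cdot\psi,\,\chi_{(n-3,2,1)})_{G}$ --- that is, branching rules for tensoring level-two characters with the Weil character. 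This is tantamount to redoing the level-three analysis that the dual pair $GU_{3}\times SU_{n}$ and the paper's degree-gap argument are designed to package, so the proof does not close as written. The paper never computes this multiplicity: it uses that $D_{\alpha}^{\circ}$ is irreducible (Proposition \ref{prop:classsifying the level 3 characters}), that $|D_{\alpha}^{\circ}(1)-\chi_{(n-3,2,1)}(1)|\leq d$ by (\ref{eq:max character degree difference}), and then Lemmas \ref{prop: degree of unipotent characterss and proximity to D3} and \ref{prop:Non-unipotent degrees close to my character}, which say $\chi_{(n-3,2,1)}$ is the \emph{only} irreducible character with degree in that window, apart from three possible degrees when $(n,q)=(7,2)$ that are eliminated using L\"ubeck's list for $SU_{7}(2)$; note that your route, lacking the exact multiplicity, has no mechanism for handling such borderline cases either.

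The second gap is the identification $\beta_{0}=\alpha$. Tracking the action of $Z(GU_{3}(q))$ in (\ref{eq:Formula for D_alpha}) only controls the central character of $D_{\beta}$ on $Z(SU_{n}(q))$, a group of order $\gcd(n,q+1)$ which is frequently trivial; central characters therefore cannot separate the unipotent $\chi_{q^{2}-q}^{(q+1)}$ from the $q$ non-unipotent characters $\chi_{q^{2}-q}^{(t)}$ of the same degree, so the asserted compatibility of $\beta\mapsto D_{\beta}^{\circ}$ with Lusztig series needs a real argument (it is a theorem in the character-level framework, not something your verification delivers). A cheaper fix within your framework would be to compare degrees: by Table \ref{table:1}, $D_{\beta}(1)<\chi_{(n-3,2,1)}(1)$ for $\beta=\chi_{q^{2}-q}^{(t)}$ with $1\leq t\leq q$, contradicting $D_{\beta_{0}}(1)\geq D_{\beta_{0}}^{\circ}(1)=\chi_{(n-3,2,1)}(1)$. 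Your final step --- using the exact coincidence $D_{\alpha}(1)=\chi_{(n-3,2,1)}(1)$ from Table \ref{table:1} together with $a_{\alpha},b_{\alpha}^{t}\geq0$ to conclude $D_{\alpha}=D_{\alpha}^{\circ}$ --- is correct and matches what the paper does implicitly, but the two gaps above mean the proposal as it stands does not prove the proposition.
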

From this we can then calculate values of $\chi_{(n-3,2,1)}$ using
the known character table of $GU_{3}(q)$ and the formula (\ref{eq:Formula for D_alpha}).
Define 
\[
d:=\frac{q^{n}(q+1)(q^{3}+1)}{q(q-1)}.
\]
To prove the proposition we first note from (\ref{eq:max character degree difference})
in the proof of Proposition \ref{prop:classsifying the level 3 characters},
that 
\[
D_{\alpha}(1)-D_{\alpha}^{\circ}(1)\leq d.
\]
It therefore suffices to show that $\chi_{(n-3,2,1)}$ is the only
irreducible character with degree in the range 
\[
D_{\alpha}(1)-d\leq\chi(1)\leq D_{\alpha}(1).
\]

We first consider the slightly easier case of unipotent characters,
where we will need the following result.
\begin{lem}
(\cite{Tiep and Zalesskii Minimal characters}, 2.1). Let $2\leq a_{1}<a_{2}<\dots<a_{l}$
be integers, $\epsilon_{1},\dots,\epsilon_{l}\in\{1,-1\}.$ Th\label{lem:Lemma on partition powers}en
\[
\frac{1}{2}<\frac{(q^{a_{1}}+\epsilon_{1})\dots(q^{a_{l}}+\epsilon_{l})}{q^{a_{1}+\dots+a_{l}}}<2.
\]
\end{lem}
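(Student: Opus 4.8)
The plan is to reduce the two-sided bound to a geometric-series estimate. First I would pull the factor $q^{a_1+\dots+a_l}$ out of the numerator, rewriting the ratio as $\prod_{i=1}^{l}\bigl(1+\epsilon_i q^{-a_i}\bigr)$. Since each $a_i\ge 2$ and $q\ge 2$ we have $0<q^{-a_i}<1$, so every factor is positive and the product is sandwiched between the two sign-uniform products: $\prod_{i=1}^{l}(1-q^{-a_i})\le\prod_{i=1}^{l}(1+\epsilon_i q^{-a_i})\le\prod_{i=1}^{l}(1+q^{-a_i})$, the left-hand (resp.\ right-hand) extreme corresponding to all signs being $-1$ (resp.\ $+1$). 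It therefore suffices to bound these two extremes.

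The key input is that $a_1<a_2<\dots<a_l$ are distinct integers $\ge 2$, so $\sum_{i=1}^{l}q^{-a_i}\le\sum_{k\ge 2}q^{-k}=\frac{1}{q(q-1)}\le\frac12$, the first inequality being strict because $l$ is finite. For the upper bound I would use $\prod_{i=1}^{l}(1+q^{-a_i})\le\exp\bigl(\sum_{i=1}^{l}q^{-a_i}\bigr)\le e^{1/2}<2$. For the lower bound I would invoke the elementary inequality $\prod_{i}(1-x_i)\ge 1-\sum_i x_i$ valid for $x_i\in[0,1]$ (a one-line induction, using that partial products stay nonnegative), which gives $\prod_{i=1}^{l}(1-q^{-a_i})\ge 1-\sum_{i=1}^{l}q^{-a_i}>1-\frac12=\frac12$.

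The argument is entirely elementary and there is no real obstacle. The only point requiring a little care is the strict lower bound when $q=2$, where the full geometric tail $\sum_{k\ge 2}2^{-k}$ equals exactly $\frac12$: here one must note that for any finite $l$ the actual sum $\sum_{i=1}^{l}2^{-a_i}$ is strictly less than $\frac12$, so the strict inequality $>\frac12$ survives. For $q\ge 3$ one already has $\frac{1}{q(q-1)}\le\frac16$, so both bounds hold with room to spare.
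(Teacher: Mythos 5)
Your argument is correct: writing the ratio as $\prod_{i=1}^{l}(1+\epsilon_i q^{-a_i})$, sandwiching it between the all-minus and all-plus products, and then using $\sum_i q^{-a_i}<\sum_{k\ge 2}q^{-k}=\tfrac{1}{q(q-1)}\le\tfrac12$ (strict because the sum is finite, which is exactly the delicate point at $q=2$) together with $1+x\le e^x$ and the Weierstrass inequality $\prod_i(1-x_i)\ge 1-\sum_i x_i$ gives both bounds. Note that the paper does not prove this lemma at all -- it is quoted verbatim from Tiep--Zalesskii (2.1) -- so there is no in-paper argument to compare with; your elementary geometric-series proof is a complete, self-contained verification of the cited statement.
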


\begin{lem}
Let $\chi_{\mu}\in\text{Irr}(G)$ and $|\chi_{\mu}(1)-\chi_{(n-3,2,1)}(1)|\le d$.
Then $\mu=(n-3,2,1)$.\label{prop: degree of unipotent characterss and proximity to D3}
\end{lem}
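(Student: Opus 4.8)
The plan is to compare the degree polynomial $\chi_{(n-3,2,1)}(1)$, expressed as $|\rho_{(n-3,2,1)}(-q)|$ via formula~(\ref{eq:degree formula of unipotent characters}), against the degree polynomials of \emph{all other} unipotent characters $\chi_\mu$ with $\mu\vdash n$, and to show that none of them lands in the window of width $d=\frac{q^n(q+1)(q^3+1)}{q(q-1)}$ below $\chi_{(n-3,2,1)}(1)$. Since $\chi_{(n-3,2,1)}(1)$ has $p$-part exactly $q^4$ (one reads off $a((n-3,2,1))=4$ from the explicit formula in Table~\ref{table:1}), the first reduction I would make is to the degree \emph{polynomial} in $q$: writing each $\chi_\mu(1)=q^{a(\mu)}\cdot\frac{(q^n-(-1)^n)\cdots(q-(-1))}{\prod_{h}(q^{l(h)}-(-1)^{l(h)})}$ (up to sign), Lemma~\ref{lem:Lemma on partition powers} shows every such degree is, up to a factor in $(\tfrac12,2)$, equal to $q^{N(\mu)}$ where $N(\mu)=a(\mu)+\binom n2-\sum_h l(h)$. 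So two unipotent characters whose exponents $N(\mu)$ differ by $2$ or more have degrees differing by a factor $>q^2/4$, which for $n\ge7$ comfortably exceeds the relative size of $d$ compared to $\chi_{(n-3,2,1)}(1)$ (note $d/\chi_{(n-3,2,1)}(1)$ is of order $q^{-3}$, since $\chi_{(n-3,2,1)}(1)$ is of order $q^{N}$ with $N$ large). Hence only characters $\chi_\mu$ with $N(\mu)\in\{N((n-3,2,1))-1,\,N((n-3,2,1)),\,N((n-3,2,1))+1\}$ need to be examined at all; everything else is automatically too far away on both sides.

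Next I would pin down which partitions $\mu\vdash n$ have $N(\mu)$ in that three-value band. This is a finite combinatorial problem about Young diagrams: $\chi_{(n-3,2,1)}$ is not the minimal-degree nontrivial unipotent character (that is $\chi_{(n-1,1)}$), nor the second or third smallest, so one should list the unipotent characters in increasing degree order — the relevant small-degree ones correspond to partitions close to $(n)$ and their transposes close to $(1^n)$, e.g. $(n-1,1)$, $(n-2,2)$, $(n-2,1,1)$, $(n-3,3)$, $(n-3,2,1)$, $(n-3,1^3)$, and the Alvis--Curtis/transpose partners near $(1^n)$ — and check that the degree exponents $N$ of the neighbours of $(n-3,2,1)$ in this list differ from $N((n-3,2,1))$ by enough (strictly more than what $d$ allows) once the $q$-dependent constants from Lemma~\ref{lem:Lemma on partition powers} are taken into account. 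For those few $\mu$ whose exponent $N(\mu)$ genuinely coincides with or is within $1$ of $N((n-3,2,1))$, I would compare the actual degree polynomials directly (not just leading exponents), using that two distinct partitions of $n$ give distinct degree polynomials, and bounding the gap from below by an explicit positive polynomial in $q$ that exceeds $d$ for all $q\ge2$ and $n\ge7$.

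The main obstacle I anticipate is the bookkeeping in the middle step: one must be sure the enumeration of "nearby-degree" unipotent characters is complete, i.e. that no partition $\mu$ with a wildly different shape happens to have $N(\mu)$ close to $N((n-3,2,1))$ through an accidental cancellation between $a(\mu)$ and $\sum_h l(h)$. I would control this by using a clean monotonicity/extremality statement for the function $\mu\mapsto N(\mu)$ on partitions of $n$ — essentially that $N$ is minimized at $(n)$, is "small" only for partitions differing from $(n)$ or $(1^n)$ in a bounded number of boxes, and grows at least linearly in the "distance" of $\mu$ from these two extremes — so that only a bounded (independent of $n$) list of $\mu$ can possibly be close to $\chi_{(n-3,2,1)}$, and for $n$ large all of those are explicitly identifiable. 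Once that list is in hand, the remaining inequalities are routine polynomial estimates using Lemma~\ref{lem:Lemma on partition powers}, finishing with the conclusion that $\mu=(n-3,2,1)$ is forced.
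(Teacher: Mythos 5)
Your overall strategy is the same one the paper uses in spirit: by Lemma \ref{lem:Lemma on partition powers} each unipotent degree is within a bounded factor of a pure power $q^{N(\mu)}$, the window of radius $d$ around $\chi_{(n-3,2,1)}(1)$ is narrow relative to that degree, so one only has to rule out partitions whose degree exponent could land near the target. But the proposal has a genuine gap exactly at the step you yourself flag as the main obstacle: you never prove the completeness statement that only a bounded, explicitly identifiable list of partitions can have $N(\mu)$ near $N((n-3,2,1))$. You describe the needed ``monotonicity/extremality statement for $\mu\mapsto N(\mu)$'' as something you \emph{would} use, but this is precisely the nontrivial content of the lemma, and it is not a routine observation: an accidental near-coincidence of $a(\mu)$ and $\sum_h l(h)$ has to be excluded by an actual argument. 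The paper does this heavy lifting by rewriting the degree formula in the $\lambda_i=\mu_i+i-1$ form and proving, by induction on the number of parts (with base case the two-part partitions and repeated use of Lemma \ref{lem:Lemma on partition powers}), that every partition whose largest part is at most $n-4$ has $\chi_\mu(1)\geq q^{4n-17}$, which exceeds $\chi_{(n-3,2,1)}(1)+d$ once $n\geq 11$; the finitely many shapes with a part $\geq n-3$ are then checked directly, as are $n=7,9$. Without some proved analogue of that lower bound, your reduction to a ``three-value band'' of exponents does not yet yield a proof.

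A secondary inaccuracy: you list the transpose partners near $(1^n)$ among the ``relevant small-degree'' characters. In $SU_n(q)$ conjugating the partition multiplies the degree by $q^{a(\lambda')-a(\lambda)}$, so those characters (e.g. $\chi_{(2,1^{n-2})}$, $\chi_{(3,2,1^{n-5})}$) have degrees of order $q^{n^2/2}$, far above the window, not near it. This does not break the argument (checking extra characters is harmless), but it shows the proposed enumeration of nearby-degree characters is not yet grounded in a correct picture of how $N(\mu)$ behaves, which reinforces that the key combinatorial lower bound must actually be proved rather than presupposed.
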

\begin{proof}
For ease we adopt the notation of \cite{Tiep and Zalesskii Minimal characters}
in this proof, reversing the order in which partitions are written. 

Let $\chi_{\mu}$ be the unipotent character corresponding to the
partition $(\mu_{1},\dots,\mu_{m})\vdash n$ and set $\lambda_{i}=\mu_{i}+i-1$
for all $i$. The degree formula (\ref{eq:degree formula of unipotent characters})
in Section \ref{sec:Unitary-Groups} can be rewritten (\cite{Tiep and Zalesskii Minimal characters},
4.2A) in terms of $\lambda_{i}$ to give 
\begin{equation}
\chi_{\mu}(1)=\frac{(q+1)(q^{2}-1)\dots(q^{n}-(-1)^{n})\prod_{i'<i}(q^{\lambda_{i}}-(-1)^{\lambda_{i}+\lambda'_{i}}q^{\lambda'_{i}})}{\left(q^{\binom{m-1}{2}+\binom{m-2}{2}+\dots}\right)\prod_{i}\prod_{k=1}^{\lambda_{i}}(q^{k}-(-1)^{k})}.\label{eq:new form for degree formula}
\end{equation}
To prove the Lemma we shall use this form of the degree to show that
if $\mu\neq(1,2,n-3)$ then $|\chi_{\mu}(1)-\chi_{(1,2,n-3)}(1)|>d$.
Firstly, the cases $n=7,9$ can be checked directly so we may assume
that $n\geq11$. Furthermore we can check explicitly that the statement
holds when $\mu_{m}\geq n-3$ and hence we assume that $\mu_{m}\leq n-4$.
We show that $\chi_{\mu}(1)\geq q^{4n-17}$, using induction on the
length of the partition, $m$. The conclusion will then follow as
$q^{4n-17}-\chi_{(1,2,n-3)}(1)>d$ for $n\geq9$.

The base case for the induction is the set of two-part partitions,
namely $\mu=(k,n-k)$ where $k\geq4$. When $k=4,$ 
\[
\chi_{(4,n-4)}(1)=\frac{q^{4}(q^{n}+1)(q^{n-1}-1)(q^{n-2}+1)(q^{n-7}-1)}{(q^{4}-1)(q^{3}+1)(q^{2}-1)(q+1)}
\]
 and it is an easy check that this is greater than $q^{4n-17}$. When
$k\geq5$, $\chi_{\mu}(1)$ is 
\[
\frac{(q+1)(q^{2}-1)\dots(q^{n}+1)(q^{n-k+1}+q^{k})}{(q+1)(q^{2}-1)\dots(q-(-1)^{k})\cdot(q+1)(q^{2}-1)\dots(q^{n-k+1}-(-1)^{n-k+1})}
\]
\[
=\frac{(q^{n-k+2}-(-1)^{n-k+2})\dots(q^{n}+1)}{(q^{2}-1)\dots(q-(-1)^{k})}\cdot\frac{q^{k}(q^{n-2k+1}-1)}{q+1}
\]
 
\[
>q^{k}\frac{(q^{n-k+2}\cdot q^{n-k+3}\cdot\dots\cdot q^{n})/2}{2q^{2}\cdot q^{3}\cdot\dots\cdot q^{k}}\geq q^{(k-2)(n-k)+n-2}.
\]
 Here we have used Lemma \ref{lem:Lemma on partition powers}. As
$k\geq5$ it follows that 
\[
(k-2)(n-k)+(n-2)-(4n-17)=(n-k)(k-5)+3(k+5)>0
\]
 which gives the conclusion for two-part partitions.

For the induction step assume that $\mu$ has length $m\geq3$. Denote
$\mu_{1}=k$ and note that the conditions $m\geq3$ and $n\geq11$
imply $k\leq n-8.$ If $\mu$ is $(2^{2},n-4),\,(1,3,n-4),\,(2,3,n-5),\,(3^{2},n-6),\,(1^{2},2,n-4)$
or $(1^{4},n-4)$ we can check directly that $\chi_{\mu}(1)>q^{4n-17}$.
Hence we assume that $\mu$ is not of this form, in addition to the
original assumption that $\mu_{m}\leq n-4.$ 

Consider the partition $\nu\vdash n-k$ of length $m-1$ defined by
$\nu_{i}=\mu_{i+1}$. By the above exclusions, it follows that $\nu_{m-1}\leq n-k-4$
and therefore $\chi_{\nu}(1)>q^{4(n-k)-17}$ by the induction hypothesis.
From (\ref{eq:new form for degree formula}) it also follows that
\[
\frac{\chi_{\mu}(1)}{\chi_{\nu}(1)}=\frac{\prod_{i=n-k+1}^{n}(q^{i}-(-1)^{i})\cdot\prod_{i=2}^{m}(q^{\lambda_{i}}-(-1)^{\lambda_{i}+k}q^{k})}{\prod_{i=1}^{n}(q^{i}-(-1)^{i})\cdot\prod_{i=2}^{m}(q^{\lambda_{i}}-(-1)^{\lambda_{i}})}.
\]
 Note that 
\[
\lambda_{m}=\mu_{m}+m-1\leq\mu_{m}+\mu_{1}+\mu_{2}+\dots+\mu_{m-1}=n
\]
 and hence 
\[
2\leq\lambda_{3}-k\leq\dots\leq\lambda_{m}-k\leq n-k.
\]
 We can therefore apply Lemma \ref{lem:Lemma on partition powers}
to the collections $(a_{1},\dots,a_{l})=(\lambda_{3}-k,\dots,\lambda_{m}-k,n-k+1,\dots,n)$
and $(a_{1},\dots,a_{l})=(2,3,\dots,k,\lambda_{2},\dots,\lambda_{m})$.
Also using the inequality
\[
(q^{\lambda_{2}}-(-1)^{\lambda_{2}+k}q^{k})/(q+1)\geq q^{\lambda_{2}-1}/2,
\]
 we have 
\[
\frac{\chi_{\mu}(1)}{\chi_{\nu}(1)}=\frac{\frac{1}{2}q^{\lambda_{2}-1}\cdot\frac{1}{2}q^{\lambda_{3}}\cdot\dots\cdot q^{\lambda_{m}}\cdot q^{n-k+1}\cdot\dots\cdot q^{n}}{2q^{2}\cdot\dots\cdot q^{k}\cdot q^{\lambda_{2}}\cdot\dots\cdot q^{\lambda_{n}}}=\frac{q^{k(n-k)}}{8}\geq q^{k(n-k)-3}.
\]
 Thus $\chi_{1,\mu}(1)>q^{(4+k)(n-k)-20}=q^{(4n-17)+(k(n-k)-4k-3)}>q^{4n-17}$
as $n-k\geq8$ and the induction is complete. 
\end{proof}
We next move on to proving the analogous result for non-unipotent
characters. 

\begin{lem}
Let $\chi\in$Irr$(SU_{n}(q))$ be a n\label{prop:Non-unipotent degrees close to my character}on-unipotent
character such that $|\chi(1)-\chi_{(n-3,2,1)}(1)|\le d$. Then $(n,q)=(7,2)$
and
\[
\chi(1)\in\{6622,10\,234,9030\}.
\]
 
\end{lem}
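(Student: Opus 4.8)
The strategy is to bound the degree of every non-unipotent irreducible character $\chi=\chi_{s,\overline{\lambda}}\in\text{Irr}(SU_n(q))$ from below, and to show that for $n\geq 8$ (and for the remaining small cases other than $(n,q)=(7,2)$) this lower bound already exceeds $\chi_{(n-3,2,1)}(1)+d$, so that no non-unipotent character can lie in the window $[\chi_{(n-3,2,1)}(1)-d,\ \chi_{(n-3,2,1)}(1)+d]$. By Proposition~\ref{prop:(Jordan-decomposition-of characters and degree formula}, $\chi(1)=|G^*:C_{\boldsymbol{G}^*}(s)^{F^*}|_{p'}\cdot(\psi_s(\chi))(1)$, so I would split the estimate into the semisimple part $|G^*:C_{\boldsymbol{G}^*}(s)^{F^*}|_{p'}$ and the unipotent part $(\psi_s(\chi))(1)$. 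Since $s\neq 1$, the centralizer $C_{\boldsymbol{G}^*}(s)$ is a proper connected reductive subgroup; writing $C_{\boldsymbol{G}^*}(s)^{F^*}$ (up to central factors) as a product of general linear and general unitary factors $\prod GL_{n_i}(q^{a_i})\times\prod GU_{m_j}(q^{b_j})$ with $\sum n_ia_i+\sum m_jb_j=n$, the index $|G^*:C_{\boldsymbol{G}^*}(s)^{F^*}|_{p'}$ is the $p'$-part of a product of cyclotomic-type factors whose $q$-power order I can estimate using Lemma~\ref{lem:Lemma on partition powers}.

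Concretely, I would first handle the ``generic'' case where $C_{\boldsymbol{G}^*}(s)$ is small, say of semisimple rank at most $n-3$, equivalently the largest factor has dimension at most $n-2$ in the natural module. In that regime the index $|G^*:C_{\boldsymbol{G}^*}(s)^{F^*}|_{p'}$ has $q$-power order at least roughly $q^{?}$ — the point is that the smallest index of a proper connected centralizer in $SU_n(q)$ whose derived group is nontrivial is on the order of $(q^n\mp1)(q^{n-1}\mp1)/((q^2-1)(q+1))$ (attained when $C_{\boldsymbol{G}^*}(s)\sim GU_{n-2}(q)\times(\text{torus}))$, which already exceeds $\chi_{(n-3,2,1)}(1)+d$ once $n$ is not too small, because $\chi_{(n-3,2,1)}(1)\approx q^4\cdot q^{3n-10}=q^{3n-6}$ while $d\approx q^{3n-6}/(q^2-q)$, so the window has width $O(q^{3n-8})$ about a centre of order $q^{3n-6}$. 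Thus any $\chi$ with $\chi(1)\ge q^{3n-5}$ (say) is automatically out, and the lower bounds on indices give exactly this once $n\geq 8$ or so. The remaining subtle case is when $C_{\boldsymbol{G}^*}(s)$ is large, i.e.\ contains a factor of dimension $n-1$ in the natural module — this forces $s$ to be (conjugate to) $\text{diag}(1,\dots,1,\zeta)$ with a single nontrivial eigenvalue of order dividing $q+1$, so $C_{\boldsymbol{G}^*}(s)\cong GU_{n-1}(q)$ (the situation already met in Proposition~\ref{prop:non vanishing characters}); here $|G^*:C_{\boldsymbol{G}^*}(s)^{F^*}|_{p'}=(q^n+1)/(q+1)$ and the unipotent part $(\psi_s(\chi))(1)$ is a unipotent-character degree of $GU_{n-1}(q)$, so I would re-run the unipotent degree analysis of Lemma~\ref{prop: degree of unipotent characterss and proximity to D3} inside $GU_{n-1}(q)$ — the four partitions of $n-1$ listed in Proposition~\ref{prop:non vanishing characters} are precisely the relevant ones, and their scaled degrees $(q^n+1)/(q+1)$ times the $GU_{n-1}(q)$-unipotent degree are easily compared to $\chi_{(n-3,2,1)}(1)$, leaving only $(n,q)=(7,2)$ where three degrees do fall into the window; these I list explicitly by consulting the $GU_6(2)$ character degrees.

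The main obstacle I anticipate is getting the intermediate centralizer cases to behave uniformly: when $C_{\boldsymbol{G}^*}(s)$ has a factor $GU_{n-2}(q)$ or $GL_{\lfloor (n-1)/2\rfloor}(q^2)$ or similar, the semisimple index is only moderately large and one must pair it carefully against the possible unipotent-part degrees — in particular against the case where $\psi_s(\chi)$ is the Steinberg character of the small factor, which maximises the unipotent contribution's $p$-part, versus the trivial character, which minimises the total degree. To control this cleanly I would work out, for each ``type'' of centralizer (indexed by the partition of $n$ recording the factor dimensions, together with the arithmetic data $\pm$ on each factor), the minimum possible value of $\chi(1)$ over all unipotent parts, use Lemma~\ref{lem:Lemma on partition powers} to reduce this minimum to a pure $q$-power with bounded multiplicative error, and then observe that the only type whose minimum dips below $\chi_{(n-3,2,1)}(1)+d$ is the $GU_{n-1}(q)$ type handled above, together with finitely many small $(n,q)$ that are checked on a computer or by hand — with $(7,2)$ being the unique survivor. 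Once $n\geq 8$ the cyclotomic gaps are wide enough that no case-by-case computer check is needed beyond $n\in\{7\}$.
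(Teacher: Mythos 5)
Your overall framework (Jordan decomposition of characters, lower-bounding $\chi(1)$ by the $p'$-part of the centralizer index, Lemma \ref{lem:Lemma on partition powers} for the cyclotomic estimates, and a type-by-type analysis of $C_{\boldsymbol{G}^{*}}(s)$) is the same as the paper's, but the reduction you propose is logically incomplete at the decisive point. You plan to compute, for each centralizer type, the \emph{minimum} of $\chi(1)$ over all unipotent parts and to discard the type once this minimum exceeds $\chi_{(n-3,2,1)}(1)+d$, asserting that the only type which ``dips below'' is $C_{G^{*}}(s)\cong GU_{n-1}(q)$. That criterion is the wrong one: many types have minimum far \emph{below} the window (for instance $C^{*}$ containing a factor $GU_{n-2}(q)$ with trivial unipotent part gives $\chi(1)\approx q^{2n-3}$, while the window is centred at $\approx q^{3n-7}$ and has width $\approx 2q^{n+2}$, not $q^{3n-8}$ as you estimate). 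What must be ruled out is that some product $(G^{*}:C^{*})_{p'}\cdot\psi(1)$ lands \emph{inside} the interval $[\chi_{(n-3,2,1)}(1)-d,\,\chi_{(n-3,2,1)}(1)+d]$, and for the intermediate types whose degrees straddle the window this forces exactly the case analysis the paper carries out: for $C$ of type $GU_{3}\times GU_{n-3}$, $GU_{2}\times GU_{n-2}$, $GU_{1}(q)^{i}\times GU_{n-i}$, and for $s$ acting irreducibly ($C\cong GU_{m}(q^{k})$, $k>1$ odd), one splits according to whether $\psi$ restricted to the large factor is trivial and uses the minimal nontrivial unipotent degree $\tfrac{q(q^{n-k-1}\mp1)}{q+1}$ of that factor (Tiep--Zalesskii) to see that the possible degrees jump over the window, except in a few configurations with $(n,q)=(7,2)$.

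This is also where you mislocate the exceptions. The three degrees $6622$, $10\,234$, $9030$ at $(n,q)=(7,2)$ do \emph{not} come from the $GU_{n-1}(q)$ series: there $\chi(1)=\tfrac{2^{7}+1}{2+1}\,\psi(1)=43\,\psi(1)$ with $\psi$ a unipotent character of $GU_{6}(2)$ (degrees $1,22,252,440,560,616,\dots$), and no such product lies near $\chi_{(4,2,1)}(1)=7568$; in particular the four partitions of $n-1$ from Proposition \ref{prop:non vanishing characters} are irrelevant here. The exceptional characters actually arise from the centralizer types $GU_{3}(2)\times GU_{4}(2)$, $GU_{2}(2)\times GU_{5}(2)$ and $GU_{1}(2)^{2}\times GU_{5}(2)$ --- precisely the intermediate types your plan dismisses via the ``minimum per type'' test --- so the argument as written would wrongly conclude that there are no exceptions at all, and consulting the $GU_{6}(2)$ degrees would not recover the stated list. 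To repair the proposal you would need to replace the minimum criterion by a window-avoidance argument for every centralizer type with a large unitary factor, which is essentially the content of the paper's proof.
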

\begin{proof}
Let $\chi=\chi_{s,\overline{\lambda}}$ where $1\neq s\in G^{*}$
as defined in Section \ref{subsec:unitary groups prelims}. Following
(\cite{Tiep and Zalesskii Minimal characters}, 4.1), let $\tilde{G}=GU_{n}(q)$
and consider an inverse image $\tilde{s}\in\tilde{G}$ of $s$. Let
$C$ be the complete inverse image of $C^{*}=C_{G^{*}}(s)$ in $\tilde{G}$
and set $D=C_{\tilde{G}}(\tilde{s}).$ Note that $(G^{*}:C^{*})=(\tilde{G}:C)$
and $D$ is a normal subgroup of $C$ such that $C/D\hookrightarrow C_{q+1}$.
Thus by Proposition \ref{prop:(Jordan-decomposition-of characters and degree formula},
\[
\chi(1)\geq(G^{*}:C^{*})_{p'}\geq(\tilde{G}:D)_{p'}/(q+1).
\]
 Now $\tilde{G}$ acts on the natural module $V=\mathbb{F}_{q^{2}}^{n}$
and we denote the characteristic polynomial of $\tilde{s}$ in this
action by $P(t)\in\mathbb{F}_{q^{2}}[t].$ Then $P(t)$ is a product
$\prod_{i=1}^{l}f_{i}(t)^{m_{i}}$ of distinct irreducible polynomials
$f_{i}(t)$ over $\mathbb{F}_{q^{2}}$, $1\leq i\leq l$. 

Firstly consider the case where $l=1$. It follows from the non-triviality
of $s$ that $f=f_{1}$ has degree $k\geq2$. If $\lambda\in\overline{\mathbb{F}}_{q}$
is a root of $f(t)$, then $\mathbb{\mathbb{F}}_{q^{2}}(\lambda)=\mathbb{F}_{q^{2k}}$
and the roots $\lambda_{j}=\lambda^{q^{2(j-1)}}$ of $f(t)$ are distinct
for $1\leq j\leq k$. From (\cite{Tiep and Zalesskii Minimal characters},
4.2B) we find further that $k$ is odd and that $D=GU_{m}(q^{k})$
such that $mk=n$.\\
Thus by Lemma \ref{lem:Lemma on partition powers}
\begin{eqnarray*}
\chi(1) & \geq & \frac{(q+1)(q^{2}-1)\dots(q^{mk}-(-1)^{mk})}{(q+1)(q^{k}+1)\dots(q^{mk}-(-1)^{m})}\\
 & > & \frac{\frac{1}{2}q^{2+3+\dots+mk}}{2q^{k+2k+\dots+mk}}=\frac{q^{\frac{1}{2}m^{2}k(k-1)-1}}{4}.
\end{eqnarray*}
Note that $\chi_{(n-3,2,1)}(1)+d<q^{3n-7}$ for $n\geq7$. Thus it
suffices to show that $q^{\frac{1}{2}m^{2}k(k-1)-3}\geq q^{3n-7}.$
As $k\geq3$, $\frac{m^{2}k(k-1)}{2}=n\cdot\frac{n-m}{2}\geq\frac{n^{2}}{3}$
and then $\frac{n^{2}}{3}\geq3n-4$ when $n\geq9.$ When $n=7$, it
follows that $k=7$ and we still have $\frac{m^{2}k(k-1)}{2}-3\geq3n-7$.
This concludes the case where $l=1$ and hence we assume that $l>1$
from now on.

Here we see that $D$ preserves a non-trivial decomposition $V=\bigoplus_{i=1}^{l}V_{i}$
where $V_{i}=P_{i}(s)(V)$ and $P_{i}(t)=P(t)/f_{i}(t)^{m_{i}}$.
We denote $n_{i}=\dim V_{i}=m_{i}\deg(f_{i})$ and without loss of
generality we assume that $\dim V_{1}\leq\dots\leq\dim V_{l}$.

Suppose $D$ fixes a non-zero totally isotropic subspace $W$ in $V.$
Denote $\dim W=k$ and $b_{k}=(\tilde{G}:\tilde{G}_{W})_{p'}.$ Clearly
$1\leq k\leq\frac{n-1}{2}$, and we check using Lemma \ref{lem:Lemma on partition powers}
that if $k\geq2$ then $b_{k}\geq\text{min}\{b_{2},b_{(n-1)/2}\}$.
Hence if $k\geq2,$ then we have 
\[
\chi(1)\geq\text{min}\{b_{2},b_{(n-1)/2}\}>\chi_{(n-3,2,1)}(1)+d.
\]
We may therefore assume that any $D$-invariant totally isotropic
subspace has dimension 1. 

Similarly, if $D$ preserves a degenerate subspace $W\subset V$,
we may assume that $W$ is totally isotropic and of dimension 1: firstly
note that if $U=W\cap W^{\perp}$, then $\dim U=1$ by the above.
Let $e$ denote an isotropic basis vector for $U$ and let $v_{j}\in V$
such that $e,v_{1},\dots,v_{k}$ is a basis for $W$, where $\langle v_{1},\dots,v_{k}\rangle$
is non-degenerate. Note that $D\leq C_{\tilde{G}}(W)\le C_{\tilde{G}}(\langle e\rangle)=P_{1}$,
where $P_{1}$ is maximal parabolic. As $C_{\tilde{G}}(W)$ stabilises
the non-degenerate $k$-space $W/\langle e\rangle,$ it follows that
$C_{\tilde{G}}(W)\leq C_{P_{1}}(W/\langle e\rangle)$ . Hence 
\[
\chi(1)>\frac{|GU_{n}(q)|_{p'}}{|GU_{k}(q)|_{p'}|GU_{n-2-k}(q)|_{p'}|GL_{1}(q^{2})|_{p'}(q+1)}.
\]
But the right hand side of the above is greater than $\chi_{(n-3,2,1)}(1)+d$
when $k\geq1$. Hence we may assume that $W=U$ is totally isotropic.

Lastly, if $D$ preserves a non-degenerate subspace $U$ of $V$,
where $1\leq k=\dim U\leq\frac{n}{2}$, then we can assume $k\leq3:$
if $D$ preserves such a subspace then $D\leq GU_{k}\times GU_{n-k}$
and $\chi(1)$ is at least
\[
\frac{\prod_{i=1}^{n}(q^{i}-(-1)^{i})}{(q+1)\prod_{i=1}^{k}(q^{i}-(-1)^{i})\prod_{i=1}^{n-k}(q^{i}-(-1)^{i})}=\frac{\prod_{i=n-k+1}^{n}(q^{i}-(-1)^{i})}{(q+1)\prod_{i=1}^{k}(q^{i}-(-1)^{i})}.
\]
 Now if $k\geq4,$ 
\[
\frac{\prod_{i=n-k+1}^{n}(q^{i}-(-1)^{i})}{(q+1)\prod_{i=1}^{k}(q^{i}-(-1)^{i})}\geq q^{3n-7}
\]
 and so $\chi(1)>\chi_{1,(n-3,2,1)}+d$ as desired. 

It follows from the above assumptions that each $V_{i}$ is totally
isotropic of dimension 1 or non-degenerate of dimension $1,2,3,n-3,n-2$
or $n-1.$ After a number of very similar calculations when $n=7$
or $9$ we may also assume that $n_{l}\geq n-3$ and that $D$ preserves
no non-degenerate subspaces of $V_{l}$. Clearly then $l\leq4$ and
max$\{n_{i}\}_{i=1}^{l-1}\leq3$.

There now remain a number of subcases to consider. Firstly consider
the case where $l=2$, $n_{1}=3$ and $n_{2}=n-3$, and $D$ preserves
no non-degenerate subspaces of $V_{1}$.  If $\tilde{s}|_{V_{2}}$
is not a scalar it follows from the $l=1$ case that $D\leq GU_{3}(q)\times GU_{m}(q^{k})$
where $mk=n-3$ and $k\geq3$. Consequently
\begin{eqnarray*}
\chi(1) & > & \frac{(q^{n}+1)(q^{n-1}-1)(q^{n-2}+1)}{(q+1)(q^{2}-1)(q^{3}+1)}\cdot\frac{(q+1)(q^{2}-1)\dots(q^{mk}-1)}{(q+1)(q^{k}+1)\dots(q^{mk}-(-1)^{m})}\\
 & > & \frac{q^{3n-8}}{8(q+1)}\cdot q^{\frac{1}{2}m^{2}k(k-1)-1}>q^{3n-14+\frac{1}{2}m^{2}k(k-1)}
\end{eqnarray*}
by Lemma \ref{lem:Lemma on partition powers}. Evidently $\frac{m^{2}k(k-1)}{2}\geq m^{2}k=(n-3)k\geq2(n-3)\geq7$
and hence $\chi(1)>q^{3n-7}$. Hence we assume that $\tilde{s}$ does
act as a scalar on $V_{2}$. As $3\neq n-3$, we conclude that either
$C=D=GU_{3}(q)\times GU_{n-3}(q)$ if $\tilde{s}|_{V_{1}}$is a scalar,
or $C=D=GU_{1}(q^{3})\times GU_{n-3}(q)$ otherwise. Hence 
\[
\chi(1)\geq\psi(1)\cdot\frac{(q^{n}+1)(q^{n-1}-1)(q^{n-2}+1)}{(q^{3}+1)(q^{2}-1)(q+1)}
\]
 where $\psi(1)$ is the degree of the unipotent character of $C^{*}$
corresponding to $\chi$ as in Proposition \ref{prop:(Jordan-decomposition-of characters and degree formula}.
Now if $\psi|_{GU_{n-3}(q)}$ is non-trivial, it has degree at least
$\frac{q(q^{n-4}+1)}{(q+1)}$ by (\cite{Tiep and Zalesskii Minimal characters},
4.1) and $\chi(1)>\chi_{(n-3,2,1)}+d$, as required. To conclude this
case, we assume that $\psi|_{GU_{n-3}(q)}$ is trivial, and then it
is easy to find all possible degrees $\psi(1)$ as the unipotent character
degrees of $GU_{1}(q^{3})$ and $GU_{3}(q)$ are known (\cite{Carter rep theory},
13.8). With one exception we find that $d>|\chi(1)-\chi_{1,(n-3,2,1)}|$.
The exceptional case occurs when $(n,q)=(7,2)$, $C=GU_{3}\times GU_{4}(q)$
and $\psi$ corresponds to the multi-partition $((1,2),(4))$. Here
$\psi(1)=\frac{2\cdot(2^{2}-1)}{(2+1)}$ and $\chi(1)=\frac{2\cdot(2^{7}+1)(2^{6}-1)(2^{5}+1)}{(2^{3}+1)(2+1)^{2}}=6622$
as in the conclusion of the Lemma. 

Next consider the case where $\text{max}\{n_{i}\}_{i=1}^{l-1}=2$.
By the work above, it follows that $D\leq GU_{2}(q)\times GU_{n-2}(q)$
or $D\leq GU_{1}(q)\times GU_{2}(q)\times GU_{n-3}(q).$ Assuming
the former case for now, if $\tilde{s}|_{V_{2}}$ is non-scalar then
$D\leq GU_{2}(q)\times GU_{m}(q^{k})$ such that $m=\frac{n-2}{k}$
and $k\geq3$. Here 
\begin{eqnarray*}
\chi(1) & > & \frac{(q^{n}+1)(q^{n-1}-1)}{(q^{2}-1)(q+1)^{2}}\cdot\frac{(q+1)\dots(q^{n-2}+1)}{(q+1)(q^{k}+1)\dots(q^{n-2}-(-1)^{\frac{n-2}{k}})}\\
 & > & \frac{q^{2n-3}}{8(q+1)^{2}}\cdot q^{\frac{1}{2}m^{2}k(k-1)-1}>q^{3m-7}.
\end{eqnarray*}
 This leaves the case where $\tilde{s}|_{V_{2}}$ is a scalar. Assuming
additionally that $\tilde{s}|_{V_{1}}$ is also a scalar, then as
$n_{1}<n_{2}$ it follows that $C=D=GU_{2}(q)\times GU_{n-2}(q)$
and 
\[
\chi(1)=\psi(1)\cdot\frac{(q^{n}+1)(q^{n-1}-1)}{(q^{2}-1)(q+1)}.
\]
 Suppose that $\psi|_{GU_{n-2}(q)}$ is non-trivial. Then by (\cite{Tiep and Zalesskii Minimal characters},
4.1), $\psi(1)\geq\frac{q(q^{n-3}-1)}{(q+1)}$ and hence 
\[
\chi(1)\geq\frac{q(q^{n}+1)(q^{n-1}-1)(q^{n-3}-1)}{(q^{2}-1)(q+1)}.
\]
We can then check that this is greater than $\chi_{(n-3,2,1)}(1)+d$
as required, provided $(n,q)\neq(7,2)$. If however $(n,q)=(7,2)$
and $\psi(1)=\frac{2\cdot(2^{7-3}-1)}{(2+1)}$ then 
\[
|\chi(1)-\chi_{(4,2,1)}(1)|=|\frac{2\cdot(2^{7}+1)(2^{6}-1)(2^{4}+1)}{(2^{2}-1)(2+1)^{2}}-\chi_{(4,2,1)}(1)|\leq d
\]
 and we have another of the listed exceptions to the Lemma. Lastly,
if $\psi|_{GU_{n-2}(q)}=1$ then 
\[
\chi(1)\in\{\frac{q(q^{n}+1)(q^{n-1}-1)}{(q^{2}-1)(q+1)},\frac{(q^{n}+1)(q^{n-1}-1)}{(q^{2}-1)(q+1)}\}
\]
 and $\chi_{(n-3,2,1)}(1)-\chi(1)>d$. The remaining case where $\tilde{s}|_{V_{1}}$is
non-scalar and $D\leq GL_{1}(q^{2})\times GU_{n-2}(q)$ follows similarly
to the above. Indeed, if $\psi|_{GU_{n-2}(q)}$ is non-trivial then
\[
\chi(1)\geq\frac{q(q^{n}+1)(q^{n-1}-1)(q^{n-3}-1)}{(q^{2}-1)}>\chi_{(n-3,2,1)}(1)+d.
\]
 If however $\psi|_{GU_{n-2}(q)}=1$ then 
\[
\chi(1)=\frac{(q^{n}+1)(q^{n-1}-1)}{(q^{2}-1)(q+1)}<\chi_{(n-3,2,1)}(1)-d.
\]

To complete the proof we require a consideration of the cases where
$D\leq GU_{1}(q)\times GU_{2}(q)\times GU_{n-3}(q)$ or $D\leq GU_{1}(q)^{i}\times GU_{n-i}(q)$
for $i=1,2$ or $3$. We shall however omit the details as the proof
follows very similarly to the work above. In summary, we find that
apart for one exception, $|\chi_{(n-3,2,1)}(1)-\chi(1)|>d$. The single
exception occurs when $(n,q)=(7,2)$, $C=D=GU_{1}(q)^{2}\times GU_{5}(q)$
and $\psi$ corresponds to the multi-partition $((1),(1),(1,4))$.
In this case $\chi(1)=9030$.  
\end{proof}
\textbf{Proof of Proposition \ref{prop: The D_alpha is the unipotent we're looking for}: }
\begin{proof}
Recall that $\alpha=\chi_{q^{2}-q}^{(q+1)}\in$Irr$(GU_{3}(q))$.
Firstly note from (\ref{eq:max character degree difference}) in the
proof of Proposition \ref{prop:classsifying the level 3 characters}
that
\[
|D_{\alpha}^{\circ}(1)-\chi_{(n-3,2,1)}(1)|\leq(\omega|_{G},\zeta_{n,q})\zeta_{n,q}(1)/\alpha(1)=\frac{q^{n}(q^{3}+1)(q+1)}{q(q-1)}.
\]
It then follows from Lemmas \ref{prop: degree of unipotent characterss and proximity to D3}
and \ref{prop:Non-unipotent degrees close to my character} that either
$D_{\alpha}=D_{\alpha}^{\circ}=\chi_{(n-3,2,1)}$ as required, or
$(n,q)=(7,2)$ and 
\[
D_{\alpha}^{\circ}(1)\in\{6622,10\,234,9030\}.
\]
 Assume the latter statement for a contradiction and first note that
here $D_{\alpha}(1)=\chi_{(4,2,1)}(1)=7568.$ Therefore as $D_{\alpha}(1)\geq D_{\alpha}^{\circ}(1)$,
we can discard the second and third possibilities listed above as
they are too large and it follows that $D_{\alpha}^{\circ}(1)=6622$. 

From Table \ref{table:1} we see that $SU_{7}(2)$ has at least two
additional characters of degree 6622. Indeed, when $\beta=\chi_{q^{2}-q}^{(t)}$
where $t=1,2$, we have $D_{\beta}(1)=6622.$ Furthermore the irreducible
component $D_{\beta}^{\circ}\in\text{Irr}(SU_{7}(2))$ has degree
lying in the range $4894\leq D_{\beta}^{\circ}(1)\leq6622$ by Proposition
\ref{prop:classsifying the level 3 characters}. But the irreducible
character degrees of $SU_{7}(2)$ are known (\cite{lubeck su72 degrees})
and we check that there exist exactly two irreducible characters with
degrees in this range, a contradiction.
\end{proof}
\begin{lem}
\label{lem:x and y on level 3 unipotent character}Let $x$ and $y$
be the regular semisimple elements chosen in Proposition \ref{prop:picking x and y in n odd}.
Then $\chi_{(n-3,2,1)}(x)=\chi_{(n-3,2,1)}(y)=1$. Also, $|\chi_{(3,2,1^{n-5})}(x)|=|\chi_{(3,2,1^{n-5})}(y)|=1$.
\end{lem}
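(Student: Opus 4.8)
The plan is to compute $\chi_{(n-3,2,1)}$ on $x$ and $y$ through its realisation as a dual pair character. By Proposition~\ref{prop: The D_alpha is the unipotent we're looking for} we have $\chi_{(n-3,2,1)}=D_\alpha$ with $\alpha=\chi_{q^{2}-q}^{(q+1)}\in\mathrm{Irr}(S)$, $S=GU_{3}(q)$, so by (\ref{eq:Formula for D_alpha}), for any $g\in G$,
\[
\chi_{(n-3,2,1)}(g)=\frac{1}{|S|}\sum_{z\in S}\overline{\alpha(z)}\,\omega(zg),
\]
where $\omega=\zeta_{3n,q}$ and $zg$ is the image of $(z,g)$ in $GU(V)=GU_{3n}(q)$, acting on $V=U\otimes W$ as $z\otimes g$. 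For $g\in\{x,y\}$ semisimple with eigenvalues $\xi_{1},\dots,\xi_{n}$ of multiplicities $m_{1},\dots$, one has $\dim\mathrm{Ker}_{\mathbb{F}}(z\otimes g-1)=\sum_{j}m_{j}\dim\mathrm{Ker}_{\mathbb{F}}(z-\xi_{j}^{-1})$. Now every eigenvalue of $z\in S\le GL_{3}(\mathbb{F}_{q^{2}})$ has degree at most $3$ over $\mathbb{F}_{q^{2}}$, whereas by Proposition~\ref{prop:picking x and y in n odd} the eigenvalues of $x$ other than $1$ are roots of an irreducible polynomial of degree $n-1\ge 6$, and those of $y$ other than $1,\mu,\mu^{-1}$ are roots of one of degree $n-3\ge 4$; hence no inverse of such an eigenvalue can be an eigenvalue of $z$, and the associated terms vanish. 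As $x$ is regular with $1$ a simple eigenvalue, and $y$ is regular with $1,\mu,\mu^{-1}$ simple eigenvalues, we obtain
\[
\dim\mathrm{Ker}_{\mathbb{F}}(z\otimes x-1)=\dim\mathrm{Ker}_{\mathbb{F}}(z-1),\qquad \dim\mathrm{Ker}_{\mathbb{F}}(z\otimes y-1)=N_{1}(z)+N_{\mu}(z)+N_{\mu^{-1}}(z),
\]
writing $N_{\xi}(z):=\dim\mathrm{Ker}_{\mathbb{F}}(z-\xi)$ and using that $\{1,\mu,\mu^{-1}\}$ is stable under inversion.

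For $x$, since $3n$ is odd, (\ref{eq:Weil character values}) gives $\omega(zx)=\zeta_{3,q}(z)$, whence $\chi_{(n-3,2,1)}(x)=\frac{1}{|S|}\sum_{z\in S}\overline{\alpha(z)}\,\zeta_{3,q}(z)=(\zeta_{3,q},\alpha)_{S}$. But $\alpha=\chi_{q^{2}-q}^{(q+1)}$ is the unipotent character $\chi_{(2,1)}$ of $GU_{3}(q)$, of degree $q^{2}-q=q(q^{2}-1)/(q+1)$, which is the unipotent one among the irreducible Weil characters; since the reducible Weil character $\zeta_{3,q}$ is the sum of the $q+1$ distinct irreducible Weil characters of $GU_{3}(q)$ (\cite{Tiep Zalesskii weil reps}, Sec. 4), $\alpha$ occurs in $\zeta_{3,q}$ with multiplicity $1$. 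Therefore $\chi_{(n-3,2,1)}(x)=1$.

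For $y$, since $N_{\xi}(z)=N_{1}(\xi^{-1}z)$, (\ref{eq:Weil character values}) gives $(-q)^{N_{\xi}(z)}=-\zeta_{3,q}(\xi^{-1}z)$, and hence (again using that $3n$ is odd) $\omega(zy)=\zeta_{3,q}(z)\,\zeta_{3,q}(\mu z)\,\zeta_{3,q}(\mu^{-1}z)$, so
\[
\chi_{(n-3,2,1)}(y)=\frac{1}{|S|}\sum_{z\in S}\overline{\alpha(z)}\,\zeta_{3,q}(z)\,\zeta_{3,q}(\mu z)\,\zeta_{3,q}(\mu^{-1}z).
\]
To evaluate this I would split the sum according to $j=N_{1}(z)+N_{\mu}(z)+N_{\mu^{-1}}(z)\in\{0,1,2,3\}$, so that $\omega(zy)=-(-q)^{j}$, and then organise the $z$ into conjugacy classes of $GU_{3}(q)$; using the explicit character table of $GU_{3}(q)$ (\cite{Simpson Frame character table of ssu3}), and noting that $1,\mu,\mu^{-1}$ are three distinct elements of $\langle\delta\rangle$, so the relevant class sizes and values of $\alpha$ do not depend on the particular $\mu$, one checks directly that the sum equals $1$. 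This explicit computation in $GU_{3}(q)$ is the main obstacle; everything else is bookkeeping.

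Finally, the conjugate partition of $(n-3,2,1)$ is $(3,2,1^{n-5})$, so — working via the extension to $GU_{n}(q)$, whose unipotent characters restrict irreducibly since $C_{\boldsymbol{G}^{*}}(1)$ is connected — Lemma~\ref{lem tiep and bob result} (with $s=1$) identifies $\chi_{(3,2,1^{n-5})}$ as the Alvis-Curtis dual of $\chi_{(n-3,2,1)}$ up to sign. Since $x$ and $y$ are regular semisimple, Lemma~\ref{lem dual charracters fixed on reg sem si-1} then yields $|\chi_{(3,2,1^{n-5})}(x)|=|\chi_{(n-3,2,1)}(x)|=1$ and $|\chi_{(3,2,1^{n-5})}(y)|=|\chi_{(n-3,2,1)}(y)|=1$.
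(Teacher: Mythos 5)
Your proposal follows essentially the same route as the paper: it evaluates $\chi_{(n-3,2,1)}=D_{\alpha}$ via the dual-pair formula (\ref{eq:Formula for D_alpha}) together with the Weil character values and Ennola's table for $GU_{3}(q)$, and obtains the values of the conjugate-partition character exactly as the paper does, by extending to $GU_{n}(q)$ and applying Lemmas \ref{lem dual charracters fixed on reg sem si-1} and \ref{lem tiep and bob result}. Your reduction of the value at $x$ to the multiplicity of $\alpha$ in $\zeta_{3,q}$ is a clean shortcut, and leaving the value at $y$ as a direct finite check in $GU_{3}(q)$ is no less explicit than the paper's own treatment.
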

\begin{proof}
By Proposition \ref{prop: The D_alpha is the unipotent we're looking for},
$\chi_{(n-3,2,1)}=D_{\alpha}$ where $\alpha=\chi_{q^{2}-q}^{(q+1)}\in\text{Irr}(GU_{3}(q))$.
The values of $\alpha$ are known (\cite{Ennola}, Table 7), and recall
for $g\in G$ we have the formula
\[
D_{\alpha}(g)=\frac{1}{|S|}\sum_{h\in S}\overline{\alpha(h)}\omega(hg),
\]
 where $\omega=\zeta_{3n,q}$ is the reducible Weil character of $GU_{3n}(q)$.
Applying this formula to $x$ and $y$ then gives the values for $\chi_{(n-3,2,1)}$.
It follows from the formula above that $\chi_{(n-3,2,1)}$ is $\tilde{G}$-invariant
and hence extends to a character of $\tilde{G}$ by Lemma \ref{lem:cyclic extensions and extendible characters}.
The values of $|\chi_{(3,2,1^{n-5})}(x)|$ and $|\chi_{(3,2,1^{n-5})}(y)|$
then follow by Lemmas \ref{lem dual charracters fixed on reg sem si-1}
and \ref{lem tiep and bob result}, as in the proof of Proposition
\ref{prop:x and y cover the group}.
\end{proof}

\subsubsection{Calculating Structure Constants for $G=SU_{n}(q)$, $n\geq7$ odd}

We now have sufficient information about the characters in the set
$X\subset$Irr$(G)$ defined after Proposition \ref{prop:non vanishing characters}
to calculate structure constants for $G$. For $g\in G$, let $g=su$
denote the Jordan decomposition of $g$ with $s$ semisimple and $u$
unipotent. We use the shorthand notation $u\sim(n^{r_{n}},\dots,2^{r_{2}},1^{r_{1}})$
to indicate the Jordan form of $u$ and let $\sigma_{g}$ denote the
spectrum of eigenvalues of $g.$ The elements $x,y\in G$ will always
denote those chosen in Proposition \ref{prop:picking x and y in n odd}.
In this section we first prove the following result.
\begin{prop}
\label{prop:calculating structurre constants any g }Let $G=SU_{n}(q)$
where $n\geq7$ is odd and let $g=su\in G$. Assume that $u\neq1$
and also that $u\nsim(2,1^{n-2})$ when $s=1$. Then $g\in x^{G}y^{G}$.
\end{prop}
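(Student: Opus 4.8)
The plan is to prove that the normalised structure constant
\[
\kappa(x^{G},y^{G},g^{G})=\sum_{\chi\in X}\frac{\chi(x)\chi(y)\chi(g^{-1})}{\chi(1)}
\]
is non-zero, where $X\subset\mathrm{Irr}(G)$ is the set of eight families of characters isolated in Proposition~\ref{prop:non vanishing characters} (for $\chi\notin X$ the summand vanishes by that proposition). The trivial character contributes $1$, and since $u\neq1$ the element $g$ is not semisimple, so $St(g^{-1})=0$ and the Steinberg term is zero. It therefore suffices to bound the remaining $4+4q$ terms and show that their total has absolute value strictly less than $1$.

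First I would insert the values on $x$ and $y$ recorded in Lemmas~\ref{lem:Level 1 and 2 c haracters on x and y} and~\ref{lem:x and y on level 3 unipotent character}: on $x$ every non-Steinberg character of $X$ has modulus $1$; on $y$ the two unipotent characters $\chi_{(n-3,2,1)},\chi_{(3,2,1^{n-5})}$ again have modulus $1$, while every non-unipotent character of $X$ takes a value of modulus $|1+2\,\mathrm{Re}(\epsilon^{a})|\le1$, which is $0$ when $3\mid q+1$ (in particular when $q=2$, where moreover $Z(SU_{n}(q))=1$ for $n$ odd prime to $3$). Thus each non-trivial term is at most $|\chi(g^{-1})|/\chi(1)$, carrying an extra factor $|1+2\,\mathrm{Re}(\epsilon^{a})|$ on the $4q$ non-unipotent ones. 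The four families of large degree --- the Alvis--Curtis duals $\chi_{s^{t},(1^{n-1})}$, $\chi_{s^{t},(2,1^{n-3})}$, and (for $n$ large) $\chi_{(3,2,1^{n-5})}$ --- I would handle with the trivial estimate $|\chi(g^{-1})|\le|C_{G}(g)|^{1/2}$: since $u\neq1$ we have $|C_{G}(g)|=|C_{G}(u)|$, which is largest when $u$ is a transvection, and comparing $|C_{G}(u)|^{1/2}$ with the degrees read off from Proposition~\ref{prop:(Jordan-decomposition-of characters and degree formula} and formula~\eqref{eq:degree formula of unipotent characters} shows each such degree exceeds $|C_{G}(u)|^{1/2}$ by a power of $q$ growing with $n$, so these terms contribute $o(1)$. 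The finitely many small $(n,q)$ where this margin degenerates --- consistent with the exceptions at $(n,q)=(7,2)$ met in Lemma~\ref{prop:Non-unipotent degrees close to my character} --- I would settle by a direct computation.

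The heart of the argument is the contribution of $\chi_{(n-3,2,1)}$ and of the two Weil-type families $\chi_{s^{t},(n-1)}$ and $\chi_{s^{t},(n-2,1)}$, estimated from the explicit formulas in Lemma~\ref{lem:(REF-TIEP).-Formulae for Weil reps}, \eqref{eq:weil decomp into erreds}, \eqref{eq:formula for D2} and Proposition~\ref{prop: The D_alpha is the unipotent we're looking for}, all of which reduce to bounding $q^{d(hg)}$, where $d(hg)$ is the dimension over $\mathbb{F}_{q^{2}}$ of the fixed space of $hg$ and $h$ runs over the relevant Weil-duality group. For $\chi_{s^{t},(n-1)}$ I would use that all $q$ of them share the degree $\tfrac{q^{n}+1}{q+1}$ and the same values on $x,y$, so that their combined contribution is $\tfrac{(q+1)(1+2\,\mathrm{Re}(\epsilon^{a}))}{q^{n}+1}\bigl(\zeta_{n,q}(g^{-1})-\chi_{(n-1,1)}(g^{-1})\bigr)$; writing $e_{\mu}$ for the number of Jordan blocks of $g$ with eigenvalue $\mu$, its modulus is at most $\tfrac{q+1}{q^{n}+1}\bigl(q^{e_{1}}+\tfrac{1}{q+1}\sum_{l=0}^{q}q^{e_{\delta^{l}}}\bigr)$, and as $u\neq1$ forces $g$ to have at most $n-1$ Jordan blocks this is $O(q^{-1})\,|1+2\,\mathrm{Re}(\epsilon^{a})|$ unless $g$ has exactly $n-1$ of them, i.e.\ unless $g$ is a $\mu_{q+1}$-scalar times a transvection. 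For $\chi_{s^{t},(n-2,1)}$ and $\chi_{(n-3,2,1)}$ I would bound the Weil sums over $GU_{2}(q)$, resp.\ $GU_{3}(q)$, by isolating the handful of scalar and non-semisimple classes of $h$ --- the only ones on which $d(hg)$ can approach $2(n-1)$, resp.\ $3(n-1)$ --- estimating those contributions directly, controlling the regular semisimple part by Cauchy--Schwarz together with the bound $d(hg)\le n-1$ there, and subtracting off the lower-level constituents as in the definition of $D_{\alpha}^{\circ}$. In every case the resulting ratio is $O(q^{-1})\,|1+2\,\mathrm{Re}(\epsilon^{a})|$.

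The only configuration in which all of this degenerates is $g$ a central scalar times a transvection. When that scalar is $1$, this is exactly $u\sim(2,1^{n-2})$ with $s=1$, excluded by hypothesis; when the central scalar $z$ is non-trivial, the unique eigenvalue of $g$ is $\neq1$, so $e_{1}=0$, the $\zeta_{n,q}$-term disappears, and the remaining near-$1$ behaviour is absorbed by the choice of $y$ with $|1+2\,\mathrm{Re}(\epsilon^{a})|$ small (and $=0$ when $q=2$). Collecting all the bounds gives $|\kappa(x^{G},y^{G},g^{G})-1|<1$, hence $\kappa(x^{G},y^{G},g^{G})\neq0$ and $g\in x^{G}y^{G}$. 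The step I expect to be the main obstacle is precisely the uniform control of $d(hg)$ over the Weil-duality groups in terms of the Jordan type of $g$, followed by the bookkeeping needed to show that the accumulated error over all $\approx4q$ non-trivial characters --- dominated by the near-transvection elements --- stays strictly below $1$ for every admissible $(n,q)$, which is what forces the explicit treatment of the small cases.
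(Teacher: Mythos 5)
Your overall strategy is the same as the paper's: restrict the normalised structure constant to the character set $X$ of Proposition \ref{prop:non vanishing characters}, feed in the values on $x,y$ from Lemmas \ref{lem:Level 1 and 2 c haracters on x and y} and \ref{lem:x and y on level 3 unipotent character}, kill the Steinberg term, bound the Weil-type families via the explicit formulas and the dual/large-degree characters via the trivial centralizer bound, and isolate the near-transvection elements as the critical case (with small $(n,q)$ checked by computer). Your trick of summing the $q$ characters $\chi_{s^{t},(n-1)}$ first (equal degrees, equal values on $x$ and $y$) before estimating is a genuine, and in fact sharper, refinement of the paper's term-by-term triangle inequality, and your explicit treatment of a central scalar times a transvection is a case the paper only addresses implicitly inside Lemma \ref{lem:structure constant non-unipotent}.

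There is, however, a concrete gap in your treatment of the large-degree characters $\chi_{(3,2,1^{n-5})}$, $\chi_{s^{t},(1^{n-1})}$, $\chi_{s^{t},(2,1^{n-3})}$. First, $|C_{G}(g)|=|C_{G}(u)|$ is false in general: $C_{G}(g)=C_{G}(s)\cap C_{G}(u)$, so one only has an inequality, with equality essentially when $s$ is central. More seriously, using $|C_{G}(u)|^{1/2}$ with $u$ a transvection does not leave a margin ``growing with $n$'' that only degenerates for finitely many $(n,q)$: for $n=7$ and $u\sim(2,1^{5})$ one has $|C_{G}(u)|^{1/2}$ of order $q^{18}$ against $\chi_{(3,2,1^{2})}(1)$ of order $q^{17}$, so the trivial bound via $C_{G}(u)$ fails for \emph{every} $q$ in this configuration (and is already only marginal at $n=9$); since the hypothesis only excludes $u\sim(2,1^{n-2})$ when $s=1$, the case $s\neq1$, $u\sim(2,1^{5})$ is an infinite family in $q$ and cannot be disposed of by ``a direct computation'' of small cases. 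This is exactly why the paper's Table \ref{table:2} restricts the block type $(2,1^{n-2})$ to $n\geq9$, and why in Lemma \ref{lem:structure constant non-unipotent} the bound for $n=7$, $u\sim(2,1^{5})$, $s\neq1$ is obtained from $|C_{G}(s)|^{1/2}$ rather than $|C_{G}(u)|^{1/2}$ (legitimate because there $s$ is non-central, so $C_{G}(g)\leq C_{G}(s)$ is small). Your sketch needs this extra idea — switch to the semisimple part of the centralizer when $s\neq1$ (and say separately what happens if $s$ is a non-trivial central scalar, which exists when $\gcd(n,q+1)>1$) — before the claimed inequality $|\kappa-1|<1$ can be closed uniformly in $q$.
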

For ease of presentation we treat the cases where $g$ is unipotent
and non-unipotent separately.
\begin{lem}
\label{lem:calculating structure constants unipotent elements} Let
$1\neq u\in G$ be unipotent, and assume that $u\nsim(2,1^{n-2})$.
Then $u\in x^{G}y^{G}$.
\end{lem}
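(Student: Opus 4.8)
The plan is to show the normalised structure constant $\kappa(x^G,y^G,u^G)=\sum_{\chi\in X}\chi(x)\chi(y)\chi(u^{-1})/\chi(1)$ is nonzero, where $X$ is the set of characters from Proposition~\ref{prop:non vanishing characters}. By Proposition~\ref{prop:non vanishing characters} only the characters in $X$ contribute, so the sum is finite and explicit. First I would isolate the dominant term: $1_G$ contributes exactly $1$, since $1_G(x)=1_G(y)=1_G(u^{-1})=1$. Next, the Steinberg character $St$ vanishes on the non-identity unipotent element $u$ (by the formula $St(z)=\pm|C_G(z)|_p$ if $z$ is semisimple and $0$ otherwise, cited from Carter), so that term drops out entirely. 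It then remains to bound the contribution of the other characters in $X$, namely the two unipotent characters $\chi_{(n-3,2,1)},\chi_{(3,2,1^{n-5})}$ and the four families of non-unipotent characters $\chi_{s^t,(n-1)},\chi_{s^t,(1^{n-1})},\chi_{s^t,(n-2,1)},\chi_{s^t,(2,1^{n-3})}$ for $1\le t\le q$, and show their total absolute value is strictly less than $1$.

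For the values on $x$ and $y$ I would quote Lemmas~\ref{lem:Level 1 and 2 c haracters on x and y} and \ref{lem:x and y on level 3 unipotent character}: each relevant character has $|\chi(x)|=1$, and $|\chi(y)|\le 1$ (with $|\chi(y)|=|1+2\mathrm{Re}(\epsilon^a)|\le 1$ for the non-unipotent families, and $|\chi(y)|=1$ for the level-3 unipotent characters). So every term is bounded by $|\chi(u^{-1})|/\chi(1)$. For $\chi(u^{-1})$ I would use the trivial bound $|\chi(u^{-1})|\le |C_G(u)|^{1/2}$, except for the Weil characters $\chi_{s^t,(n-1)}$ and $\chi_{(n-1,1)}$-type characters where the explicit eigenspace formula (Lemma~\ref{lem:(REF-TIEP).-Formulae for Weil reps}, equation~(\ref{eq:formula for the unipotent Weil rep})) gives $|\chi(u^{-1})|\le \tfrac{1}{q+1}|(-q)^{d}-1|$ with $d=\dim\mathrm{Ker}(u-1)\le n-2$ since $u\ne1$. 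The key point is that because $u$ is unipotent with $u\nsim(2,1^{n-2})$, its fixed space on the natural module has dimension at most $n-2$ and in fact the centralizer order $|C_G(u)|_p$ is controlled by the Jordan type; the worst case for $|C_G(u)|$ among unipotent classes other than $(2,1^{n-2})$ and $1$ is $u\sim(2^2,1^{n-4})$ or $u\sim(3,1^{n-3})$, whose centralizer orders can be read off from the formulas in Liebeck--Seitz, 7.1 and Carter's centralizer paper. Dividing these centralizer-order square roots by the character degrees in Table~\ref{table:1} and the degrees listed in Proposition~\ref{prop:non vanishing characters}, each term is $O(q^{-1})$ or smaller, and summing the $O(q)$ non-unipotent characters together with the $2$ unipotent ones gives a total $<1$ for all $n\ge7$, $q\ge2$ — with possibly a handful of small $(n,q)$ exceptions (the smallest being $(7,2)$) checked directly with GAP \cite{GAP} or CHEVIE \cite{CHEVIE}.

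The main obstacle I anticipate is the bookkeeping for the non-unipotent families: there are $q$ characters in each of four families, so naively the contribution could be $\Theta(q)$ times a single term, and one must check that the individual terms decay fast enough (like $q^{-2}$ or better) that the sum over $t$ still stays below the slack left after subtracting the Steinberg and trivial contributions. This requires a careful degree estimate: the dominant non-unipotent degrees are of order $q^{2n-5}$ to $q^{2n-3}$ (for $\chi_{s^t,(n-1)}$ and $\chi_{s^t,(n-2,1)}$) while $|C_G(u)|^{1/2}$ is at most roughly $q^{(\text{something}\le 2n-4)/2}$ for the admissible unipotent classes, comfortably smaller; the tightest case is the small-degree Weil family $\chi_{s^t,(n-1)}$ of degree $\tfrac{q^{n-1}-1}{q+1}$, but there the refined eigenspace bound $|\chi_{s^t,(n-1)}(u^{-1})|\le\tfrac{1}{q+1}|q^{n-2}-(-1)^{n-2}\cdot 1|$ rather than the trivial centralizer bound saves the day, exactly as in the proof of Proposition~\ref{prop:x and y cover the group}. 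Assembling these estimates, one concludes $\kappa(x^G,y^G,u^G)\ge 1-\sum_{\chi\in X\setminus\{1_G,St\}}|\chi(u^{-1})|/\chi(1)>0$, hence $u\in x^G y^G$.
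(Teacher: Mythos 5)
Your skeleton matches the paper's (reduce to the set $X$ via Proposition \ref{prop:non vanishing characters}, peel off the trivial character's contribution of $1$, note $St(u)=0$, and bound the remaining terms by $|\chi(u)|/\chi(1)$ using $|\chi(x)\chi(y)|\le 1$), but the quantitative core of your plan fails. Your estimate ``$|C_G(u)|^{1/2}$ is at most roughly $q^{(2n-4)/2}$ for the admissible unipotent classes'' is wrong by an order of magnitude: a unipotent element close to the identity, e.g.\ $u\sim(2^2,1^{n-4})$ or $(3,1^{n-3})$, has centralizer of order about $q^{(n-2)^2}$ (the centralizer dimension is $\sum_i(\lambda_i')^2$), so $|C_G(u)|^{1/2}\approx q^{(n-2)^2/2}$. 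This dwarfs the degrees $\chi_{s^t,(n-2,1)}(1)\approx q^{2n-3}$ and $\chi_{(n-3,2,1)}(1)\approx q^{3n-7}$ as soon as $n\ge 7$, so for exactly these characters the trivial bound gives ratios far greater than $1$ and your sum cannot be controlled. This is precisely why the paper develops the dual-pair machinery: $\chi_{(n-3,2,1)}$ is identified with $D_{\alpha}$ for $\alpha=\chi_{q^2-q}^{(q+1)}\in\mathrm{Irr}(GU_3(q))$ (Proposition \ref{prop: The D_alpha is the unipotent we're looking for}), giving the explicit unipotent values (\ref{eq:D3(U)-1}), and $\chi_{s^t,(n-2,1)}$ is evaluated via the $GU_2(q)$ formula (\ref{eq:formula for D2}); the trivial bound is only safe for the genuinely large-degree characters $\chi_{(3,2,1^{n-5})}$, $\chi_{s^t,(1^{n-1})}$, $\chi_{s^t,(2,1^{n-3})}$.

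A second, independent gap is your treatment of $q=2$. Using only $|\chi(y)|\le 1$, the Weil family already contributes $\sum_{t=1}^{q}|\chi_{s^t,(n-1)}(u)|/\chi_{s^t,(n-1)}(1)=\frac{q(q^{n-2}+1)}{q^n+1}\approx \tfrac12$ for $u\sim(2^2,1^{n-4})$ or $(3,1^{n-3})$, and even with the paper's sharp bounds for the remaining families the total exceeds $1$ at $q=2$ for every odd $n\ge 7$ — so this is not ``a handful of small exceptions'' that GAP can absorb (the relevant character tables for large $n$ are unavailable anyway). The paper avoids this by building the fix into the choice of $y$: when $q\equiv -1\pmod 3$ the eigenvalue $\mu=\delta^a$ is chosen with $1+2\mathrm{Re}(\epsilon^a)=0$, so by Lemma \ref{lem:Level 1 and 2 c haracters on x and y} every non-unipotent character in $X$ vanishes on $y$, and for $q=2$ the sum collapses to the three unipotent terms $\chi_{(n-3,2,1)}$, $\chi_{(3,2,1^{n-5})}$, $St$. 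Without invoking that vanishing (or some substitute), your argument does not close at $q=2$.
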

\begin{proof}
By Theorem \ref{thm:Burnsides formula} and Proposition \ref{prop:non vanishing characters},
$u\in x^{G}y^{G}$ if and only if
\[
\kappa(x^{G},y^{G},u^{G})=\sum_{\chi\in X\subset\text{Irr}(G)}\frac{\chi(x)\chi(y)\chi(u^{-1})}{\chi(1)}\neq0.
\]

Removing the contribution of the trivial character, it suffices to
show that
\begin{equation}
\sum_{\chi\in X\backslash\{1_{G}\}}\frac{|\chi(x)\chi(y)\chi(u^{-1})|}{\chi(1)}<1.\label{eq:Simpler formula for the structure constant}
\end{equation}
The values of $|\chi(x)\chi(y)|$ are known by Lemmas \ref{lem:Level 1 and 2 c haracters on x and y}
and \ref{lem:x and y on level 3 unipotent character}, and in all
cases $|\chi(x)\chi(y)|\leq1$. Furthermore, when $q=2$, $\chi(y)=0$
for all non-unipotent characters $\chi\in X\backslash\{1_{G}\}$ (Lemma
\ref{lem:Level 1 and 2 c haracters on x and y}). Hence for (\ref{eq:Simpler formula for the structure constant})
to hold it is sufficient to show that 
\[
\sum_{\chi\in X\backslash\{1_{G}\}\subset\text{Irr}(G)}\frac{|\chi(u)|}{\chi(1)}<1,\ \text{if }q\geq3,
\]
 and 
\[
\frac{|\chi_{(n-3,2,1)}(u)|}{\chi_{(n-3,2,1)}(1)}+\frac{|\chi_{(3,2,1^{n-5})}(u)|}{\chi_{(3,2,1^{n-5})}(1)}+\frac{|St(u)|}{St(1)}<1,\ \text{if }q=2.
\]
 In either case, denote the sum of character ratios on the left hand
side by $\Delta(u)$.

To prove that $\Delta(u)<1$, we bound the character ratio summands
on a case by case basis. Firstly consider the possible values of $\chi_{(n-3,2,1)}(u)$.
Let $u\sim(n^{r_{n}},\dots,2^{r_{2}},1^{r_{1}})$ and denote the total
number of blocks by $r$. Recall from Proposition \ref{prop: The D_alpha is the unipotent we're looking for}
that $\chi_{(n-3,2,1)}=D_{\alpha}$ where $\alpha=\chi_{q^{2}-q}^{(q+1)}\in\text{Irr}(GU_{3}(q))$
and hence 
\begin{equation}
D_{\alpha}(u)=\frac{1}{|GU_{3}(q)|}\sum_{h\in GU_{3}(q)}\overline{\alpha(h)}\omega(hu).\label{eq: values of the D3}
\end{equation}
 Here $\omega$ denotes the reducible Weil character of $GU_{3n}(q)$
that takes values
\[
\omega(hu)=-(-q)^{\text{dim Ker}(hu-1)}.
\]
 The values of $\alpha$ are known (\cite{Ennola}, Table 7) and we
compute
\begin{eqnarray}
|GU_{3}(q)|\cdot\chi_{(n-3,2,1)}(u) & = & (q^{2}-q)\left(-(-q)^{3r}-q\right)\nonumber \\
 & - & q\left(-(-q)^{3r-r_{1}}-q\right)(q-1)(q^{3}+1)\nonumber \\
 & - & q^{2}(q-1)(q^{2}-q+1)\left(-q^{2r+1}-(-q)^{r+1}-q(q-1)\right)\nonumber \\
 & + & q^{2}(q-1)(q^{3}+1)\left(-(-q)^{2r-r_{1}+1}-(-q)^{r+1}-q(q-1)\right)\nonumber \\
 & + & 2q^{3}(q-1)^{2}(q^{2}-q+1)\left(\frac{-3(-q)^{r+1}-q(q-2)}{6}\right)\nonumber \\
 & + & \frac{q^{4}}{3}(q+1)^{3}(q-1)^{2}.\label{eq:D3(U)-1}
\end{eqnarray}
 Recall that $\chi_{(n-3,2,1)}(1)\sim q^{3n-7}$. Hence when $r$
is large, $\frac{|\chi_{(n-3,2,1)}(u)|}{\chi_{(n-3,2,1)}(1)}\thicksim q^{3r-3n}$
and checking small cases computationally we see that $\frac{|\chi_{(n-3,2,1)}(u)|}{\chi_{(n-3,2,1)}(1)}<q^{-3}.$ 

We do not have a method for computing general character values of
$\chi_{(3,2,1^{n-5})}$ but it is enough to use the trivial bound
$|\chi_{(3,2,1^{n-5})}(g)|\leq|C_{G}(g)|^{\frac{1}{2}}$. Note that
$\chi_{(3,2,1^{n-5})}(1)\sim q^{\frac{1}{2}(n^{2}-n-8)}$ and unipotent
centralizers can be easily computed using the details in (\cite{Liebeck and Seitz book},
7.1). Hence we can bound the character ratio $\frac{|\chi_{(3,2,1^{n-5})}(u)|}{\chi_{(3,2,1^{n-5})}(1)}$.
These bounds will be useful for later computations so we list some
details in Table \ref{table:2}. \begin{table}[h] \begin{center} \caption{Character Ratio Bounds} \label{table:2}   \renewcommand{\arraystretch}{1.7} \begin{tabular}{ccc} & & \tabularnewline \hline  Block Structure & Conditions & Upper bound for  $\frac{|\chi_{(3,2,1^{n-5})}(u)|}{\chi_{(3,2,1^{n-5})}(1)}$ \tabularnewline \hline  $(2,1^{n-2})$ & $n\geq9,\,q=2,3$ & $\frac{1}{q^{2}}$\tabularnewline  & $n\geq9,\,q\geq4$ & $\frac{1}{q^{3}}$\tabularnewline \hline $(2^{2},1^{n-4})$ & $n=7,\,q=2$ & $0.263$\tabularnewline  & $n=7,\,q\geq3$ & $\frac{1}{q^{2}}$\tabularnewline  & $n\geq9$ & $\frac{1}{q^{4}}$\tabularnewline \hline $(3,1^{n-3})$ & $n=7$ & $\frac{1}{q^{2}}$\tabularnewline  & $n\geq9$ & $\frac{1}{q^{3}}$\tabularnewline \hline  \end{tabular} \end{center} \end{table}

For the unipotent elements not listed in Table \ref{table:2}, we
see that $|C_{G}(u)|^{\frac{1}{2}}\lesssim q^{\frac{1}{2}(n^{2}-6n+17)}$.
Checking small dimensions computationally we find in these cases that
$\frac{|\chi_{(3,2,1^{n-5})}(u)|}{\chi_{(3,2,1^{n-5})}(1)}<\frac{1}{q^{3}}$. 

Next we consider the contribution to $\Delta(u)$ of any character
\[
\chi\in\{\chi_{s^{t},(1^{n-1})},\chi_{s^{t},(2,1^{n-2})}\,|\,1\leq t\leq q\}\subset X.
\]
As stated above, these characters all vanish when $q=2$ but in general
we use the centralizer bound once more. In particular note that that
$\chi(1)>q\cdot\chi_{(3,2,1^{n-5})}(1)$ and hence the bounds found
above for $\frac{|\chi_{(3,2,1^{n-5})}(u)|}{\chi_{(3,2,1^{n-5})}(1)}$,
also apply to $q\cdot\text{\ensuremath{\frac{|\chi(u)|}{\chi(1)}} }$.
It follows that the sums $\sum_{t=1}^{q}\frac{|\chi_{s^{t},(1^{n-1})}(u)|}{\chi_{s^{t},(1^{n-1})}(1)}$
and $\sum_{t=1}^{q}\frac{|\chi_{s^{t},(2,1^{n-2})}(u)|}{\chi_{s^{t},(2,1^{n-2})}(1)}$
are bounded by the values given in Table \ref{table:2}, or $q^{-3}$
otherwise. 

The remaining characters to consider are the non-unipotent families
$\{\chi_{s^{t},(n-1)}\}_{t=1}^{q}$ and $\{\chi_{s^{t},(n-2,1)}\}_{t=1}^{q}$.
Let $\chi\in\{\chi_{s^{t},(n-1)}\}_{t=1}^{q}$. We can calculate character
values using Lemma \ref{lem:(REF-TIEP).-Formulae for Weil reps}.
We compute some of the largest values and print them below for elements
of interest. 
\[
u\sim(2,1^{n-2}):\,\,\,\sum_{t=1}^{q}\frac{|\chi_{s^{t},(n-1)}(u)|}{\chi_{s^{t},(n-1)}(1)}=\frac{q(q^{n-1}-1)}{q^{n}+1},
\]
\[
u\sim(2^{2},1^{n-4}),(3,1^{n-3}):\,\,\,\sum_{t=1}^{q}\frac{|\chi_{s^{t},(n-1)}(u)|}{\chi_{s^{t},(n-1)}(1)}=\frac{q(q^{n-2}+1)}{q^{n}+1}.
\]
 For all other unipotent elements it is an easy check that $\sum_{t=1}^{q}\frac{|\chi_{s^{t},(n-1)}(u)|}{\chi_{s^{t},(n-1)}(1)}<\frac{1}{q^{2}}$. 

Lastly let $\chi\in\{\chi_{s^{t},(n-2,1)}\}_{t=1}^{q}.$ Again we
have an explicit formula for character values, namely formula (\ref{eq:formula for D2})
in Section \ref{subsec:Values-of-characters}. We find for $u\sim(n^{r_{n}},\dots,2^{r_{2}},1^{r_{1}})$
with a total number of blocks $r$, that 
\begin{eqnarray*}
|GU_{2}(q)|\cdot\chi_{s^{t},(n-2,1)}(u) & = & (q-1)(q^{2r}-1)\\
 & - & (q^{2}-1)((-q)^{2r-r_{1}}-1)\\
 & + & q(q-1)((-q)^{r}(-q+1)+(q-1)).
\end{eqnarray*}
This is maximal when $u\sim(2,1^{n-2})$ and in this case 
\[
\sum_{t=1}^{q}\frac{|\chi_{s^{t},(n-2,1)}(u)|}{\chi_{s^{t},(n-2,1)}(1)}<q\cdot\frac{1}{q^{15/8}}=\frac{1}{q^{7/8}}.
\]
 For all other unipotent elements,
\[
\sum_{t=1}^{q}\frac{|\chi_{s^{t},(n-2,1)}(u)|}{\chi_{s^{t},(n-2,1)}(1)}<\frac{1}{q^{5/2}}.
\]

Collating these bounds for character ratios, we can compute upper
bounds for $\Delta(u).$ Firstly let $q=2$. Here 
\[
\Delta(u)=\frac{|\chi_{(n-3,2,1)}(u)|}{\chi_{(n-3,2,1)}(1)}+\frac{|\chi_{(3,2,1^{n-5})}(u)|}{\chi_{(3,2,1^{n-5})}(1)}+\frac{|St(u)|}{St(1)}.
\]
Using the bounds above and noting that $St(u)=0$ (\cite{Carter rep theory},
6.4.7) gives
\[
\Delta(u)<0.263+\frac{1}{2^{3}}<1.
\]
Similarly, when $q\geq3$ the bounds above give 
\[
\Delta(u)=\sum_{\chi\in X\backslash\{1_{G}\}\subset\text{Irr}(G)}\frac{|\chi(u)|}{\chi(1)}<\frac{q(q^{n-2}+1)}{(q^{n}+1)}+\frac{3}{q^{2}}+\frac{1}{q^{5/2}}+\frac{1}{q^{3}}<1.
\]
 
\end{proof}
\begin{lem}
\label{lem:structure constant non-unipotent} Let $g=su\in G$ such
that $s,u\neq1$. Then $g\in x^{G}y^{G}$.
\end{lem}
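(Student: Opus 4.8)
The plan is to compute the normalised structure constant as in Lemma~\ref{lem:calculating structure constants unipotent elements}. By Theorem~\ref{thm:Burnsides formula} and Proposition~\ref{prop:non vanishing characters},
\[
g\in x^{G}y^{G}\iff\kappa(x^{G},y^{G},g^{G})=\sum_{\chi\in X}\frac{\chi(x)\chi(y)\chi(g^{-1})}{\chi(1)}\ne0,
\]
so, peeling off the trivial character, it suffices to prove $\Delta(g):=\sum_{\chi\in X\setminus\{1_{G}\}}\frac{|\chi(x)\chi(y)\chi(g^{-1})|}{\chi(1)}<1$. Exactly as before, Lemmas~\ref{lem:Level 1 and 2 c haracters on x and y} and~\ref{lem:x and y on level 3 unipotent character} give $|\chi(x)\chi(y)|\le1$ for every $\chi\in X$, with $\chi(x)$ and $\chi(y)$ \emph{constant} in $t$ along the Galois families $\{\chi_{s^{t},(n-1)}\}_{t=1}^{q}$ and $\{\chi_{s^{t},(n-2,1)}\}_{t=1}^{q}$, and $\chi(y)=0$ for every non-unipotent $\chi\in X$ when $q=2$. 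The one genuinely new simplification over the unipotent case is that $g=su$ with $u\ne1$ is not semisimple, so the Steinberg summand vanishes: $St(g)=0$ by (\cite{Carter rep theory}, 6.4.7).

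The key quantitative input is the bound $\text{dim Ker}_{\mathbb{F}}(g-\mu)\le n-2$ for every scalar $\mu$, valid as soon as $s$ is non-scalar: on the generalised $\mu$-eigenspace of $s$ the element $g$ acts as $\mu$ times a unipotent, so the dimension in question is the number of Jordan blocks of $u$ there; no eigenspace of a non-scalar $s\in SU_{n}(q)$ has dimension $n$, and an eigenspace of dimension $n-1$ must carry a nontrivial part of $u$ (as $u\ne1$ and its complement is a line), so the count is at most $n-2$. Feeding this into Lemma~\ref{lem:(REF-TIEP).-Formulae for Weil reps} for the family $\{\chi_{s^{t},(n-1)}\}$, into formula~(\ref{eq:formula for D2}) for $\{\chi_{s^{t},(n-2,1)}\}$, and into the Weil-character expression~(\ref{eq: values of the D3}) for $\chi_{(n-3,2,1)}=D_{\alpha}$ (Proposition~\ref{prop: The D_alpha is the unipotent we're looking for}), and then summing over $t$ --- here the identity ``$\sum_{t=1}^{q}\epsilon^{-tl}$ equals $q$ if $l=0$ and $-1$ otherwise'' collapses the $q$-fold Galois sum, the $t$-independent factors $\chi(x)\chi(y)$ and $\chi(1)$ pulling out --- shows that $\sum_{t}|\chi_{s^{t},(n-1)}(x)\chi_{s^{t},(n-1)}(y)\chi_{s^{t},(n-1)}(g^{-1})|/\chi_{s^{t},(n-1)}(1)$ is at most of order $q^{-1}$ against the degree $(q^{n}+1)/(q+1)$, and likewise for the $(n-2,1)$ family, while for $\chi_{(n-3,2,1)}$ the explicit analogue of~(\ref{eq:D3(U)-1}) (computed over the eight conjugacy-class families of $GU_{3}(q)$ and the known values of $\alpha=\chi_{q^{2}-q}^{(q+1)}$) gives a ratio $<q^{-3}$, as in Lemma~\ref{lem:calculating structure constants unipotent elements}. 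For the ``dual'' characters $\chi_{(3,2,1^{n-5})}$, $\{\chi_{s^{t},(1^{n-1})}\}$ and $\{\chi_{s^{t},(2,1^{n-3})}\}$, for which only $|\chi(x)\chi(y)|$ is known, I would use the trivial bound $|\chi(g)|\le|C_{G}(g)|^{1/2}$: since $C_{G}(g)=C_{C_{G}(s)}(u)$ sits inside the centralizer of a nontrivial semisimple element and is cut down further by $u\ne1$, and since $\chi_{s^{t},(1^{n-1})}(1),\chi_{s^{t},(2,1^{n-3})}(1)>q\cdot\chi_{(3,2,1^{n-5})}(1)$, these contributions are dominated by the bounds already tabulated in Table~\ref{table:2}.

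One configuration escapes the bound above, namely $s$ a nontrivial central scalar $\delta^{m}I$ (which can occur only when $\gcd(n,q+1)>1$): there $\text{dim Ker}_{\mathbb{F}}(g-\delta^{m})$ can reach $n-1$, but $g$ has the single eigenvalue $\delta^{m}$, so a short computation with Lemma~\ref{lem:(REF-TIEP).-Formulae for Weil reps} gives $\chi_{s^{t},(n-1)}(g)=\epsilon^{tm}\,\chi_{s^{t},(n-1)}(u)$; the central-character phase thus multiplies each Galois family by $\sum_{t}\epsilon^{tm}=-1$, so the potentially dangerous unipotent-case sum $\approx1$ again collapses to something of order $q^{-1}$, and $\Delta(g)<1$ follows here too (the case $q=2$, where the non-unipotent characters already vanish on $y$, being immediate). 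Collating all contributions gives $\Delta(g)<1$ --- tightly so for the smallest $(n,q)$, which must be checked directly or with GAP~\cite{GAP} or CHEVIE~\cite{CHEVIE}. I expect the main obstacle to be precisely this tightness: making the surviving term of each collapsed Galois sum genuinely of size $q^{\,n-2}$ by tracking $\text{dim Ker}_{\mathbb{F}}(g-\delta^{-l})$ simultaneously over all $l$, and, as throughout the section, controlling $\chi_{(n-3,2,1)}$ via~(\ref{eq: values of the D3}) for those $g=su$ whose centralizer is as large as the hypotheses $s\ne1$, $u\ne1$ permit; for all other $g$ the inequality $\Delta(g)<1$ holds with room to spare.
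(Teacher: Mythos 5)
Your proposal is correct and follows essentially the same route as the paper: reduce via Theorem \ref{thm:Burnsides formula} and Proposition \ref{prop:non vanishing characters} to bounding $\Delta(g)<1$ over the set $X$, kill the Steinberg term since $g$ is not semisimple, control the Weil-type families $\{\chi_{s^{t},(n-1)}\}$, $\{\chi_{s^{t},(n-2,1)}\}$ and $\chi_{(n-3,2,1)}$ through the explicit formulae (Lemma \ref{lem:(REF-TIEP).-Formulae for Weil reps}, (\ref{eq:formula for D2}), (\ref{eq: values of the D3})) together with the eigenvalue-multiplicity restriction coming from $s,u\neq1$, and bound the dual characters by $|\chi(g)|\leq|C_{G}(g)|^{1/2}$ using Table \ref{table:2}. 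Your explicit treatment of the central-scalar case of $s$ via the phase identity $\sum_{t}\epsilon^{tm}=-1$ is a sensible refinement of a point the paper leaves implicit in its maximisation over Jordan forms, but it does not change the method.
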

\begin{proof}
As explained in the proof of Lemma \ref{lem:calculating structure constants unipotent elements},
for the result to hold it suffices to show that $\Delta(g)<1$ where
\[
\Delta(g)=\sum_{\chi\in X\backslash\{1_{G}\}}\frac{|\chi(g)|}{\chi(1)},\ \text{if }q\geq3,
\]
 and
\[
\Delta(g)=\frac{|\chi_{(n-3,2,1)}(g)|}{\chi_{(n-3,2,1)}(1)}+\frac{|\chi_{(3,2,1^{n-5})}(g)|}{\chi_{(3,2,1^{n-5})}(1)}+\frac{|St(g)|}{St(1)},\ \text{if }q=2.
\]

Firstly consider $\chi\in\{\chi_{(3,2,1^{n-5})}\}\cup\{\chi_{s^{t},(1^{n-1})},\chi_{s^{t},(2,1^{n-2)}}\,|\,1\leq t\leq q\}\subset X$.
As $|C_{G}(g)|\leq|C_{G}(u)|$ we can use the bounds for $\frac{|\chi_{(3,2,1^{n-5})}(g)|}{\chi(1)}$
calculated in Lemma \ref{lem:calculating structure constants unipotent elements}.
Note however that Table \ref{table:2} does not include a treatment
of the case $n=7$, $u\sim(2,1^{5})$. However when $s\neq1$ and
$u\sim(2,1^{5})$ we can sufficiently bound the character ratio by
estimating $|C_{G}(s)|^{1/2}$ and find that $\frac{|\chi_{(3,2,1^{n-5})}(g)|}{\chi(1)}<\frac{1}{q^{2}}$
in this case. 

The value of $|\chi_{(n-3,2,1)}(g)|$ has a slightly more complicated
formula when $s\neq1$ but we can again find a sufficient bound. Let
$\sigma_{g}=\{\lambda_{i}\}$ denote the eigenvalues of $g$ and note
these are not necessarily in $\mathbb{F}_{q^{2}}.$ Let $\delta,\rho,\tau\in\mathbb{F}_{q^{6}}$
be elements of orders $q+1,q^{2}-1$ and $q^{3}+1$ respectively.
Then define $\sigma_{g,\delta}:=\sigma_{g}\cap\langle\delta\rangle$,
$\sigma_{g,\rho}:=\{\sigma_{g}\cap\langle\rho\rangle\}\backslash\sigma_{g,\delta}$
and $\sigma_{g,\tau}:=\{\sigma_{g}\cap\langle\tau\rangle\}\backslash\sigma_{g,\rho}.$
From the presence of the Weil character $\omega$ in formula (\ref{eq: values of the D3}),
we see that the magnitude of $|\chi_{(n-3,2,1)}(g)|$ is controlled
by the dimension of the eigenspaces of each $\lambda_{i}\in\sigma_{g,\delta},\sigma_{g,\rho},\sigma_{g,\tau}$. 

Let $m_{\delta}:=\text{max}_{\lambda\in\sigma_{g,\delta}}\{\text{dim(Ker}(g-\lambda I))\}$
and define $m_{\rho}$ and $m_{\tau}$ similarly. Then 
\[
|\chi_{(n-3,2,1)}(g)|\sim q^{\text{max}_{i}\{3m_{\delta}+2,2m_{\rho}+5,3m_{\tau}+6\}-7}.
\]
In general this is maximal when $g$ has Jordan normal form $(\lambda J_{1}^{n-2})\oplus(\mu J_{2})$
for $\lambda,\mu\in\sigma_{g,\delta}$ such that $\lambda\neq\mu$.
Checking small values of $(n,q)$ explicitly we find that $\frac{|\chi_{(n-3,2,1)}(g)|}{\chi_{(n-3,2,1)}(1)}\leq\frac{1}{q^{3}}$.

Finally consider $\chi\in\{\chi_{s^{t},(n-1)}\}_{t=1}^{q}\cup\{\chi_{s^{t},(n-2,1)}\}_{t=1}^{q}.$
Here the character values are given by Lemma \ref{lem:(REF-TIEP).-Formulae for Weil reps}
and formula (\ref{eq:formula for D2}) in Section \ref{subsec:Values-of-characters}.
Much like the above, $|\chi(g)|$ has a non-trivial contribution for
each $\lambda_{i}\in\sigma_{g,\delta},\sigma_{g,\rho}$ and is maximised
when $g$ has Jordan normal form $(\lambda J_{1}^{n-2})\oplus(\mu J_{2})$
for $\lambda,\mu\in\sigma_{g,\delta}$ such that $\lambda\neq\mu$.
Checking small cases computationally we find that 

\[
\sum_{t=1}^{q}\frac{|\chi_{s^{t},(n-1)}(g)|}{\chi_{s^{t},(n-1)}(1)}=\frac{q^{n-2}+q+2}{q^{n}+1}<q^{-15/8}
\]
 and 
\[
\sum_{t=1}^{q}\frac{|\chi_{s^{t},(n-2,1)}(u)|}{\chi_{s^{t},(n-2,1)}(1)}<\frac{1}{q^{2}}.
\]

We collate the information above to bound $\Delta(g)$. Firstly let
$q=2$. Again, as $g$ is not semisimple, $St(g)=0$ and it follows
that 
\[
\Delta(g)<0.263+\frac{1}{2^{3}}<1.
\]
Now let $q\geq3$. By the above it follows that 
\[
\Delta(g)<\frac{1}{q^{15/8}}+\frac{3}{q^{2}}+\frac{1}{q^{2}}+\frac{1}{q^{3}}<1.
\]
\end{proof}
Lemmas \ref{lem:calculating structure constants unipotent elements}
and \ref{lem:structure constant non-unipotent} together complete
the proof of Proposition \ref{prop:calculating structurre constants any g }.

We can now finally prove Theorem \ref{thm:inv wwidth of PSU} where
$n$ is odd, to conclude our work on simple unitary groups.\\
\\
\textbf{Proof of Theorem }\ref{thm:inv wwidth of PSU} ($n$ odd)\textbf{:}
\begin{proof}
We have already seen that the result holds when $n=3,5$ (Lemmas \ref{lem:SU result for 3}
and \ref{lem:psu5}) so we can assume that $n\geq7$.

Let $g\in SU_{n}(q)$ with Jordan decomposition $g=su$. If $u=1$
then it follows from Theorem \ref{thm:(Gow-)Theorem for ss elements}
that $\overline{g}=\overline{s}\in\overline{x}^{\overline{G}}\overline{y}^{\overline{G}}$.
Similarly, provided $u\nsim(2,1^{n-2})$ when $s=1$, $\overline{g}\in\overline{x}^{\overline{G}}\overline{y}^{\overline{G}}$
by Lemma \ref{prop:calculating structurre constants any g }. In either
case it then follows that iw$(\overline{g})\leq4$ by Proposition
\ref{prop:picking x and y in n odd}. This leaves the unipotent element
$g=u\sim(2,1^{n-2})$. Here we can embed $u$ in a subgroup $SU_{3}(q)$
when $q>2$, and $SU_{4}(q)$ when $q=2$. It follows that iw$(u)\leq4$
by Lemma \ref{lem:SU result for 3} and our earlier work in Section
\ref{subsec:n-even} on even-dimensional unitary groups. This completes
the proof of Theorem \ref{thm:inv wwidth of PSU}.
\end{proof}

\section{Exceptional Grou\label{sec:Simple-groups-of exceptional type}ps
of Lie type}

In this final section we consider the involution width of the exceptional
groups of Lie type. This will complete our case by case study via
the classification of finite simple groups and finish the proof of
Theorem~\ref{mainthm}. 

\subsubsection*{$G=E_{8}(q)$}

The first case $E_{8}(q)$ is illustrative of the method used for
the majority of the exceptional groups. Firstly choose regular semisimple
elements $x,\,y\in G$ of orders $r=$ppd$(q,\,24)$ and $s=$ppd$(q,\,20)$
respectively. Then $G\backslash\{1\}\subseteq x^{G}y^{G}$ by Theorem
7.6 of \cite{Guralnick and Malle}.

Classes of maximal tori in $G$ correspond to classes in the associated
Weyl group $W=W(E_{8})$ and orders of such tori are given by Carter
\cite{Carter Conjugacy classes in the weyl group}. By considering
possible orders, we see that $x$ lies in a unique maximal torus $T_{w}$
such that $|T_{w}|=\Phi_{24}(q)=q^{8}-q^{4}+1$. In this instance
$T_{w}$ corresponds to an element $w\in W$ of order $24.$

We claim that $x$ is strongly real. For this it suffices to show
there exists an involution in $G$ inverting all elements of $T_{w}$.
Now $N_{G}(T_{w})/T_{w}\cong C_{W}(w)$ and thus contains the coset
corresponding to the longest element of the Weyl group, namely $w_{0}=-1$.
This central element acts by inversion on the torus $T_{w}$ and hence
so does any preimage $n_{0}\in N_{G}(T_{w})$. As $|T_{w}|$ is odd
and $w_{0}$ is an involution, there exists a preimage $n_{0}$ that
is also an involution. 

The argument for the element $y$ is almost identical. Here $y$ is
contained in a torus $T_{w}$ of order $\Phi_{20}(q)=q^{8}-q^{6}+q^{4}-q^{2}+1$
which is also odd. Hence as before $y$ is strongly real, and so iw$(G)\leq4$
as required. 

\subsubsection*{$G=$$\,{}^{2}B_{2}(2^{2n+1}),$ $n\geq1$}

Next consider the Suzuki groups $^{2}B_{2}(2^{2n+1})$. We take elements
$x$ and $y$ both of order $r=$ppd$(2^{2n+1},\,4)$ and it follows
by Theorem 7.1 of \cite{Guralnick and Malle} that $G\backslash\{1\}\subseteq x^{G}y^{G}$.
Suzuki \cite{Suzuki construction} showed that $G$ has $q+3$ conjugacy
classes of which only two, containing order 4 elements, are not strongly
real. Thus $x$ and $y$ are strongly real, and so iw$(G)\leq4$. 

\subsubsection*{$G=G_{2}(q),\,q\geq3$}

For $q\neq4$ we take elements $x$ and $y$ of order $r=$ ppd$(q,\,3)$
and it follows by Theorem 7.3 of \cite{Guralnick and Malle} that
$G\backslash\{1\}\subseteq x^{G}y^{G}.$ As $x$ and $y$ are regular
semisimple they lie in a unique maximal torus $T$, which by choice
of $r$ has size $q^{2}+q+1$. Now $W(G_{2})\cong D_{12}$ has longest
element $w_{0}=-1$ and thus as $T$ has odd order , we follow the
argument given in the case of $E_{8}(q)$ to see that $x$ and $y$
are strongly real. Finally, when $q=4$ we check using GAP \cite{GAP}
that $G_{2}(4)$ has involution width $3$. 

\subsubsection*{$G=$$\,{}^{2}G(q)$ }

Consider $G=$$\,{}^{2}G_{2}(q)$ with $q=3^{2n+1}>3.$ Here we take
$x$ of order $r=$ ppd$(q,\,6)$ and it follows by (\cite{Guralnick and Malle},
Thm 7.1) that $G\backslash\{1\}\subseteq x^{G}x^{G}$. The full character
table is known due to Ward \cite{Ward on Ree} and checking orders,
$x$ belongs to one of two classes, namely classes $V$ or $W$ (see
\cite{Ward on Ree} for notation). These classes contain elements
of orders $q+\sqrt{3q}+1$ and $q-\sqrt{3q}+1$ respectively. It is
now straightforward to show that $x$ is strongly real by computing
structure constants. Letting $a$ be a representative of the single
conjugacy class of involutions (this has size $q(q-1)(q+1)$), there
exist exactly four characters $\chi\in$Irr$(G)$ such that $\chi(a)\chi(x)\neq0$.
By Theorem \ref{thm:Burnsides formula} 
\begin{eqnarray*}
\kappa(a^{G},a^{G},x^{G}) & = & \sum_{\chi}\frac{\chi(a)^{2}\overline{\chi(x)}}{\chi(1)}\\
 & = & 1-\frac{1}{3^{2n+1}}+\frac{3^{2n+1}-1}{3^{n}(3^{2n+1}+1\pm3^{n+1})}>0.
\end{eqnarray*}

Hence $x\in(a^{G})^{2}$ and it follows that iw$(G)\leq4$.

\subsubsection*{$G=$$\,{}^{3}D_{4}(q)$ }

This group is strongly real (see Theorem \ref{Thm: Strongly real G}). 

\subsubsection*{$G=$$\,{}^{2}F_{4}(2^{2k+1})',$ $k\geq0$}

Let $G=$$\,{}^{2}F_{4}(q)'$ with $q=2^{2k+1}$. If $q=2$ then iw$(G)=3$
by GAP \cite{GAP}, so assume $q>2.$ Lemma $2.13$ of \cite{Guralnick and Tiep}
shows that $G$ contains regular semisimple elements $x$ of order
$r=$ ppd$(2^{2k+1},\,12)$ and $y$ of order $s=$ ppd$(2^{2k+1},\,6)$,
such that $x^{G}y^{G}=G\backslash\{1\}$. In the notation of Shinoda
\cite{Shinoda}, $x$ is conjugate to $t_{16}$ or $t_{17}$ and $y$
is conjugate to $t_{15}$. These lie in unique maximal tori, namely
$T(10)\cong\mathbb{Z}_{q^{4}-\sqrt{2}q^{3}+q^{2}-\sqrt{2}q+1}$ or
$T(11)\cong\mathbb{Z}_{q^{4}+\sqrt{2}q^{3}+q^{2}+\sqrt{2}q+1}$ and
$T(9)\cong\mathbb{Z}_{q^{4}-q^{2}+1}$ respectively. 

Note that each of these tori is of odd order. Therefore as the Weyl
group $W(G)\cong D_{16}$ again contains the central involution $w_{0}=-1$
we can follow the argument for $E_{8}(q)$ to show that $x$ and $y$
are strongly real.

\subsubsection*{$G=F_{4}(q)$ }

Take regular semisimple elements $x$ and $y$ of orders $r=$ ppd$(q,\,12)$
and $s=$ ppd$(q,\,8)$ respectively and it follows that $G\backslash\{1\}\subseteq x^{G}y^{G}$
by (\cite{Guralnick and Malle}, 7.6). Note that these are elements
of maximal tori of order $|T_{1}|=\Phi_{12}(q)$ and $|T_{2}|=\Phi_{8}(q).$

If $q$ is even then both tori have odd order and therefore a preimage
$n_{0}\in G$ of $w_{0}=-1$ can also be taken to be an involution.
If $q$ is odd then (\cite{Singh and Thakur}, Thm.$2.3.3$) shows
that all semisimple element of $F_{4}(q)$ are strongly real and hence
the result follows.

\subsubsection*{$G=E_{7}(q)$}

Let $G=E_{7}(q)$ and let $r=(\Phi_{2}(q)\Phi_{18}(q))_{\{2,3\}'}$
and $s=\Phi_{7}(q)$. By (\cite{Guralnick and Malle}, 7.7) there
exists regular semisimple $x$ and $y\in G$ of orders $r$ and $s$
respectively, such that $G\backslash\{1\}\subseteq x^{G}\cdot y^{G}$.
Let $G_{ad}=E_{7}(q).(2,q-1)$ denote the adjoint group of type $E_{7}$,
that is, the simple group with additional diagonal automorphism of
order $(2,q-1)$. By (\cite{Singh and Thakur}, Thm 2.3.3), every
semisimple element of $G_{ad}$ is strongly real in $G_{ad}$. Therefore,
when $q$ is even and so $G=G_{ad}$, it follows that $x$ and $y$
are strongly real and iw$(G)\leq4$. Now assume that the characteristic
is odd. The structures of maximal tori of $G_{ad}$ are given in \cite{Derizotis E type exceptional groups}
and we check that $x\in T_{1,ad}$ where $T_{1,ad}$ is cyclic of
order $m=(q+1)(q^{6}-q^{3}+1).$ Again by (\cite{Singh and Thakur},
Thm 2.3.3), there exists an involution $n_{0}\in G_{ad}$ such that
$x^{n}=x^{-1}$. In particular, if $\langle t\rangle=T_{1,ad}$ then
$x$ is contained in the dihedral subgroup $D_{2m}=\langle t,n_{0}\rangle$.
Note that $t^{k}n_{0}$ is an involution for all $k$ and inverts
$x$ by conjugation. Thus if $D_{2m}\cap E_{7}(q)$ contains such
an element then $x$ is strongly real in the simple group. Assuming
otherwise, it follows that $D_{2m}\cap E_{7}(q)\subset T_{1,ad}\cap E_{7}(q)$
and hence $|D_{2m}\cap E_{7}(q)|\leq\frac{m}{2}$ . This is a contradiction
and hence $x$ is indeed strongly real in $E_{7}(q)$. The same conclusion
for $y$ follows identically as the maximal torus $T_{2,ad},$ containing
$y$ and of order $q^{7}-1$, is again cyclic.

\subsubsection*{$G=E_{6}^{\epsilon}(q)$}

The final families of exceptional groups $E_{6}(q)$ and $^{2}E_{6}(q)$
require a more careful consideration. This is because there no longer
exists a central element $-1$ in the Weyl group $W(E_{6})$ and we
thus have to look harder for strongly real elements. Instead, for
root systems of type $E_{6}$ (as well as $A_{l}$, and $D_{l}$ for
$l$ odd) the longest element $w_{0}$ corresponds to the product
of $-1$ and the nontrivial symmetry of the Dynkin diagram.

Let $\boldsymbol{G}=E_{6}(\overline{\mathbb{F}}_{q})_{ad}$ denote
the adjoint algebraic group of type $E_{6}$ and $F:\boldsymbol{G}\rightarrow\boldsymbol{G}$
a Frobenius endomorphism such that $G=\boldsymbol{G}^{F}$ is the
finite adjoint group $E_{6}(q)_{ad}$ or $^{2}E_{6}(q)_{ad}$. These
finite groups are not necessarily simple but the derived groups $G'=E_{6}(q)$
or $^{2}E_{6}(q)$ are simple. We will use the notation $E_{6}^{\epsilon}(q)$
where $\epsilon\in\{+,-\}$, to denote $E_{6}(q)$ if $\epsilon=+$
and $^{2}E_{6}(q)$ if $\epsilon=-$. Define $d:=(3,q-\epsilon)$
and note that $|G\,:\,G^{'}|=d$.

In the remainder of this section we prove the following result.
\begin{thm}
\label{thm:E6 involution width result}Every element in $G'=E_{6}^{\epsilon}(q)$
can be written as a product of at most 4 involutions.
\end{thm}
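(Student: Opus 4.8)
The plan is to treat $G'=E_6^{\epsilon}(q)$ by splitting on $\epsilon$, and within the untwisted case on the type of element, combining the two techniques used earlier in this section. For the twisted group ${}^2E_6(q)$ I would proceed as for $E_8(q)$: by a theorem of Guralnick and Malle (\cite{Guralnick and Malle}, 7.5) there exist regular semisimple $x$ of order $r=\mathrm{ppd}(q,18)$ and $y$ of order a multiple of $s=\mathrm{ppd}(q,12)$ in the adjoint group $G={}^2E_6(q)_{ad}$ with $G\setminus Z(G)\subseteq x^Gy^G$, and since $d:=(3,q-\epsilon)=|G:G'|$ is odd one deduces $G'\setminus\{1\}\subseteq x^{G'}y^{G'}$ after checking $x,y\in G'$. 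Now $x$ lies in the cyclic maximal torus $T_1$ of odd order $\Phi_{18}(q)=q^6-q^3+1$, and here — unlike in $E_6$ — the Frobenius twist already incorporates the graph symmetry, so the relative Weyl group $N_{G'}(T_1)/T_1$ is cyclic of order $18$ and its unique involution acts as $-1$ on $T_1$; as $|T_1|$ is odd this involution lifts to an involution of $G'$ inverting $x$, so $\overline x$ is strongly real. One argues similarly for $\overline y$, either via the relative Weyl group of its torus or by placing $y$ inside a strongly real ${}^3D_4(q)$-subgroup (Theorem \ref{Thm: Strongly real G}). Hence $\mathrm{iw}({}^2E_6(q))\le \mathrm{iw}(\overline x)+\mathrm{iw}(\overline y)\le 4$.

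The genuine difficulty, as flagged above, is the untwisted group $E_6(q)$: since $W(E_6)$ contains no element acting as $-1$, no element of $N_{G'}(T)$ inverts a maximal torus $T$, and in particular the regular semisimple element of order $\mathrm{ppd}(q,9)$ in the $\Phi_9(q)$-torus, whose full normaliser has the odd order $9\,\Phi_9(q)$, is not even real in $E_6(q)$; the ``two strongly real elements'' scheme therefore cannot work here. Instead I would use the inductive method of the introduction: it suffices to show that every $g\in E_6(q)$ is conjugate into a subgroup $H\le E_6(q)$ with $\mathrm{iw}(H)\le 4$, because an involution of $H$ is an involution of $E_6(q)$. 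Writing $g=su$ for the Jordan decomposition, if $C_{\boldsymbol G}(s)$ is a proper reductive subgroup of maximal rank then $g$ lies in the associated subsystem subgroup — of type $A_5A_1$, $A_2^3$, $D_5T_1$, and so on — which is a central product of classical groups and so of involution width at most $4$ by Corollary \ref{cor:iw of PSL is at most 4}, Theorem \ref{thm:inv wwidth of PSU}, Theorem \ref{thm:Sp 4 reflectional} and Theorem \ref{thm: orthog width}. The remaining ``elliptic'' classes, meeting no proper maximal-rank subgroup, are dealt with individually: the $\Phi_9(q)$-torus lies in a field-extension subgroup of type $A_2(q^3)$, of involution width $\le 4$ by Theorem \ref{thm:sl 4 ref}; the $\Phi_{12}(q)$-torus lies in a ${}^3D_4(q)$-subgroup, strongly real by Theorem \ref{Thm: Strongly real G}; $F_4(q)\le E_6(q)$, of involution width $\le 4$ as shown above, absorbs the regular unipotent class and several others; and the unipotent and mixed classes are placed inside suitable subsystem or Levi subgroups (for mixed classes one must also account for the central torus of $C_{\boldsymbol G}(s)$, which can be handled by enlarging $H$ to include an involution inverting that torus).

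The main obstacle is precisely this last case analysis: running through all conjugacy classes of maximal tori of $E_6(q)$ (and of ${}^2E_6(q)$), together with the unipotent and mixed classes, and in each instance naming a subgroup from the above list — maximal-rank subsystem subgroup, field-extension subgroup $A_2^{\pm}(q^3)$, ${}^3D_4(q)$, or $F_4(q)$ — that contains a conjugate of the element and whose involution width has already been bounded by $4$. The delicate points are the few elliptic tori lying in no maximal-rank subsystem subgroup — notably the Coxeter torus of order $\Phi_{12}(q)\Phi_3(q)$ and the $\Phi_9(q)$-torus, where one must fall back on the field-extension or exceptional subgroups or, if necessary, a direct structure-constant estimate as in the ${}^2G_2(q)$ case above — and small characteristic, where the $A_1$- or $F_4$-type subgroups may degenerate; the handful of small $q$ (and the usual $q=2$ exceptions) I would settle directly with \textsf{GAP} \cite{GAP} and \textsf{CHEVIE} \cite{CHEVIE}, as in the other exceptional cases. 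Assembling these, every element of $E_6^{\epsilon}(q)$ is a product of at most four involutions, which with the preceding sections completes the proof of Theorem \ref{mainthm}.
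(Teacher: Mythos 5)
There is a genuine gap, and it sits exactly where you located the difficulty. Your premise that the ``two strongly real classes'' scheme cannot work for untwisted $E_6(q)$ is false: although $W(E_6)$ has no $-1$, the paper produces strongly real regular semisimple elements anyway, namely $x$ of order $\mathrm{ppd}(q,12)$ lying in the Coxeter torus of an $F_4(q)$ subgroup (that torus has odd order $\Phi_{12}(q)$, so $x$ is inverted by an involution of $F_4(q)$), and $y$ of order $\mathrm{ppd}(q,8)$, shown strongly real inside a spin subgroup $D_5^{\epsilon}(q)$ using the criterion for when an involution of $\Omega_{10}^{\epsilon}(q)$ lifts to an involution of the spin group. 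Discarding this scheme is not just unnecessary; it removes the only tool that handles the hard semisimple classes. Your replacement --- conjugate every element into a proper subgroup of known width --- breaks down precisely on the elliptic tori you flag: a regular element $t$ of the Coxeter torus, of order divisible by both $\mathrm{ppd}(q,12)$ and $\mathrm{ppd}(q,3)$, lies in no maximal-rank subsystem subgroup ($W(A_5A_1)$ and $W(A_2^3)$ contain no element of order $12$, and elements of $W(D_5T_1)$ are not elliptic), and it cannot lie in $F_4(q)$, ${}^3D_4(q)$ or a field-extension $A_2^{\pm}(q^3)$ either, since none of these has a maximal torus of order divisible by $\Phi_{12}(q)\Phi_3(q)$. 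Moreover the relative Weyl group of the Coxeter torus is cyclic of order $12$ acting by $q$-power maps, and no power of $q$ is congruent to $-1$ modulo the $\Phi_3$-part, so such $t$ is not even real; hence your fallback of proving strong reality by a structure-constant computation (as for ${}^2G_2$) cannot succeed. In the paper these elements are disposed of in one line: they are semisimple, so Theorem \ref{thm:(Gow-)Theorem for ss elements} (Gow) puts them in $x^{G'}y^{G'}$ for the two strongly real regular semisimple classes --- which is exactly the scheme you abandoned.

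The twisted half has a different problem: the blanket covering $G\setminus Z(G)\subseteq x^G y^G$ that you attribute to Guralnick--Malle for ${}^2E_6(q)$ is not available (note that the paper cites their covering theorems for $E_8$, $F_4$, $G_2$, $E_7$, the Suzuki and Ree groups, but conspicuously not for $E_6^{\epsilon}$; the nontrivial centre when $3\mid q-\epsilon$ is the obstruction). What the paper proves instead, for both $\epsilon$, is a partial covering: after reducing to the unipotent characters $\{1, St, D_{4,1}, D_{4,\epsilon}\}$ (untwisted) or $\{1, St, \phi_{8,3'}, \phi_{8,9''}\}$ (twisted) via defect-zero arguments, the structure constant is shown nonzero only when $|C_{G'}(g)|\le q^{26}$ (resp.\ $q^{28}$); semisimple $g$ are covered by Gow's theorem; and the remaining elements, which necessarily have nontrivial unipotent part and large centralizer, are conjugated into subgroups of known width ($A_2$-type subsystem groups, $SL_3(q)$, $D_4(q)$, $A_5^{\pm}(q)$, etc.) using Mizuno's and Liebeck--Seitz's centralizer data --- this is where your embedding idea is correct and is genuinely used, but only for non-semisimple elements, where the nontrivial unipotent part forces the element into such a subgroup. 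As written, your argument leaves the regular elements of the elliptic tori of $E_6(q)$ (and, absent the unproved covering statement, of ${}^2E_6(q)$) unaccounted for.
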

As before, the strategy is to pick two strongly real classes $x^{G'}$
and $y^{G'}$ such that the product $x^{G'}\cdot y^{G'}$ covers as
much of the group $G'=E_{6}^{\epsilon}(q)$ as possible. If however
there exists $g\in G'$ for which we cannot show that $g\in x^{G'}\cdot y^{G'},$
then we embed $g$ in a subgroup $X\subset G'$, where $X$ is a Lie
type group for which the involution width is already known. Throughout
this section we assume that $q\neq2$. The Theorem can be checked
for $E_{6}^{\epsilon}(2)$ using GAP \cite{GAP}.\\

Choose $x,y\in G'$ with $x$ of order $r=$ppd$(q,\,12)$ and $y$
of order $s=$ppd$(q,\,8)$ . Note that $r$ and $s$ divide $\Phi_{12}(q)=q^{4}-q^{2}+1$
and $\Phi_{8}(q)=q^{4}+1$ respectively. The element $x$ is contained
in a Coxeter torus of a subgroup $F_{4}(q)$, and $|C_{G'}(x)|=(q^{4}-q^{2}+1)(q^{2}+q+1)/d$,
while $y$ is a regular semisimple element contained in the unique
maximal torus $C_{G'}(y)=T$ where $|T|=(q^{4}+1)(q^{2}-1)/d$.
\begin{lem}
\label{lem:x and y chosen in E6 are strongly real}The elements $x$
and $y$ are strongly real in $G'=E_{6}^{\epsilon}(q).$
\end{lem}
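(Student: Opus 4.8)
The plan is to realise both $x$ and $y$ inside a subgroup $H\cong F_4(q)$ of $G'$ and to invert each by an involution of $H$, exploiting the fact that --- unlike $W(E_6)$ --- the Weyl group $W(F_4)$ contains $-1$ as its central longest element $w_0$. Recall that $F_4(q)$ is a subgroup of $E_6^\epsilon(q)$ for both $\epsilon=\pm$, arising from the fixed points of a graph automorphism of the ambient algebraic group, and, being perfect, it lies in the simple group $G'$. By construction $x$ lies in the Coxeter torus $T_c$ of such a subgroup $H$, with $|T_c|=\Phi_{12}(q)=q^4-q^2+1$. Also, $W(F_4)$ has an element $w$ of order $8$ whose characteristic polynomial on the reflection representation is $\Phi_8$, so $H$ has a maximal torus $T_8$ with $|T_8|=\Phi_8(q)=q^4+1$; since $y$ has order $s=\mathrm{ppd}(q,8)$, which divides $q^4+1$, the $G'$-class of $y$ meets $T_8\le H$ (both $y$ and its representative in $T_8$ are regular semisimple of order $s$, hence forced into the unique maximal torus of $G'$ of order $(q^4+1)(q^2-1)/d$, whose $s$-part is cyclic). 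So it suffices to show $x$ and $y$ are strongly real in $H$.

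First I would invert $x$. As $w_0=-1$ is central in $W(F_4)$ --- indeed $w_0=c^6$ for a Coxeter element $c$ --- it lies in $C_{W(F_4)}(c)\cong N_H(T_c)/T_c$, and the corresponding coset of $T_c$ in $N_H(T_c)$ acts on $T_c$ by inversion. Since $|T_c|=q^4-q^2+1$ is odd for every $q$, this coset contains an involution: for any representative $n$ we have $n^2\in T_c$, so $n^2=t^2$ for the unique $t\in T_c$ with this property, and $n_1:=nt^{-1}$ is an involution with $x^{n_1}=x^{-1}$. As $n_1\in H\subseteq G'$, the element $x$ is strongly real in $G'$.

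For $y$ I would split on the parity of $q$. When $q$ is even, $|T_8|=q^4+1$ is odd, and $w_0=-1=w^4$ again lies in $C_{W(F_4)}(w)\cong N_H(T_8)/T_8$, so the identical argument produces an involution $n_2\in H\subseteq G'$ with $y^{n_2}=y^{-1}$. When $q$ is odd, $x$ and $y$ are semisimple elements of $H\cong F_4(q)$ and are therefore strongly real in $H$ by (\cite{Singh and Thakur}, Thm.~2.3.3), hence strongly real in $G'$.

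The essential point --- and the reason for descending to $F_4(q)$ --- is that $-1\notin W(E_6)$, so a general maximal torus of $E_6^\epsilon(q)$ cannot be inverted by a Weyl element of $E_6$, whereas inside $F_4(q)$ the element $-1$ is available and lies in the normaliser of each of the two tori we use. The main loose ends to tie down are the assertion that the class of $y$ genuinely meets $F_4(q)$ (a check on centraliser orders and on the cyclicity of the $\mathrm{ppd}(q,8)$-part of the relevant torus), and the routine odd-order manipulation upgrading the coset representative of $w_0$ to an honest involution; the $q$ odd case for $y$ is then immediate from the known strong reality of all semisimple elements of $F_4(q)$.
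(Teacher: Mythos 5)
Your treatment of $x$ is exactly the paper's: $x$ lies in a Coxeter torus of a subgroup $F_4(q)$, that torus has odd order $\Phi_{12}(q)$, and a preimage of the central element $w_0=-1$ of $W(F_4)$ gives an involution of $F_4(q)\le G'$ inverting it. For $y$ you take a genuinely different route. The paper embeds $y$ in a spin group $D_5^{\epsilon}(q)$, quotes the strong-reality results of Malle--Saxl--Weigel for $\Omega_{10}^{\epsilon}(q)$, and then uses Borovik's criterion (negative eigenspace of dimension divisible by $4$) to lift the inverting involution from $\Omega_{10}^{\epsilon}(q)$ back to the spin group; you instead pull $y$ into $F_4(q)$ and finish either with the odd-order $\Phi_8(q)$-torus and $-1\in W(F_4)$ (for $q$ even) or with Singh--Thakur's strong reality of all semisimple elements of $F_4(q)$ (for $q$ odd). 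Your route has the merit of making the $E_6^{\epsilon}$ case uniform with the paper's $F_4$ and $E_8$ arguments and of avoiding the spin-group lifting step entirely.

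There is, however, one genuine gap, precisely at the step you flag as a loose end: the claim that the $G'$-class of $y$ meets $F_4(q)$. Your argument only shows that $y$ and an order-$s$ element of $T_8\le F_4(q)$ both lie in maximal tori of the unique type of order $(q^4+1)(q^2-1)/d$; two regular semisimple elements in conjugate tori (indeed, two elements of the same cyclic $s$-part) need not be $G'$-conjugate, so this does not place $y$ itself inside $F_4(q)$. The gap is easily closed: $\Phi_8$ divides each of $|E_6^{\epsilon}(q)|$, $|F_4(q)|$ and $|T|$ to the first power only, and $s\nmid q^i-1$ for $i<8$, so the Sylow $s$-subgroups of $G'$ are cyclic of order $(q^4+1)_s$ and one of them lies in the $\Phi_8$-torus of $F_4(q)$; by Sylow's theorem every $s$-element of $G'$, in particular $y$, is conjugate into that torus. (Alternatively, since the structure-constant arguments use only the order and centralizer type of $y$, one may simply choose $y$ inside $F_4(q)$ from the outset.) A minor further remark: your adjustment $n_1=nt^{-1}$ is both unnecessary and miscomputed --- if $n$ inverts the torus elementwise then $(nt)^2=n^2$ for every torus element $t$ --- but the desired conclusion is automatic, since $n^2$ lies in the torus and is simultaneously centralized and inverted by $n$, hence is trivial because the torus has odd order.
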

\begin{proof}
Firstly $x$ is contained in a maximal torus of the subgroup $F_{4}(q)$.
This torus has odd order $\Phi_{12}(q)$ and hence $x$ is inverted
by an involution contained in $F_{4}(q)$. Therefore $x$ is also
strongly real in the full group $E_{6}^{\epsilon}(q)$. 

We can also embed $y$ in a subgroup, namely a spin group $D_{5}^{\epsilon}(q)\subset E_{6}^{\epsilon}(q)$.
Note that in $D_{5}^{\epsilon}(q)$, $y$ is contained in a maximal
torus of order $(q^{4}+1)(q+\epsilon).$ Let $\overline{y}$ denote
the image of $y$ in $\Omega_{10}^{\epsilon}(q)$. It then follows
that $\overline{y}$ is strongly real in $\Omega_{10}^{\epsilon}(q)$
by (\cite{Malle and Saxl and Weigel}, 2.5(c) and 2.6 (c)) and we
let $\overline{t}\in\Omega_{10}^{\epsilon}(q)$ be an involution inverting
$\overline{y}.$ The involutions of $\Omega_{10}^{\epsilon}(q)$ that
lift to involutions in $D_{5}^{\epsilon}(q)$ are those where the
dimension of the negative eigenspace is divisible by 4 (\cite{Borovik},
8.4). This is true for $\overline{t}$ in our case as in $SO_{10}(\overline{\mathbb{F}}_{q})$,
$y$ is conjugate to an element of the form diag$(\lambda,\lambda^{q},\lambda^{q^{2}},\lambda^{q^{3}},\lambda^{-1},\lambda^{-q},\lambda^{-q^{2}},\lambda^{-q^{3}},1,1)$
where $|\lambda|=s$. Hence $\overline{t}$ lifts to an involution
$t\in D_{5}^{\epsilon}(q)$ that inverts $y$ by conjugation. 
\end{proof}
As usual, to show that a given conjugacy class $g^{G'}$ is contained
in the product $x^{G'}y^{G'}$, we compute the structure constants
using Theorem \ref{thm:Burnsides formula}. As in the proof of Theorem
\ref{thm:inv wwidth of PSU}, the first step is to reduce to the case
of unipotent characters. 

Firstly consider the untwisted case $G'=E_{6}(q)$. In the following
lemma, $St$ denotes the Steinberg character, and there are a further
two characters of interest, namely $D_{4,1}$ and $D_{4,\epsilon}$.
These characters arise from cuspidal unipotent characters of the Levi
subgroup $D_{4}(q)$ of $G'$. Full details are available in (\cite{Carter rep theory},
Sec. 13.9).
\begin{lem}
\label{E6 reduction to four characters in the structure constant sum}Suppose
$\chi\in Irr(G')$ such that $\chi(x)\chi(y)\ne0$. Then $\chi$ is
unipotent and $\chi\in\{1,\,St,\,D_{4,1},\,D_{4,\epsilon}\}$.
\end{lem}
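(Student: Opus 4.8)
The plan is to run the same Lusztig-series argument used in the unitary case (Lemma \ref{lem:reduction to 4 character lemma} and Proposition \ref{prop:non vanishing characters}), but now exploiting the fact that $x$ and $y$ lie in maximal tori of \emph{coprime} orders to force the relevant Lusztig series to be the unipotent one. First I would apply Lemma \ref{lem:G and M lemma on the existence of a character in a non trivial Lusztig series} to both $x$ and $y$: since $\chi(x)\neq 0$ there is a semisimple $s_1\in G'^*$ with $\chi\in\mathcal{E}(G',(s_1))$ and $s_1$ lying in a torus $T_1^*$ dual to $C_{G'}(x)$; similarly $\chi\in\mathcal{E}(G',(s_2))$ for some $s_2$ in a torus $T_2^*$ dual to $C_{G'}(y)$. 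Since each $\chi$ lies in a unique Lusztig series, $(s_1)=(s_2)$, so the order of this common semisimple class divides $\gcd(|T_1^*|,|T_2^*|)$. Here $|C_{G'}(x)|=(q^4-q^2+1)(q^2+q+1)/d$ and $|T|=(q^4+1)(q^2-1)/d$, and one checks $\gcd\big((q^4-q^2+1)(q^2+q+1),\,(q^4+1)(q^2-1)\big)$ is a divisor of $d=(3,q-1)$; combined with the fact that any semisimple element of the adjoint group $G$ whose class has order dividing $d$ is central (indeed $Z(G)=1$ for $E_6(q)_{ad}$, while the centre is absorbed into passing between $G$ and $G'$), we get $s_1=s_2=1$, i.e. $\chi$ is unipotent.

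Next I would cut down the unipotent characters using $p$-defect-zero vanishing (Theorem \ref{thm:Brauers theorem on prime defect}) with respect to the primitive prime divisors $r=\mathrm{ppd}(q,12)$ and $s=\mathrm{ppd}(q,8)$. Both $r$ and $s$ divide $|E_6^{\epsilon}(q)|$, and the unipotent characters of $E_6(q)$ (and of $\,^2E_6(q)$) together with their degrees are listed explicitly in Carter (\cite{Carter rep theory}, Sec.\ 13.9). A unipotent character $\chi$ has $r$-defect zero unless $\Phi_{12}$ divides its degree polynomial, and $s$-defect zero unless $\Phi_8$ divides its degree polynomial; by Theorem \ref{thm:Brauers theorem on prime defect}, $\chi(x)\neq0$ forces $\Phi_{12}\mid\chi(1)$ and $\chi(y)\neq0$ forces $\Phi_8\mid\chi(1)$. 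Going through the explicit list of unipotent degree polynomials for $E_6^{\epsilon}(q)$, the only ones divisible by both $\Phi_{12}$ and $\Phi_8$ are the trivial character, the Steinberg character $St$ (whose degree is $q^{36}$, hence divisible by every cyclotomic factor formally — here one instead uses $St(z)=\pm|C_{G'}(z)|_p$ directly, which is nonzero on the semisimple $x,y$), and the two characters $D_{4,1}$ and $D_{4,\epsilon}$ coming from the cuspidal unipotent character of the $D_4$ Levi. This gives the claimed four-element set.

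The main obstacle is the combinatorial bookkeeping in the second step: one must be certain no other unipotent character of $E_6(q)$ or $\,^2E_6(q)$ has degree polynomial divisible by both $\Phi_8$ and $\Phi_{12}$. This is a finite check against the tables in \cite{Carter rep theory}, but it is the place where an error could creep in, so I would verify it carefully (and note that for $\,^2E_6(q)$ one must use the correct set of unipotent characters and their degrees, which differ from the split case in how the relative Weyl group acts). A secondary subtlety is the Steinberg character: rather than arguing via defect-zero divisibility (which is vacuous since $St(1)=q^{36}$), I would simply retain $St$ in the list and handle its (nonzero, controlled) value $St(x)=\pm|C_{G'}(x)|_p$, $St(y)=\pm|C_{G'}(y)|_p$ in the subsequent structure-constant estimate. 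A final remark is that the lemma statement is phrased for the untwisted case $G'=E_6(q)$; the identical argument, using $\mathrm{ppd}(q,12)$ and $\mathrm{ppd}(q,8)$ dividing $|\,^2E_6(q)|$ and the fact that $\Phi_{12},\Phi_8$ still correspond to the Coxeter-type and $D_5$-type tori, covers $\,^2E_6(q)$ as well.
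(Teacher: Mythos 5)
Your route to unipotency is genuinely different from the paper's, and as written it has a gap. The paper applies Lemma \ref{lem:G and M lemma on the existence of a character in a non trivial Lusztig series} once and then disposes of every series $\mathcal{E}(G',(t))$ with $t\neq1$ directly: by the degree formula in Proposition \ref{prop:(Jordan-decomposition-of characters and degree formula}, $\chi$ has $l$-defect zero whenever $|C_{G'^{*}}(t)|_{l}=1$, and the tables of semisimple centralizers in $E_{6}^{\epsilon}(q)$ show that every non-trivial $t$ has centralizer order coprime to $r$ or to $s$, so $\chi$ vanishes on $x$ or on $y$ by Theorem \ref{thm:Brauers theorem on prime defect}. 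Your gcd argument only shows that the common class $(s_{1})$ has order dividing $\gcd\bigl(|T_{1}^{*}|,|T_{2}^{*}|\bigr)$, and this gcd is $3$, not $1$, when $d=3$ (it comes from $\gcd(\Phi_{3},\Phi_{1})$, resp. $\gcd(\Phi_{6},\Phi_{2})$ in the twisted case). The statement you then invoke --- that a semisimple element of the adjoint group whose order divides $d$ is central --- is false: the adjoint group has trivial centre but many non-trivial semisimple elements of order $3$, and in the dual group (of simply connected type) there are both central and non-central order-$3$ elements. So for $d=3$ you have not excluded series attached to order-$3$ classes; ruling them out requires exactly the kind of centralizer check the paper does (and it is not vacuous: centralizers of type ${}^{3}D_{4}(q)\cdot\Phi_{3}$ do have order divisible by $\Phi_{12}$, so one must use that no proper semisimple centralizer is divisible by both $\Phi_{12}$ and $\Phi_{8}$, or handle such series by the defect-zero argument).

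Your second step also has the defect-zero criterion inverted. Brauer's theorem says a character of $r$-defect zero vanishes on $x$; hence $\chi(x)\neq0$ forces $\chi$ \emph{not} to have $r$-defect zero, i.e.\ $\chi(1)$ does \emph{not} carry the full $r$-part of $|G'|$, which for a unipotent character means its degree polynomial is \emph{not} divisible by (the relevant power of) $\Phi_{12}$, and likewise $\chi(y)\neq0$ forces $\Phi_{8}\nmid\chi(1)$. With the rule as you stated it ("$\chi(x)\neq0$ forces $\Phi_{12}\mid\chi(1)$") you would discard precisely the four surviving characters --- note $1$, $St(1)=q^{36}$, $D_{4,1}(1)=\tfrac{q^{3}}{2}\Phi_{1}^{4}\Phi_{3}^{2}\Phi_{5}\Phi_{9}$ and $D_{4,\epsilon}(1)=\tfrac{q^{15}}{2}\Phi_{1}^{4}\Phi_{3}^{2}\Phi_{5}\Phi_{9}$ contain neither $\Phi_{8}$ nor $\Phi_{12}$ --- and keep characters that in fact vanish; the parenthetical claim that $q^{36}$ is "divisible by every cyclotomic factor" is not correct. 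Your final list agrees with the lemma, but the selection rule must be reversed before the finite check against the degree lists in (\cite{Carter rep theory}, 13.9) (with $\phi_{8,3'},\phi_{8,9''}$ in the twisted case) can be carried out.
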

\begin{proof}
Recall that the irreducible characters are partitioned into Lusztig
series as described in Section \ref{subsec:Preliminary-Material for general Lie Type Groups}.
By Lemma \ref{lem:G and M lemma on the existence of a character in a non trivial Lusztig series}
there exists a semisimple class $(t)$ such that $\chi\in\mathcal{{E}}(G',\,(t))$
and we firstly assume that $t=1$. Here, $\chi$ is by definition
a unipotent character and the degrees of such characters are given
in (\cite{Carter rep theory}, 13.9). It is easy to check that, excluding
the characters $1,\,St,\,D_{4,1}$ and $D_{4,\epsilon}$, all unipotent
characters are of defect zero for either $r$ or $s$. It therefore
follows from Theorem \ref{thm:Brauers theorem on prime defect} that
$\chi$ vanishes on either $x$ or $y$ for $\chi\notin\{1,\,St,\,D_{4,1},\,D_{4,\epsilon}\}$.
Now we consider $t\neq1$. In this case we use the formula given in
Proposition \ref{prop:(Jordan-decomposition-of characters and degree formula}
to find $\chi(1)$ and in particular we note that for a given prime
$l$, $\chi$ will have $l$-defect zero if $|C_{G'^{*}}(t)|_{l}=1$.
But the orders of centralizers of semisimple elements are known (\cite{Deriziotis and liebeck twisted e6},
Table 1 and \cite{Derizoitis untwisted e6} ,Table 4) and we check
that for all $t$, $|C_{G'^{*}}(t)|_{l}=1$ for either $l=r$ or $l=s$.
Hence $\chi$ vanishes on either $x$ or $y$ by Theorem \ref{thm:Brauers theorem on prime defect}.
\end{proof}
\begin{lem}
Suppose that $g\in G'$ such that $|C_{G'}(g)|\leq q^{26}$. Then
$g\in x^{G}\cdot y^{G}$.\label{prop:dim less that 26 then result}
\end{lem}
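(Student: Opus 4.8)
The plan is to apply Burnside's formula (Theorem~\ref{thm:Burnsides formula}) and show that the normalised structure constant
\[
\kappa(x^{G'},y^{G'},g^{G'})=\sum_{\chi\in\mathrm{Irr}(G')}\frac{\chi(x)\chi(y)\chi(g^{-1})}{\chi(1)}
\]
is non-zero; since the multiplicity $\eta$ differs from $\kappa$ by a positive factor, this forces $g\in x^{G'}\cdot y^{G'}$. By Lemma~\ref{E6 reduction to four characters in the structure constant sum} in the untwisted case, and by the analogous reduction for $^2E_6(q)$ obtained in the same way from the unipotent degree formulas and the centralizer-order tables of \cite{Deriziotis and liebeck twisted e6}, the only characters with $\chi(x)\chi(y)\neq0$ are the unipotent characters $1$, $St$, $D_{4,1}$ and $D_{4,\epsilon}$. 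The trivial character contributes exactly $1$, so it suffices to prove
\[
\left|\frac{St(x)St(y)St(g^{-1})}{St(1)}+\frac{D_{4,1}(x)D_{4,1}(y)D_{4,1}(g^{-1})}{D_{4,1}(1)}+\frac{D_{4,\epsilon}(x)D_{4,\epsilon}(y)D_{4,\epsilon}(g^{-1})}{D_{4,\epsilon}(1)}\right|<1.
\]

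First I would dispose of the Steinberg term. If $g$ is semisimple and non-trivial then $\overline g\in\overline x^{G'}\overline y^{G'}$ already by Gow's theorem (Theorem~\ref{thm:(Gow-)Theorem for ss elements}, applied with $L_1=x^{G'}$, $L_2=y^{G'}$), while $g=1$ is impossible since $|C_{G'}(1)|=|G'|>q^{26}$. Hence we may assume $g$ is not semisimple, and then $St(g^{-1})=0$ by (\cite{Carter rep theory}, 6.4.7), so the Steinberg summand vanishes. (We also have $|St(x)|=|St(y)|=1$ because $x,y$ are regular semisimple, but this is no longer needed.)

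It remains to bound the two $D_4$-summands. Since $x$ and $y$ are regular semisimple — $x$ lying in a Coxeter torus of the subsystem subgroup $F_4(q)$ and $y$ in the torus $T$ of order $(q^4+1)(q^2-1)/d$ — the four values $D_{4,1}(x),D_{4,\epsilon}(x),D_{4,1}(y),D_{4,\epsilon}(y)$ can be extracted from Lusztig's description of unipotent character values on regular semisimple classes, equivalently from the generic character table of $E_6^{\epsilon}(q)$ available in CHEVIE \cite{CHEVIE}, and all four are bounded by a small absolute constant $c$. Combined with the trivial bound $|\chi(g^{-1})|\le|C_{G'}(g)|^{1/2}\le q^{13}$ and with the degrees $D_{4,1}(1),D_{4,\epsilon}(1)$, which are polynomials in $q$ of order $q^{17}$ by (\cite{Carter rep theory}, 13.9) and in particular exceed $q^{16}$, each of the two surviving summands has absolute value at most $c^{2}q^{13}/q^{16}=c^{2}/q^{3}$. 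Hence the displayed quantity is at most $2c^{2}/q^{3}$, which is $<1$ for every $q\ge3$; thus $\kappa(x^{G'},y^{G'},g^{G'})\neq0$ and $g\in x^{G'}\cdot y^{G'}$.

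The main obstacle is the last paragraph. Unlike $1$ and $St$, the characters $D_{4,1}$ and $D_{4,\epsilon}$ arise from the cuspidal unipotent character of a $D_4$-Levi and have no elementary closed form, so the estimate genuinely depends on pinning down their (small) values on the two specific tori containing $x$ and $y$, and on the companion four-character reduction for the twisted group $^2E_6(q)$. Both are routine given the cited machinery; should the arithmetic with the precise value of $c$ be tight for the very smallest admissible prime powers $q$ (recall $q=2$ is already excluded and $E_6^{\epsilon}(2)$ handled separately with GAP \cite{GAP}), these finitely many cases can be settled directly, but the bound above already covers all $q\ge3$.
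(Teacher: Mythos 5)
Your overall plan (reduce to $\{1,\,St,\,D_{4,1},\,D_{4,\epsilon}\}$ and show the three non-trivial summands have total absolute value less than $1$) is the paper's, and your disposal of the Steinberg term via Gow's theorem plus $St(g)=0$ for non-semisimple $g$ is a legitimate variant. But the step you yourself call ``the main obstacle'' is a genuine gap: you never establish that $|D_{4,1}(x)|,|D_{4,\epsilon}(x)|,|D_{4,1}(y)|,|D_{4,\epsilon}(y)|$ are bounded by a small absolute constant $c$, nor do you determine $c$; you only assert that such values ``can be extracted'' from Lusztig's theory or CHEVIE. Since the whole lemma turns on exactly this estimate, deferring it leaves the proof incomplete. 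In addition, your degree arithmetic is wrong: $D_{4,1}(1)=\tfrac{q^{3}}{2}\Phi_{1}^{4}\Phi_{3}^{2}\Phi_{5}\Phi_{9}$ and $D_{4,\epsilon}(1)=\tfrac{q^{15}}{2}\Phi_{1}^{4}\Phi_{3}^{2}\Phi_{5}\Phi_{9}$ are of order $q^{21}$ and $q^{33}$, not $q^{17}$ (the error is in the harmless direction, but it shows the degrees were not actually checked). Finally, your parenthetical claim that the same four characters survive for $^{2}E_{6}(q)$ is off: there the relevant unipotent characters are $\phi_{8,3'}$ and $\phi_{8,9''}$ and the centralizer threshold is $q^{28}$; this does not affect the present (untwisted) lemma, but it is not ``the analogous reduction'' you describe.

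The gap can be closed with no character-table input at all, which is what the paper does: apply the trivial bound at $x$ and $y$ as well. Since
\[
|C_{G'}(x)|\cdot|C_{G'}(y)|\leq(q^{4}-q^{2}+1)(q^{2}+q+1)(q^{4}+1)(q^{2}-1)\leq q^{13},
\]
one has $|\chi(x)\chi(y)|\leq\bigl(|C_{G'}(x)|\,|C_{G'}(y)|\bigr)^{1/2}\leq q^{13/2}$ for every $\chi$, and together with $|\chi(g)|\leq|C_{G'}(g)|^{1/2}\leq q^{13}$ and the correct degrees above, the $D_{4,1}$-, $D_{4,\epsilon}$- and $St$-summands are at most $q^{13/2+13}$ divided by $\tfrac12 q^{3}\Phi_{1}^{4}\Phi_{3}^{2}\Phi_{5}\Phi_{9}$, by $\tfrac12 q^{15}\Phi_{1}^{4}\Phi_{3}^{2}\Phi_{5}\Phi_{9}$, and by $q^{36}$ respectively; a direct check shows the resulting sum is strictly less than $1$ for all $q\geq3$. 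This removes both the undetermined constant $c$ and any appeal to CHEVIE, and is the route taken in the paper.
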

\begin{proof}
By Lemma \ref{E6 reduction to four characters in the structure constant sum}
we can so far evaluate the normalised structure constant as follows
\[
\kappa(x^{G'},y^{G'},g^{G'})=1+\sum_{\chi\in X}\frac{\chi(x)\chi(y)\overline{\chi(g)}}{\chi(1)},
\]
 where $X=\{D_{4,1},D_{4,\epsilon},St\}.$\\
Furthermore these characters have degrees
\[
St(1)=q^{36},\ D_{4,1}(1)=\frac{q^{3}}{2}\Phi_{1}^{4}\Phi_{3}^{2}\Phi_{5}\Phi_{9},\ D_{4,\epsilon}(1)=\frac{q^{15}}{2}\Phi_{1}^{4}\Phi_{3}^{2}\Phi_{5}\Phi_{9},
\]
 where $\Phi_{i}=\Phi_{i}(q)$ is the cyclotomic polynomial. 

Recall that $St(x)$,$St(y)\in\{\pm1\}$ (\cite{Carter rep theory},
6.4.7), and also note the trivial bound $|\chi(g)|\leq|C_{G'}(g)|^{\frac{1}{2}}\leq q^{13}$
for all $\chi\in$Irr$(G')$. Thus as 
\[
|C_{G'}(x)|\cdot|C_{G'}(y)|\leq(q^{4}+1)(q^{2}-1)(q^{4}-q^{2}+1)(q^{2}+q+1)\leq q^{13},
\]
 it follows that 
\[
|\sum_{\chi\in X}\frac{\chi(x)\chi(y)\overline{\chi(g)}}{\chi(1)}|\leq\frac{q^{\frac{13}{2}}|D_{4,1}(g)|}{\frac{1}{2}q^{3}\Phi_{1}^{4}\Phi_{3}^{2}\Phi_{5}\Phi_{9}}+\frac{q^{\frac{13}{2}}|D_{4,\epsilon}(g)|}{\frac{1}{2}q^{15}\Phi_{1}^{4}\Phi_{3}^{2}\Phi_{5}\Phi_{9}}+\frac{|St(g)|}{q^{36}}.
\]
 The right hand side of the above is then strictly less than $1$
as $|D_{4,1}(g)|$, $|D_{4,\epsilon}(g)|$ and $|St(g)|$ are all
at most $q^{13}$ by the hypothisis $|C_{G'}(g)|\leq q^{26}$. Hence
$\kappa(x^{G},y^{G},g^{G})\neq0$ and $g^{G^{'}}\in x^{G^{'}}y^{G^{'}}$.
\end{proof}
We can conclude by Lemma \ref{lem:x and y chosen in E6 are strongly real}
that if $|C_{G'}(g)|\leq q^{26}$ then $g$ has involution width at
most 4. The result also follows if $g=s$ is semisimple as here $g\in x^{G^{'}}y^{G^{'}}$follows
from Theorem \ref{thm:(Gow-)Theorem for ss elements}.To prove the
result for the remaining $g\in E_{6}(q)$, we embed the centralizer
(and therefore $g$) in a group of Lie type for which the involution
width is already known.

Recall that $\boldsymbol{G}$ denotes the adjoint algebraic group
$E_{6}(\bar{\mathbb{F}}_{q})_{ad}$ and (as we are currently assuming
$G'=E_{6}(q)$), $F$ is the standard Frobenius map.
\begin{lem}
\label{prop:e6 width proof when the centraliser is too big}Let $g\in E_{6}(q)$
such that $|C_{G'}(g)|>q^{26}$. Then $g$ has involution width at
most 4.
\end{lem}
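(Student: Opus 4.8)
The plan is to dispose of the remaining elements of $E_6(q)$ through their Jordan decomposition together with the known structure of centralisers. Write $g=su$ with $s$ semisimple and $u$ unipotent. If $u=1$ then $g$ is semisimple, and $g\in x^{G'}y^{G'}$ by Theorem~\ref{thm:(Gow-)Theorem for ss elements}, so $\mathrm{iw}(g)\le 4$ by Lemma~\ref{lem:x and y chosen in E6 are strongly real}; hence we may assume $u\ne 1$. Since $g$ centralises $s$ we have $g\in L:=C_{G'}(s)$ and, more importantly, $C_{G'}(g)\le L$, so $|L|>q^{26}$. Writing $\boldsymbol L:=C_{\boldsymbol G}(s)$, the element $s$ cannot be regular (otherwise $\boldsymbol L^{\circ}$ is a torus and no nontrivial unipotent element commutes with $s$), so $\boldsymbol L^{\circ}$ has a nontrivial semisimple part $\boldsymbol L':=[\boldsymbol L^{\circ},\boldsymbol L^{\circ}]$, with $\boldsymbol L^{\circ}=Z^{\circ}(\boldsymbol L^{\circ})\boldsymbol L'$ and $Z^{\circ}(\boldsymbol L^{\circ})$ a torus of rank $6-\mathrm{rk}(\boldsymbol L')$.

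Next I would invoke the explicit list of centralisers of semisimple elements of $E_6(q)_{ad}$ (\cite{Derizoitis untwisted e6}, Table~4), by which $\boldsymbol L'$ is one of a short list of maximal-rank semisimple subgroups of $E_6$. Since $u\ne 1$ lies in $\boldsymbol L'$, one has $\dim C_{\boldsymbol G}(g)=\dim Z^{\circ}(\boldsymbol L^{\circ})+\dim C_{\boldsymbol L'}(u)\le\dim\boldsymbol L^{\circ}-e(\boldsymbol L')$, where $e(\boldsymbol L')$ denotes the dimension of the smallest nontrivial unipotent class of $\boldsymbol L'$, read off from \cite{Liebeck and Seitz book}. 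Comparing $|C_{G'}(g)|\asymp q^{\dim C_{\boldsymbol G}(g)}$ with the hypothesis $|C_{G'}(g)|>q^{26}$ eliminates every type in which $\boldsymbol L'$ has small rank; after a short numerical check I expect to be left only with $\boldsymbol L'$ of type $D_5$, $A_5A_1$, $A_5$ or $A_4A_1$ (keeping track along the way of the fact that $\boldsymbol L$ may be disconnected, with component group of order dividing $d=(3,q-1)$). If instead $s=1$ the displayed inequality does not apply, and one classifies directly the unipotent classes of $E_6$ with centraliser of order exceeding $q^{26}$ — the ``small'' classes $A_1,2A_1,3A_1,A_2,A_2A_1,2A_2$ and $A_3$ — each of which is distinguished in a proper subsystem or Levi subgroup whose derived group is torus-free and of classical type.

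In every case the conclusion follows by exhibiting a subgroup $X\le G'$ of Lie type, generated by involutions and with $\mathrm{iw}(X)\le 4$ already established in this paper, that contains $g$. When $\boldsymbol L'$ has an $A_5$ or $A_4$ factor (or for the $A_3$ unipotent class) one takes $X$ inside a linear or unitary subgroup and quotes Theorem~\ref{thm:sl 4 ref}, Corollary~\ref{cor:iw of PSL is at most 4} or Theorem~\ref{thm:inv wwidth of PSU}; when a $D_5$ (or $D_4$) factor occurs one takes $X$ inside the corresponding spin subgroup and quotes Theorem~\ref{thm: orthog width}; for $\boldsymbol L'=A_5A_1$ one uses the subsystem subgroup, a central product of $SL_6^{\epsilon}(q)$ and $SL_2(q)$; the small unipotent classes land in subgroups of type $SL_2(q)^k$, $SL_3(q)$ or $SL_4(q)$, covered by Corollary~\ref{cor:iw of PSL is at most 4}; and any residual situation is absorbed into the reductive subgroup $F_4(q)\le E_6(q)$, whose involution width was bounded by $4$ above. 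Since $g=su$ lies in $X$, it follows that $\mathrm{iw}(g)\le 4$.

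The step I expect to be the main obstacle is the mixed case with a nontrivial central torus, i.e.\ when $\boldsymbol L^{\circ}$ is a Levi such as $D_5T_1$, $A_5T_1$ or $A_4A_1T_1$: then $\boldsymbol L^{\circ F}$ itself is not generated by involutions, so a known involution width cannot simply be transferred, and $g=su$ with $s$ essentially generating the central torus does not lie in the torus-free derived subgroup. To deal with this I would argue along one of the following lines: (i) when $s$ has order at most $2$, $g=us$ is the product of the involution $s$ with a unipotent element $u$ of the classical-type subgroup, and one checks $\mathrm{iw}(u)\le 3$ there (using Theorem~\ref{thm: orthog width} or Theorem~\ref{thm:sl 4 ref}, controlling lifting through the relevant spin cover, and, in even characteristic, strong reality of unipotent elements), so $\mathrm{iw}(g)\le 4$; (ii) for larger $s$ one uses the precise order of $s$, which divides a small cyclotomic factor of $|E_6(q)|/d$, to place $g$ inside a torus-free subsystem subgroup of the listed type, or else bounds the relevant structure constant $\kappa(x^{G'},y^{G'},g^{G'})$ directly as in Lemma~\ref{prop:dim less that 26 then result}, now with the slightly larger character set permitted by Lemma~\ref{E6 reduction to four characters in the structure constant sum}. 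The twisted group $^{2}E_6(q)$ is treated identically, with \cite{Deriziotis and liebeck twisted e6} in place of \cite{Derizoitis untwisted e6}.
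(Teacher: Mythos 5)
Your overall strategy (Jordan decomposition, dispose of semisimple $g$ by Theorem~\ref{thm:(Gow-)Theorem for ss elements}, then embed $g=su$ with $u\neq 1$ in a subgroup of known involution width, using Deriziotis/Mizuno/Liebeck--Seitz centralizer data) is the same as the paper's, but the step you yourself flag as the main obstacle is exactly where the proof lives, and your proposed remedies do not close it. When $C_{\boldsymbol G}(s)^{F}$ is of type $D_5T_1$, $A_5A_1$, $A_4T_2$ or $D_4^{\delta}T_2$, neither of your fallbacks works: (i) $s$ need not have order $\le 2$ (it can have order dividing $q\pm1$ or larger cyclotomic factors), and (ii) the structure-constant route is unavailable precisely because $|C_{G'}(g)|>q^{26}$ defeats the trivial bound $|\chi(g)|\le |C_{G'}(g)|^{1/2}$ used in Lemma~\ref{prop:dim less that 26 then result} -- the character set of Lemma~\ref{E6 reduction to four characters in the structure constant sum} is already as small as it gets, so "a slightly larger character set" buys nothing. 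The paper's resolution is a concrete trick you do not supply: the bound $|C_{C}(u)|>q^{25}$ (etc.) forces $u$ to have a very small Jordan type on the natural module of the classical factor $C$, hence $u$ is centralised by a fundamental $A_1(q)$ (or lies in an $A_2(q)$) inside $C$; since $s$ is central in $C_{G}(s)$, both $s$ and $u$ then lie in the torus-free subsystem subgroup $C_{G}(A_1(q))=A_5(q)$ (resp.\ $C_G(H)H=A_2(q)^3$), to which Theorem~\ref{thm:sl 4 ref} applies. Merely asserting that "the precise order of $s$" places $g$ in a torus-free subsystem subgroup is not a proof, and is the gap.

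There are also smaller inaccuracies. Your list of unipotent classes with $|C_{G'}(u)|>q^{26}$ is wrong: it should be $A_1,A_1^2,A_1^3,A_2,A_2A_1,A_2A_1^2,A_2^2$ (centralizer dimensions $56,46,38,36,32,28,30$); you include $A_3$ (dimension $26$, order below $q^{26}$) and omit $A_2A_1^2$. More substantively, "each class is distinguished in a Levi, hence lies in the corresponding finite subgroup" ignores rationality: the $A_2$ class has $C/C^{\circ}=\mathbb{Z}_2$ and splits into two $G$-classes, and the second representative ($x_{21}$ for $p$ odd, $x_{40}$ for $p=2$ in Mizuno's notation) does not sit in an $F$-stable $A_2$-Levi in the obvious way -- the paper handles it via $A_1(q)^4<D_4(q)$ and, in characteristic $2$, via Theorem~\ref{thm: orthog width}; similarly the $A_2^2$ class splits into $d$ classes in the simple group and one must use that $N_G(A_2(q)^3)$ contains the diagonal automorphism. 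Finally, your "short numerical check" of semisimple centralizers omits the $D_4^{\delta}(q)$ case, which does occur and is treated in the paper.
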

\begin{proof}
Let $g=su$ be the usual Jordan decomposition. By the above we can
assume that $u\neq1$. Furthermore, as $C_{G'}(su)=C_{G'}(s)\cap C_{G'}(u)$,
it follows that $|C_{G'}(s)|,|C_{G'}(u)|>q^{26}$.

\textbf{Case 1: }\textit{Unipotent elements}: Assume $g=u$. The centralizer
sizes of unipotent elements are given by Mizuno (\cite{Mizuno on E6},
Section 4). For a given centralizer, representatives of the associated
class are given in terms of root elements of $G$. In particular the
$\boldsymbol{G}$-class of $g=u$ has one of the following labels:
\[
A_{1},A_{1}^{2},A_{2},A_{1}^{3},A_{2}A_{1},A_{2}A_{1}^{2},A_{2}^{2}.
\]
 In each case, $u$ is a distinguished unipotent element in a Levi
subgroup $\boldsymbol{L}$ of $\boldsymbol{G}$ corresponding to the
label. Denoting $C=C_{\boldsymbol{G}}(u)$, the number of $G$-classes
in $u^{\boldsymbol{G}}\cap G$ is equal to the number of classes in
$C/C^{0}$ by Lang's Theorem (\cite{Liebeck and Seitz book}, 2.12).
In all cases except when $u$ is of type $A_{2}$, $C=C^{0}$ and
hence $u^{\boldsymbol{G}}\cap G$ is a single class in $G$. Furthermore,
in such cases $\boldsymbol{L}$ is $F$-stable and $u\in\boldsymbol{L}'^{F}$.
All such subgroups lie in a subsystem subgroup $A_{2}(q)^{3}$ of
$G$ and so the result follows from Theorem \ref{thm:sl 4 ref}. Note
in the case where $u$ has label $A_{2}^{2}$, $u^{\boldsymbol{G}}\cap G'$
splits into $d=(3,q-1)$ classes. However because $N_{G}(A_{2}(q)^{3})$
contains the diagonal automorphism we may still embed all conjugacy
class representatives of $u^{\boldsymbol{G}}\cap G'$ in a subgroup
$A_{2}(q)^{3}$. This is not of concern otherwise, as it is only in
this case mentioned that the class splits in the simple group. Next
assume that $\boldsymbol{L}$ has label $A_{2}$. Here $C/C^{0}=\mathbb{Z}_{2}$
and thus $u^{\boldsymbol{G}}\cap G$ splits into 2 $G$-classes. Let
us adopt the usual notation where $\alpha=c_{1}\dots c_{6}$ denotes
the root $\Sigma c_{i}\alpha_{i}$ (here $\alpha_{i}$ refer to the
fundamental roots of the $E_{6}$ system) and $x_{\alpha}(t)$ is
a corresponding root element. Then by \cite{Mizuno on E6}, there
exist representatives of the two classes of the form
\[
x_{2}=x_{100000}(1)x_{001000}(1),
\]
\[
x_{21}=x_{100000}(1)x_{000100}(1)x_{000001}(1)x_{122321}(\zeta),\ p\neq2,
\]
\[
x_{40}=x_{100000}(1)x_{001000}(1)x_{000010}(1)x_{010110}(\eta),\ p=2,
\]
where $\zeta$ is a fixed non square and $x^{2}-x+\eta$ is irreducible
over $\mathbb{F}_{q}$. Now $x_{2}$ lies in a Levi subgroup $(\boldsymbol{L}')^{F}=A_{2}(q)$
and the result follows as before. For $x_{21}$, $\alpha=122321$
denotes the longest root of the $E_{6}$ system and hence the four
roots span an $A_{1}^{4}$ subsystem. It follows that $x_{21}$ is
contained in $A_{1}(q)^{4}\subset D_{4}(q)$, a spin group. The unipotent
element $x_{21}$ is uniquely determined by its Jordan decomposition
which has form $(J_{3}^{2},J_{1}^{2})$ on the natural $8$-dimensional
$D_{4}$-module. Consequently, $x_{21}$ is contained in a subgroup
$SL_{3}(q)$ and so $x_{21}$ has involution width at most 4 by Theorem
\ref{thm:sl 4 ref}. Finally $x_{40}$ is contained in a subgroup
$D_{4}(q)$. This $D_{4}(q)$ is the orthogonal group in characteristic
2 and the result follows from Theorem \ref{thm: orthog width}.

\textbf{Case 2: }\textit{Non-unipotent elements:} Assuming now that
$s$ is non trivial, it follows that $C_{G}(s)$ is a subsystem subgroup
of order at least $q^{26}$. Inspection of such subgroups (see \cite{Derizoitis untwisted e6})
shows that $C_{G}(s)$ has a quasisimple normal subgroup 
\[
C=D_{5}(q),\,D_{4}^{\delta}(q),\,A_{4}(q)\,\text{or}\,A_{5}(q).
\]
 Firstly suppose $C=D_{5}(q)$. Orders of unipotent centralizers in
$C$ are given in (\cite{Liebeck and Seitz book}, Table 8.6a) and
are determined by the Jordan block structure of $u\in C$ on the natural
10-dimensional $C$-module. As $C_{G}(s)=C\circ(q-1)$, it follows
that $|C_{C}(u)|>q^{25}$ and possible block structures are
\[
q\,\text{odd:}\,u=(J_{3},J_{1}^{7}),\,(J_{2}^{2},J_{1}^{6})\,\text{or}\,(J_{1}^{10})
\]
\[
q\,\text{even:}\,u=(J_{2}^{2},J_{1}^{6})\,\mbox{(2 classes)}\text{\, or}\,(J_{1}^{10}).
\]
 In all cases, $u$ is centralised by a fundamental subgroup $A_{1}(q)$
of $C$ and hence $u\in C_{G}(A_{1}(q))=A_{5}(q)$. As $s\in Z(C)$,
it follows that $g=su\in A_{5}(q)$ and the result follows from Theorem
\ref{thm:sl 4 ref}. Similarly, if $C=D_{4}^{\delta}(q)$, then we
can check possible centralizer dimensions in (\cite{Liebeck and Seitz book},
8.6) and the restriction $|C_{C}(u)|>q^{24}$ forces $u$ to be the
identity in $C$. Hence we can take $u$ to lie in a subgroup $H=A_{2}(q)$
of $C$ and then $g=su\in C_{G}(H)H=A_{2}(q)^{3}$ and we have the
result by Theorem \ref{thm:sl 4 ref}. The remaining cases where $C=A_{5}(q)$
or $A_{4}(q)$ can be dealt with in a similar manner. Unipotent centralizers
in linear groups are well known (see for example \cite{Liebeck and Seitz book})
and by checking orders it follows that the projection $u_{0}$ of
$u$ in $C$ must be $1$ or a transvection (i.e. $(J_{2},J_{1}^{4})$
or $(J_{2},J_{1}^{3})$ respectively). In either case we can again
embed $u$ in the subgroup $H=A_{2}(q)$ of $C$ and we have the result. 
\end{proof}

We now turn to the twisted case and complete the proof of Theorem
\ref{thm:E6 involution width result} when $G^{'}=\,^{2}E_{6}(q)$.
Recall that $\boldsymbol{G}^{F}=G=\,^{2}E_{6}(q)_{ad}$ and $G^{'}=\,^{2}E_{6}(q)$
with $q>2$. In the following, $\phi_{8,3'}$ and $\phi_{8,9''}$
denote unipotent characters of $G^{'}$ defined in (\cite{Carter rep theory},
Sec. 13.9).
\begin{lem}
\label{twisted E6 reduction to four characters in the structure constant sum}Suppose
$\chi\in Irr(G')$ such that $\chi(x)\chi(y)\ne0$. Then $\chi$ is
unipotent and $\chi\in\{1,\,St,\,\phi_{8,3'},\,\phi_{8,9''}\}$.
\end{lem}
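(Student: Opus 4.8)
The plan is to run the argument of Lemma \ref{E6 reduction to four characters in the structure constant sum} again with the twisted data. First I would apply Lemma \ref{lem:G and M lemma on the existence of a character in a non trivial Lusztig series} to the semisimple elements $x$ and $y$: any $\chi\in\text{Irr}(G')$ with $\chi(x)\chi(y)\ne0$ lies in some Lusztig series $\mathcal{E}(G',(t))$, where $(t)$ is a semisimple class of the dual group $G'^{*}$. Recall that $x$ has order $r=$ ppd$(q,12)$ and $y$ has order $s=$ ppd$(q,8)$, with $r\mid\Phi_{12}(q)$ and $s\mid\Phi_{8}(q)$. By Theorem \ref{thm:Brauers theorem on prime defect} it then suffices to show that every such $\chi$ lying outside $\{1,\,St,\,\phi_{8,3'},\,\phi_{8,9''}\}$ has $r$-defect zero or $s$-defect zero, since it will then vanish on $x$ or on $y$.

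Next I would dispatch the unipotent case $t=1$. The degrees of the unipotent characters of $^{2}E_{6}(q)$ are listed in (\cite{Carter rep theory}, 13.9) as a power of $q$ times a product of cyclotomic polynomials $\Phi_{i}(q)$. Since $r$ and $s$ are primitive prime divisors of $\Phi_{12}(q)$ and $\Phi_{8}(q)$ respectively, $r\mid\chi(1)$ precisely when $\Phi_{12}(q)\mid\chi(1)$, and likewise for $s$ and $\Phi_{8}(q)$. A direct inspection of that list shows that the only unipotent characters whose degree is divisible by neither $\Phi_{12}(q)$ nor $\Phi_{8}(q)$ are the four named in the statement; every other unipotent character therefore has $r$- or $s$-defect zero.

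Then I would handle $t\ne1$. Here Proposition \ref{prop:(Jordan-decomposition-of characters and degree formula} expresses $\chi(1)$ as $|G'^{*}:C_{\boldsymbol{G}^{*}}(t)^{F^{*}}|_{p'}$ times a unipotent character degree of $C_{\boldsymbol{G}^{*}}(t)^{F^{*}}$, so $\chi$ has $l$-defect zero whenever the prime $l$ does not divide $|C_{\boldsymbol{G}^{*}}(t)^{F^{*}}|$. The orders of centralizers of semisimple elements of $^{2}E_{6}(q)_{ad}$, together with the subsystem data needed, are available in (\cite{Deriziotis and liebeck twisted e6}, Table 1) and (\cite{Derizoitis untwisted e6}, Table 4); running through this list one checks that for every non-central semisimple $t$ the order $|C_{\boldsymbol{G}^{*}}(t)^{F^{*}}|$ is divisible by at most one of $\Phi_{12}(q)$ and $\Phi_{8}(q)$. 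Hence such $\chi$ has $r$-defect zero or $s$-defect zero and again vanishes on $x$ or $y$.

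The one step demanding genuine care will be this last one: one must verify, across the complete published list of centralizer types of semisimple elements of $^{2}E_{6}(q)_{ad}$, that no proper centralizer order is divisible by both $\Phi_{12}(q)$ and $\Phi_{8}(q)$. This is a short finite check, facilitated by the fact that these two cyclotomic polynomials each have degree $4$, so the order of a reductive group of rank $6$ can be divisible by both only when that group is $^{2}E_{6}(q)$ itself, which is precisely the case $t=1$ already treated.
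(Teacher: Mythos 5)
Your proposal follows the paper's own route exactly: the paper's proof is literally ``follows exactly as for Lemma \ref{E6 reduction to four characters in the structure constant sum}'', i.e.\ apply Lemma \ref{lem:G and M lemma on the existence of a character in a non trivial Lusztig series} to place $\chi$ in a series $\mathcal{E}(G',(t))$, use the degree list in (\cite{Carter rep theory}, 13.9) plus Theorem \ref{thm:Brauers theorem on prime defect} for $t=1$, and for $t\neq1$ use Proposition \ref{prop:(Jordan-decomposition-of characters and degree formula} together with the semisimple centralizer tables of \cite{Deriziotis and liebeck twisted e6} and \cite{Derizoitis untwisted e6} to get $r$- or $s$-defect zero. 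The only point to correct is your closing ``shortcut'': the claim that, since $\Phi_{8}$ and $\Phi_{12}$ each have degree $4$, a rank-$6$ reductive group can have order divisible by both only if it is all of $^{2}E_{6}(q)$ is not valid, because the total degree of the cyclotomic factors of $|H^{F}|$ is not bounded by the rank of $H$ --- for instance $F_{4}(q)$ has rank $4$ yet its order is divisible by both $\Phi_{8}(q)$ and $\Phi_{12}(q)$. What actually carries the argument (and what the paper does) is that centralizers of semisimple elements are subsystem subgroups of maximal rank, and a case-by-case inspection of the finite list in the cited tables shows no proper centralizer order is divisible by both $\Phi_{8}(q)$ and $\Phi_{12}(q)$; so keep the table check as the real step rather than the rank count.
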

\begin{proof}
Here the proof follows exactly as for Lemma \ref{E6 reduction to four characters in the structure constant sum}
so we shall be brief. If $\chi$ is unipotent then its degree is given
in (\cite{Carter rep theory}, 13.9) and we check that unless $\chi\in\{1,\,St,\,\phi_{8,3'},\,\phi_{8,9''}\}$,
it is defect zero for $r$ or $s$. Therefore if $\chi$ is not one
of these exceptions it will vanish on $x$ or $y$ respectively. All
non-unipotent characters will similarly vanish as for all semisimple
$1\neq t\in G^{*}$, $|C_{G^{*}}(t)|_{l}=1$ for either $l=r$ or
$l=s$. 
\end{proof}
\begin{lem}
Suppose that $g\in G$ such that $|C_{G'}(g)|\leq q^{28}.$ Then $g\in x^{G}\cdot y^{G}$.
\end{lem}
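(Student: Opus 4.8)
The plan is to transcribe the proof of Lemma~\ref{prop:dim less that 26 then result} to the twisted setting, using Lemma~\ref{twisted E6 reduction to four characters in the structure constant sum} in place of Lemma~\ref{E6 reduction to four characters in the structure constant sum}. First I would apply that reduction together with Theorem~\ref{thm:Burnsides formula} to collapse the normalised structure constant to
\[
\kappa(x^{G'},y^{G'},g^{G'})=1+\sum_{\chi\in X}\frac{\chi(x)\chi(y)\overline{\chi(g)}}{\chi(1)},\qquad X=\{\phi_{8,3'},\,\phi_{8,9''},\,St\},
\]
so it suffices to bound the sum over $X$ in absolute value by something strictly less than $1$.

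Next I would record the relevant degrees from (\cite{Carter rep theory}, 13.9): writing $\Phi_i=\Phi_i(q)$,
\[
St(1)=q^{36},\qquad \phi_{8,3'}(1)=\tfrac12 q^{3}\Phi_{2}^{4}\Phi_{6}^{2}\Phi_{10}\Phi_{18},\qquad \phi_{8,9''}(1)=\tfrac12 q^{15}\Phi_{2}^{4}\Phi_{6}^{2}\Phi_{10}\Phi_{18}.
\]
An elementary check gives $\Phi_{2}^{4}\Phi_{6}^{2}\Phi_{10}\Phi_{18}\ge q^{18}$ for all $q\ge2$, so $\phi_{8,3'}(1)\ge\tfrac12 q^{21}$ and $\phi_{8,9''}(1)\ge\tfrac12 q^{33}$. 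As in the untwisted case, $St(x),St(y)\in\{\pm1\}$ by (\cite{Carter rep theory}, 6.4.7), and the trivial bound $|\psi(h)|\le|C_{G'}(h)|^{1/2}$ for $\psi\in\mathrm{Irr}(G')$ yields $|\chi(g)|\le q^{14}$ from the hypothesis $|C_{G'}(g)|\le q^{28}$, together with
\[
|\chi(x)\chi(y)|\le\bigl(|C_{G'}(x)|\,|C_{G'}(y)|\bigr)^{1/2}=\left(\frac{(q^{4}-q^{2}+1)(q^{2}+q+1)(q^{4}+1)(q^{2}-1)}{d^{2}}\right)^{1/2}<\sqrt{\tfrac32}\,q^{6},
\]
using $(q^{4}-q^{2}+1)(q^{4}+1)<q^{8}$ and $(q^{2}+q+1)(q^{2}-1)<\tfrac32 q^{4}$ for $q\ge2$.

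Assembling these estimates,
\[
\left|\sum_{\chi\in X}\frac{\chi(x)\chi(y)\overline{\chi(g)}}{\chi(1)}\right|<\frac{\sqrt{3/2}\,q^{6}\cdot q^{14}}{\tfrac12 q^{21}}+\frac{\sqrt{3/2}\,q^{6}\cdot q^{14}}{\tfrac12 q^{33}}+\frac{q^{14}}{q^{36}}=\frac{2\sqrt{3/2}}{q}+\frac{2\sqrt{3/2}}{q^{13}}+\frac{1}{q^{22}},
\]
which is strictly less than $1$ for every $q\ge3$; since $q\neq2$ is assumed throughout the section (the case $q=2$ being settled by GAP), this gives $\kappa(x^{G'},y^{G'},g^{G'})\neq0$, hence $g\in x^{G'}y^{G'}\subseteq x^{G}y^{G}$. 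I do not expect a genuine obstacle here: the argument is the exact twisted counterpart of Lemma~\ref{prop:dim less that 26 then result}, and the only points needing care are confirming the two degree polynomials $\phi_{8,3'}(1),\phi_{8,9''}(1)$ and the inequality $\Phi_{2}^{4}\Phi_{6}^{2}\Phi_{10}\Phi_{18}\ge q^{18}$ against Carter's tables, and checking that the constant $2\sqrt{3/2}$ still leaves the final sum below $1$ at the smallest admissible value $q=3$ (it does, with room to spare once the exact value of $|C_{G'}(x)|\,|C_{G'}(y)|$ is substituted for the crude $q^{12}$ bound). Everything else carries over verbatim from the $E_{6}(q)$ case.
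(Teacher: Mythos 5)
Your proposal is correct and follows exactly the route the paper intends: the paper's own proof simply says it "follows exactly as in the untwisted case" using Lemma \ref{twisted E6 reduction to four characters in the structure constant sum} and records the degrees of $St$, $\phi_{8,3'}$ and $\phi_{8,9''}$, which is precisely your argument with the omitted estimates filled in. Your explicit bounds (e.g.\ $\Phi_{2}^{4}\Phi_{6}^{2}\Phi_{10}\Phi_{18}\geq q^{18}$ and the resulting sum $<1$ for $q\geq3$) check out, so nothing is missing.
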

\begin{proof}
The proof follows exactly as in Lemma \ref{prop:dim less that 26 then result},
with an application of Lemma \ref{twisted E6 reduction to four characters in the structure constant sum}.
We omit the details but note the character degrees 
\[
St(1)=q^{36},\ \phi_{8,3'}=\frac{q^{3}}{2}\Phi_{2}^{4}\Phi_{6}^{2}\Phi_{10}\Phi_{18},\ \phi_{8,9"}=\frac{q^{15}}{2}\Phi_{2}^{4}\Phi_{6}^{2}\Phi_{10}\Phi_{18}.
\]
\end{proof}
Hence if $|C_{G'}(g)|\leq q^{28}$, then $g$ has involution width
at most $4$ by Lemma \ref{lem:x and y chosen in E6 are strongly real}.
Semisimple elements also have involution width at most 4 by Theorem
\ref{thm:(Gow-)Theorem for ss elements}. The following result handles
the remaining cases.
\begin{lem}
Let $g\in G'$ such that $|C_{G'}(g)|>q^{28}$. Then $g$ has involution
width at most 4.
\end{lem}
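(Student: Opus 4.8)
The plan is to mirror the argument of Lemma~\ref{prop:e6 width proof when the centraliser is too big} in the twisted case. Write $g=su$ for the Jordan decomposition. Since semisimple elements are already handled by Theorem~\ref{thm:(Gow-)Theorem for ss elements} together with Lemma~\ref{lem:x and y chosen in E6 are strongly real}, we may assume $u\neq 1$, and then from $C_{G'}(su)=C_{G'}(s)\cap C_{G'}(u)$ the hypothesis $|C_{G'}(g)|>q^{28}$ forces both $|C_{G'}(s)|>q^{28}$ and $|C_{G'}(u)|>q^{28}$. I would then split into the unipotent case ($s=1$) and the genuinely non-unipotent case.

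For $g=u$ unipotent I would invoke the classification of unipotent classes of $\boldsymbol{G}^{F}={}^{2}E_{6}(q)_{ad}$ and their centraliser orders, as recorded by Mizuno \cite{Mizuno on E6} and in \cite{Liebeck and Seitz book}. The bound $|C_{G'}(u)|>q^{28}$ restricts the $\boldsymbol{G}$-class of $u$ to the short list with labels $A_{1},A_{1}^{2},A_{2},A_{1}^{3},A_{2}A_{1},A_{2}A_{1}^{2},A_{2}^{2}$, exactly as in the untwisted case. In each such case $u$ is a distinguished unipotent element of an $F$-stable Levi $\boldsymbol{L}$ of the corresponding type and $u\in(\boldsymbol{L}')^{F}$; all these derived subgroups lie inside a subsystem subgroup of type $A_{2}^{3}$, which in ${}^{2}E_{6}(q)$ is, up to isogeny, $SL_{3}(q^{2})\times SU_{3}(q)$. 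Thus $u$ lands in one of $SL_{3}(q^{2})$, $SU_{3}(q)$, or $SL_{3}(q^{2})\times SU_{3}(q)$, each of which has involution width at most $4$ by Theorems~\ref{thm:sl 4 ref} and~\ref{thm:inv wwidth of PSU} (recall $q>2$, so the excluded small cases $SL_{2}(2),SL_{2}(3),SU_{3}(2)$ do not arise). The only class needing extra care is the $A_{2}$-type class, whose component group is $\mathbb{Z}_{2}$, so $u^{\boldsymbol{G}}\cap G'$ splits into two $G'$-classes: here I would take explicit root-element representatives as in \cite{Mizuno on E6}, one lying in an $A_{2}$-type Levi (hence in $SL_{3}(q^{2})$ or $SU_{3}(q)$), the other lying in an $A_{1}^{4}\subset D_{4}$-type subgroup with Jordan form $(J_{3}^{2},J_{1}^{2})$ on the $8$-dimensional module, forcing containment in an $SL_{3}$-type subgroup in odd characteristic, or in a $D_{4}$-type orthogonal group in characteristic $2$, to which Theorems~\ref{thm:sl 4 ref} and~\ref{thm: orthog width} apply.

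For $g=su$ with $s\neq 1$, the centraliser $C_{G}(s)$ is a subsystem subgroup of order $>q^{28}$; inspecting the centralisers of semisimple elements of ${}^{2}E_{6}(q)$ (\cite{Deriziotis and liebeck twisted e6}, Table~1), such a centraliser has a quasisimple normal subgroup $C$ of type $D_{5}^{\pm}(q)$, $D_{4}^{\delta}(q)$, $A_{4}^{\pm}(q)$ or $A_{5}^{\pm}(q)$ (with the appropriate twist). Since $s\in Z(C)$ and $|C_{C}(u)|$ is still close to $q^{28}$, the Jordan block structure of $u$ on the natural module of $C$ — read off from \cite{Liebeck and Seitz book}, Tables~8.6 — is forced to be trivial or that of a transvection or other small unipotent element; in every case $u$ is centralised by a fundamental $A_{1}$ subgroup of $C$, so $g=su\in C_{G}(A_{1})$, which is a subgroup of type $A_{5}^{\pm}(q)$ or $A_{2}^{3}$. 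The conclusion then follows from Theorems~\ref{thm:sl 4 ref} and~\ref{thm:inv wwidth of PSU}, with Theorem~\ref{thm: orthog width} covering the characteristic-$2$ $D_{4}$ subcase.

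The part I expect to be the main obstacle is the bookkeeping of how the twisted Frobenius acts on the relevant subsystem and Levi subgroups, so that one knows precisely which of $SL_{n}(q^{2})$, $SU_{n}(q)$, $\Omega_{m}^{\pm}(q)$ or ${}^{2}D_{m}(q)$ a given element is being embedded into, and then checking that in every arising instance the ambient subgroup has involution width at most $4$ by an already-established result. Closely related is confirming the splitting data for the unipotent classes over $G'$ (as opposed to over $\boldsymbol{G}$) and the completeness of the semisimple-centraliser list for ${}^{2}E_{6}(q)$, so that the table of labels in the unipotent case and the list of quasisimple factors $C$ in the non-unipotent case are exhaustive.
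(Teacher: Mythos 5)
Your overall strategy coincides with the paper's: reduce to $u\neq1$, note that $|C_{G'}(s)|$ and $|C_{G'}(u)|$ are both large, restrict the unipotent label to $A_{1},A_{1}^{2},A_{2},A_{1}^{3},A_{2}A_{1},A_{2}A_{1}^{2},A_{2}^{2}$ via the known centralizer orders, embed $u$ (or $g=su$) in subsystem subgroups whose involution width is already established, and handle the mixed case exactly as in Lemma \ref{prop:e6 width proof when the centraliser is too big}. The one place where you genuinely diverge is the label $A_{2}$, where $u^{\boldsymbol{G}}\cap G'$ splits into two classes, and there your plan has a concrete gap: you propose to reuse Mizuno's explicit root-element representatives ($x_{2}$, $x_{21}$, $x_{40}$), but \cite{Mizuno on E6} treats the split Chevalley group $E_{6}(q)$; those representatives are written for the untwisted Frobenius and are not, as given, representatives of the two ${}^{2}E_{6}(q)$-classes (they need not even be fixed by the graph-twisted Frobenius), and the auxiliary subgroups used for $x_{21}$ and $x_{40}$ (the $A_{1}^{4}\subset D_{4}$ configuration and the split $D_{4}(q)$) also acquire twists that change which finite groups actually arise. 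You flag this as ``bookkeeping,'' but it is exactly the content that must be supplied, and the paper does not supply it by your route: instead, in the twisted case it argues via the reductive part $D=A_{2}^{2}$ of $C_{\boldsymbol{G}}(u)$, whose two $F$-forms $A_{2}^{-}(q)^{2}$ and $A_{2}(q^{2})$ account for the two rational classes, and then places $u$ in $C_{\boldsymbol{G}}(D)^{F}$, a group of type $A_{2}$ over $\mathbb{F}_{q}$, to which Lemma \ref{lem:SU result for 3} (resp.\ Theorem \ref{thm:sl 4 ref}) applies. Either you should adopt that argument, or you must produce genuinely ${}^{2}E_{6}(q)$-rational representatives and re-verify the containments for the twisted forms.

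A smaller inaccuracy: the $F$-forms of the $A_{2}^{3}$ subsystem subgroup of ${}^{2}E_{6}(q)$ are not just $SL_{3}(q^{2})\times SU_{3}(q)$; by (\cite{Derizotis E type exceptional groups}, Sec.~A(ii)) the possibilities are $A_{2}(q)A_{2}(q^{2})$, $A_{2}^{-}(q^{3})$ and $A_{2}^{-}(q)^{3}$. This does not damage your argument, since every one of these is a central product of groups of type $SL_{3}$ or $SU_{3}$ over fields of order greater than $2$ (recall $q>2$ is assumed in this section), so Theorem \ref{thm:sl 4 ref} and Lemma \ref{lem:SU result for 3} still apply, but the exhaustive list is what the non-$A_{2}$ cases actually rest on. The non-unipotent case in your proposal matches the paper, which indeed reads off the quasisimple factor $C\in\{D_{5}^{-}(q),D_{4}^{\delta}(q),A_{4}^{-}(q),A_{5}^{-}(q)\}$ from \cite{Deriziotis and liebeck twisted e6} and then repeats the untwisted argument verbatim.
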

\begin{proof}
We employ the same method as Lemma \ref{prop:e6 width proof when the centraliser is too big},
embedding $g=su$ in a subgroup of $^{2}E_{6}(q)$ for which an involution
width result already exists. As before, we can assume that $u\neq1$. 

\textbf{Case 1: }\textit{Unipotent elements}: The structure of unipotent
centralizers are known in $G'$ (\cite{Liebeck and Seitz book} Table
22.1.3). Therefore the restriction $|C_{G}(u)|>q^{28}$ yields that
$g=u$ has one of the following labels in the algebraic group $\boldsymbol{G}$:
\[
A_{1},A_{1}^{2},A_{2},A_{1}^{3},A_{2}A_{1},A_{2}A_{1}^{2},A_{2}^{2}.
\]
Following the same argument as in the untwisted case, we see that
excluding type $A_{2}$, $u^{\boldsymbol{G}}\cap G$ is a single unipotent
class of $G$. In particular we note that in $\boldsymbol{G}$, the
labels above are all contained in the subsystem $A_{2}^{3}$. Thus
$u$ is contained in a corresponding finite subsystem subgroup of
$G'$. These are found in (\cite{Derizotis E type exceptional groups},
Sec.A(ii)) and the possibilities are $A_{2}(q)A_{2}(q^{2})$, $A_{2}^{-}(q^{3})$
or $A_{2}^{-}(q)^{3}$. Thus as these are simply central products
of copies of $SL_{3}$ and $SU_{3}$, the result follows by Theorems
\ref{thm:sl 4 ref} and \ref{lem:SU result for 3}. When $u$ has
label $A_{2}$, $u^{\boldsymbol{G}}\cap G$ contains two $G$-classes.
Here the reductive part of $C_{\boldsymbol{G}}(u)$ is $D=A_{2}^{2}$
with $D^{F}=A_{2}^{-}(q)^{2}$ or $A_{2}(q^{2})$. This has $\boldsymbol{G}$-centralizer
$\boldsymbol{L}'=A_{2}$ so $u\in\boldsymbol{L}'^{F}=A_{2}(q)$ and
the result follows from Lemma \ref{lem:SU result for 3}.

\textbf{Case 2: }\textit{Non-unipotent elements}: Assuming now that
$g=su$ where $s$ is nontrivial, it follows that $C_{G}(s)$ is a
subsystem subgroup of order at least $q^{28}$. Such centralizers
have been classified and checking against \cite{Deriziotis and liebeck twisted e6}
we see that $C_{G}(s)$ has a quasisimple normal subgroup 
\[
C=D_{5}^{-}(q),\,D_{4}^{\delta}(q),\,A_{4}^{-}(q)\,\text{or}\,A_{5}^{-}(q).
\]
Now we complete the proof just as in Lemma \ref{prop:e6 width proof when the centraliser is too big}.
\end{proof}
This concludes the proof of Theorem \ref{thm:E6 involution width result}
for the groups $E_{6}^{\epsilon}(q)$ and hence the proof of Theorem~\ref{mainthm}
is complete.

\end{document}